\documentclass[hidelinks,onefignum,onetabnum]{siamart251216}

\usepackage[T1]{fontenc}
\usepackage[utf8]{inputenc}
\usepackage{amsfonts}
\usepackage{amssymb}
\usepackage{mathrsfs}
\usepackage{booktabs}
\usepackage{array}
\usepackage{placeins}

\newsiamremark{remark}{Remark}
\newsiamremark{example}{Example}
\newsiamthm{assumption}{Assumption}
\crefname{assumption}{Assumption}{Assumptions}
\Crefname{assumption}{Assumption}{Assumptions}

\newif\ifdTwoSRMIncludeMain
\newif\ifdTwoSRMIncludeSupplement
\newif\ifdTwoSRMArxiv
\newcommand{\dTwoSRMBetweenMainAndSupplement}{}

\makeatletter
\long\def\dTwoSRMXRTest#1#2#3#4\XR@{%
  \let\XR@tempa\@gobblefour
  \ifx#1\newlabel
    \in@{@cref}{#2}%
    \ifin@
      \expandafter\protected@xdef\csname r@\XR@prefix#2\endcsname{#3}%
    \else
      \let\XR@tempa\@firstoffour
    \fi
  \else\ifx#1\XR@bibcite
    \let\XR@tempa\@secondoffour
  \else\ifx#1\@input
    \let\XR@tempa\@thirdoffour
  \else\ifx#1\new@label@record
    \let\XR@tempa\@fourthoffour
  \fi\fi\fi\fi
  \XR@tempa
    {\expandafter\protected@xdef\csname r@\XR@prefix#2\endcsname
       {\XR@addURL{#3}}}%
    {\expandafter\bibcite\expandafter{\XR@prefix#2}{#3}}%
    {\edef\XR@list{\XR@list\filename@area#2\relax}}%
    {\edef\next{\noexpand\new@label@record{\XR@prefix#2}%
       {\unexpanded{#3}{xr-url}{\XR@URL}}}\next}%
  \ifeof\@inputcheck
    \expandafter\XR@aux
  \else
    \expandafter\XR@read
  \fi
}
\newcommand{\dTwoSRMExternalDocument}[1]{%
  \begingroup
  \let\XR@test\dTwoSRMXRTest
  \externaldocument[][nocite]{#1}%
  \endgroup
}
\makeatother

\newcommand{\dTwoSRMShortAuthors}{Zhenhua Zhao and Jihao Long}

\headers{Deep Second-Order Stochastic Residual Method}{\dTwoSRMShortAuthors}

\title{A Deep Second-Order Stochastic Residual Method for Fully Nonlinear Parabolic
PDEs}

\author{Zhenhua Zhao\thanks{School of Mathematical Sciences, Peking University (\email{zhenhuazhao@stu.pku.edu.cn}).}
\and Jihao Long\thanks{Corresponding author. Institute for Advanced Algorithms Research, Shanghai
(\email{longjh1998@gmail.com}). }}

\ifpdf
\hypersetup{
  pdftitle={A Deep Second-Order Stochastic Residual Method for Fully Nonlinear Parabolic PDEs},
  pdfauthor={Zhenhua Zhao and Jihao Long}
}
\fi

\dTwoSRMIncludeMaintrue
\dTwoSRMIncludeSupplementtrue
\dTwoSRMArxivtrue
\renewcommand{\dTwoSRMBetweenMainAndSupplement}{%
  \bibliographystyle{siamplain}%
  \bibliography{d2srm_article}%
}

\begin{document}

\maketitle
% Shared manuscript content. Compile one of the three driver files instead.
\ifdTwoSRMIncludeMain
\begin{abstract}
We introduce the Deep Second-Order Stochastic Residual Method (D2SRM) for high-dimensional,
Hessian-dependent fully nonlinear parabolic PDEs. A single scalar space--time network generates
derivative-consistent approximations of the solution, gradient, and Hessian, which are trained jointly
through second-order Brownian one-step residuals and terminal value and gradient penalties. For
globally Lipschitz equations with identity diffusion and sufficiently weak Hessian coupling, we
establish well-posedness in a Brownian occupation space and develop a population-level convergence
theory. Under additional regularity, an a posteriori estimate bounds the squared full-jet occupation
error of any admissible candidate by the time step and its population objective. For approximate
population minimizers, the error bound separates time discretization, neural approximation, and
population suboptimality; when the latter two terms are $O(h)$, the full-jet occupation norm is
$O(h^{1/2})$. Experiments on a 100-dimensional manufactured benchmark compare terminal treatments,
probe Hessian couplings inside and outside the proved small-gain range, and show decreasing errors
as the time step decreases. The code is available at \url{https://github.com/ZZHPKU/D2SRM}.
\end{abstract}

\begin{keywords}
fully nonlinear parabolic PDE; deep learning;
backward stochastic differential equations; Picard iteration.
\end{keywords}

\begin{MSCcodes}
35K55, 68T07, 60H35, 65M75
\end{MSCcodes}

\section{Introduction}
\label{sec:introduction}
High-dimensional parabolic PDEs arise in stochastic control, differential games, mathematical
finance, nonlinear expectations, and volatility uncertainty. Tensor-product grids quickly become
intractable as the dimension grows. The Deep BSDE methodology \cite{ref2,ref35,ref1} showed that neural
approximation and Monte Carlo simulation can nevertheless solve important semilinear equations in
dimensions far beyond the reach of spatial grids; see \cite{ref36} for a broader account of
high-dimensional algorithms. Fully nonlinear equations are harder because feedback through the
Hessian requires a solver to identify and stabilize a second-order channel in addition to the value
and gradient. We study this problem through a Brownian occupation-law formulation; the PDE and
numerical setting are defined precisely in
\cref{sec:framework-and-d2srm}.

\subsection{Related work and positioning}
\label{subsec:background-and-related-work}
Standard BSDEs give the nonlinear Feynman--Kac representation for semilinear
equations \cite{ref6}. Second-order forward--backward systems were connected to
classical solutions of fully nonlinear parabolic PDEs in \cite{ref25}, while
the quasi-sure 2BSDE theory of \cite{ref38} establishes well-posedness under a
nondominated family of measures. For numerical schemes, the
Barles--Souganidis framework \cite{ref37} reduces convergence to monotonicity,
stability, and consistency. The probabilistic schemes \cite{ref4,ref45,ref5}
implement this program through conditional expectations and Monte Carlo or
regression. Their stability mechanisms propagate $L^\infty$
bounds on deterministic one-step defects. This differs from the $L^2$
residual-to-full-jet control required by a learning objective; see
\cref{rem:monotonicity-comparison}.

Deep solvers applicable to Hessian-nonlinear parabolic PDEs fall into four
broad classes, distinguished first by whether they advance an approximation
recursively in time. Residual and global stochastic-regression methods pose
global-in-time learning problems without a solver recursion, although they may
discretize trajectory simulation or time-integral quadrature. Among them,
\emph{(i) residual methods}, including the Deep Galerkin Method and
physics-informed neural networks (PINNs), minimize automatically differentiated
PDE residuals \cite{ref40,ref41}. \emph{(ii) Deep Picard and other
global stochastic-regression methods} include Deep Picard iteration (DPI),
which regresses Feynman--Kac value and Bismut--Elworthy--Li gradient labels,
uses a control variate for finite-variance gradients, and differentiates the
learned value for the Hessian \cite{ref42}. A concurrent zeroth-order method
learns the full jet from perturbed-path labels \cite{ref43}, while related
structural approaches use branching representations \cite{ref48} or convex
dual control with differential and Malliavin losses \cite{ref49}. By contrast,
methods with a solver time grid include \emph{(iii) Deep 2BSDE}, which evolves
coupled value and gradient dynamics forward and learns second-order processes
through a global terminal mismatch \cite{ref39}, and \emph{(iv) deep backward
dynamic programming (DBDP)}, which solves local regressions backward
\cite{ref3}; its fully nonlinear extension learns value and gradient at each
time and differentiates the next-time gradient for the Hessian \cite{ref44}.
For fully nonlinear Hamilton--Jacobi--Bellman equations arising from
control-dependent diffusions, a structurally different route reformulates the
PDE as stochastic control and optimizes feedback controls by actor--critic or
policy-gradient methods \cite{ref50,ref51}.

Theoretical guarantees for these methods remain limited. Apart from the
concurrent zeroth-order study \cite{ref43}, estimates that convert a learning error into full-jet
control for Hessian-dependent equations are largely unavailable. Existing PINN
analyses cover restricted PDE classes or assume an abstract stability inequality
\cite{ref47,ref46}, while convergence remains open for both the fully nonlinear
source Picard iteration underlying DPI and the fully nonlinear DBDP scheme
\cite{ref42,ref44}. The concurrent study decomposes statistical error under
contraction of the exact solution operator and proves a sufficient contraction
condition for Langevin dynamics initialized from an invariant density.
Stationarity fixes the spatial measure and permits
reversible integration by parts. Our analysis instead treats the nonstationary
Brownian occupation law generated from a fixed initial state, whose
concentration near the initial time makes Hessian recovery singular. In this
setting, we prove source-to-full-jet stability and contraction of the exact
source Picard iteration, followed by full-jet path regularity, exact-grid
consistency, mesh-uniform $L^2$ residual stability, and population convergence.
To our knowledge, this chain has not previously been established for a
Hessian-dependent deep solver; see \cref{rem:neighboring-solvers} for
implications for neighboring formulations.

\subsection{Method and contributions}
\label{subsec:contributions}
D2SRM inserts the derivative-consistent jet of one scalar space--time function into all one-step
second-order Brownian residuals and minimizes them jointly. Following a
reliability and attainability strategy, the analysis first converts a small objective
into a small full-jet occupation error and then identifies an approximation topology in which the
objective is attainable.

The contributions are fourfold. \emph{(i)} Gaussian affine-complement coercivity and a
centered-gradient identity give a dimension-free source--terminal Hessian estimate with source
constant $2\sqrt2$, nonlinear occupation mild well-posedness, and convergence of the source Picard
iteration. \emph{(ii)} Canonical positive-time traces, a first-interval Hessian
average, and $L^2$ semimartingale identities yield full-jet path regularity and exact-grid residual
consistency. \emph{(iii)} Gaussian chaos projections and an endpoint-inclusive degree-two Copson
estimate give mesh-uniform stability for the auxiliary implicit scheme, including its first-node
Hessian channel. \emph{(iv)} A posteriori reliability and an explicit attainability criterion yield
population convergence, while weighted derivative approximation gives qualitative vanishing of the
best-approximation term. The analysis is at the population level; quantitative approximation rates,
generalization, and optimization are not covered. The framework also identifies stability components
and applicability boundaries for other deep learning methods for fully nonlinear PDEs; see
\cref{rem:neighboring-solvers}.

The remainder of the paper is organized as follows. Sections 2--3 introduce D2SRM and state the
assumptions and main results. Sections 4--7 establish the continuous estimates, grid regularity and
consistency, discrete stability, and population convergence. Section 8 reports the numerical
experiments, and Section 9 concludes with limitations and directions for future work.

\section{The Deep Second-Order Stochastic Residual Method}
\label{sec:framework-and-d2srm}
\subsection{Notation and PDE}
\label{subsec:notation-and-pde}
Write $\mathbb N=\{0,1,2,\ldots\}$ and $\mathbb N^+=\{1,2,\ldots\}$.
Fix $T>0$, $d\in\mathbb N^+$, and $x_0\in\mathbb R^d$. Let
$(\Omega,\mathcal F,(\mathcal F_t)_{0\le t\le T},\mathbb P)$ support a
$d$-dimensional Brownian motion $W$ with its usual augmented filtration. Set
$
X_t=x_0+W_t,
\mu_t=\mathcal L(X_t),
$
and write $\mathbb E_t[\cdot]=\mathbb E[\cdot\mid\mathcal F_t]$.
The fixed initial state is primarily a presentational choice. If instead $X_0$ has an arbitrary law
independent of $W$, most occupation-law results extend by conditioning on $X_0=x$, applying the
fixed-start estimates, and integrating in $x$, provided the assumptions and constants are uniform in
$x$ or satisfy the required integrability.
For $r\ge0$, let
$
P_r\varphi(x)=\mathbb E[\varphi(x+W_r)]
$
denote the Brownian heat semigroup.
For $t>0$, let $p_t$ be the Gaussian density of $\mu_t$. For $\beta\ge0$, define
$
r_\beta(t)=e^{-2\beta(T-t)},
\,
\nu_\beta(dt,dx)=r_\beta(t)p_t(x)\,dt\,dx,
$
where the density identity holds for $t>0$. Define the Brownian occupation
space by
$
\mathcal H_\beta:=L^2(\nu_\beta).
$
We retain $\mathcal H_0$ for unweighted data assumptions. Unless stated
otherwise, every estimate involving a negative power of $\beta$ is understood
with a fixed $\beta>0$.
The same notation is used for scalar-, vector-, matrix-, and tensor-valued functions. Matrices are
measured in the Frobenius norm $|\cdot|_F$. Let $\mathbb S^d$ denote the
space of real symmetric $d\times d$ matrices and let $I_d$ be the
$d\times d$ identity matrix. We use
$A:B=\operatorname{tr}(A^\top B)$. For a square-integrable vector $U$, let
$\operatorname{Var}(U)=\mathbb E|U-\mathbb EU|^2$.
For $1\le j\le d$, let $e_j$ denote the $j$th coordinate vector in
$\mathbb R^d$, and write $\partial_j=\partial_{x_j}$.
The space $H^k(\mu_t)$ is the Gaussian Sobolev space with weak derivatives through order $k$ in
$L^2(\mu_t)$.
Let
$
E=\mathbb R\times\mathbb R^d\times\mathbb S^d,
\, q=(y,z,\gamma)\in E.
$ The equation is
\begin{equation}
\partial_tu^\star+\frac 12\Delta u^\star
+f\!\left(t,x,u^\star,\nabla_xu^\star,D_x^2u^\star\right)=0, \quad
u^\star(T,\cdot)=g.
\label{eq:pde}
\end{equation}

\subsection{Grid, Markov comparison class, residual, and D2SRM}
\label{subsec:grid-residual-d2srm}
Let $N\ge2$, $h=\frac TN$, and $t_i=ih$. Write
$
X_i=X_{t_i},
\Delta W_i=W_{t_{i+1}}-W_{t_i},
Q_i=\Delta W_i\Delta W_i^\top-hI_d.
$

We first fix the Markov comparison class $\mathfrak C_h$, which consists of all triples
$(\widehat{Y}, \widehat{Z}, \widehat{\Gamma})$ such that there exists
$\widehat g\in H^1(\mu_T)$ with
\[
\widehat q_i := (\widehat{Y}_i, \widehat{Z}_i, \widehat{\Gamma}_i)\in L^2(\mu_{t_i};E),
\quad
\widehat q_i\text{ is }\sigma(X_i)\text{-measurable},
\quad 0\le i<N,
\quad
\widehat Y_N=\widehat g(X_T),
\]
and, for each $0\le i<N$, the one-step local residual
\[
\begin{aligned}
\rho_i[\widehat Y,\widehat Z,\widehat\Gamma]
&:=\widehat Y_{i+1}-\widehat Y_i
+h f(t_i,X_i,\widehat Y_i,\widehat Z_i,\widehat\Gamma_i)-\widehat Z_i\cdot\Delta W_i
-\frac 12\widehat\Gamma_i:Q_i.
\end{aligned}
\]
For $(\widehat Y,\widehat Z,\widehat\Gamma)\in\mathfrak C_h$, define the Markov residual functional
\begin{equation}
J_h(\widehat Y,\widehat Z,\widehat\Gamma)
=\frac{1}{h^2}\sum_{i=0}^{N-1}h\,
\mathbb E|\rho_i[\widehat Y,\widehat Z,\widehat\Gamma]|^2.
\label{eq:markov-residual}
\end{equation}
This functional is not a residual norm on arbitrary adapted path-dependent triples; throughout the
paper it is used on the Markov class $\mathfrak C_h$.

\begin{definition}[Deep Second-Order Stochastic Residual Method]
\label{def:d2srm}
For each $h$, let $\Theta_h$ be a parameter set and let
$\{U^\theta:\theta\in\Theta_h\}$ be a family of scalar neural functions in
$C^{0,2}([0,T]\times\mathbb R^d)$. Set
$Y_i^\theta=U^\theta(t_i,X_i)$ for $0\le i\le N$, and, for $0\le i<N$, set
$Z_i^\theta=\nabla_xU^\theta(t_i,X_i)$ and
$\Gamma_i^\theta=D_x^2U^\theta(t_i,X_i)$. Assume these sampled
derivative-consistent jets belong to $\mathfrak C_h$. The population objective analyzed below is
\begin{equation}
\mathcal L_h(\theta)
=J_h(Y^\theta,Z^\theta,\Gamma^\theta)
+\mathbb E|Y_N^\theta-g(X_T)|^2
+\mathbb E|\nabla_xU^\theta(T,X_T)-\nabla g(X_T)|^2.
\label{eq:d2srm-objective}
\end{equation}
Here $\nabla g$ denotes the Gaussian-Sobolev weak gradient of
$g\in H^1(\mu_T)$, as required by
\cref{ass:continuous-occupation-well-posedness}.
The algorithmic formulation therefore assumes access to a pointwise
evaluable $\mu_T$-almost-everywhere representative of $\nabla g$ at terminal
samples.

The Deep Second-Order Stochastic Residual Method aims to solve the constrained Markov minimization
problem
$
\inf_{\theta\in\Theta_h}\mathcal L_h(\theta)
$
through SGD-type algorithms. We further define a parameter $\theta_h$ as an
$\varepsilon$-approximate population minimizer if
$
\mathcal L_h(\theta_h)
\le \inf_{\theta\in\Theta_h}\mathcal L_h(\theta)
+\varepsilon.
$

\end{definition}

\section{Occupation Mild Solutions, Assumptions, and Main Results}
\label{sec:occupation-mild-solution-assumptions-and-main-results}
\subsection{Standing assumptions}
\label{subsec:standing-assumptions}
\begin{assumption}[Continuous occupation well-posedness]
\label{ass:continuous-occupation-well-posedness}
The function $f:[0,T]\times\mathbb R^d\times E\to\mathbb R$ is Borel measurable,
$
f_0(t,x)=f(t,x,0,0,0)\in\mathcal H_0,
g\in H^1(\mu_T),
$
and there are constants $K,L_0,L_1,L_2\ge0$ such that
$
\|f_0\|_{\mathcal H_0}\le K,
\,
\|g\|_{H^1(\mu_T)}\le K,
$
and
\[
|f(t,x,y,z,\gamma)-f(t,x,y',z',\gamma')|
\le L_0|y-y'|+L_1|z-z'|+L_2|\gamma-\gamma'|_F.
\]
Assume $2\sqrt2L_2<1$.
\end{assumption}

\begin{assumption}[Spatial regularity]
\label{ass:spatial-regularity}
\Cref{ass:continuous-occupation-well-posedness} holds,
$g\in H^2(\mu_T)$ with $\|g\|_{H^2(\mu_T)}\le K$, and there is a constant
$L_x\ge0$ such that
\[
|f(t,x,q)-f(t,x',q)|\le L_x|x-x'|.
\]
\end{assumption}

\begin{assumption}[Time and Brownian-state regularity]
\label{ass:time-regularity}
\Cref{ass:spatial-regularity} holds, and there are constants
$L_t,L_{\mathrm B}\ge0$ such that
\[
|f(t,x,q)-f(s,x,q)|\le L_t|t-s|^{\frac 12}.
\]
Moreover, the following uniform Brownian-state bound holds; see
\cref{rem:laplacian-sufficient-condition} for a sufficient spatial-Laplacian
criterion:
\[
\mathbb E\left|f(t,x+W_r,q)-f(t,x,q)\right|^2
\le L_{\mathrm B}^2r,
\qquad (t,x,q,r)\in[0,T]\times\mathbb R^d\times E\times[0,T].
\]
\end{assumption}

Spatial Lipschitz continuity yields the Brownian-state bound with
$L_{\mathrm B}=\sqrt d\,L_x$, since $\mathbb E|W_r|^2=dr$; stating it
separately permits sharper dimension dependence. Dimension-independent
constants therefore concern problem families whose invoked data and regularity
constants, including $L_{\mathrm B}$, are uniform in $d$;
\cref{rem:laplacian-sufficient-condition} gives a sufficient condition without
an explicit dimension factor.

Throughout, $C$ denotes a positive constant that may change from line
to line and depends only on $T$, $K$, and the $L$-type regularity constants in
the assumptions invoked. Additional parameters except $T$, $K$, and the
$L$-type constants are displayed in subscripts: $C_\beta$ and $C_{\beta,d}$
may additionally depend on $\beta$ and $(\beta,d)$, respectively.

\subsection{Occupation mild solutions}
\label{subsec:occupation-mild-solution-framework}
\begin{definition}[Weak derivatives]
\label{def:weak-spatial-derivatives}
Let $\mathsf V$ be a finite-dimensional Euclidean space and
$\phi\in L^1_{\mathrm{loc}}((0,T)\times\mathbb R^d;\mathsf V)$. For
$\xi_k\in L^1_{\mathrm{loc}}((0,T)\times\mathbb R^d;\mathsf V)$, we say
$\partial_{x_k}\phi=\xi_k$ weakly if
\begin{equation*}
\int \phi\,\partial_{x_k}\varphi\,dt\,dx
=-\int\xi_k\varphi\,dt\,dx,
\qquad
\varphi\in C_c^\infty((0,T)\times\mathbb R^d).
\end{equation*}
When these derivatives exist for every $k$, write
$D_x\phi=(\partial_{x_1}\phi,\ldots,\partial_{x_d}\phi)$.
\end{definition}

For a source $F\in\mathcal H_\beta$ and terminal datum $\eta\in H^1(\mu_T)$, define the Duhamel map
\begin{equation}
\bigl(\mathcal U(F,\eta)\bigr)(t,x)
=P_{T-t}\eta(x)+\int_t^T P_{r-t}F(r,\cdot)(x)\,dr.
\label{eq:duhamel-map}
\end{equation}
The same notation is used componentwise for finite-dimensional vector-valued data.
For the terminal datum $g$ in \cref{eq:pde}, write $u^F:=\mathcal U(F,g)$.
Throughout, \emph{smooth data} means that every source $F$ and terminal datum
$g$ under consideration satisfy
\[
F\in C_c^\infty((0,T)\times\mathbb R^d),
\qquad
g\in C_c^\infty(\mathbb R^d).
\]

\begin{definition}[Linear and nonlinear occupation mild solutions]
\label{def:occupation-mild-solutions}
A measurable function
$v:(0,T)\times\mathbb R^d\to\mathbb R$ is said to have an
\textbf{occupation mild jet} if there are measurable functions
$
z:(0,T)\times\mathbb R^d\to\mathbb R^d,
\gamma:(0,T)\times\mathbb R^d\to\mathbb S^d,
$ such that $v,z,\gamma\in\mathcal H_\beta$ and
$D_xv=z$ and $D_xz=\gamma$ in the sense of \cref{def:weak-spatial-derivatives}; we then write
$D_x^2v=\gamma$. On every compact set in $(0,T)\times\mathbb R^d$, the density
$r_\beta(t)p_t(x)$ is bounded below by a positive constant. Hence
$\mathcal H_\beta\subset L^1_{\mathrm{loc}}((0,T)\times\mathbb R^d)$, so the
weak derivative in \cref{def:weak-spatial-derivatives} applies to elements of
$\mathcal H_\beta$.

For $F\in\mathcal H_\beta$ and $\eta\in H^1(\mu_T)$, a function with an occupation mild jet is a
\textbf{linear occupation mild solution} with source $F$ and terminal datum
$\eta$ if $
v=\mathcal U(F,\eta).
$
It is a \textbf{nonlinear occupation mild solution} of \cref{eq:pde} if the source
\[
F^v(t,x)
:=f\!\left(t,x,v(t,x),D_xv(t,x),D_x^2v(t,x)\right)
\]
belongs to $\mathcal H_\beta$, and $v=\mathcal U(F^v,g)$.

\end{definition}

\begin{remark}[Distributional and classical consistency]
\label{rem:occupation-mild-weak-classical}
Every nonlinear occupation mild solution $v$ is a distributional solution of
\cref{eq:pde} and attains its terminal datum along $X$. Indeed,
$v,D_xv,F^v\in\mathcal H_\beta\subset L^1_{\mathrm{loc}}$, and
\cref{lem:smooth-approximation,thm:occupation-mild-extension} give
\begin{equation*}
\int_0^T\int_{\mathbb R^d}
\left(
-v\,\partial_t\varphi
-\frac12D_xv\cdot\nabla_x\varphi
+F^v\varphi
\right)\,dx\,dt=0,
\qquad
\varphi\in C_c^\infty((0,T)\times\mathbb R^d).
\end{equation*}
Moreover, the continuous adapted realization from
\cref{lem:path-space-realization} satisfies
$Y_t^v\to g(X_T)$ in $L^2(\Omega)$ as $t\uparrow T$.

Conversely, let $\bar u\in C^{1,2}((0,T)\times\mathbb R^d)$ solve
\cref{eq:pde} pointwise in the interior, with
$\bar u,D_x\bar u,D_x^2\bar u,F^{\bar u}\in\mathcal H_\beta$ and
$\bar u(t,X_t)\to g(X_T)$ in $L^2(\Omega)$ as $t\uparrow T$. A localized
It\^o formula and conditional expectation give
$\bar u(t,X_t)=\mathcal U(F^{\bar u},g)(t,X_t)$ for almost every $t$. Since
$p_t>0$, $\bar u=\mathcal U(F^{\bar u},g)$ in $\mathcal H_\beta$, so it is an
occupation mild solution and, under the hypotheses of
\cref{thm:occupation-well-posedness}, coincides with the unique solution
identified there. No viscosity identification or uniqueness in a larger
distributional class is claimed.
\end{remark}

\begin{theorem}[Occupation mild well-posedness]
\label{thm:occupation-well-posedness}
Fix $\beta>0$. For $F\in\mathcal H_\beta$, consider the source-update
formula
\begin{equation}
\begin{aligned}
\bigl(\Phi(F)\bigr)(t,x)
:=f\!\left(t,x,u^F(t,x),D_xu^F(t,x),D_x^2u^F(t,x)\right).
\end{aligned}
\label{eq:source-update}
\end{equation}
Under \cref{ass:continuous-occupation-well-posedness}, the source-update map
in \cref{eq:source-update} is a well-defined map
$\Phi:\mathcal H_\beta\to\mathcal H_\beta$ and satisfies
\begin{equation}
\|\Phi(F)-\Phi(\widetilde F)\|_{\mathcal H_\beta}
\le\lambda(\beta)\|F-\widetilde F\|_{\mathcal H_\beta}, \quad
\lambda(\beta)
:=\frac{L_0}{\beta}+\frac{L_1}{\sqrt\beta}+2\sqrt2L_2.
\end{equation}
Suppose that $\lambda(\beta)<1$. Such a choice of $\beta$ is possible under
$2\sqrt2L_2<1$ by taking $\beta$ sufficiently large. Then there is a unique
fixed point
$F^\star\in\mathcal H_\beta$, and
\[
\|F^\star\|_{\mathcal H_\beta}
\le
\frac{1+L_0\beta^{-\frac12}+L_1+L_2}{1-\lambda(\beta)}\,K.
\]
The mild function
$u^\star:=u^{F^\star}$ satisfies $F^{u^\star}=F^\star$ and is the
unique nonlinear occupation mild solution.
\end{theorem}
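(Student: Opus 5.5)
The plan is to reduce the theorem to three linear a priori estimates for the Duhamel map $\mathcal U$ in the occupation space, and then apply the Banach fixed-point theorem. For $F\in\mathcal H_\beta$, set $v=\mathcal U(F,0)$. I would establish, first for smooth data and then by density, the bounds
\[
\|v\|_{\mathcal H_\beta}\le \beta^{-1}\|F\|_{\mathcal H_\beta},\qquad
\|D_xv\|_{\mathcal H_\beta}\le \beta^{-1/2}\|F\|_{\mathcal H_\beta},\qquad
\|D_x^2v\|_{\mathcal H_\beta}\le 2\sqrt2\,\|F\|_{\mathcal H_\beta}.
\]
The terminal part $P_{T-t}g$ would be handled separately.

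The first two bounds are routine.
\begin{itemize}
\item For the value bound, I would write $v(t,X_t)=\mathbb E_t\int_t^T F(r,X_r)\,dr$ along the Brownian path. I would then use Cauchy--Schwarz with the weight $e^{-\beta(r-t)}$ and integrate against $r_\beta(t)\,dt$. A Gronwall-type computation should give the factor $1/\beta$.
\item For the gradient bound, I would use the BSDE representation $Z_t=D_xv(t,X_t)$. It\^o's formula applied to $e^{\beta t}|Y_t|^2$, or equivalently the energy identity for the heat equation in the weighted $L^2(p_t)$ space, gives $\beta\|Y\|^2+\tfrac12\|Z\|^2\lesssim\|F\|\,\|Y\|$. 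This yields the $\beta^{-1/2}$ rate for $Z$ with the stated constant after optimizing.
\end{itemize}

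The Hessian estimate with the dimension-free constant $2\sqrt2$ and no $\beta$ gain is the main obstacle. Under the law $p_t$, concentration near $t=0$ makes naive Bismut-type bounds blow up. I would invoke the paper's Gaussian affine-complement coercivity and the centered-gradient identity (the source--terminal Hessian estimate of Section 4, listed as contribution (i)). The idea is to apply the gradient energy identity to $\partial_jv$, which solves the heat equation with source $\partial_jF$, and then move the derivative off $F$. Gaussian integration by parts under $p_t$ produces the term $(x-x_0)/t$. The centered-gradient identity controls this term by the variance, that is, by the affine-complement part of $D_xv$, and the Gaussian Poincar\'e coercivity turns that into a bound in terms of $\|F\|$. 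The constants combine to $2\sqrt2$. The terminal contributions from $g\in H^1(\mu_T)$ are estimated in the same way and give the $K$-terms.

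With these bounds in hand, the rest is assembly.
\begin{enumerate}
\item Well-definedness of $\Phi$: the Lipschitz condition gives $|\Phi(F)|\le|f_0|+L_0|u^F|+L_1|D_xu^F|+L_2|D_x^2u^F|$, and each term lies in $\mathcal H_\beta$, since $\mathcal H_0\subset\mathcal H_\beta$ because $r_\beta\le1$.
\item Contraction: by linearity, $u^F-u^{\widetilde F}=\mathcal U(F-\widetilde F,0)$, so the Lipschitz condition and the three bounds give $\lambda(\beta)$ directly. Since $2\sqrt2L_2<1$, taking $\beta$ large makes $\lambda(\beta)<1$.
\item Fixed point and its norm bound: Banach's theorem gives a unique $F^\star$. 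For the bound, write $\|F^\star\|\le\|\Phi(0)\|+\lambda\|F^\star\|$ and estimate $\|\Phi(0)\|\le K+L_0\|u^0\|+L_1\|Du^0\|+L_2\|D^2u^0\|$, where $u^0=P_{T-\cdot}g$. The terminal estimates should give $\|u^0\|\lesssim\beta^{-1/2}K$ together with the $L_1$ and $L_2$ constants. This may need minor adjustment to match the exact numerator.
\item Solution property and uniqueness: $F^{u^\star}=\Phi(F^\star)=F^\star$, so $u^\star=\mathcal U(F^{u^\star},g)$ is a nonlinear occupation mild solution. Conversely, any solution $v$ satisfies $v=u^{F^v}$, so $F^v=\Phi(F^v)$. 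Uniqueness of the fixed point then gives $F^v=F^\star$ and $v=u^\star$. This requires that $D_x$ and $D_x^2$ of $\mathcal U(F,g)$ coincide with the weak derivatives, which follows from the density and extension argument.
\end{enumerate}
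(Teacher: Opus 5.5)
Your proposal is correct and follows essentially the paper's route: the proof of \cref{thm:occupation-well-posedness} assembles the linear estimates of \cref{cor:linear-mild-map-estimates} into well-definedness, contraction, Banach's theorem, and uniqueness via $\Phi(F^v)=F^v$, exactly as you do. Those estimates are the value and gradient bounds from It\^o's formula for $r_\beta(t)|Y_t|^2$ (the weight is $e^{2\beta t}$ up to a constant, not $e^{\beta t}$) and the Hessian bound from the centered-gradient identity with affine-complement coercivity. The ``minor adjustment'' you anticipate in step~3 is unnecessary: the terminal versions $\|u^0\|_{\mathcal H_\beta}\le\beta^{-1/2}\|g\|_{L^2(\mu_T)}$, $\|D_xu^0\|_{\mathcal H_\beta}\le\|g\|_{L^2(\mu_T)}$, and $\|D_x^2u^0\|_{\mathcal H_\beta}\le\|\nabla g\|_{L^2(\mu_T)}$ of the same estimates, together with $\|f_0\|_{\mathcal H_\beta}\le K$, give exactly the numerator $(1+L_0\beta^{-1/2}+L_1+L_2)K$.
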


The proof is given in \cref{subsec:nonlinear-source-contraction}.

\subsection{Main results}
\label{subsec:main-results}
Let $(Y^\star,Z^\star,\Gamma^\star)$ be the path-space target associated with
the occupation mild jet of $u^\star$, as specified in \cref{sec:grid-regularity}.
For a grid triple
$(\widehat Y,\widehat Z,\widehat\Gamma) \in \mathfrak C_h$, define the squared occupation path
error
\begin{equation}
\begin{aligned}
\mathsf{PE}_h^2(\widehat Y,\widehat Z,\widehat\Gamma)
=\sum_{i=0}^{N-1}\int_{t_i}^{t_{i+1}}
\mathbb E\!\left[
|Y_t^\star-\widehat Y_i|^2
+|Z_t^\star-\widehat Z_i|^2
+|\Gamma_t^\star-\widehat\Gamma_i|_F^2
\right]dt.
\end{aligned}
\label{eq:path-error}
\end{equation}

\begin{theorem}[A posteriori reliability]
\label{thm:reliability}
Under \cref{ass:time-regularity}, there exists $h_\star>0$
such that for every
$0<h\le h_\star$ and every $\theta\in\Theta_h$ whose sampled jet belongs to
$\mathfrak C_h$,
\begin{equation}
\mathsf{PE}_h^2(Y^\theta,Z^\theta,\Gamma^\theta)
\le C\bigl(h+\mathcal L_h(\theta)\bigr).
\label{eq:reliability}
\end{equation}

\end{theorem}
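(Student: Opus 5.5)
The bound follows from three ingredients that the introduction announces for Sections 5--6: exact-grid consistency of the true jet, mesh-uniform $L^2$ residual stability of the Markov residual, and path regularity of the target. I would split the error at the grid nodes as
\[
\mathsf{PE}_h^2\le 2\sum_i\int_{t_i}^{t_{i+1}}\mathbb E\bigl[|Y_t^\star-Y^\star_{t_i}|^2+|Z_t^\star-Z^\star_{t_i}|^2+|\Gamma_t^\star-\Gamma^\star_{t_i}|_F^2\bigr]dt
+2h\sum_i\mathbb E|q^\star_i-q^\theta_i|^2 ,
\]
where $q^\star_i=(Y^\star_{t_i},Z^\star_{t_i},\Gamma^\star_{t_i})$. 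For $i=0$, where $\Gamma^\star$ is singular near the initial time, I would replace $\Gamma^\star_{t_0}$ by the first-interval Hessian average announced in contribution (ii). The first sum is the full-jet path regularity, bounded by $Ch$ under \cref{ass:time-regularity}. It remains to bound the discrete nodal error $h\sum_i\mathbb E|q^\star_i-q^\theta_i|^2$ by $C(h+\mathcal L_h(\theta))$.

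For the nodal error I would use the exact-grid consistency result. The true Markov triple $q^\star_i=u^\star$-jet at $X_i$ (with the averaged Hessian at $i=0$) lies in $\mathfrak C_h$ with $\widehat g=g$, and it satisfies $J_h(q^\star)\le Ch$. This follows from the $L^2$ semimartingale identities: $Y^\star_{t_{i+1}}-Y^\star_{t_i}=-\int F^\star\,dt+\int Z^\star dW$, together with Itô's expansion $Z^\star_t-Z^\star_{t_i}\approx\int\Gamma^\star dW$, which produces the $\tfrac12\Gamma_i:Q_i$ term. The remainders are $O(h^{3/2})$ in $L^2$ per step, so after the $h^{-2}\cdot h$ weighting their total is $O(h)$.

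The core step is the discrete stability estimate, which is the main obstacle. For two triples in $\mathfrak C_h$ with terminal functions $\widehat g,\widehat g'$, I want
\[
h\sum_i\mathbb E|\delta q_i|^2\le C\Bigl(J_h(\widehat q)+J_h(\widehat q')+\mathbb E|\delta Y_N|^2+\mathbb E|\nabla\widehat g-\nabla\widehat g'|^2(X_T)\Bigr),
\]
uniformly in $h\le h_\star$. The plan is to view $\delta\rho_i$ as a defect of an auxiliary implicit scheme. Projecting $\delta\rho_i$ onto the Gaussian chaoses of $\Delta W_i$ conditional on $X_i$ uses Markovianity. The zeroth chaos gives $\delta Y_i=\mathbb E_i\delta Y_{i+1}+h\,\delta f_i-\mathbb E_i\delta\rho_i$. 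The first chaos gives $h\,\delta Z_i=\mathbb E_i[\delta Y_{i+1}\Delta W_i]-\mathbb E_i[\delta\rho_i\Delta W_i]$. The second chaos gives $h^2\delta\Gamma_i\approx\mathbb E_i[\delta Y_{i+1}Q_i]$ up to the defect.

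By Gaussian integration by parts, these express $\delta Z_i,\delta\Gamma_i$ as heat-smoothed gradients and Hessians of $\delta Y_{i+1}$. This is the discrete analogue of the continuous source-to-full-jet estimate with constant $2\sqrt2$ from \cref{thm:occupation-well-posedness}: summing the discrete Duhamel representation and using a degree-two Copson (Hardy-type) inequality controls $\sum_i h|\delta\Gamma_i|^2$ by the defects. The defects are weighted by $h^{-1}$ in $J_h$, which matches the chaos normalisation $\mathbb E|Q_i|^2\sim h^2$. The terminal gradient penalty supplies the terminal Hessian channel, and the endpoint-inclusive Copson bound handles the first-node Hessian.

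The Lipschitz terms are absorbed as in the continuous proof. The $L_0$ and $L_1$ terms are absorbed with an exponential weight $r_\beta$ and $\beta$ large. The $L_2$ term is absorbed by $2\sqrt2L_2<1$, with a margin $h_\star$ to absorb $O(h)$ discretization losses of the constant; if the Copson constant does not reproduce $2\sqrt2$ exactly, I would expect this absorption step to be the delicate one.

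Applying the stability estimate with $\widehat q=q^\theta$ and $\widehat q'=q^\star$ gives $J_h(q^\theta)+\mathbb E|Y^\theta_N-g|^2+\mathbb E|\nabla U^\theta(T)-\nabla g|^2\le\mathcal L_h(\theta)$ and $J_h(q^\star)\le Ch$. Combining this with the path-regularity bound yields \cref{eq:reliability}.
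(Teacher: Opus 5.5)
Your argument is correct and uses the same three ingredients as the paper: full-jet path regularity with the first-interval Hessian average (\cref{prop:full-jet-path-regularity}), exact-grid consistency (\cref{prop:exact-grid-consistency}), and the chaos-projection/Copson stability. The difference is the intermediate triple you insert.

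The paper inserts the implicit reference scheme. \Cref{thm:residual-stability} compares one Markov candidate with $(Y^h,Z^h,\Gamma^h)$, whose projected residuals vanish identically, so only the candidate's residual and terminal errors enter. \Cref{cor:reference-scheme-convergence}, which is the same stability theorem applied to the exact-grid triple, then gives $\mathsf{PE}_h^2(Y^h,Z^h,\Gamma^h)\le Ch$.

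You insert the exact-grid triple directly and use a symmetric two-triple estimate. This goes through verbatim: the projections of $\rho_i[\widehat q]-\rho_i[\widehat q']$ play the roles of $\bar e_i,C_i,B_i$ in \cref{eq:projected-error-recursion}, with $\psi=\widehat g-\widehat g'$ and $\mathcal R_{\beta,h}^2\le2\bigl(J_h(\widehat q)+J_h(\widehat q')\bigr)$.

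What each route buys:
\begin{itemize}
\item Your route avoids constructing the implicit scheme, so it does not need \cref{prop:implicit-reference-well-posedness} or the condition $hL_0<1$. It also yields Lipschitz stability between arbitrary Markov triples.
\item The paper's route gives convergence of the implicit scheme as a by-product. Its stability theorem involves only the candidate's own residual.
\end{itemize}

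The absorption step you flag is not delicate. The discrete Hessian constant is exactly $a_2=2\sqrt2$, with no $h$-loss. In degree two, the endpoint-inclusive Copson constant $4$ times the factor $2$ in $\mathbb E|G_i^{S,(\alpha)}|_F^2=2|(\mathcal Cb)_i|^2$ gives $8$. In degree $n\ge3$, the positive-time Copson bound with $a=n/2$ gives $4n/(n-1)\le6$. So the $h_\star$-margin is needed only for $a_{0,h}$ and $a_{1,h}$, through $\vartheta_{\beta,h}\to2\beta$.

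Two small corrections:
\begin{itemize}
\item At positive nodes, $\Gamma^\star_{t_i}$ must be the canonical trace, because the occupation Hessian has no intrinsic fixed-time value.
\item The consistency remainders need not be $O(h^{3/2})$ uniformly per step. The local Hessian oscillation is bounded by $h$ times a local occupation integral of $|D_xD_x^2u^\star|^2+|D_xF^\star|^2$, which is summable over intervals but not uniformly $O(h)$ near $t=0$. So $J_h(Y^\pi,Z^\pi,\Gamma^\pi)\le Ch$ must be obtained in aggregate, via the It\^o isometry, \cref{lem:source-freezing}, and \cref{prop:full-jet-path-regularity}.
\end{itemize}
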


For each $h$, let $\mathcal V_h^{\mathrm{reg}}$ be the class of functions
$
v\in C^{1,3}_{\mathrm{loc}}([0,T)\times\mathbb R^d)
\cap C^{0,2}([0,T]\times\mathbb R^d)
$
whose sampled jets belong to $\mathfrak C_h$ and which satisfy
$
v,\,\nabla_xv,\, D_x^2v,\,
\mathscr Lv,\,\nabla_x\mathscr Lv
\in\mathcal H_\beta$ and $
v(T,\cdot)\in H^1(\mu_T),
$ where $\mathscr L=\partial_t+\frac 12\Delta$.
This regular comparison class is used only for the attainability direction.

\begin{theorem}[Population convergence]
\label{thm:population-convergence}
Under \cref{ass:time-regularity}, let $h_\star$ be the threshold in
\cref{thm:reliability}, let $0<h\le h_\star$, and suppose that
$U^\theta\in\mathcal V_h^{\mathrm{reg}}$ for every
$\theta\in\Theta_h$.
Let $\theta_h$ be an
$\varepsilon_{\mathrm{opt},h}$-approximate population minimizer. Then
\begin{equation}
\mathsf{PE}_h^2(Y^{\theta_h},Z^{\theta_h},\Gamma^{\theta_h})
\le C\left[
h+
\inf_{\theta\in\Theta_h}
\mathfrak A_h(U^\theta)
+\varepsilon_{\mathrm{opt},h}
\right].
\label{eq:population-convergence}
\end{equation}
Here $\mathfrak A_h$, defined precisely in
\cref{eq:approximation-functional}, combines occupation graph approximation,
deterministic grid-trace approximation, an $h$-weighted drift-regularity
term, and the terminal $H^1(\mu_T)$ error.
In particular, if the best-approximation term and
$\varepsilon_{\mathrm{opt},h}$ are $O(h)$, then the occupation path norm is $O(h^{\frac 12})$.

\end{theorem}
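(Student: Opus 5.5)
The proof combines \cref{thm:reliability}, which is the reliability direction, with an attainability bound for the population objective on the regular class $\mathcal V_h^{\mathrm{reg}}$.

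\emph{Step 1: reliability.} Since $\theta_h\in\Theta_h$ and its sampled jet lies in $\mathfrak C_h$, \cref{thm:reliability} gives
\[
\mathsf{PE}_h^2(Y^{\theta_h},Z^{\theta_h},\Gamma^{\theta_h})\le C\bigl(h+\mathcal L_h(\theta_h)\bigr).
\]
The approximate-minimizer property gives $\mathcal L_h(\theta_h)\le\inf_{\theta}\mathcal L_h(\theta)+\varepsilon_{\mathrm{opt},h}$. It therefore suffices to prove the attainability estimate
\[
\mathcal L_h(\theta)\le C\bigl(h+\mathfrak A_h(U^\theta)\bigr)\qquad\text{for every }\theta\in\Theta_h,
\]
and then take the infimum over $\theta$.

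\emph{Step 2: attainability.} Fix $v=U^\theta\in\mathcal V_h^{\mathrm{reg}}$. The two terminal penalties are bounded by $\|v(T,\cdot)-g\|_{H^1(\mu_T)}^2$, which is one component of $\mathfrak A_h$. For the residual term $J_h$, I would compare $\rho_i[v]$ with the exact one-step residual $\rho_i[Y^\star,Z^\star,\Gamma^\star]$ of the target. By the exact-grid consistency result from \cref{sec:grid-regularity}, the latter satisfies $\frac1h\sum_i\mathbb E|\rho_i^\star|^2\le Ch^2$, so its contribution to $J_h$ is $O(h)$.

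For the sampled jet of $v$, apply It\^o's formula on $[t_i,t_{i+1}]$. It is legitimate for $t_{i+1}<T$ because $v\in C^{1,3}_{\mathrm{loc}}$; the last interval is handled by the $C^{0,2}$ regularity and a limit. This gives
\[
v(t_{i+1},X_{i+1})-v(t_i,X_i)=\int_{t_i}^{t_{i+1}}\mathscr Lv\,dt+\int_{t_i}^{t_{i+1}}\nabla_xv\cdot dW_t .
\]
Next expand $\int\nabla_xv\,dW$ once more by It\^o. The result is $\nabla_xv(t_i,X_i)\cdot\Delta W_i+\tfrac12D_x^2v(t_i,X_i):Q_i$, plus a remainder involving the increments of $D_x^2v$ and $\nabla_x\mathscr Lv$ along the path. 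Hence
\[
\rho_i[v]=\int_{t_i}^{t_{i+1}}\bigl(\mathscr Lv+f(t_i,X_i,\text{jet of }v)\bigr)\,dt+R_i .
\]
The remainder satisfies $\mathbb E|R_i|^2\le Ch\int_{t_i}^{t_{i+1}}\bigl(\mathbb E|\nabla_x\mathscr Lv|^2+\ldots\bigr)$ together with Hessian-increment terms. These are exactly the $h$-weighted drift-regularity term and the grid-trace term of $\mathfrak A_h$.

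\emph{Step 3: the drift term.} Replace $\mathscr Lv$ by $\mathscr Lu^\star=-F^\star$ and use the Lipschitz bound on $f$. This controls the drift part by $h$ times the squared occupation distance between the jets of $v$ and $u^\star$, plus time and Brownian-state regularity errors of size $Ch^2$ coming from \cref{ass:time-regularity}. After division by $h^2$ and summation, every contribution is of the form $C(h+\mathfrak A_h(v))$.

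\emph{Main obstacle.} The difficulty is that $\mathscr Lv-\mathscr Lu^\star$ is not directly controlled by the graph distance between $v$ and $u^\star$. I would avoid a direct bound by not comparing drifts at all. Instead, write $\rho_i[v]-\rho_i^\star$ through the increments of $Y^v-Y^\star$ and the difference of the linear martingale terms. Then use the orthogonality of the chaos components $\Delta W_i$ and $Q_i$ against $\mathcal F_{t_i}$-measurable variables, with the Copson-type discrete bound, so that only the grid traces of $v-u^\star$ and the drift regularity of $v$ enter. Near $t=0$ the first-interval Hessian channel is singular; there I would use the first-interval Hessian average from the grid-regularity section. The final sentence of the theorem is immediate: if $\inf_\theta\mathfrak A_h(U^\theta)$ and $\varepsilon_{\mathrm{opt},h}$ are both $O(h)$, the right-hand side is $O(h)$, so the occupation path norm is $O(h^{1/2})$.
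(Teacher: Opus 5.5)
Your reduction matches the paper's. \cref{thm:reliability} together with the approximate-minimizer property leaves only the attainability bound $\mathcal L_h(\theta)\le C\bigl(h+\mathfrak A_h(U^\theta)\bigr)$. Your Step~2 expansion (value It\^o identity, gradient It\^o identity, and \cref{lem:centered-quadratic-ito}) is exactly the decomposition \cref{eq:candidate-residual-decomposition,eq:candidate-gradient-remainder}.

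The gap is in how you close the drift term. The ``main obstacle'' you name does not exist. $\mathfrak A_h(v)$ contains $\mathcal E_{\mathrm{occ}}^2(v)$, whose first summand is the occupation graph error $\|\mathscr Lv+F^\star\|_{\mathcal H_0}^2$; this is the ``occupation graph approximation'' named in the statement. So your Step~3 replacement of $\mathscr Lv$ by $-F^\star$ was the right move. Writing $\delta_v=\mathscr Lv+F^v=(\mathscr Lv+F^\star)+(F^v-F^\star)$, the jet-Lipschitz bound gives $\|\delta_v\|_{\mathcal H_0}^2\le C\mathcal E_{\mathrm{occ}}^2(v)$ at once.

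The workaround you propose instead cannot succeed. No estimate of $J_h(v)$ by $h$, the grid and occupation jet errors of $v-u^\star$, $hM_{\mathrm{dr}}^2(v)$, and the terminal errors, without the graph term, is true. For a smooth target, take $v=u^\star+\epsilon\phi(t)$ with $\phi(T)=0$, $\phi$ oscillating at frequency $h^{-1/2}$, and $\epsilon=h^{1/2}$. Then:
\begin{itemize}
\item the spatial jet, $\nabla_x\mathscr Lv$, and the terminal data are unchanged;
\item the value errors on the grid and in occupation are $O(\epsilon^2)=O(h)$;
\item yet every residual acquires the deterministic increment $\epsilon(\phi(t_{i+1})-\phi(t_i))\approx\epsilon h\phi'(t_i)$, so $J_h(v)\approx\epsilon^2\int_0^T|\phi'|^2\,dt$ is of order one.
\end{itemize}
Only the graph term, here $\|\epsilon\phi'\|_{\mathcal H_0}^2$, detects this. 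Chaos orthogonality and the Copson estimates are stability tools for inverting the discrete recursion. Attainability is a forward consistency computation and has no use for them.

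The remaining bookkeeping also needs correcting. The Hessian-increment part of your remainder is the oscillation $\mathcal O_\Gamma(v)$. The frozen-driver part is $\sum_i\int_{t_i}^{t_{i+1}}\mathbb E|F^v(t,X_t)-F_i^v|^2\,dt$. Neither is a component of $\mathfrak A_h$. Time and Brownian-state regularity of $f$ do not control the frozen driver, because the jet of $v$ frozen at $t_i$ also moves along the interval.

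The paper routes both terms through the target. It writes $F^v(t,X_t)-F_i^v=(F^v(t,X_t)-F_t^\star)+(F_t^\star-F_i^\pi)+(F_i^\pi-F_i^v)$, and splits $D_x^2v(t,X_t)-\Gamma_i^v$ similarly through $\Gamma_t^\star$ and $\Gamma_i^\pi$. The outer pieces are bounded by $\mathcal E_{\mathrm{occ}}^2(v)$ and $\mathcal E_\pi^2(v)$. The middle piece is bounded by \cref{lem:source-freezing} and \cref{prop:full-jet-path-regularity}; this is where the $Ch$ comes from. On $[0,h]$ the middle node is $\overline\Gamma_0^h$, and its mismatch with $D_x^2v(0,x_0)$ is exactly the first-node term in $\mathcal E_\pi^2(v)$. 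Together with $2h^2M_{\mathrm{dr}}^2(v)$ from the It\^o isometry, this closes \cref{prop:attainability}.

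No comparison with the target residual is needed. Incidentally, exact-grid consistency reads $J_h(Y^\pi,Z^\pi,\Gamma^\pi)\le Ch$, i.e.\ $\sum_i\mathbb E|\rho_i^\pi|^2\le Ch^2$, not $h^{-1}\sum_i\mathbb E|\rho_i^\pi|^2\le Ch^2$.
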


The proofs of \cref{thm:reliability,thm:population-convergence} are given in
\cref{subsec:a-posteriori-reliability,subsec:attainability-and-approximate-population-minimizers},
respectively.

\section{Continuous Brownian Estimates and Source Well-Posedness}
\label{sec:continuous-estimates}
\subsection{Affine projection and Gaussian coercivity}
\label{subsec:affine-projection-and-gaussian-coercivity}
For $t>0$, write
$\langle\phi\rangle_t=\int\phi\,d\mu_t$, where
$\mu_t=N(x_0,tI_d)$ has density $p_t$, and define
\[
(\Pi_t^{\mathrm{aff}}\phi)(x)
=\langle\phi\rangle_t
+\langle\nabla\phi\rangle_t\cdot(x-x_0),
\qquad
Q_t^{\mathrm{aff}}=I-\Pi_t^{\mathrm{aff}}.
\]

\begin{lemma}[Gaussian affine-complement coercivity]
\label{lem:gaussian-affine-coercivity}
For $t>0$, define
\[
\mathcal A_t^\mu\phi
:=p_t^{-1}\nabla\!\cdot(p_t\nabla\phi)
=\Delta\phi-\frac{x-x_0}{t}\cdot\nabla\phi.
\]
Then, for every $\phi\in C_c^\infty(\mathbb R^d)$,
\begin{equation}
\|\mathcal A_t^\mu Q_t^{\mathrm{aff}}\phi\|_{L^2(\mu_t)}
\le\sqrt2\,\|D^2\phi\|_{L^2(\mu_t)}.
\label{eq:gaussian-affine-coercivity}
\end{equation}

\end{lemma}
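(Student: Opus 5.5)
Set $\psi:=Q_t^{\mathrm{aff}}\phi=\phi-\langle\phi\rangle_t-\langle\nabla\phi\rangle_t\cdot(x-x_0)$. Two facts follow at once: $D^2\psi=D^2\phi$, since we only subtract an affine function, and $\langle\psi\rangle_t=0$, $\langle\nabla\psi\rangle_t=0$, so each $\partial_j\psi$ has $\mu_t$-mean zero. The function $\psi$ is smooth and agrees with an affine function outside a compact set. Hence $\psi$ and all its derivatives are polynomially bounded and belong to every $L^p(\mu_t)$. This justifies every Gaussian integration by parts below, because the boundary terms vanish against the Gaussian tails. The plan is to combine an exact Bochner-type identity for $\mathcal A_t^\mu$ with the Gaussian Poincaré inequality applied to the first derivatives.

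\emph{Step 1 (Bochner identity).} We show that for such $\psi$,
\[
\|\mathcal A_t^\mu\psi\|_{L^2(\mu_t)}^2=\|D^2\psi\|_{L^2(\mu_t)}^2+\frac1t\|\nabla\psi\|_{L^2(\mu_t)}^2 .
\]
Since $\mathcal A_t^\mu$ is symmetric in $L^2(\mu_t)$ with Dirichlet form $\int|\nabla\cdot|^2\,d\mu_t$, we have $\int(\mathcal A_t^\mu\psi)^2\,d\mu_t=-\int\nabla\psi\cdot\nabla(\mathcal A_t^\mu\psi)\,d\mu_t$. Next we use the commutation relation $\partial_j\mathcal A_t^\mu\psi=\mathcal A_t^\mu\partial_j\psi-\frac1t\partial_j\psi$, which comes from differentiating the drift term $-\frac{x-x_0}{t}\cdot\nabla$. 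Integrating by parts once more gives $\sum_j\int|\nabla\partial_j\psi|^2\,d\mu_t+\frac1t\int|\nabla\psi|^2\,d\mu_t$, which is the claimed identity. Alternatively, the identity can be read off from the Hermite expansion: $\mathcal A_t^\mu$ acts on degree-$k$ chaos by $-k/t$, and the three norms are weighted sums of $k^2$, $k(k-1)$ and $k$ times $t^{-2}\|\text{chaos}_k\|^2$. Since $k^2=k(k-1)+k$, the identity holds.

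\emph{Step 2 (Poincaré on the gradient).} The Gaussian Poincaré inequality for $\mu_t=N(x_0,tI_d)$ states that $\operatorname{Var}_{\mu_t}(g)\le t\int|\nabla g|^2\,d\mu_t$. We apply it to $g=\partial_j\psi$, which has mean zero by construction, and sum over $j$. This gives $\frac1t\|\nabla\psi\|_{L^2(\mu_t)}^2\le\|D^2\psi\|_{L^2(\mu_t)}^2$. This is exactly the step where the affine complement is essential: without removing $\langle\nabla\phi\rangle_t$, the first-chaos term would be uncontrolled as $t\downarrow0$.

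\emph{Step 3 (conclusion).} Inserting the bound of Step 2 into Step 1 gives $\|\mathcal A_t^\mu\psi\|^2\le2\|D^2\psi\|^2=2\|D^2\phi\|^2$, and taking square roots yields \cref{eq:gaussian-affine-coercivity}. The only delicate point is the bookkeeping in Step 1: the sign and factor of the commutator term $-\frac1t\partial_j\psi$, and the justification of integration by parts for the non-compactly supported $\psi$. I would handle both most cleanly through the Hermite chaos computation, where $\psi$ has no chaos components of degree $0$ or $1$, so for $k\ge2$ we have $k^2\le 2k(k-1)$. This gives the constant $\sqrt2$ directly and shows that it is sharp, with equality in degree two.
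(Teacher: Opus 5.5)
Your proof is correct and follows the paper's argument. The paper likewise sets $\psi=Q_t^{\mathrm{aff}}\phi$, uses the Gaussian Bochner identity $\|\mathcal A_t^\mu\psi\|_{L^2(\mu_t)}^2=\|D^2\psi\|_{L^2(\mu_t)}^2+t^{-1}\|\nabla\psi\|_{L^2(\mu_t)}^2$, and bounds the last term by the componentwise Gaussian Poincar\'e inequality applied to the mean-zero $\partial_j\psi$; your commutator and Hermite derivations simply spell out the identity the paper cites.
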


The proof is given in \cref{subsec:smooth-core-brownian-estimates}.

\subsection{Combined Brownian Hessian estimate}
\label{subsec:combined-brownian-hessian-estimate}
\begin{theorem}[Combined Brownian Hessian estimate]
\label{thm:brownian-hessian-estimate}
\setlength{\emergencystretch}{1em}%
Fix $\beta>0$. Let $F\in\mathcal H_\beta$, $\eta\in H^1(\mu_T)$, and set
$w=\mathcal U(F,\eta)$.
For general data, the projected Gaussian-operator component below is
understood as the occupation-space extension constructed in
\cref{thm:occupation-mild-extension}.
Then $w$ has an occupation mild jet and
\begin{equation}
\begin{aligned}
&\|D_x^2w\|_{\mathcal H_\beta}^2
+2\beta\int_0^T r_\beta(t)
\operatorname{Var}\!\left(\nabla_xw(t,X_t)\right)dt\\
&\qquad=
\operatorname{Var}(\nabla\eta(X_T))
-2\int_0^T r_\beta(t)
\int F(t,x)\,
\mathcal A_t^\mu Q_t^{\mathrm{aff}}w(t,x)\,\mu_t(dx)\,dt.
\end{aligned}
\label{eq:centered-gradient-energy}
\end{equation}
Moreover, the Hessian estimate
\begin{equation}
\begin{aligned}
\|D_x^2w\|_{\mathcal H_\beta}
\le2\sqrt2\|F\|_{\mathcal H_\beta}
+\operatorname{Var}(\nabla\eta(X_T))^{\frac 12}.
\end{aligned}
\label{eq:brownian-hessian-bound}
\end{equation}
\end{theorem}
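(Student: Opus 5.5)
We prove the energy identity \cref{eq:centered-gradient-energy} first for smooth data ($F\in C_c^\infty((0,T)\times\mathbb R^d)$, $\eta\in C_c^\infty(\mathbb R^d)$), deduce \cref{eq:brownian-hessian-bound} from it together with \cref{lem:gaussian-affine-coercivity}, and then pass to general data by density, which is what \cref{thm:occupation-mild-extension} and \cref{lem:smooth-approximation} provide.

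For smooth data, $w$ is a classical solution of $\partial_tw+\frac12\Delta w+F=0$ with $w(T)=\eta$. Differentiating in $x$ shows that $z=\nabla_xw$ solves the heat equation with source $\nabla F$. Set $\bar z(t)=\langle z\rangle_t=\mathbb E\,z(t,X_t)$ and $V(t)=\operatorname{Var}(z(t,X_t))=\mathbb E|z(t,X_t)|^2-|\bar z(t)|^2$.

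By It\^o's formula,
\[
\frac{d}{dt}\mathbb E|z(t,X_t)|^2=-2\,\mathbb E[\nabla F\cdot z]+\mathbb E|D^2w|_F^2 .
\]
The mean satisfies $\frac{d}{dt}\bar z=-\langle\nabla F\rangle_t$. Hence
\[
V'(t)=\|D^2w(t)\|_{L^2(\mu_t)}^2-2\bigl(\langle\nabla F\cdot z\rangle_t-\langle\nabla F\rangle_t\cdot\langle z\rangle_t\bigr).
\]
We rewrite the covariance term by Gaussian integration by parts. Since $\langle\nabla F\cdot(z-\bar z)\rangle_t=-\langle F\,\mathcal A_t^\mu(w-\bar z\cdot(x-x_0))\rangle_t$ and $\mathcal A_t^\mu$ annihilates constants, this equals $-\langle F\,\mathcal A_t^\mu Q_t^{\mathrm{aff}}w\rangle_t$. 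This is the centered-gradient identity.

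Next multiply by $r_\beta$ and integrate over $[\epsilon,T]$. Using $r_\beta'=2\beta r_\beta$ and $r_\beta(T)=1$,
\[
\int_\epsilon^T r_\beta V'\,dt=V(T)-r_\beta(\epsilon)V(\epsilon)-2\beta\int_\epsilon^T r_\beta V\,dt .
\]
We have $V(\epsilon)\to0$ as $\epsilon\downarrow0$, because $\mu_\epsilon\to\delta_{x_0}$ and $z$ is smooth and bounded near $t=0$. Letting $\epsilon\downarrow0$ therefore gives \cref{eq:centered-gradient-energy}, with $V(T)=\operatorname{Var}(\nabla\eta(X_T))$.

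For the bound, drop the nonnegative $\beta$-term and apply Cauchy--Schwarz in $\mathcal H_\beta$ together with \cref{lem:gaussian-affine-coercivity} at each fixed $t$. Since $D^2Q_t^{\mathrm{aff}}w=D^2w$, we get $\|\mathcal A_t^\mu Q_t^{\mathrm{aff}}w\|_{L^2(\mu_t)}\le\sqrt2\|D^2w(t)\|_{L^2(\mu_t)}$. Writing $a=\|D^2w\|_{\mathcal H_\beta}$, $b=\|F\|_{\mathcal H_\beta}$ and $c^2=\operatorname{Var}(\nabla\eta(X_T))$, this yields
\[
a^2\le c^2+2\sqrt2\,ab .
\]
Solving the quadratic gives $a\le\sqrt2\,b+\sqrt{2b^2+c^2}\le2\sqrt2\,b+c$. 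The lemma is stated for $C_c^\infty$ functions, whereas $w(t,\cdot)$ is only smooth with Gaussian-controlled growth. A standard cutoff approximation in $H^2(\mu_t)$ extends the lemma to such $w(t,\cdot)$; this is routine.

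For general data, approximate $(F,\eta)$ by smooth $(F_n,\eta_n)$ in $\mathcal H_\beta\times H^1(\mu_T)$. Since the Hessian bound is linear in the data, applying it to differences shows that $D^2w_n$ is Cauchy in $\mathcal H_\beta$. The lower-order parts are also Cauchy: $w_n$ converges by the heat-semigroup contraction, and $\nabla w_n$ converges by the $\beta$-weighted variance term of the identity plus control of the mean. Weak derivatives pass to the limit because $\mathcal H_\beta\subset L^1_{\mathrm{loc}}$, so $w$ has an occupation mild jet. The pairing $\int F\,\mathcal A_t^\mu Q_t^{\mathrm{aff}}w$ converges because $\mathcal A_t^\mu Q_t^{\mathrm{aff}}w_n$ is Cauchy in $\mathcal H_\beta$ by the coercivity lemma; this is exactly the occupation-space extension referred to in the statement.

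The main obstacle is the behaviour near $t=0$. The initial law is a Dirac mass and the drift $(x-x_0)/t$ in $\mathcal A_t^\mu$ is singular there. In the smooth case the boundary term $r_\beta(\epsilon)V(\epsilon)\to0$ is fine. In the density argument, however, we must not need any control at $t=0$: all quantities must be controlled solely through $\mathcal H_\beta$ norms and $\operatorname{Var}(\nabla\eta)$, with no pointwise-in-time bounds required.
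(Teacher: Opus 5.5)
Your proposal is correct and follows essentially the same route as the paper. Your weighted ODE for $V(t)=\operatorname{Var}(\nabla_x w(t,X_t))$ is the expectation of the paper's It\^o formula for $r_\beta(t)|Z_t^\circ|^2$, with $V(\epsilon)\to0$ playing the role of $Z_0^\circ=0$. The remaining steps match the paper: Gaussian integration by parts, affine-complement coercivity, the quadratic-formula step, and the density extension. The one minor difference is how you get convergence of $\nabla_x w_n$ in $\mathcal H_\beta$. The paper does not split this into variance plus mean, but uses the value--gradient energy inequality $\beta\|w\|_{\mathcal H_\beta}^2+\|\nabla_xw\|_{\mathcal H_\beta}^2\le\|\eta\|_{L^2(\mu_T)}^2+\beta^{-1}\|F\|_{\mathcal H_\beta}^2$, obtained from It\^o's formula for $r_\beta(t)|w(t,X_t)|^2$. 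That avoids bounding the mean $\mathbb E\,\nabla_x w_n(t,X_t)$ separately, which your sketch leaves unstated.
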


\begin{proof}
We give the core estimate here for smooth data; the occupation-space closure
is recorded in \cref{thm:occupation-mild-extension}. Set
\begin{equation*}
Z_t=\nabla_xw(t,X_t),\qquad
Z_t^\circ=Z_t-\mathbb EZ_t,\qquad
\Gamma_t=D_x^2w(t,X_t).
\end{equation*}
The definition of $Q_t^{\mathrm{aff}}$ gives
$\nabla_x(Q_t^{\mathrm{aff}}[w(t,\cdot)])(X_t)=Z_t^\circ$.
The differentiated heat equation yields
$dZ_t=-\nabla_xF(t,X_t)\,dt+\Gamma_t\,dW_t$.
After subtracting expectations, apply It\^o's formula to
$r_\beta(t)|Z_t^\circ|^2$. Since $Z_0^\circ=0$, this gives
\begin{equation*}
\begin{aligned}
\|\Gamma\|_{\mathcal H_\beta}^2
+2\beta\|Z^\circ\|_{\mathcal H_\beta}^2
&=\operatorname{Var}(\nabla\eta(X_T))
+2\int_0^T r_\beta(t)
\mathbb E[Z_t^\circ\cdot\nabla_xF(t,X_t)]\,dt.
\end{aligned}
\end{equation*}
Gaussian integration by parts and \cref{eq:gaussian-affine-coercivity}, applied
to $w(t,\cdot)$ by spatial cutoff, give
\begin{equation*}
\begin{aligned}
\mathbb E[Z_t^\circ\cdot\nabla_xF(t,X_t)]
&=-\int F(t,x)\mathcal A_t^\mu Q_t^{\mathrm{aff}}w(t,x)\,\mu_t(dx),\\
\left|\mathbb E[Z_t^\circ\cdot\nabla_xF(t,X_t)]\right|
&\le\sqrt2\|F(t,\cdot)\|_{L^2(\mu_t)}
\|D_x^2w(t,\cdot)\|_{L^2(\mu_t)}.
\end{aligned}
\end{equation*}
The first line proves \cref{eq:centered-gradient-energy} on the smooth core.
We then obtain \cref{eq:brownian-hessian-bound} from
\begin{equation*}
\|\Gamma\|_{\mathcal H_\beta}^2
\le \operatorname{Var}(\nabla\eta(X_T))
+2\sqrt2\|F\|_{\mathcal H_\beta}\|\Gamma\|_{\mathcal H_\beta},
\end{equation*}
which gives the Hessian estimate for smooth data after applying the quadratic formula.
For general $F\in\mathcal H_\beta$ and $\eta\in H^1(\mu_T)$,
\cref{thm:occupation-mild-extension} constructs the occupation mild jet and
passes the identity and the estimate to the limit.
\end{proof}

See \cref{sec:supp-general-diffusions} for a conditional general-diffusion
analogue of \cref{thm:brownian-hessian-estimate} and its cleaner
specialization to $b=b(t)$ and $\Sigma=\Sigma(t)$.

\subsection{Linear mild-map estimates}
\label{subsec:linear-mild-map-estimates}
\begin{corollary}[Linear mild-map estimates]
\label{cor:linear-mild-map-estimates}
For every $\beta>0$, $F\in\mathcal H_\beta$, and $\eta\in H^1(\mu_T)$, the function
$w=\mathcal U(F,\eta)$ is the
unique linear occupation mild solution with source $F$ in the sense of
\cref{def:occupation-mild-solutions}. Moreover,
$
w, \nabla_xw, D_x^2w,
\mathcal A_\cdot^\mu Q_\cdot^{\mathrm{aff}}w
\in\mathcal H_\beta.
$
In addition to \cref{eq:brownian-hessian-bound},
\begin{align}
\|w\|_{\mathcal H_\beta}
\le \beta^{-\frac 12}\|\eta\|_{L^2(\mu_T)}
+\beta^{-1}\|F\|_{\mathcal H_\beta},
\|\nabla_xw\|_{\mathcal H_\beta}
\le \|\eta\|_{L^2(\mu_T)}
+\beta^{-\frac 12}\|F\|_{\mathcal H_\beta}.
\label{eq:linear-value-gradient-bounds}
\end{align}
If $\eta\in H^2(\mu_T)$, the same construction is compatible with approximation in the terminal
$H^2(\mu_T)$ graph norm.
\end{corollary}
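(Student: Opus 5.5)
We prove the value and gradient bounds on the smooth core by stochastic representations and weighted Itô energy identities. We then pass to general data through \cref{thm:occupation-mild-extension}, exactly as in the proof of \cref{thm:brownian-hessian-estimate}.

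\textbf{Smooth data.} Take $F\in C_c^\infty((0,T)\times\mathbb R^d)$ and $\eta\in C_c^\infty(\mathbb R^d)$, and set $Y_t=w(t,X_t)$ and $Z_t=\nabla_xw(t,X_t)$. The Duhamel formula and the Markov property give
\[
dY_t=-F(t,X_t)\,dt+Z_t\cdot dW_t,\qquad Y_T=\eta(X_T).
\]
Differentiating, $dZ_t=-\nabla_xF(t,X_t)\,dt+\Gamma_t\,dW_t$ with $Z_T=\nabla\eta(X_T)$.

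For the value bound, write $Y_t=\mathbb E_t[\eta(X_T)]+\int_t^T\mathbb E_t[F(r,X_r)]\,dr$.
\begin{itemize}
\item Jensen and the tower property give $\|Y_t\|_{L^2}\le\|\eta\|_{L^2(\mu_T)}+\int_t^T\|F(r,X_r)\|_{L^2}\,dr$.
\item Multiplying by $r_\beta(t)^{1/2}$ and using $r_\beta(t)^{1/2}=e^{-\beta(r-t)}r_\beta(r)^{1/2}$ turns the integral into a convolution with the kernel $e^{-\beta s}$.
\item That kernel has $L^1(0,T)$ norm at most $\beta^{-1}$, so Young's inequality bounds the source part by $\beta^{-1}\|F\|_{\mathcal H_\beta}$.
\item The terminal part contributes $\bigl(\int_0^Tr_\beta\,dt\bigr)^{1/2}\|\eta\|_{L^2(\mu_T)}\le(2\beta)^{-1/2}\|\eta\|\le\beta^{-1/2}\|\eta\|$.
\end{itemize}

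For the gradient bound, apply Itô's formula to $r_\beta(t)|Y_t|^2$:
\[
\|Z\|_{\mathcal H_\beta}^2+2\beta\|Y\|_{\mathcal H_\beta}^2+r_\beta(0)|Y_0|^2
=\mathbb E|\eta(X_T)|^2+2\int_0^Tr_\beta\,\mathbb E[Y_tF(t,X_t)]\,dt.
\]
Bound the cross term by Young's inequality, $2|ab|\le 2\beta a^2+\beta^{-1}b^2$. This absorbs $\|Y\|_{\mathcal H_\beta}^2$ and leaves
\[
\|Z\|_{\mathcal H_\beta}^2\le\|\eta\|_{L^2(\mu_T)}^2+\beta^{-1}\|F\|_{\mathcal H_\beta}^2,
\]
and taking square roots gives the gradient inequality. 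The Hessian bound \cref{eq:brownian-hessian-bound} and the identity \cref{eq:centered-gradient-energy} are already available from \cref{thm:brownian-hessian-estimate}. The identity expresses $\mathcal A^\mu_\cdot Q^{\mathrm{aff}}_\cdot w$ in a form that is controlled in $\mathcal H_\beta$. Concretely, $\mathcal A_t^\mu Q_t^{\mathrm{aff}}w=\Delta w-\langle\Delta w\rangle_t-\frac{x-x_0}{t}\cdot Z^\circ$-type terms, bounded by \cref{eq:gaussian-affine-coercivity} with constant $\sqrt2\|D_x^2w\|$. Thus $\|\mathcal A^\mu Q^{\mathrm{aff}}w\|_{\mathcal H_\beta}\le\sqrt2\|D_x^2w\|_{\mathcal H_\beta}$.

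\textbf{Extension to general data.} For general $(F,\eta)$, approximate by smooth data using \cref{lem:smooth-approximation}. All four estimates are linear in the data, so the approximating jets are Cauchy in $\mathcal H_\beta$. Their limits are identified with the occupation mild jet of $w$ from \cref{thm:occupation-mild-extension}: weak derivatives pass to the limit because $\mathcal H_\beta\subset L^1_{\mathrm{loc}}$ continuously on compacts. The Duhamel map is continuous $\mathcal H_\beta\times L^2(\mu_T)\to\mathcal H_\beta$ by the value bound, so $w=\mathcal U(F,\eta)$ is a linear occupation mild solution. Uniqueness is immediate, since the definition requires $v=\mathcal U(F,\eta)$. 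If $\eta\in H^2(\mu_T)$, approximating $\eta$ in $H^2(\mu_T)$ produces the same limit, because convergence in $H^2$ implies convergence in $H^1$.

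\textbf{Main obstacle.} The delicate step is the closure. It requires that smooth compactly supported $\eta$ be dense in $H^1(\mu_T)$ in a way compatible with the Hessian term, whose control $\operatorname{Var}(\nabla\eta(X_T))^{1/2}$ only needs $H^1$ convergence. It also requires justifying the Itô identities near $t=0$, where $p_t$ degenerates. I would rely on \cref{thm:occupation-mild-extension} for both, and note that the weight $r_\beta$ stays bounded, so no singularity arises at $t=0$ in these value and gradient estimates.
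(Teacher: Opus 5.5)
Your proposal is correct and follows essentially the paper's route: smooth-core estimates from the weighted It\^o energy identity for $r_\beta(t)|Y_t|^2$, then the density and weak-closure extension of \cref{thm:occupation-mild-extension}. The one difference is that the paper reads both the value and gradient bounds off the single inequality $\beta\|w\|_{\mathcal H_\beta}^2+\|\nabla_xw\|_{\mathcal H_\beta}^2\le\|\eta\|_{L^2(\mu_T)}^2+\beta^{-1}\|F\|_{\mathcal H_\beta}^2$, whereas you take the value bound from the Jensen/convolution argument that the paper uses only to identify $w=\mathcal U(F,\eta)$ in the limit. A small correction: $\mathcal A_t^\mu Q_t^{\mathrm{aff}}w=\Delta w-\frac{x-x_0}{t}\cdot(\nabla_xw-\langle\nabla_xw\rangle_t)$ contains no $-\langle\Delta w\rangle_t$ term, and its $\mathcal H_\beta$ bound comes from applying \cref{eq:gaussian-affine-coercivity} at each fixed $t$, not from the centered-gradient identity; that application needs a spatial cutoff, since $w(t,\cdot)\notin C_c^\infty$.
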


The remaining smooth-core estimate and occupation-space extension for \cref{cor:linear-mild-map-estimates} are
proved in \cref{sec:supp-continuous-estimates}.

\subsection{Nonlinear source-update contraction}
\label{subsec:nonlinear-source-contraction}
\begin{proof}[Proof of \cref{thm:occupation-well-posedness}]
For $F\in\mathcal H_\beta$, \cref{cor:linear-mild-map-estimates} gives a well-defined occupation
jet for $u^F$. The terminal versions of the linear estimates imply
\[
\begin{aligned}
\|u^F\|_{\mathcal H_\beta}
\le\beta^{-\frac 12}\|g\|_{L^2(\mu_T)}
+\beta^{-1}&\|F\|_{\mathcal H_\beta}, \quad
\|D_xu^F\|_{\mathcal H_\beta}
\le\|g\|_{L^2(\mu_T)}
+\beta^{-\frac 12}\|F\|_{\mathcal H_\beta},\\
\|D_x^2u^F\|_{\mathcal H_\beta}
&\le2\sqrt2\|F\|_{\mathcal H_\beta}
+\|\nabla g\|_{L^2(\mu_T)}.
\end{aligned}
\]
Since $f_0\in\mathcal H_0\subset\mathcal H_\beta$ and $f$ is globally Lipschitz in the jet
variables, \cref{eq:source-update} defines $\Phi(F)\in\mathcal H_\beta$. Thus
$\Phi:\mathcal H_\beta\to\mathcal H_\beta$ is well-defined.

For $F,\widetilde F\in\mathcal H_\beta$, the difference
$w=u^F-u^{\widetilde F}$ has zero terminal value and source
$F-\widetilde F$. Applying \cref{cor:linear-mild-map-estimates} with zero terminal data gives
\begin{equation}
\begin{aligned}
\|\Phi(F)-\Phi(\widetilde F)\|_{\mathcal H_\beta}
&\le L_0\|w\|_{\mathcal H_\beta}
+L_1\|\nabla_xw\|_{\mathcal H_\beta}
+L_2\|D_x^2w\|_{\mathcal H_\beta}\\
&\le\lambda(\beta)\|F-\widetilde F\|_{\mathcal H_\beta}.
\end{aligned}
\label{eq:source-update-contraction}
\end{equation}
Since $\lambda(\beta)<1$,
Banach's fixed-point theorem yields a unique $F^\star\in\mathcal H_\beta$
such that $F^\star=\Phi(F^\star)$. The estimates above
with $F=0$ give
\begin{equation*}
\|\Phi(0)\|_{\mathcal H_\beta}
\le(1+L_0\beta^{-\frac12}+L_1+L_2)K.
\end{equation*}
Hence the contraction estimate and $F^\star=\Phi(F^\star)$ prove the
displayed bound for $F^\star$. Set
$u^\star:=u^{F^\star}$. By \cref{eq:source-update},
$
F^{u^\star}=\Phi(F^\star)=F^\star,
$
and hence $u^\star=\mathcal U(F^{u^\star},g)$ is a nonlinear occupation mild solution.

Conversely, if $v$ is any nonlinear occupation mild solution, then
$v=\mathcal U(F^v,g)$, so \cref{eq:source-update} gives
\[
\Phi(F^v)=F^{\mathcal U(F^v,g)}=F^v.
\]
The uniqueness of the fixed point implies $F^v=F^\star$, and therefore
$v=\mathcal U(F^v,g)=u^{F^\star}=u^\star$. This proves uniqueness in
the nonlinear occupation mild solution class.
\end{proof}

\section{Full-Jet Grid Regularity and Exact-Grid Consistency}
\label{sec:grid-regularity}

\subsection{Spatial Sobolev regularity of the source}
\label{subsec:spatial-sobolev-regularity-of-source}
\begin{proposition}[Spatial regularity of the nonlinear source]
\label{prop:source-spatial-regularity}
Let
$\mathcal H_\beta^{1,x}:=\{F\in\mathcal H_\beta:D_xF\in\mathcal H_\beta\}$ and
$m_\beta:=\int_0^T r_\beta(t)\,dt$.
Under \cref{ass:spatial-regularity},
$F^\star\in\mathcal H_\beta^{1,x}$ and
\begin{equation}
\|D_xF^\star\|_{\mathcal H_\beta}
\le\frac{B_\beta}{1-\lambda(\beta)},\quad B_\beta:=L_xm_\beta^{1/2}+(L_0\beta^{-1/2}+L_1+L_2)K.
\label{eq:source-gradient-bound}
\end{equation}
\end{proposition}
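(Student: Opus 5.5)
The plan is to differentiate the fixed-point identity $F^\star=\Phi(F^\star)$ in $x$ and show that $D_xF^\star$ satisfies a linear inequality of contraction type with the same factor $\lambda(\beta)$. Spatial Lipschitz continuity of $f$ supplies the forcing term $L_x m_\beta^{1/2}$.

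Formally, with $u^\star=u^{F^\star}$, the chain rule gives
\[
|D_xF^\star|\le L_x+L_0|D_xu^\star|+L_1|D_x^2u^\star|_F+L_2|D_x^3u^\star|.
\]
The derivative $\partial_ku^\star$ is itself a Duhamel solution $\mathcal U(\partial_kF^\star,\partial_kg)$, so \cref{cor:linear-mild-map-estimates} and \cref{eq:brownian-hessian-bound} apply to it componentwise:
\[
\|D_xu^\star\|_{\mathcal H_\beta}\le\beta^{-1/2}\|g\|_{L^2(\mu_T)}+\beta^{-1}\|D_xF^\star\|_{\mathcal H_\beta},
\]
\[
\|D_x^2u^\star\|_{\mathcal H_\beta}\le\|\nabla g\|_{L^2(\mu_T)}+\beta^{-1/2}\|D_xF^\star\|_{\mathcal H_\beta},
\]
\[
\|D_x^3u^\star\|_{\mathcal H_\beta}\le2\sqrt2\|D_xF^\star\|_{\mathcal H_\beta}+\|D^2g\|_{L^2(\mu_T)}.
\]
Here $g\in H^2(\mu_T)$ with norm at most $K$. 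Also $\|L_x\|_{\mathcal H_\beta}=L_xm_\beta^{1/2}$. The $L_0$ coefficient contributes $L_0\beta^{-1/2}K$, because the first bound is applied to the value of $\partial_ku$ with terminal datum $\partial_kg$. Summing gives
\[
\|D_xF^\star\|\le B_\beta+\lambda(\beta)\|D_xF^\star\|,
\]
which yields \cref{eq:source-gradient-bound} provided $\|D_xF^\star\|<\infty$.

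To make this rigorous without knowing a priori that $D_xF^\star$ exists, I would run the argument on the Picard iterates $F^{n+1}=\Phi(F^n)$ with $F^0=0$, which converge to $F^\star$ in $\mathcal H_\beta$ by \cref{thm:occupation-well-posedness}. The induction hypothesis is $F^n\in\mathcal H^{1,x}_\beta$ with
\[
\|D_xF^n\|\le B_\beta\sum_{j<n}\lambda^j.
\]
Given $D_xF^n\in\mathcal H_\beta$, the function $u^{F^n}$ has an occupation third derivative in $\mathcal H_\beta$, since weak differentiation commutes with the Duhamel map on the smooth core and passes to the limit. Then $F^{n+1}=f(t,x,u,D_xu,D_x^2u)$ is a Lipschitz function composed with $\mathcal H^{1,x}_\beta$ functions, and its composite has a weak $x$-gradient bounded pointwise as above. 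This gives the induction step.

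I would establish the composite bound by difference quotients: $\tau_\delta F(t,x)=\delta^{-1}(F(t,x+\delta e_k)-F(t,x))$ is dominated pointwise by $L_x$ plus Lipschitz multiples of the difference quotients of the jet. These converge locally in $L^2$, and weak compactness, together with the local boundedness of $p_t$ from above and below on compacts, identifies the weak limit. It also preserves the norm bound by weak lower semicontinuity.

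Finally, the bounds on $D_xF^n$ are uniform, so a subsequence $D_xF^n$ converges weakly in $\mathcal H_\beta$. Since $F^n\to F^\star$ strongly, the limit is $D_xF^\star$ in the sense of \cref{def:weak-spatial-derivatives}, and lower semicontinuity gives the stated bound.

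The main obstacle is the step identifying $D_x(\mathcal U(F,\eta))=\mathcal U(D_xF,\nabla\eta)$ in the occupation sense, which yields the third-derivative bound with the constant $2\sqrt2$. The shifted difference quotient is not translation-invariant with respect to $\mu_t$, because Gaussian weights change under shifts. I would therefore prove the identity on the smooth core, where it is classical, and then extend it by density using the closure from \cref{thm:occupation-mild-extension} and \cref{lem:smooth-approximation}. Weak derivatives are local, so this local identification suffices.
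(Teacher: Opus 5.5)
Your proposal is correct and follows the paper's proof essentially step for step. The paper likewise runs Picard iterates $F^{n+1}=\Phi(F^n)$ and applies the three linear estimates of \cref{cor:linear-mild-map-estimates} to the differentiated equations $\partial_ku^{F^n}=\mathcal U(\partial_kF^n,\partial_kg)$; this identification is \cref{lem:weak-differentiation}, proved there by your smooth-core-plus-density argument. It then inducts to the uniform bound $B_\beta/(1-\lambda(\beta))$ and concludes with weak compactness, \cref{lem:weak-spatial-closure}, and lower semicontinuity.

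The only difference is that you prove the Lipschitz chain rule by difference quotients, whereas the paper cites it. In doing so, take quotients along arbitrary unit directions (a countable dense set) and pass to the supremum, not only along $e_k$. Summing the coordinatewise bounds would give $\sqrt d\,L_x$ instead of $L_x$ in $B_\beta$.
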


\begin{proof}
Let $F\in\mathcal H_\beta^{1,x}$. By \cref{lem:weak-differentiation},
$v_k=\partial_ku^F$ has source $\partial_kF$, terminal datum $\partial_kg$,
and $D_x^2v_k\in\mathcal H_\beta$. Applying the three linear estimates in
\cref{cor:linear-mild-map-estimates} to these differentiated equations gives
\begin{equation*}
\begin{aligned}
\|D_xu^F\|_{\mathcal H_\beta}
\le\beta^{-\frac 12}K
+\beta^{-1}\|D_xF\|_{\mathcal H_\beta}&, \quad
\|D_x\nabla_xu^F\|_{\mathcal H_\beta}
\le K
+\beta^{-\frac 12}\|D_xF\|_{\mathcal H_\beta},\\
\|D_xD_x^2u^F\|_{\mathcal H_\beta}
&\le K+2\sqrt2\|D_xF\|_{\mathcal H_\beta}.
\end{aligned}
\end{equation*}
The spatial Sobolev chain rule for Lipschitz maps \cite{ref12} and the three
bounds above imply that $\Phi(F)$ has a weak derivative satisfying
\begin{equation}
\|D_x\Phi(F)\|_{\mathcal H_\beta}
\le B_\beta+\lambda(\beta)\|D_xF\|_{\mathcal H_\beta}.
\label{eq:source-gradient-iteration}
\end{equation}

The Picard iterates $F^0=0$, $F^{n+1}=\Phi(F^n)$ converge to $F^\star$ in
$\mathcal H_\beta$. Induction in \cref{eq:source-gradient-iteration} gives
$D_xF^n\in\mathcal H_\beta$ and
$\sup_n\|D_xF^n\|_{\mathcal H_\beta}\le B_\beta/(1-\lambda(\beta))$.
Since $F^n\to F^\star$ strongly, weak compactness gives a weakly convergent
subsequence of $D_xF^n$, and \cref{lem:weak-spatial-closure} identifies its
limit with $D_xF^\star$. Weak lower semicontinuity proves
\cref{eq:source-gradient-bound}.
\end{proof}

\subsection{Canonical traces and the exact-grid triple}
\label{subsec:canonical-traces-and-the-first-interval}
For smooth $F,\eta$, set
\begin{equation*}
\mathcal T_a(F,\eta)
=P_{T-a}\nabla\eta(X_a)
+\int_a^T\nabla_xP_{r-a}F(r,\cdot)(X_a)\,dr,
\qquad a\in(0,T).
\end{equation*}
The Hermite estimate in \cref{lem:positive-time-trace} gives a unique continuous extension
\begin{equation}
\mathcal T_a:\mathcal H_\beta\times H^1(\mu_T)
\longrightarrow L^2(\Omega),
\quad
\|\mathcal T_a(F,\eta)\|_{L^2}
\le C_a
\left(\|F\|_{\mathcal H_\beta}
+\|\nabla\eta\|_{L^2(\mu_T)}\right).
\label{eq:canonical-trace-bound}
\end{equation}
More precisely, one may take
$C_a=C_\beta(1+\log(T/a))^{1/2}$. Thus every fixed positive node has a
well-defined trace, but this bound is not uniform as the first positive node
approaches zero. The mesh-uniform argument below instead uses the local
endpoint estimate in \cref{lem:local-gradient-estimates}; the initial Hessian
is handled separately.

\begin{definition}[Canonical traces and exact-grid triple]
\label{def:canonical-traces-exact-grid}
Define the target value and canonical derivative traces by
\begin{equation*}
\begin{aligned}
&Y_a^\star
=\bigl(\mathcal U(F^\star,g)\bigr)(a,X_a),\quad
Z_a^{\star,\mathrm{can}}
=\bigl(\mathcal U(D_xF^\star,\nabla g)\bigr)(a,X_a),
\quad 0\le a\le T,\\
&\Gamma_a^{\star,\mathrm{can}}e_k
=\mathcal T_a(\partial_kF^\star,\partial_kg),
\quad k=1,\ldots,d,\quad 0<a<T, \quad
\Gamma_T^{\star,\mathrm{can}}
=D_x^2g(X_T).
\end{aligned}
\end{equation*}
The exact-grid triple is defined by
\begin{equation}
\begin{aligned}
Y_i^\pi&=Y_{t_i}^\star,\quad 0\le i\le N,\,\,\,\,\,\,\,\qquad
Z_i^\pi=Z_{t_i}^{\star,\mathrm{can}},\quad 0\le i<N, \\
\Gamma_i^\pi &=
\begin{cases}
\overline\Gamma_0^h,&i=0,\\
\Gamma_{t_i}^{\star,\mathrm{can}},&1\le i<N.
\end{cases}, \quad
\overline\Gamma_0^h
=\frac1h\,\mathbb E\int_0^h\Gamma_t^{\star,\mathrm{occ}}\,dt.
\end{aligned}
\label{eq:exact-grid-triple}
\end{equation}
\end{definition}

\begin{lemma}[Canonical trace identification]
\label{lem:canonical-traces}
Under \cref{ass:spatial-regularity}, the target traces in
\cref{def:canonical-traces-exact-grid} are well defined, have state-measurable versions,
agree with the classical derivatives for smooth data, and have a symmetric
Hessian channel. For every $0<a<T$,
$Z_a^{\star,\mathrm{can}}=\mathcal T_a(F^\star,g)$.
The canonical derivative traces and occupation
versions agree for almost every time, and $Y^\star$ and
$Z^{\star,\mathrm{can}}$ are continuous adapted processes. Moreover, for every
$0\le s < t\le T$,
\begin{equation}
\begin{aligned}
Y_t^\star-Y_s^\star
&=-\int_s^tF^\star(r,X_r)\,dr
+\int_s^tZ_r^{\star,\mathrm{occ}}\cdot dW_r,\\
Z_t^{\star,\mathrm{can}}-Z_s^{\star,\mathrm{can}}
&=-\int_s^tD_xF^\star(r,X_r)\,dr
+\int_s^t\Gamma_r^{\star,\mathrm{occ}}\,dW_r.
\end{aligned}
\label{eq:target-ito-identities}
\end{equation}
Both identities hold in $L^2(\Omega)$.
\end{lemma}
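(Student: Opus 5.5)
The proof runs on the smooth core and then closes by density, using the linear continuity estimates already available. The data are $F^\star\in\mathcal H_\beta^{1,x}$ from \cref{prop:source-spatial-regularity} and $g\in H^2(\mu_T)$. I will approximate them by smooth data $F_n\in C_c^\infty$ and $g_n\in C_c^\infty$, with $F_n\to F^\star$ and $D_xF_n\to D_xF^\star$ in $\mathcal H_\beta$, and $g_n\to g$ in $H^2(\mu_T)$. Such sequences exist by \cref{lem:smooth-approximation} together with the terminal $H^2$ compatibility in \cref{cor:linear-mild-map-estimates}. For smooth data set $w_n=\mathcal U(F_n,g_n)$. This is a classical solution of $\partial_tw_n+\frac12\Delta w_n=-F_n$, and differentiating in $x$ gives $\partial_kw_n=\mathcal U(\partial_kF_n,\partial_kg_n)$.

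\emph{Step 1: smooth-data identities.} I evaluate $\mathcal T_a(F_n,g_n)$ directly. Since $\nabla_xP_{r-a}F_n=P_{r-a}\nabla_xF_n$, it equals $\nabla_xw_n(a,X_a)$. Likewise $\mathcal T_a(\partial_kF_n,\partial_kg_n)=\nabla_x\partial_kw_n(a,X_a)=D_x^2w_n(a,X_a)e_k$. This shows agreement with the classical derivatives, and symmetry of the Hessian channel is immediate here. Next I apply It\^o's formula to $w_n(t,X_t)$ and to $\nabla_xw_n(t,X_t)$, using the heat equation and its spatial derivative. This gives \cref{eq:target-ito-identities} with $(F_n,\nabla w_n,D^2w_n)$ in place of $(F^\star,Z^{\mathrm{occ}},\Gamma^{\mathrm{occ}})$, for all $0\le s<t\le T$.

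\emph{Step 2: passage to the limit.} I will use three convergences, all for the differences $F_n-F^\star$ and $g_n-g$.
\begin{itemize}
\item Apply \cref{eq:canonical-trace-bound} to the differences and to their $\partial_k$-derivatives. Then $\mathcal T_a(F_n,g_n)\to\mathcal T_a(F^\star,g)$ and $\mathcal T_a(\partial_kF_n,\partial_kg_n)\to\Gamma_a^{\star,\mathrm{can}}e_k$ in $L^2$ for each fixed $a>0$. In particular $Z_a^{\star,\mathrm{can}}=\mathcal T_a(F^\star,g)$ holds once I also know $\nabla w_n(a,X_a)\to Z^{\star,\mathrm{can}}_a$.
\item By \cref{cor:linear-mild-map-estimates,thm:brownian-hessian-estimate}, $w_n$, $\nabla w_n$ and $D^2w_n$ converge in $\mathcal H_\beta$ to the occupation jet of $u^\star$. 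The limits of $D^2w_n$ inherit symmetry. The limit processes are state-measurable as $L^2$-limits of functions of $X_a$.
\item The time integrals $\int_s^tF_n(r,X_r)\,dr$ converge in $L^2(\Omega)$ because $\mathcal H_\beta$ controls $\int\mathbb E|\cdot|^2dr$, with weight $r_\beta\ge e^{-2\beta T}$. The stochastic integrals converge by the It\^o isometry.
\end{itemize}

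To identify the pointwise-in-time values, I need uniform-in-time control. For the value, Doob's inequality on the backward martingale representation gives $\sup_t\mathbb E|w_n(t,X_t)-w_m(t,X_t)|^2\le C(\|g_n-g_m\|^2_{L^2(\mu_T)}+\|F_n-F_m\|^2_{\mathcal H_\beta})$. The gradient is handled the same way, with $D_xF_n$ and $\nabla g_n$ in place of $F_n$ and $g_n$. Continuous adapted versions of $Y^\star$ and $Z^{\star,\mathrm{can}}$ then follow from uniform $L^2$ limits of continuous processes, using the Itô identities themselves to produce continuous modifications. This also shows that the trace defined via $\mathcal U(D_xF^\star,\nabla g)$ coincides with the $\mathcal T_a$ limit. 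Finally, $\mathcal H_\beta$-convergence of $\nabla w_n$ and $D^2w_n$, combined with pointwise-in-$a$ $L^2$ convergence, gives agreement of canonical and occupation versions for a.e.\ time, via a subsequence and Fubini.

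\emph{Main obstacle.} The delicate point is the Hessian channel and the equality of the two gradient traces at each fixed $a$, rather than merely almost everywhere. For the Hessian, only \cref{eq:canonical-trace-bound} with its $a$-dependent constant is available. This is enough because the lemma asserts only $0<a<T$, but I must check that the extension is independent of the chosen approximating sequence, which follows from continuity. For the gradient, I will verify that $\mathcal U(D_xF,\nabla g)(a,\cdot)=\mathcal T_a(F,g)$ on the smooth core and that both sides are continuous in the data at fixed $a$. These are the $\beta$-weighted Duhamel bound and the Hermite trace bound, respectively.
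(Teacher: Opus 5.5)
Your proposal is correct and follows the paper's proof essentially step by step. The shared steps are: graph-norm smooth approximation; fixed-time convergence of the value and gradient via the Duhamel/conditional-Jensen bound; convergence of the Hessian columns via the positive-time trace bound, which together with the gradient convergence gives $Z_a^{\star,\mathrm{can}}=\mathcal T_a(F^\star,g)$; a summable subsequence with Fubini for the a.e.\ identification; closed subspaces for symmetry and state measurability; and the It\^o isometry to pass to the limit in the identities. The only variation is the continuity step, where you use Doob's inequality (which gives $\mathbb E\sup_t$, not the $\sup_t\mathbb E$ you wrote) or the It\^o identities themselves, instead of the martingale-representation realization of \cref{lem:path-space-realization}; either route works.
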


The proof is given in \cref{subsec:canonical-identification}. \cref{eq:target-ito-identities} is the
classical second-order BSDE representation associated with fully nonlinear
parabolic PDEs; see \cite{ref25}. Henceforth we
omit $\mathrm{occ}$ and $\mathrm{can}$: integrands use occupation versions and
derivative grid values use canonical traces.

\subsection{The local estimate and full-jet path regularity}
\label{subsec:full-jet-path-regularity}
\begin{lemma}[Local gradient estimates]
\label{lem:local-gradient-estimates}
Let $F\in\mathcal H_\beta$, $\eta\in H^1(\mu_T)$, and let $v$ be the linear
occupation mild solution, in the sense of
\cref{def:occupation-mild-solutions}, with source $F$ and terminal datum
$\eta$.
Write $Z_t=D_xv(t,X_t)$ in the occupation sense.
For $0\le a<b\le T$, define the conditional time projection
$
\overline Z_a^{a,b}
=(b-a)^{-1}\,\mathbb E_a\int_a^bZ_s\,ds.
$

The conditional projection satisfies
\begin{equation}
\int_a^b\mathbb E|Z_t-\overline Z_a^{a,b}|^2\,dt
\le C(b-a)\int_a^b\mathbb E\left(
|D_x^2v(t,X_t)|_F^2+|F(t,X_t)|^2
\right)dt.
\label{eq:local-gradient-average}
\end{equation}
The estimate includes $a=0$. If
$0<a<b\le 2a$, the canonical endpoint trace satisfies
\begin{equation}
\int_a^b\mathbb E|Z_t-\mathcal T_a(F,\eta)|^2\,dt
\le C(b-a)\int_a^b\mathbb E\left(
|D_x^2v(t,X_t)|_F^2+|F(t,X_t)|^2
\right)dt.
\label{eq:local-gradient-endpoint}
\end{equation}
\end{lemma}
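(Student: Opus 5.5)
We prove both estimates for smooth data $F\in C_c^\infty((0,T)\times\mathbb R^d)$, $\eta\in C_c^\infty(\mathbb R^d)$ first, and pass to general $F\in\mathcal H_\beta$, $\eta\in H^1(\mu_T)$ by the occupation-space extension of \cref{thm:occupation-mild-extension} together with the continuity of $\mathcal T_a$ in \cref{eq:canonical-trace-bound}. In the smooth case, $Z_t=\nabla_xv(t,X_t)$ satisfies the differentiated dynamics used in the proof of \cref{thm:brownian-hessian-estimate}:
\[
dZ_t=-\nabla_xF(t,X_t)\,dt+\Gamma_t\,dW_t .
\]
The difficulty is that the drift involves $\nabla_xF$, while the right-hand sides of \cref{eq:local-gradient-average,eq:local-gradient-endpoint} contain only $F$. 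A naive It\^o argument therefore fails, and the drift must be handled by Gaussian integration by parts, as in \cref{thm:brownian-hessian-estimate}.

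For \cref{eq:local-gradient-average}, we use the $L^2$-optimality of the conditional projection. Among $\mathcal F_a$-measurable $\xi$, the choice $\overline Z_a^{a,b}$ minimizes $\int_a^b\mathbb E|Z_t-\xi|^2dt$. It therefore suffices to bound this quantity for a convenient competitor. We take $\xi=\mathbb E_a Z_b$ when the source has no drift contribution, but in general we write $Z_t=\mathbb E_t Z_b+\int_t^b \mathbb E_t[\nabla_xF(r,X_r)]\,dr$. The martingale part gives $\mathbb E|\mathbb E_tZ_b-\mathbb E_aZ_b|^2\le\int_a^b\mathbb E|\widetilde\Gamma_s|^2ds$, where $\widetilde\Gamma$ is the martingale integrand. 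The drift term needs a bound in terms of $F$. Conditionally on $X_t=x$, we use $\mathbb E_t[\nabla_xF(r,X_r)]=\nabla_x P_{r-t}F(r,\cdot)(x)$ together with the Bismut/heat-kernel estimate
\[
|\nabla P_{s}\varphi|\le s^{-1/2}\bigl(P_s|\varphi|^2\bigr)^{1/2}.
\]
This gives
\[
\Bigl\|\int_t^b \nabla P_{r-t}F\,dr\Bigr\|\le\int_t^b (r-t)^{-1/2}\|F(r,X_r)\|_{L^2}\,dr,
\]
and by Cauchy--Schwarz its square is at most $C(b-a)\int_a^b\mathbb E|F|^2$. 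Finally, the martingale integrand $\widetilde\Gamma_s$ of $\mathbb E_sZ_b$ equals $\Gamma_s$ plus a drift correction, which is controlled in $L^2$ by the same heat-kernel bound. Integrating over $t\in[a,b]$ yields a factor $(b-a)$. None of these steps uses $a>0$, so $a=0$ is included.

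For \cref{eq:local-gradient-endpoint}, we compare with $\mathcal T_a(F,\eta)$, which for smooth data is $Z_a$ by \cref{lem:canonical-traces}. We write
\[
Z_t-Z_a=(Z_t-\overline Z_a^{a,b})+(\overline Z_a^{a,b}-Z_a).
\]
The first piece is handled by \cref{eq:local-gradient-average}. For the second, we have $Z_a-\overline Z_a^{a,b}=(b-a)^{-1}\mathbb E_a\int_a^b(Z_a-Z_s)\,ds$. Each increment $\mathbb E_a(Z_s-Z_a)=-\int_a^s\nabla_xP_{r-a}F(r,\cdot)(X_a)\,dr$, so the heat-kernel estimate gives
\[
\|\overline Z_a^{a,b}-Z_a\|^2\le C(b-a)\,\sup_{s\le b}\Bigl(\int_a^s (r-a)^{-1/2}\|F(r,X_r)\|\,dr\Bigr)^2/(b-a),
\]
and this is bounded by $C\int_a^b\mathbb E|F|^2$. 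Multiplying by $(b-a)$ gives the claim.

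The main obstacle is the passage to general data. For $a>0$ the trace $\mathcal T_a$ is continuous, but only with the constant $C_a$ from \cref{eq:canonical-trace-bound}. This is where $b\le2a$ enters: on $[a,2a]$ the law $\mu_t$ is comparable to $\mu_a$ up to a dimension-free Gaussian change of variance, so both sides of the smooth inequality are stable under $\mathcal H_\beta\times H^1(\mu_T)$ limits. The smooth inequality then extends by density, using $\Gamma\in\mathcal H_\beta$ from \cref{thm:brownian-hessian-estimate}. The Bismut-type step also needs care: the key is to choose constants depending only on $T$, so that it is dimension-free.
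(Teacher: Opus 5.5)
There is a genuine gap, and it sits exactly where $b\le 2a$ should enter. In the endpoint step you bound $|Z_a-\overline Z_a^{a,b}|$ by $\int_a^b(r-a)^{-1/2}\|F(r,X_r)\|_{L^2}\,dr$. You then assert that its square is at most $C\int_a^b\mathbb E|F(r,X_r)|^2\,dr$. This is false: Cauchy--Schwarz produces $\int_a^b(r-a)^{-1}dr=\infty$, and no rearrangement helps, since $(r-a)^{-1/2}\notin L^2(a,b)$.

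Your smooth-data argument never uses $b\le 2a$. It would therefore give the endpoint estimate for all $0<a<b$ with a constant independent of $b/a$, and that statement is false. Take $\eta=0$ and $F(r,x)=r^{-1}\mathbf 1_{[a,b]}(r)\,(x-x_0)\cdot e_1$ (approximate by smooth data if you prefer). Then $v(t,\cdot)$ is affine, $D_x^2v=0$, $Z_t=\log(b/t)\,e_1$ on $[a,b]$, and $\mathbb E|F(r,X_r)|^2=r^{-1}$ there. Hence
\[
\int_a^b|Z_t-Z_a|^2\,dt\sim b\log^2(b/a),
\qquad
(b-a)\int_a^b\mathbb E|F(r,X_r)|^2\,dr\sim b\log(b/a)
\qquad(b/a\to\infty).
\]
So $b\le2a$ is needed in the smooth estimate itself, not in the density passage. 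At fixed $a$, the limit only uses occupation convergence of $(v^m,\nabla_xv^m,D_x^2v^m)$ and continuity of $\mathcal T_a$, whose $a$-dependent constant is harmless.

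The missing idea is to work chaos by chaos. By Mehler's formula, $A_t=\int_a^t\nabla_xP_{r-a}F(r,\cdot)(X_a)\,dr$ acts diagonally on Hermite degree $n$ with squared kernel $\frac na(\frac ar)^n$. Its integral $I_n(a,t)$ in \cref{lem:a-hermite-integral} is at most $2$ uniformly in $n$ precisely when $t\le2a$, which gives $\mathbb E|A_t|^2\le2\int_a^t\mathbb E|F(r,X_r)|^2dr$. The pointwise bound $|\nabla P_\tau\phi|^2\le\tau^{-1}P_\tau|\phi|^2$ loses this, because it takes the supremum over degrees before integrating in $r$.

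Your averaged estimate also needs repair, in two places.

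\emph{Drift term.} The pointwise-in-$t$ bound on $\int_t^b\nabla_xP_{r-t}F(r,\cdot)(X_t)\,dr$ fails for the same logarithmic reason. It becomes true only after integrating in $t$: split $(r-t)^{-1/2}=(r-t)^{-1/4}(r-t)^{-1/4}$ and use Fubini to get $4(b-a)\int_a^b\mathbb E|F|^2$. This is what the paper's kernel $K_{a,b}$, with $\int_a^bK_{a,b}(t,r)^2dt=(r-a)(b-r)/(b-a)$, accomplishes.

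\emph{Martingale term.} This is the more serious problem. The martingale integrand of $\mathbb E_sZ_b$ is $\Gamma_s-\int_s^bD_x^2P_{r-s}F(r,\cdot)(X_s)\,dr$. The correction involves second derivatives of the semigroup, whose pointwise bound scales like $(r-s)^{-1}$ and is not integrable. So the ``same heat-kernel bound'' does not control it; you would need a maximal-regularity estimate such as \cref{thm:brownian-hessian-estimate} on $[0,b]$.

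The paper sidesteps both problems with the competitor $m_t=\mathbb E_aZ_t$. The conditional Gaussian Poincar\'e inequality gives $\mathbb E_a|Z_t-m_t|^2\le(t-a)\,\mathbb E_a|D_x^2v(t,X_t)|_F^2$ directly, and $\overline Z_a^{a,b}$ is the time average of $m_t$. Conditional orthogonality then leaves only the fluctuation of $m_t=\nabla_xv(a,X_a)-A_t$, which needs just the first-order heat-kernel bound.
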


The proof is given in \cref{subsec:local-gradient-estimates}.

\begin{proposition}[Full-jet path regularity]
\label{prop:full-jet-path-regularity}
Under \cref{ass:spatial-regularity}, the exact-grid triple in
\cref{eq:exact-grid-triple} is well defined, belongs to $\mathfrak C_h$, and satisfies
\begin{equation}
\sum_{i=0}^{N-1}\int_{t_i}^{t_{i+1}}
\mathbb E\!\left[
|Y_t^\star-Y_i^\pi|^2
+|Z_t^\star-Z_i^\pi|^2
+|\Gamma_t^\star-\Gamma_i^\pi|_F^2
\right]dt
\le Ch.
\label{eq:full-jet-path-regularity}
\end{equation}
\end{proposition}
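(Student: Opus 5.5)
The estimate splits into the value, gradient and Hessian components of \cref{eq:full-jet-path-regularity}, treated separately and summed. Membership in $\mathfrak C_h$ comes first. By \cref{lem:canonical-traces}, $Y^\star$, $Z^{\star,\mathrm{can}}$ and $\Gamma^{\star,\mathrm{can}}$ have state-measurable versions, and the Hessian channel is symmetric. Square integrability at positive nodes follows from \cref{eq:canonical-trace-bound} with $F=\partial_kF^\star$, which lies in $\mathcal H_\beta$ by \cref{prop:source-spatial-regularity}. The first-node value $\overline\Gamma_0^h$ is a deterministic symmetric matrix, hence $\sigma(X_0)$-measurable. The terminal condition $Y_N^\pi=g(X_T)$ holds with $\widehat g=g\in H^1(\mu_T)$. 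It remains to check that the one-step residuals are in $L^2$, which is routine from the Lipschitz bound on $f$.

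\emph{Value component.} I will use the first identity in \cref{eq:target-ito-identities} on $[t_i,t]$, together with $(a+b)^2\le 2a^2+2b^2$ and Cauchy--Schwarz in time:
\[
\mathbb E|Y_t^\star-Y_{t_i}^\star|^2\le 2h\int_{t_i}^{t_{i+1}}\mathbb E|F^\star(r,X_r)|^2dr+2\int_{t_i}^{t_{i+1}}\mathbb E|Z_r^\star|^2dr .
\]
Integrating over $t\in[t_i,t_{i+1}]$ and summing over $i$ gives a bound $Ch(\|F^\star\|^2+\|D_xu^\star\|^2)$. The norms here are unweighted occupation norms, which are comparable to the $\mathcal H_\beta$ norms because $r_\beta\ge e^{-2\beta T}$, so the constant depends on the fixed $\beta$. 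The sizes of these norms are controlled by \cref{thm:occupation-well-posedness} and \cref{cor:linear-mild-map-estimates}.

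\emph{Gradient component.} I will apply \cref{lem:local-gradient-estimates} to the differentiated problem $v=u^\star$, whose gradient has source $D_xF^\star$ and terminal datum $\nabla g$. Equivalently, I work componentwise with $v_k=\partial_ku^\star$, source $\partial_kF^\star$ and terminal datum $\partial_kg$. This is legitimate because \cref{lem:canonical-traces} gives $Z_a^{\star,\mathrm{can}}=\mathcal T_a(F^\star,g)$. There are two cases.
\begin{itemize}
\item For $i\ge1$, the interval satisfies $t_{i+1}\le 2t_i$, so the endpoint estimate \cref{eq:local-gradient-endpoint} applies with the source and terminal datum $(F^\star,g)$. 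It gives the bound $Ch\int_{t_i}^{t_{i+1}}\mathbb E(|\Gamma^\star|^2+|F^\star|^2)$.
\item For $i=0$, the trace $Z_0^\pi=Z_0^{\star,\mathrm{can}}$ is deterministic. I compare it with the projection $\overline Z_0^{0,h}$ using \cref{eq:local-gradient-average}, and control $|Z_0^\star-\overline Z_0^{0,h}|^2$ by the continuity of $Z^{\star,\mathrm{can}}$ together with the second identity in \cref{eq:target-ito-identities}. That identity gives $\mathbb E|Z_t-Z_0|^2\le 2h\int_0^h\mathbb E|D_xF^\star|^2+2\int_0^h\mathbb E|\Gamma^\star|^2$, which is directly $O(h)$ after integration over $[0,h]$. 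This second-identity argument actually works for every $i$ and bypasses the lemma; I will use it for all $i$ if convenient.
\end{itemize}
Summing either version gives $Ch(\|\Gamma^\star\|^2+\|D_xF^\star\|^2)$, which is bounded by \cref{eq:brownian-hessian-bound} and \cref{eq:source-gradient-bound}.

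\emph{Hessian component (main obstacle).} For $i\ge1$, I apply \cref{eq:local-gradient-endpoint} componentwise to $v_k=\partial_ku^\star$ with source $\partial_kF^\star$ and terminal datum $\partial_kg$; here $\mathcal T_{t_i}(\partial_kF^\star,\partial_kg)=\Gamma_{t_i}^\star e_k$ by definition. The integrand on the right is then $|D_x^2v_k|^2+|\partial_kF^\star|^2$. Summing over $k$ produces third derivatives $D_x^3u^\star$, whose occupation norm is bounded by the differentiated Hessian estimate $\|D_xD_x^2u^\star\|\le K+2\sqrt2\|D_xF^\star\|$ from the proof of \cref{prop:source-spatial-regularity}, using $g\in H^2(\mu_T)$. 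So the $i\ge1$ contribution is $Ch$.

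The delicate term is $i=0$, where no canonical trace is available near $t=0$. There $\overline\Gamma_0^h$ is exactly $\mathbb E[\overline\Gamma_0^{0,h}]$, the conditional projection at $a=0$, since $\mathcal F_0$ is trivial. Therefore \cref{eq:local-gradient-average}, applied to the $v_k$ with $a=0$ and $b=h$, gives
\[
\int_0^h\mathbb E|\Gamma_t^\star-\overline\Gamma_0^h|^2dt\le Ch\int_0^h\mathbb E\bigl(|D_x^3u^\star|^2+|D_xF^\star|^2\bigr)dt .
\]
The main risks are justifying this application when $\Gamma^\star$ itself only has an occupation (not pathwise) version near $0$, and confirming that the lemma's inclusion of $a=0$ covers this case. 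Adding the three components yields \cref{eq:full-jet-path-regularity}.
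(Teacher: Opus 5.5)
Your proposal is correct and follows the paper's proof. The paper handles the value and gradient channels through the two It\^o identities in \cref{eq:target-ito-identities}, which is your second-identity route for all $i$. It handles the Hessian channel by applying \cref{eq:local-gradient-average} on $[0,h]$ and \cref{eq:local-gradient-endpoint} on later intervals to $v_k=\partial_ku^\star$, closed by the differentiated Hessian estimate and \cref{eq:source-gradient-bound}, and it checks admissibility as in \cref{lem:exact-grid-admissibility}.

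The two risks you flag are already covered. \cref{lem:local-gradient-estimates} is formulated for occupation versions and explicitly includes $a=0$, and since $\mathcal F_0$ is trivial, the conditional projection there equals $\overline\Gamma_0^h e_k$.
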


\begin{proof}
For $k=1,\ldots,d$, let $v_k=\partial_ku^\star$. By
\cref{prop:source-spatial-regularity,lem:weak-differentiation}, $v_k$ is the mild solution with
source
$\partial_kF^\star$ and terminal datum $\partial_kg$, its occupation gradient
is $\Gamma^\star e_k$, and its canonical gradient trace at $t_i>0$ is
$\Gamma_{t_i}^\star e_k$. On $[0,h]$, apply \cref{eq:local-gradient-average} to $v_k$. On later intervals,
apply \cref{eq:local-gradient-endpoint} to $v_k$.
Since $t_{i+1}\le2t_i$ for $i\ge1$,
\begin{equation}
\begin{aligned}
\sum_{i=0}^{N-1}\int_{t_i}^{t_{i+1}}
\mathbb E|(\Gamma_t^\star-\Gamma_i^\pi) e_k|^2dt
\le Ch\int_0^T\mathbb E(
|D_x^2v_k(t,X_t)|_F^2
+|\partial_kF^\star(t,X_t)|^2
)dt.
\end{aligned}
\label{eq:hessian-column-regularity}
\end{equation}
The weighted and unweighted occupation norms are equivalent on
$[0,T]$. Applying \cref{thm:brownian-hessian-estimate} to the differentiated
equations and summing
over the columns gives
\begin{equation}
\sum_{k=1}^d\|D_x^2v_k\|_{\mathcal H_\beta}^2
\le C\left(
\|D_xF^\star\|_{\mathcal H_\beta}^2
+\|D_x^2g\|_{L^2(\mu_T)}^2
\right).
\label{eq:differentiated-hessian-bound}
\end{equation}
Thus \cref{eq:hessian-column-regularity,eq:differentiated-hessian-bound} prove the
Hessian part of \cref{eq:full-jet-path-regularity} with \cref{eq:source-gradient-bound}.

For the value and gradient channels, apply the two identities in
\cref{eq:target-ito-identities} on each grid interval. Cauchy--Schwarz and the It\^o isometry give
\begin{equation*}
\begin{aligned}
\sum_{i=0}^{N-1}\int_{t_i}^{t_{i+1}}
\mathbb E|Y_t^\star-Y_i^\pi|^2\,dt
&\le Ch\left(
\|F^\star\|_{\mathcal H_0}^2
+\|Z^\star\|_{\mathcal H_0}^2\right),\\
\sum_{i=0}^{N-1}\int_{t_i}^{t_{i+1}}
\mathbb E|Z_t^\star-Z_i^\pi|^2\,dt
&\le Ch\left(
\|D_xF^\star\|_{\mathcal H_0}^2
+\|\Gamma^\star\|_{\mathcal H_0}^2\right).
\end{aligned}
\end{equation*}
The source bound in \cref{thm:occupation-well-posedness}, the linear estimates in
\cref{cor:linear-mild-map-estimates}, and \cref{eq:source-gradient-bound} bound the right-hand
sides by $C$.
This proves \cref{eq:full-jet-path-regularity}. Membership of the exact-grid triple in
$\mathfrak C_h$ is verified in \cref{lem:exact-grid-admissibility}.
\end{proof}

\subsection{Source freezing and exact residual consistency}
\label{subsec:source-freezing-and-residual-consistency}
\begin{lemma}
\label{lem:source-freezing}
Let $F_t^\star=f(t,X_t,Y_t^\star,Z_t^\star,\Gamma_t^\star)$ and
$F_i^\pi=f(t_i,X_i,Y_i^\pi,Z_i^\pi,\Gamma_i^\pi)$. Under \cref{ass:time-regularity},
\begin{equation}
\sum_{i=0}^{N-1}\int_{t_i}^{t_{i+1}}
\mathbb E|F_t^\star-F_i^\pi|^2\,dt
\le Ch.
\label{eq:source-freezing}
\end{equation}

\end{lemma}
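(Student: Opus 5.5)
Fix an interval $[t_i,t_{i+1})$ and a time $t$ in it. The natural first move is to split the difference $F_t^\star-F_i^\pi$ by changing one variable at a time:
\[
F_t^\star-F_i^\pi
=\bigl[f(t,X_t,q_t^\star)-f(t_i,X_t,q_t^\star)\bigr]
+\bigl[f(t_i,X_t,q_t^\star)-f(t_i,X_i,q_t^\star)\bigr]
+\bigl[f(t_i,X_i,q_t^\star)-f(t_i,X_i,q_i^\pi)\bigr],
\]
where $q_t^\star=(Y_t^\star,Z_t^\star,\Gamma_t^\star)$ and $q_i^\pi=(Y_i^\pi,Z_i^\pi,\Gamma_i^\pi)$. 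By $(a+b+c)^2\le3(a^2+b^2+c^2)$ it then suffices to show that each of the three pieces contributes $O(h)$ after squaring, taking expectations, integrating over $[t_i,t_{i+1}]$, and summing over $i$.

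The time piece is immediate from \cref{ass:time-regularity}. Its square is at most $L_t^2|t-t_i|\le L_t^2h$, and integrating over $[0,T]$ gives $L_t^2Th$.

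The jet piece follows from the global Lipschitz bound in \cref{ass:continuous-occupation-well-posedness}. Its square is at most
\[
3\bigl(L_0^2|Y_t^\star-Y_i^\pi|^2+L_1^2|Z_t^\star-Z_i^\pi|^2+L_2^2|\Gamma_t^\star-\Gamma_i^\pi|_F^2\bigr),
\]
and after summing, \cref{prop:full-jet-path-regularity} bounds this by $Ch$. Two points need care here. First, the canonical traces at the nodes and the occupation versions inside the integrals must be used consistently, as in the convention adopted after \cref{lem:canonical-traces}. Second, on the first interval $\Gamma_0^\pi=\overline\Gamma_0^h$ is a deterministic average rather than a trace; this is harmless, because \cref{eq:full-jet-path-regularity} already covers exactly this choice.

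The state piece is the step I expect to need the most attention, because the frozen argument $q_t^\star$ is random and depends on the path. The $L_x$-Lipschitz bound would give $L_x^2\mathbb E|X_t-X_i|^2=L_x^2d(t-t_i)$, which is $O(h)$ but carries a dimension factor. To avoid it I would use the Brownian-state bound of \cref{ass:time-regularity} instead, which requires conditioning. Write $X_t=X_i+(W_t-W_{t_i})$. Then $X_i$ is $\mathcal F_{t_i}$-measurable and the increment is independent of $\mathcal F_{t_i}$, but $q_t^\star$ depends on the same increment, so the bound cannot be applied with $q_t^\star$ frozen directly. My plan is to freeze the jet at the node instead, replacing $q_t^\star$ by $q_i^\pi$ in the middle term and moving the jet difference into the third term. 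The third term is then $f(t_i,X_t,q_t^\star)-f(t_i,X_t,q_i^\pi)$, which is still controlled by the Lipschitz bound and \cref{prop:full-jet-path-regularity}. With this reordering, $q_i^\pi$ is $\sigma(X_i)$-measurable, by \cref{lem:exact-grid-admissibility} and the state-measurability in \cref{lem:canonical-traces}. Conditioning on $\mathcal F_{t_i}$, the Brownian-state bound applied with $x=X_i$, $q=q_i^\pi$, and $r=t-t_i$ gives
\[
\mathbb E\bigl|f(t_i,X_i+W_t-W_{t_i},q_i^\pi)-f(t_i,X_i,q_i^\pi)\bigr|^2\le L_{\mathrm B}^2(t-t_i),
\]
and summing over the intervals yields $L_{\mathrm B}^2Th$. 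Collecting the three bounds proves \cref{eq:source-freezing}.
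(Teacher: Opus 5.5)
Your proposal is correct and follows essentially the paper's argument. You insert intermediate evaluations so that the jet is frozen at the node value $q_i^\pi$ before the state is moved. The time and jet differences are then bounded by the H\"older and Lipschitz conditions together with \cref{prop:full-jet-path-regularity}, and the Brownian-state bound is applied conditionally on $\mathcal F_{t_i}$ with $(X_i,q_i^\pi)$ fixed. The only cosmetic difference is the order of the time shift: the paper performs it last, at $(X_i,q_i^\pi)$ with $\psi(x)=f(t,x,q_i^\pi)$, whereas you perform it first, at $(X_t,q_t^\star)$. Your order is equally valid because the time and Brownian-state bounds are uniform in their other arguments.
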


The proof is given in \cref{subsec:source-freezing-proof}.

\begin{lemma}[Centered quadratic It\^o identity]
\label{lem:centered-quadratic-ito}
For every $0\le i<N$ and every
$A\in L^2(\mathcal F_{t_i};\mathbb S^d)$,
\begin{equation}
\frac12A:Q_i
=\int_{t_i}^{t_{i+1}}A(W_s-W_{t_i})\cdot dW_s.
\label{eq:centered-quadratic-ito}
\end{equation}
\end{lemma}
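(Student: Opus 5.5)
The identity follows from applying It\^o's product formula to the process $M_s:=W_s-W_{t_i}$ on $[t_i,t_{i+1}]$ and then pairing with the frozen matrix $A$. Componentwise, for $1\le j,k\le d$, It\^o's formula gives
\[
M_s^jM_s^k=\int_{t_i}^sM_r^j\,dW_r^k+\int_{t_i}^sM_r^k\,dW_r^j+\delta_{jk}(s-t_i),
\]
since $d\langle W^j,W^k\rangle_r=\delta_{jk}\,dr$ and $M_{t_i}=0$. At $s=t_{i+1}$ this reads $Q_i=\int_{t_i}^{t_{i+1}}(M_r\,dW_r^\top+dW_r\,M_r^\top)$ entrywise.

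Next I contract with $A$. Because $A$ is $\mathcal F_{t_i}$-measurable and constant on $[t_i,t_{i+1}]$, it can be moved inside the stochastic integrals: $A_{jk}\int M^j\,dW^k=\int A_{jk}M^j\,dW^k$. This is valid for a bounded $\mathcal F_{t_i}$-measurable factor, and it extends to $A\in L^2(\mathcal F_{t_i})$ as explained below. Summing over $j,k$ gives
\[
A:Q_i=\sum_{j,k}A_{jk}\int_{t_i}^{t_{i+1}}(M^j_r\,dW^k_r+M^k_r\,dW^j_r)=\int_{t_i}^{t_{i+1}}\bigl(A^\top M_r+AM_r\bigr)\cdot dW_r=2\int_{t_i}^{t_{i+1}}AM_r\cdot dW_r,
\]
where the last step uses the symmetry of $A$. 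Dividing by $2$ yields \cref{eq:centered-quadratic-ito}.

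The only delicate step is integrability. The integrand $AM_s$ is adapted on $[t_i,t_{i+1}]$, and by independence of $A$ and $M_s$,
\[
\mathbb E|AM_s|^2=\mathbb E\bigl[|A|_F^2\bigr](s-t_i)<\infty.
\]
So the right-hand side of \cref{eq:centered-quadratic-ito} is a genuine $L^2$ It\^o integral. To justify pulling $A$ inside, I first treat bounded (or simple) $A$ and then approximate a general $A\in L^2(\mathcal F_{t_i};\mathbb S^d)$ by truncations $A^{(n)}\to A$ in $L^2$. On the left, $\mathbb E|(A-A^{(n)}):Q_i|^2\le\mathbb E|A-A^{(n)}|_F^2\,\mathbb E|Q_i|_F^2$ by independence. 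On the right, the It\^o isometry with the variance bound above gives the same kind of control. Both sides therefore converge in $L^2(\Omega)$, and the identity passes to the limit.
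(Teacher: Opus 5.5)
Your proof is correct and follows essentially the same route as the paper. You apply the componentwise It\^o product formula to $Q_i$, contract with the symmetric $\mathcal F_{t_i}$-measurable $A$, and get square-integrability from independence and the It\^o isometry. Your truncation argument, which moves a general $A\in L^2(\mathcal F_{t_i};\mathbb S^d)$ inside the stochastic integral, spells out a step the paper leaves implicit.
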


The proof is given in
\cref{subsec:centered-quadratic-gaussian-identities}.

\begin{proposition}[Exact-grid residual consistency]
\label{prop:exact-grid-consistency}
Under \cref{ass:time-regularity},
$
J_h(Y^\pi,Z^\pi,\Gamma^\pi)\le Ch.
$
\end{proposition}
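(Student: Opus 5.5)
We will prove the bound by writing each exact-grid residual $\rho_i[Y^\pi,Z^\pi,\Gamma^\pi]$ as a sum of stochastic and Lebesgue integrals over $[t_i,t_{i+1}]$ whose integrands are differences already controlled in \cref{prop:full-jet-path-regularity} and \cref{lem:source-freezing}. Since $J_h=h^{-1}\sum_i\mathbb E|\rho_i|^2$, it suffices to show $\sum_i\mathbb E|\rho_i|^2\le Ch^2$.

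\emph{Step 1: decomposition of $\rho_i$.} By the first identity in \cref{eq:target-ito-identities} on $[t_i,t_{i+1}]$ and the fact that $Y^\pi_{i}=Y^\star_{t_i}$, we have
\[
Y^\pi_{i+1}-Y^\pi_i=-\int_{t_i}^{t_{i+1}}F_t^\star\,dt+\int_{t_i}^{t_{i+1}}Z_t^\star\cdot dW_t .
\]
Here $F^\star(t,X_t)=F_t^\star$ by the fixed-point property $F^{u^\star}=F^\star$ and the identification of occupation versions. Using \cref{lem:centered-quadratic-ito} with $A=\Gamma_i^\pi$, we obtain
\[
\rho_i=-\int_{t_i}^{t_{i+1}}(F_t^\star-F_i^\pi)\,dt+\int_{t_i}^{t_{i+1}}\bigl(Z_t^\star-Z_i^\pi-\Gamma_i^\pi(W_t-W_{t_i})\bigr)\cdot dW_t .
\]
Measurability of $\Gamma^\pi_0=\overline\Gamma_0^h$ is automatic, since it is deterministic.

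\emph{Step 2: estimation of the two terms.} The drift term is handled by Cauchy--Schwarz: its second moment is at most $h\int_{t_i}^{t_{i+1}}\mathbb E|F^\star_t-F^\pi_i|^2dt$. Summed over $i$, this is $\le Ch^2$ by \cref{eq:source-freezing}. For the martingale term we use the Itô isometry, which gives $\int_{t_i}^{t_{i+1}}\mathbb E|Z_t^\star-Z_i^\pi-\Gamma_i^\pi(W_t-W_{t_i})|^2dt$. The key point is that we do not bound this crudely by the $L^2$ regularity of $Z$, which would only give $O(h)$ per sum. Instead we insert the second identity in \cref{eq:target-ito-identities}:
\[
Z_t^\star-Z_i^\pi=-\int_{t_i}^tD_xF^\star(r,X_r)\,dr+\int_{t_i}^t\Gamma_r^\star\,dW_r .
\]
Writing $\Gamma_i^\pi(W_t-W_{t_i})=\int_{t_i}^t\Gamma_i^\pi\,dW_r$, the integrand becomes
\[
-\int_{t_i}^tD_xF^\star(r,X_r)\,dr+\int_{t_i}^t(\Gamma_r^\star-\Gamma_i^\pi)\,dW_r .
\]
Its second moment is bounded by $2(t-t_i)\int_{t_i}^{t_{i+1}}\mathbb E|D_xF^\star|^2dr+2\int_{t_i}^{t_{i+1}}\mathbb E|\Gamma_r^\star-\Gamma^\pi_i|_F^2dr$. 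Integrating in $t$ over an interval of length $h$ and summing gives
\[
Ch\Bigl(h\|D_xF^\star\|_{\mathcal H_0}^2+\sum_i\int_{t_i}^{t_{i+1}}\mathbb E|\Gamma^\star_r-\Gamma^\pi_i|_F^2dr\Bigr)\le Ch^2,
\]
by \cref{eq:source-gradient-bound} and the Hessian part of \cref{eq:full-jet-path-regularity}. The second equality of \cref{eq:target-ito-identities} requires $s=t_i$, which is allowed including $s=0$, where $Z^\pi_0=Z^\star_0$ is deterministic.

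\emph{Main obstacle.} The delicate point is the first interval, where $\Gamma^\pi_0$ is the time average $\overline\Gamma_0^h$ rather than a trace. The argument above only needs the occupation-error bound of $\Gamma^\pi$ on $[0,h]$, which \cref{prop:full-jet-path-regularity} already supplies, so no separate trace estimate near $t=0$ is needed. One must also check that the Hessian-channel martingale term legitimately uses the occupation version $\Gamma^\star$ inside the stochastic integral and the grid value $\Gamma^\pi_i$ only as a frozen $\mathcal F_{t_i}$-measurable matrix. Both points are covered by \cref{lem:canonical-traces}, since the canonical and occupation versions agree for almost every time.
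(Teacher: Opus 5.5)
Your proposal is correct and follows essentially the same route as the paper's proof. Both arguments rewrite the residual using the value identity, the centered quadratic It\^o identity, and the gradient identity, and then close with the same three bounds: source freezing, the $D_xF^\star$ bound, and the Hessian part of the full-jet path regularity. The only difference is cosmetic: you keep the $D_xF^\star$ drift and the Hessian oscillation inside a single stochastic integral before applying the It\^o isometry, while the paper splits them into the separate remainders $R_i^D$ and $R_i^\Gamma$.
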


\begin{proof}
Fix $0\le i<N$ and set $I_i=[t_i,t_{i+1}]$. Write $\rho_i=\rho_i[Y^\pi,Z^\pi,\Gamma^\pi]$,
\begin{equation*}
\begin{aligned}
R_i^F=\int_{I_i}(F_s^\star-F_i^\pi)&\,ds, \quad
R_i^D=\int_{I_i}
\left(\int_{t_i}^sD_xF^\star(r,X_r)\,dr\right)\cdot dW_s,\\
R_i^\Gamma&=\int_{I_i}
\left(\int_{t_i}^s
(\Gamma_r^\star-\Gamma_i^\pi)\,dW_r\right)\cdot dW_s.
\end{aligned}
\end{equation*}
The value identity in \cref{eq:target-ito-identities} and the definition of
the residual give
\begin{equation}
\rho_i=-R_i^F+\int_{I_i}(Z_s^\star-Z_i^\pi)\cdot dW_s
-\frac12\Gamma_i^\pi:Q_i.
\label{eq:exact-residual-value-step}
\end{equation}
For $s\in I_i$, the gradient identity in
\cref{eq:target-ito-identities} yields
\begin{equation*}
\begin{aligned}
Z_s^\star-Z_i^\pi
={}-\int_{t_i}^sD_xF^\star(r,X_r)\,dr
+\int_{t_i}^s(\Gamma_r^\star-\Gamma_i^\pi)\,dW_r
+\Gamma_i^\pi(W_s-W_{t_i}).
\end{aligned}
\end{equation*}
Since $\Gamma_i^\pi$ is $\mathcal F_{t_i}$-measurable, symmetric, and
square-integrable, \cref{lem:centered-quadratic-ito} gives
\begin{equation*}
\int_{I_i}(Z_s^\star-Z_i^\pi)\cdot dW_s
=-R_i^D+R_i^\Gamma+\frac12\Gamma_i^\pi:Q_i.
\end{equation*}
Substitution into \cref{eq:exact-residual-value-step} therefore gives
$
\rho_i=R_i^\Gamma-R_i^F-R_i^D.
$
These terms are square-integrable
by \cref{prop:source-spatial-regularity,prop:full-jet-path-regularity}.
Cauchy--Schwarz, the It\^o isometry, \cref{lem:source-freezing}, and the Hessian
path estimate give
\begin{equation}
\begin{aligned}
\sum_{i=0}^{N-1}\mathbb E|R_i^F|^2
&\le h\sum_{i=0}^{N-1}\int_{t_i}^{t_{i+1}}
\mathbb E|F_t^\star-F_i^\pi|^2\,dt\le Ch^2,\\
\sum_{i=0}^{N-1}\mathbb E|R_i^D|^2
&\le h^2\|D_xF^\star\|_{\mathcal H_0}^2\le Ch^2,\\
\sum_{i=0}^{N-1}\mathbb E|R_i^\Gamma|^2
&\le h\sum_{i=0}^{N-1}\int_{t_i}^{t_{i+1}}
\mathbb E|\Gamma_t^\star-\Gamma_i^\pi|_F^2\,dt\le Ch^2.
\end{aligned}
\label{eq:exact-residual-remainders}
\end{equation}
Since $|\rho_i|^2\le3(|R_i^F|^2+|R_i^D|^2+|R_i^\Gamma|^2)$,
\cref{eq:exact-residual-remainders} proves the claim.
\end{proof}

\section{Auxiliary Implicit Scheme and Residual Stability}
\label{sec:auxiliary-implicit-scheme-and-residual-stability}
\subsection{Implicit reference scheme}
\label{subsec:gaussian-projections-and-the-reference-scheme}
For $\xi\in L^2(\sigma(X_i,\Delta W_i))$, define
\begin{equation}
\mathcal P_i^0\xi=\mathbb E[\xi\mid X_i],
\,\,
\mathcal P_i^1\xi
=\mathbb E\!\left[\xi\frac{\Delta W_i}{h}\,\middle|\,X_i\right],
\,\,
\mathcal P_i^2\xi
=\mathbb E\!\left[\xi\frac{Q_i}{h^2}\,\middle|\,X_i\right].
\label{eq:gaussian-projections}
\end{equation}
The auxiliary implicit reference scheme is
\begin{equation}
\begin{aligned}
Y_N^h&=g(X_N),\,\,
Z_i^h=\mathcal P_i^1Y_{i+1}^h,\,\,
\Gamma_i^h=\mathcal P_i^2Y_{i+1}^h,\\
Y_i^h&=\mathcal P_i^0Y_{i+1}^h
+h f(t_i,X_i,Y_i^h,Z_i^h,\Gamma_i^h),
\qquad i=N-1,\ldots,0.
\end{aligned}
\label{eq:implicit-reference-scheme}
\end{equation}
For a grid process $V=(V_i)_{i=0}^{N-1}$, set
$\|V\|_{\beta,h}^2=\sum_{i=0}^{N-1}h\,r_\beta(t_i)\mathbb E|V_i|^2$.
\begin{proposition}[Well-posedness of the implicit reference scheme]
\label{prop:implicit-reference-well-posedness}
Under \cref{ass:time-regularity}, if $hL_0<1$, the implicit scheme
\cref{eq:implicit-reference-scheme} has a unique square-integrable Markov solution.

\end{proposition}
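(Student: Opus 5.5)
We argue by backward induction on $i=N-1,\ldots,0$. The inductive hypothesis at step $i+1$ is that $Y_{i+1}^h=\psi_{i+1}(X_{i+1})$ for a Borel $\psi_{i+1}$ with $\psi_{i+1}\in L^2(\mu_{t_{i+1}})$. The base case is $\psi_N=g$, and $g\in H^2(\mu_T)\subset L^2(\mu_T)$ by \cref{ass:spatial-regularity}.

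At step $i$, write $\Delta W_i$ as independent of $\mathcal F_{t_i}$, so that $X_{i+1}=X_i+\Delta W_i$. Then the three projections in \cref{eq:gaussian-projections} are explicit Gaussian integrals against $\psi_{i+1}(x+\cdot)$:
\begin{equation*}
a_i(x)=\mathbb E\psi_{i+1}(x+\Delta W_i),\quad
z_i(x)=\mathbb E\Bigl[\psi_{i+1}(x+\Delta W_i)\frac{\Delta W_i}{h}\Bigr],\quad
\gamma_i(x)=\mathbb E\Bigl[\psi_{i+1}(x+\Delta W_i)\frac{Q_i}{h^2}\Bigr].
\end{equation*}
Each is a Borel function of $x$, and $\gamma_i$ is $\mathbb S^d$-valued because $Q_i$ is symmetric. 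Conditional Jensen and Cauchy--Schwarz give
\begin{equation*}
\mathbb E|z_i(X_i)|^2\le h^{-2}\,\mathbb E|\Delta W_i|^2\,\mathbb E|Y_{i+1}^h|^2=d\,h^{-1}\,\mathbb E|Y_{i+1}^h|^2 ,
\end{equation*}
and, using $\mathbb E|Q_i|_F^2=(d^2+d)h^2$, also $\mathbb E|\gamma_i(X_i)|_F^2\le C_d h^{-2}\,\mathbb E|Y_{i+1}^h|^2$. So $Z_i^h$ and $\Gamma_i^h$ are square-integrable Markov functions of $X_i$, with constants depending on $h$ and $d$; only existence is at stake here, so this is harmless. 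Likewise $\mathcal P_i^0Y_{i+1}^h=a_i(X_i)\in L^2$.

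The implicit equation for $Y_i^h$ is then solved pointwise in $x$. For fixed $x$, consider the scalar equation
\begin{equation*}
y=a_i(x)+h f\bigl(t_i,x,y,z_i(x),\gamma_i(x)\bigr).
\end{equation*}
The right-hand side is $hL_0$-Lipschitz in $y$, so under $hL_0<1$ it is a contraction of $\mathbb R$ and has a unique fixed point $\psi_i(x)$.

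The step requiring care is Borel measurability of $x\mapsto\psi_i(x)$. I would obtain it by noting that $\psi_i$ is the pointwise limit of the Picard iterates $y^{(0)}=0$, $y^{(n+1)}(x)=a_i(x)+hf(t_i,x,y^{(n)}(x),z_i(x),\gamma_i(x))$. Each iterate is Borel as a composition of Borel maps, since $f$ is Borel by \cref{ass:continuous-occupation-well-posedness}. Pointwise limits of Borel functions are Borel, so $\psi_i$ is Borel.

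Square integrability comes from the fixed-point equation together with the Lipschitz bound
\begin{equation*}
|f(t,x,q)|\le |f_0(t,x)|+L_0|y|+L_1|z|+L_2|\gamma|_F ,
\end{equation*}
which yields
\begin{equation*}
(1-hL_0)|\psi_i(x)|\le |a_i(x)|+h\bigl(|f_0(t_i,x)|+L_1|z_i(x)|+L_2|\gamma_i(x)|_F\bigr).
\end{equation*}
The one subtle point is that $f_0\in\mathcal H_0$ only controls $f_0$ in $L^2(dt\otimes\mu_t)$, not at the fixed time $t_i$. This is where \cref{ass:time-regularity} enters. The time H\"older bound gives $|f_0(t_i,x)|\le |f_0(t,x)|+L_t|t-t_i|^{1/2}$. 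To transfer $\mu_t$-integrability to $\mu_{t_i}$, I would use the Brownian-state bound
\begin{equation*}
\mathbb E|f(t_i,x+W_r,0)-f(t_i,x,0)|^2\le L_{\mathrm B}^2 r .
\end{equation*}
This gives $\mathbb E|f_0(t_i,X_{t_i+r})|^2\le 2\,\mathbb E|f_0(t_i,X_{t_i})|^2+2L_{\mathrm B}^2r$, and the same argument with the roles of the two times reversed gives the converse inequality. Averaging over $t\in[t_i,t_{i+1}]$ then shows $\mathbb E|f_0(t_i,X_i)|^2<\infty$.

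For $i=0$, where $X_0=x_0$ is deterministic, finiteness is immediate once $f_0(0,x_0)$ is finite. That finiteness follows from the same Brownian-state bound, comparing with $f_0(0,x_0+W_r)$ for small $r$. Hence $Y_i^h=\psi_i(X_i)\in L^2$, which closes the induction.

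Uniqueness holds step by step. Any square-integrable Markov solution must, by \cref{eq:implicit-reference-scheme}, have $Z_i^h$ and $\Gamma_i^h$ determined by $Y_{i+1}^h$. Its $Y_i^h$ then satisfies the scalar fixed-point equation almost surely, so it equals $\psi_i(X_i)$ almost surely by uniqueness of the contraction's fixed point. Backward induction from $Y_N^h=g(X_N)$ gives uniqueness of the whole triple.
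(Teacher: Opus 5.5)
Your proof is correct, but it is organized differently from the paper's.

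\textbf{Fixed point.} The paper applies the Banach fixed-point theorem directly in the Hilbert space $L^2(\sigma(X_i))$. Once $\mathcal P_i^0Y_{i+1}^h$, $Z_i^h$, $\Gamma_i^h$ (via \cref{lem:one-step-projection}) and $f_0(t_i,X_i)$ are square-integrable, the map $y\mapsto\mathcal P_i^0Y_{i+1}^h+hf(t_i,X_i,y,Z_i^h,\Gamma_i^h)$ sends $L^2(\sigma(X_i))$ into itself with Lipschitz constant $hL_0$. Measurability, square integrability, and uniqueness in the Markov $L^2$ class then come out together. You instead solve a scalar contraction for each fixed $x$, recover Borel measurability from the Picard iterates, and get integrability from the a priori bound $(1-hL_0)|\psi_i|\le\cdots$. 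This is longer, but it gives uniqueness at each node without presupposing square integrability of $Y_i^h$.

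\textbf{Integrability of $f_0(t_i,X_i)$.} This is where the two routes genuinely differ. The paper simply uses the spatial Lipschitz bound in \cref{ass:spatial-regularity}:
\begin{equation*}
|f_0(t_i,X_i)|\le|f_0(t_i,x_0)|+L_x|X_i-x_0|.
\end{equation*}
Here $f_0(t_i,x_0)$ is automatically a real number because $f$ is real-valued, and $X_i-x_0=W_{t_i}$ has a finite second moment. Your transfer from $f_0\in\mathcal H_0$ through the time-H\"older and Brownian-state bounds is valid. Because the Brownian-state bound controls a difference, the single inequality
\begin{equation*}
\mathbb E|f_0(t_i,X_{t_i})|^2\le2\mathbb E|f_0(t_i,X_t)|^2+2L_{\mathrm B}^2(t-t_i),
\end{equation*}
combined with the time-H\"older bound and averaged over $t\in[t_i,t_{i+1}]$, already suffices. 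Your route would be the natural one if spatial Lipschitz continuity were replaced by the Brownian-state condition alone. Under the stated assumptions, though, it is unnecessary, and so is your separate justification that $f_0(0,x_0)$ is finite.

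\textbf{Projection bounds.} Your Cauchy--Schwarz bounds on $z_i$ and $\gamma_i$ carry the factors $d$ and $d^2+d$. The conditional chaos decomposition behind \cref{lem:one-step-projection} instead gives dimension-free bounds, $h|C|^2\le\mathbb E[|e|^2\mid X_i]$ and $\frac{h^2}{2}|B|_F^2\le\mathbb E[|e|^2\mid X_i]$. As you note, this does not matter for existence.
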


The backward-contraction proof is in
\cref{subsec:implicit-reference-well-posedness-proof}.

\subsection{Discrete linear and projection estimates}
\label{subsec:discrete-linear-and-projection-estimates}
Fix $\beta>0$ and set
\[
\vartheta_{\beta,h}=\frac{1-e^{-2\beta h}}h,
\,\,
a_{0,h}=\frac2{\vartheta_{\beta,h}},
\,\,
a_{1,h}=\sqrt{\frac2{\vartheta_{\beta,h}}},
\,\,
a_2=2\sqrt2,
\,\,
b_{0,h}=\vartheta_{\beta,h}^{-\frac 12}.
\]
The proofs of \cref{lem:discrete-source-terminal-estimates,lem:one-step-projection}
are in \cref{sec:supp-discrete}.

\begin{lemma}[Discrete source--terminal estimates]
\label{lem:discrete-source-terminal-estimates}
Let $S_i=s_i(X_i)\in L^2(\sigma(X_i))$ for $0\le i<N$ and
$\psi\in H^1(\mu_T)$. Define
\[
V_N=\psi(X_N),
\qquad
V_i=\mathbb E[V_{i+1}\mid X_i]+hS_i,
\qquad
U_i=\mathcal P_i^1V_{i+1},
\qquad
G_i=\mathcal P_i^2V_{i+1}.
\]
These three channels satisfy
\begin{equation}
\begin{aligned}
\|V\|_{\beta,h}
\le a_{0,h}\|S\|_{\beta,h}
+b_{0,h}\|\psi\|&_{L^2(\mu_T)},\quad
\|U\|_{\beta,h}
\le a_{1,h}\|S\|_{\beta,h}
+\|\psi\|_{L^2(\mu_T)},\\
\|G\|_{\beta,h}
&\le a_2\|S\|_{\beta,h}
+\|\nabla\psi\|_{L^2(\mu_T)}.
\end{aligned}
\label{eq:discrete-source-terminal-estimates}
\end{equation}
\end{lemma}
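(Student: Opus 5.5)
The plan is to prove the three bounds in order of increasing difficulty. Each one comes from a discrete energy identity obtained from the Gaussian chaos decomposition of $V_{i+1}$ conditional on $X_i$; this is the discrete counterpart of the It\^o energy arguments behind \cref{cor:linear-mild-map-estimates} and \cref{thm:brownian-hessian-estimate}. Write $r_i=r_\beta(t_i)$, so that $r_{i+1}=e^{2\beta h}r_i$. The basic observation is that $1,\ \Delta W_i/\sqrt h,\ Q_i/h$ span orthogonal Wiener chaoses given $X_i$, with $\mathbb E[(A:Q_i)^2]=2h^2|A|_F^2$ for symmetric $A$. Hence, with $\bar V_i:=\mathcal P_i^0V_{i+1}$,
\begin{equation*}
\mathbb E|V_{i+1}|^2\ \ge\ \mathbb E|\bar V_i|^2+h\,\mathbb E|U_i|^2+\tfrac{h^2}{2}\mathbb E|G_i|_F^2 .
\end{equation*}
This replaces the It\^o isometry throughout.

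\emph{Value and gradient channels.} From $V_i=\bar V_i+hS_i$ I will write
\begin{equation*}
\mathbb E|\bar V_i|^2=\mathbb E|V_i|^2-2h\,\mathbb E[V_iS_i]+h^2\mathbb E|S_i|^2 .
\end{equation*}
Inserting this into the inequality above, multiplying by $r_i$, and summing over $i$ produces the telescoping combination $r_{i}\mathbb E|V_{i+1}|^2-r_i\mathbb E|V_i|^2$. Rewriting $r_i=e^{-2\beta h}r_{i+1}$ and summing by parts leaves a terminal term $\le\|\psi\|_{L^2(\mu_T)}^2$ (since $r_N=1$), the favourable term $\vartheta_{\beta,h}\|V\|_{\beta,h}^2$ up to an index shift, the gradient term $\|U\|_{\beta,h}^2$, and the cross term $2\sum_i h r_i\mathbb E[V_iS_i]$. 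I will bound the cross term by Cauchy--Schwarz as $2\|V\|_{\beta,h}\|S\|_{\beta,h}$ and drop the nonnegative $h^2$ terms. This gives the quadratic inequality
\begin{equation*}
\vartheta_{\beta,h}\|V\|^2+\|U\|^2\le\|\psi\|^2+2\|V\|\,\|S\|,
\end{equation*}
which the paper's constants are built to match. It yields $\|V\|\le a_{0,h}\|S\|+b_{0,h}\|\psi\|$. For $U$, I will use $2\|V\|\|S\|\le\vartheta_{\beta,h}\|V\|^2+\vartheta_{\beta,h}^{-1}\|S\|^2$, which gives $\|U\|^2\le\|\psi\|^2+\vartheta^{-1}\|S\|^2$ and, after a square root, the stated form, possibly with a harmless loss matching $a_{1,h}=\sqrt{2/\vartheta}$. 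The index shift between $r_i\mathbb E|V_i|^2$ and $r_{i+1}\mathbb E|V_{i+1}|^2$ will be handled by keeping the $i=0$ term on the favourable side.

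\emph{Hessian channel.} First I reduce by density to smooth compactly supported $s_i$ and $\psi$. The left side is continuous in $(S,\psi)$ because $\mathcal P_i^2$ is $L^2$-bounded for fixed $h$. Then $V_i=v_i(X_i)$ with $v_i=P_hv_{i+1}+hs_i$ smooth, and Gaussian integration by parts gives
\begin{equation*}
U_i=\mathbb E[\nabla v_{i+1}(X_{i+1})\mid X_i],\qquad G_i=\mathcal P_i^1\bigl[\nabla v_{i+1}(X_{i+1})\bigr].
\end{equation*}
So the gradient process $D_i:=\nabla v_i(X_i)$ satisfies the same recursion with source $\nabla s_i$ and terminal datum $\nabla\psi$. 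Its $\mathcal P^1$-channel is exactly $G$. I will apply the energy computation above to the centered process $D_i^\circ=D_i-\mathbb ED_i$, mirroring the continuous proof. The terminal term becomes $\operatorname{Var}(\nabla\psi(X_T))\le\|\nabla\psi\|^2$, the initial term vanishes because $X_0=x_0$ is deterministic, and the remaining contribution is the cross term $\sum_ihr_i\mathbb E[D_i^\circ\cdot\nabla s_i(X_i)]$. By Gaussian integration by parts under $\mu_{t_i}$ and \cref{lem:gaussian-affine-coercivity}, this cross term is bounded by $\sqrt2\,\|s_i\|_{L^2(\mu_{t_i})}\|D^2v_i\|_{L^2(\mu_{t_i})}$. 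The quadratic formula would then give the constant $a_2=2\sqrt2$, as in \cref{thm:brownian-hessian-estimate}.

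\emph{Main obstacle.} The cross term is controlled by $\|D^2v_i(X_i)\|$, whereas the energy only controls $G_{i-1}=\mathbb E[D^2v_i(X_i)\mid X_{i-1}]$. The discrete Hessian is a conditional average one step earlier, and the $h$-shift must be absorbed without losing the sharp constant, uniformly in $h$ and including the first node. To close the estimate, I would first try to perform the centered energy identity in terms of the time-shifted quantities $U_i=\mathbb E[D_{i+1}\mid X_i]$ in place of $D_i$. Using $\mathcal P^0$-contractivity, the cross term can then be written as $\mathbb E[\nabla s_i\cdot U_i^\circ]$, and integrating by parts at time $t_i$ against $P_h v_{i+1}$ produces the Gaussian operator applied to $Q^{\mathrm{aff}}_{t_i}P_hv_{i+1}$. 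Its Hessian is precisely $\mathbb E[D^2v_{i+1}(X_{i+1})\mid X_i]=G_i$. The coercivity lemma then bounds the cross term by $\sqrt2\|s_i\|\,\|G_i\|$, with no index mismatch.
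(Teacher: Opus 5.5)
Your treatment of the value and first-chaos channels is correct and is the paper's argument: conditional chaos Pythagoras, weighted telescoping producing $\vartheta_{\beta,h}\|V\|_{\beta,h}^2$ plus the initial term, then the quadratic inequality.

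The gap is in the second-chaos channel, and the proposed index shift does not close it. Let $R_i^\perp$ be the part of $D_{i+1}^\circ$ beyond the first chaos given $X_i$, so that $\mathbb E|D_{i+1}^\circ|^2=\mathbb E|U_i^\circ|^2+h\,\mathbb E|G_i|_F^2+\mathbb E|R_i^\perp|^2$. Substituting $D_i^\circ=U_i^\circ+h\bigl(\nabla s_i(X_i)-\mathbb E\nabla s_i(X_i)\bigr)$ gives the exact identity
\[
\mathbb E|D_i^\circ|^2+h\,\mathbb E|G_i|_F^2+\mathbb E|R_i^\perp|^2
=\mathbb E|D_{i+1}^\circ|^2
+2h\,\mathbb E\bigl[U_i^\circ\cdot\nabla s_i(X_i)\bigr]
+h^2\operatorname{Var}\bigl(\nabla s_i(X_i)\bigr).
\]
If you write the cross term through $D_i^\circ$, the $h^2$ term is favourable, but coercivity then sees $D^2v_i(X_i)=G_i+hD^2s_i(X_i)$. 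If you write it through $U_i^\circ$, as you propose, you do get $|\mathbb E[U_i^\circ\cdot\nabla s_i(X_i)]|\le\sqrt2\|s_i\|_{L^2(\mu_{t_i})}(\mathbb E|G_i|_F^2)^{1/2}$. However, the surplus $h^2\operatorname{Var}(\nabla s_i(X_i))$ then sits on the wrong side, and you never account for it; shifting the index only moves this term.

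This surplus is not controlled by $\|S\|_{\beta,h}$. Take $d=1$, $\psi=0$, $s_m=0$ for $m\ne1$, and $s_1(x)=H_n((x-x_0)/\sqrt h)$ with $H_n$ the normalized Hermite polynomial and $n\ge3$. Then $V_m=0$ for $m\ge2$, and $V_1=hs_1(X_1)$ lies in the $n$th chaos of $\Delta W_0$, so $G\equiv0$. Yet the weighted surplus is $r_\beta(t_1)h^2\cdot n/h=n\|S\|_{\beta,h}^2$. Your telescoped inequality is therefore only
\[
\|G\|_{\beta,h}^2\le\operatorname{Var}(\nabla\psi(X_T))+2\sqrt2\|S\|_{\beta,h}\|G\|_{\beta,h}
+\sum_{i=0}^{N-1}r_\beta(t_i)h^2\operatorname{Var}\bigl(\nabla s_i(X_i)\bigr).
\]
The last term is unbounded relative to $\|S\|_{\beta,h}^2$ and is not $L^2$-continuous in $S$. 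The density reduction cannot remove it, so no $h$-uniform constant follows, let alone $2\sqrt2$.

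The missing idea is that $G_i$ sees the source only at nodes $m\ge i+1$, through conditional expectations that damp a degree-$n$ Hermite mode by $(t_i/t_m)^{n/2}$. Tracking $\nabla v_i(X_i)$ reintroduces the undamped same-node increment $h\nabla s_i$. The paper therefore uses no energy identity for this channel:
\begin{itemize}
\item It expands $s_m$ and $\psi$ in Hermite modes and uses Mehler's identity to get the continuation coefficients $\bar a_{\alpha,i}=h\sum_{m=i+1}^{N-1}(t_i/t_m)^{n/2}b_{\alpha,m}+(t_i/T)^{n/2}c_\alpha$, with $n=|\alpha|$.
\item At positive nodes it uses $\mathbb E|G_i|_F^2=t_i^{-2}\sum_\alpha n(n-1)|\bar a_{\alpha,i}|^2$.
\item For $n\ge3$ it applies \cref{lem:a-copson-positive} with $a=n/2$, giving constant $4n/(n-1)\le6$; $\mathcal P_0^2$ annihilates these modes.
\item For $n=2$ it applies the endpoint-inclusive \cref{lem:a-copson-endpoint}, which also covers the first node, with constant $8$.
\item The terminal part is an explicit Riemann sum bounded by $\|\nabla\psi\|_{L^2(\mu_T)}^2$, and Minkowski combines the two pieces.
\end{itemize}
Thus $a_2=2\sqrt2=\sqrt8$ comes from the degree-two Copson bound, not from \cref{lem:gaussian-affine-coercivity}.
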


\begin{lemma}
\label{lem:one-step-projection}
Let $e\in L^2(\sigma(X_i,\Delta W_i))$, set
\[
\bar e=\mathcal P_i^0e,
\qquad
C=\mathcal P_i^1e,
\qquad
B=\mathcal P_i^2e.
\]
These projections satisfy
\begin{equation}
|\bar e|^2\le\mathbb E[|e|^2\mid X_i],
\qquad
h|C|^2\le\mathbb E[|e|^2\mid X_i],
\qquad
\frac{h^2}{2}|B|_F^2\le\mathbb E[|e|^2\mid X_i].
\label{eq:one-step-projection-bounds}
\end{equation}
\end{lemma}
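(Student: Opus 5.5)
The plan is to work conditionally on $X_i$ and reduce everything to orthogonality in $L^2$ of the Gaussian increment. Conditionally on $X_i$, the increment $\Delta W_i$ is independent of $\mathcal F_{t_i}$ and distributed as $N(0,hI_d)$.

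First, fix $\omega$-wise the conditional law given $X_i$. By independence, $e$ can be written as $e=\phi(X_i,\Delta W_i)$ with $\phi(x,\cdot)\in L^2(N(0,hI_d))$ for $\mu_{t_i}$-a.e. $x$. Conditional expectations given $X_i$ then become integrals against the Gaussian measure $\gamma_h=N(0,hI_d)$ in the second variable. This reduces all three inequalities to deterministic statements for a fixed $\varphi\in L^2(\gamma_h)$, namely
\[
\Bigl|\int\varphi\,d\gamma_h\Bigr|^2\le\int\varphi^2d\gamma_h,
\quad
h\Bigl|\int\varphi\,\tfrac{w}{h}\,d\gamma_h\Bigr|^2\le\int\varphi^2d\gamma_h,
\quad
\tfrac{h^2}{2}\Bigl|\int\varphi\,\tfrac{ww^\top-hI_d}{h^2}\,d\gamma_h\Bigr|_F^2\le\int\varphi^2d\gamma_h .
\]

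The first inequality is Jensen/Cauchy--Schwarz. For the other two, I would use the Hermite (Wiener chaos) structure of $L^2(\gamma_h)$.

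For the second inequality, the functions $w_j/\sqrt h$, $j=1,\dots,d$, are orthonormal in $L^2(\gamma_h)$. Bessel's inequality then gives
\[
\sum_j\bigl(\int\varphi\,w_j/\sqrt h\,d\gamma_h\bigr)^2\le\|\varphi\|^2 .
\]
The left side equals $h|C|^2$.

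For the third inequality, the entries of $(ww^\top-hI_d)/h$ are second chaos elements. For $j\ne k$, we have $\|w_jw_k/h\|^2=1$. For $j=k$, we have $\|(w_j^2-h)/h\|^2=2$. Distinct index pairs $\{j,k\}$ are mutually orthogonal, and all of them are orthogonal to constants and to the first chaos.

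Write $B=\mathcal P_i^2e$, so $B_{jk}=h^{-1}\langle\varphi,(w_jw_k-h\delta_{jk})/h\rangle$. The normalized family $\{w_jw_k/h\}_{j<k}\cup\{(w_j^2-h)/(\sqrt2h)\}_j$ is orthonormal, so Bessel gives
\[
\sum_{j<k}h^2B_{jk}^2+\sum_j\tfrac{h^2}{2}B_{jj}^2\le\|\varphi\|^2 .
\]
Since $B$ is symmetric, $|B|_F^2=\sum_jB_{jj}^2+2\sum_{j<k}B_{jk}^2$. The left side above is therefore exactly $\tfrac{h^2}{2}|B|_F^2$, which is the claim with the sharp constant.

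The only delicate step is the measure-theoretic reduction to a fixed Gaussian integral, i.e.\ justifying a regular conditional version. I would handle it by the disintegration $\mathbb E[\Psi(X_i,\Delta W_i)\mid X_i]=\int\Psi(X_i,w)\gamma_h(dw)$, which is valid by independence of $\Delta W_i$ and $X_i$ and Fubini. Alternatively, I would prove the bounds for bounded $e$ and pass to the limit, using that both sides are continuous in $L^2$. The algebra in the Frobenius step, matching the factor $2$ on off-diagonal terms against the variance $2$ on the diagonal, is where I would double-check the constants.
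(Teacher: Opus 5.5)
Your proposal is correct and is essentially the paper's argument. The paper forms $e^\perp=e-\bar e-C\cdot\Delta W_i-\frac12B:Q_i$ and uses the conditional Isserlis identities to obtain the Pythagoras identity $\mathbb E[|e|^2\mid X_i]=|\bar e|^2+h|C|^2+\frac{h^2}{2}|B|_F^2+\mathbb E[|e^\perp|^2\mid X_i]$, which is your Bessel inequality for the orthonormal degree-$\le2$ chaos family written coordinate-free. Your diagonal/off-diagonal bookkeeping for the factor $\frac{h^2}{2}$ is right, and applying Bessel to the union of your three mutually orthogonal families also gives the paper's slightly stronger combined bound $|\bar e|^2+h|C|^2+\frac{h^2}{2}|B|_F^2\le\mathbb E[|e|^2\mid X_i]$.
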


\subsection{Uniform full-jet residual stability}
\label{subsec:uniform-full-jet-residual-stability}

\begin{theorem}[Mesh-uniform residual stability]
\label{thm:residual-stability}
Under \cref{ass:time-regularity}, there exists $h_\star>0$ such that, for every
$0<h\le h_\star$ and every
$(\widehat Y,\widehat Z,\widehat\Gamma)\in\mathfrak C_h$ with
$\widehat Y_N=\widehat g(X_T)$,
\begin{equation}
\begin{aligned}
&\sum_{i=0}^{N-1}h\,\mathbb E\!\left[
|\widehat Y_i-Y_i^h|^2
+|\widehat Z_i-Z_i^h|^2
+|\widehat\Gamma_i-\Gamma_i^h|_F^2
\right]\\
&\qquad\le C\left[
J_h(\widehat Y,\widehat Z,\widehat\Gamma)
+\mathbb E|\widehat g(X_T)-g(X_T)|^2
+\mathbb E|\nabla\widehat g(X_T)-\nabla g(X_T)|^2
\right].
\end{aligned}
\label{eq:residual-stability}
\end{equation}
\end{theorem}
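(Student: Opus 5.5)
We will compare the candidate triple with the implicit reference solution by rewriting the candidate as a perturbed implicit scheme, then applying the discrete source--terminal estimates of \cref{lem:discrete-source-terminal-estimates} to the differences and closing with a small-gain argument in $\|\cdot\|_{\beta,h}$.

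\emph{Step 1: perturbed scheme.} For $(\widehat Y,\widehat Z,\widehat\Gamma)\in\mathfrak C_h$, let $\rho_i=\rho_i[\widehat Y,\widehat Z,\widehat\Gamma]$. Since $\widehat q_i$ is $\sigma(X_i)$-measurable, $\mathbb E[\Delta W_i\mid X_i]=0$, $\mathbb E[Q_i\mid X_i]=0$, and the Gaussian moments give $\mathcal P_i^1(\widehat Z_i\cdot\Delta W_i)=\widehat Z_i$ and $\mathcal P_i^2(\tfrac12\widehat\Gamma_i:Q_i)=\widehat\Gamma_i$ (symmetric $\widehat\Gamma_i$). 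Applying $\mathcal P_i^0,\mathcal P_i^1,\mathcal P_i^2$ to the residual identity therefore gives
\[
\widehat Y_i=\mathcal P_i^0\widehat Y_{i+1}+hf(t_i,X_i,\widehat q_i)-\mathcal P_i^0\rho_i,\quad
\widehat Z_i=\mathcal P_i^1\widehat Y_{i+1}-\mathcal P_i^1\rho_i,\quad
\widehat\Gamma_i=\mathcal P_i^2\widehat Y_{i+1}-\mathcal P_i^2\rho_i .
\]
(The $hf$ and $\widehat Y_i$ terms are $X_i$-measurable, so $\mathcal P^1,\mathcal P^2$ annihilate them.)

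\emph{Step 2: linear decomposition.} Set $\delta Y_i=\widehat Y_i-Y_i^h$, and similarly $\delta Z_i$ and $\delta\Gamma_i$. Also set
\[
S_i=f(t_i,X_i,\widehat q_i)-f(t_i,X_i,q_i^h),
\]
so that $|S_i|\le L_0|\delta Y_i|+L_1|\delta Z_i|+L_2|\delta\Gamma_i|_F$. By linearity, $(\delta Y,\delta Z,\delta\Gamma)=(V,U,G)-(\text{residual corrections})$. Here $(V,U,G)$ solves the linear recursion of \cref{lem:discrete-source-terminal-estimates} with source $S$ and terminal datum $\psi=\widehat g-g$.

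The residual corrections arise from propagating $-\mathcal P_i^0\rho_i$ backward through the conditional expectations and applying $\mathcal P^1$ and $\mathcal P^2$. \Cref{lem:one-step-projection} gives $h|\mathcal P_i^1\rho_i|^2\le\mathbb E[|\rho_i|^2\mid X_i]$ and $h^2|\mathcal P_i^2\rho_i|_F^2\le 2\mathbb E[|\rho_i|^2\mid X_i]$. Hence the direct $Z$ and $\Gamma$ corrections contribute $\sum_i h\,\mathbb E|\mathcal P^{1,2}_i\rho_i|^2\le C h^{-1}\sum\mathbb E|\rho_i|^2=CJ_h$. The value correction is a backward sum of $h\cdot(h^{-1}\mathcal P^0\rho_j)$, i.e.\ a source term $h^{-1}\mathcal P_j^0\rho_j$ with $\|h^{-1}\mathcal P^0\rho\|_{\beta,h}^2\le J_h$.

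The dangerous part is the propagated value corrections feeding into $\mathcal P_i^1$ and $\mathcal P_i^2$ of later nodes. To handle this, we treat the full accumulated defect $\sum_{j>i}\mathcal P^0_j\rho_j$ as a source $h^{-1}\mathcal P^0\rho$ in the linear lemma. The contribution of that source to $G$ is then bounded by $a_2\|h^{-1}\mathcal P^0\rho\|_{\beta,h}\le 2\sqrt2\,J_h^{1/2}$. The $\mathcal P^1,\mathcal P^2$ parts of the non-conditional-expectation remainder, $\rho_i-\mathcal P^0_i\rho_i$, are handled by the one-step bounds above.

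\emph{Step 3: closing.} Combining the three estimates of \cref{eq:discrete-source-terminal-estimates}, we get
\[
\|\delta\Gamma\|\le 2\sqrt2\|S\|+\|\nabla\psi\|+CJ_h^{1/2},
\]
\[
\|\delta Z\|\le a_{1,h}\|S\|+\|\psi\|+CJ_h^{1/2},
\]
\[
\|\delta Y\|\le a_{0,h}\|S\|+b_{0,h}\|\psi\|+CJ_h^{1/2},
\]
with $\|S\|\le L_0\|\delta Y\|+L_1\|\delta Z\|+L_2\|\delta\Gamma\|$. As $h\to0$, $\vartheta_{\beta,h}\to2\beta$, so $a_{0,h}\to\beta^{-1}$ and $a_{1,h}\to\beta^{-1/2}$. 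The effective gain is therefore $\lambda_h(\beta)\to\lambda(\beta)<1$ (the $\beta$ from \cref{thm:occupation-well-posedness}). Choosing $h_\star$ so that $\lambda_h(\beta)\le(1+\lambda(\beta))/2$ and $hL_0<1$, we absorb and obtain the bound in $\|\cdot\|_{\beta,h}$. Since $r_\beta\ge e^{-2\beta T}$, the weighted and unweighted sums are equivalent, which yields \cref{eq:residual-stability}.

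\emph{Main obstacle.} The main obstacle is Step 2 for the Hessian channel, in particular at the first node. Propagated value defects enter $\mathcal P^2$ with an $h^{-2}$ scaling, so naive one-step bounds lose a factor $h^{-1}$. We must instead run the defects through the Copson-type / Gaussian chaos mechanism underlying the mesh-uniform constant $a_2$ in \cref{lem:discrete-source-terminal-estimates}, treating $h^{-1}\mathcal P^0\rho$ as a genuine source. Verifying that this mechanism accepts such non-smooth Markov sources uniformly in $h$ is where we expect the real work to lie.
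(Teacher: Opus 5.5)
Your proposal is correct and follows the paper's proof essentially step for step. You project the residual identity with $\mathcal P_i^0,\mathcal P_i^1,\mathcal P_i^2$, feed the difference recursion into \cref{lem:discrete-source-terminal-estimates}, bound the projected residuals via \cref{lem:one-step-projection}, and absorb using $\lambda_{\beta,h}\to\lambda(\beta)<1$. The only cosmetic difference is that the paper bundles the residual into a single source $S_i=\delta f_i-\bar e_i/h$, whereas you split it off by linearity. The ``main obstacle'' you flag needs no further work: \cref{lem:discrete-source-terminal-estimates} is stated and proved (via Hermite truncation) for arbitrary Markov sources $S_i=s_i(X_i)\in L^2(\sigma(X_i))$, so it applies directly to $h^{-1}\mathcal P_i^0\rho_i$.
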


\begin{proof}
Set $c_2=2\sqrt2L_2<1$ and choose $\beta>0$ such that
\[
\lambda(\beta)
=\frac{L_0}{\beta}+\frac{L_1}{\sqrt\beta}+c_2
\le\frac{1+c_2}{2}.
\]
For this $\beta$, define
$
\lambda_{\beta,h}=a_{0,h}L_0+a_{1,h}L_1+a_2L_2.
$
Since $\lambda_{\beta,h}\to\lambda(\beta)$ as $h\downarrow0$, there is
$h_\star>0$ such that, for every $0<h\le h_\star$,
\[
h L_0<1,
\qquad
\lambda_{\beta,h}\le \bar\lambda:=\frac{2+c_2}{3}<1.
\]
In particular, the reference scheme is well posed by
\cref{prop:implicit-reference-well-posedness}.

Set
$
\delta Y_i=\widehat Y_i-Y_i^h,\,
\delta Z_i=\widehat Z_i-Z_i^h,\,
\delta\Gamma_i=\widehat\Gamma_i-\Gamma_i^h.
$
Write $\widehat e_i=\rho_i[\widehat Y,\widehat Z,\widehat\Gamma]$ and
\[
\bar e_i=\mathcal P_i^0\widehat e_i,
\qquad
C_i=\mathcal P_i^1\widehat e_i,
\qquad
B_i=\mathcal P_i^2\widehat e_i.
\]
The Markov property of the comparison triple ensures that these projections
are well defined. Define
$
\delta f_i
=f(t_i,X_i,\widehat Y_i,\widehat Z_i,\widehat\Gamma_i)
-f(t_i,X_i,Y_i^h,Z_i^h,\Gamma_i^h).
$
Projecting the residual identity and subtracting the reference equations gives
\begin{equation}
\begin{aligned}
\delta Y_i=\mathbb E[\delta Y_{i+1}\mid X_i]
+h\delta f_i-\bar e_i,\,
\delta Z_i=\mathcal P_i^1\delta Y_{i+1}-C_i,\,
\delta\Gamma_i=\mathcal P_i^2\delta Y_{i+1}-B_i.
\end{aligned}
\label{eq:projected-error-recursion}
\end{equation}

Define $S_i=\delta f_i-\frac{\bar e_i}h$ and
$
R_0=\|\bar e/h\|_{\beta,h},
\,
R_1=\|C\|_{\beta,h},
\,
R_2=\|B\|_{\beta,h}.
$
With $\psi=\widehat g-g$, set
$
e_0=\|\psi\|_{L^2(\mu_T)},
\,
e_1=\|\nabla\psi\|_{L^2(\mu_T)}
$.

Since $\delta Y_N=\psi(X_N)$, the first recursion in
\cref{eq:projected-error-recursion} and
\cref{lem:discrete-source-terminal-estimates}, followed by the triangle
inequality in the projected channels, give
\begin{equation}
\begin{aligned}
\|\delta Y\|_{\beta,h}
\le a_{0,h}\|S\|_{\beta,h}+b_{0,h}e_0,&\quad
\|\delta Z\|_{\beta,h}
\le a_{1,h}\|S\|_{\beta,h}+R_1+e_0,\\
\|\delta\Gamma\|_{\beta,h}
&\le a_2\|S\|_{\beta,h}+R_2+e_1.
\end{aligned}
\label{eq:discrete-channel-estimates}
\end{equation}
The jet-Lipschitz condition therefore yields
\begin{equation}
\|S\|_{\beta,h}
\le\lambda_{\beta,h}\|S\|_{\beta,h}+\mathfrak D_h,
\qquad
\|S\|_{\beta,h}\le(1-\bar\lambda)^{-1}\mathfrak D_h,
\label{eq:discrete-source-absorption}
\end{equation}
where $
\mathfrak D_h
=R_0+L_1R_1+L_2R_2
+(L_0b_{0,h}+L_1)e_0+L_2e_1.
$
\cref{lem:one-step-projection} then implies
\[
R_0\le\mathcal R_{\beta,h},
\quad
R_1\le\sqrt h\,\mathcal R_{\beta,h},
\quad
R_2\le\sqrt2\,\mathcal R_{\beta,h},
\quad
\mathcal R_{\beta,h}^2
=\sum_{i=0}^{N-1}\frac{r_\beta(t_i)}h\,\mathbb E|\widehat e_i|^2.
\]
Since $h\le T$,
$\vartheta_{\beta,h}\ge(1-e^{-2\beta T})/T$; hence
$a_{0,h},a_{1,h}$, and $b_{0,h}$ are uniformly bounded. Moreover, $\mathcal R_{\beta,h}^2 \le J_h(\widehat{Y},
\widehat{Z},\widehat{\Gamma})$. Therefore
\crefrange{eq:discrete-channel-estimates}{eq:discrete-source-absorption} yield
\[
\|\delta Y\|_{\beta,h}
+\|\delta Z\|_{\beta,h}
+\|\delta\Gamma\|_{\beta,h}
\le C(\mathcal R_{\beta,h}+e_0+e_1).
\]
After squaring and using $r_\beta(t_i)\ge e^{-2\beta T}$, this is precisely
\cref{eq:residual-stability}.
\end{proof}

\begin{remark}[Comparison with monotonicity-based regimes]
\label{rem:monotonicity-comparison}
The fixed-reference $L^2$ argument in \cref{thm:residual-stability} requires
$2\sqrt2L_2<1$, stricter than the monotonicity regime of \cite{ref5}. For
$f(\gamma):=a\sum_{j=1}^d|\gamma_{jj}|$, $a\ge0$, one has $L_2=a\sqrt d$;
hence \cref{ass:continuous-occupation-well-posedness} imposes
$a<1/\sqrt{8d}$, while \cite{ref5} permits $0\le a<1/2$ independently of $d$.

The issue is a norm mismatch, not the one-step format:
\cref{thm:residual-stability} turns an $L^2$-small residual under the fixed
Brownian occupation law into mesh-uniform full-jet control, while \cite{ref5}
only propagates an a priori $L^\infty$ bound on the deterministic one-step
defect through nonnegative averaging. It cannot infer such a bound from an
$L^2$ residual, which may concentrate on reference-rare events carrying
non-negligible weight under the Hessian-feedback transition. Thus monotonicity
alone does not yield the analogue of \cref{thm:residual-stability} needed here.
The missing ingredient is a finite-sample $L^\infty$ generalization bound for
the learned one-step defect. The lower bound in
\cite[Theorem 7.1 and the following sphere example]{ref22} concerns the
$L^\infty$ learning error, already in the noiseless setting, and has a
dimension-dependent exponent. It therefore identifies a statistical
generalization obstruction rather than an approximation limitation; changing
the approximator alone does not remove it.

\end{remark}

\begin{remark}[Necessity of the Markov/local class]
\label{rem:markov-class-necessity}
The restriction to $\mathfrak C_h$ is structural: an estimate based only on
$J_h$ fails for arbitrary adapted, path-dependent comparison triples. Indeed, fix $T>0$, let $h=T/N$, and take $d=1$, $f=g=0$, and
$Q_0=(\Delta W_0)^2-h$. The reference scheme is identically zero. For
$a\ne0$, define:
\[
\begin{aligned}
\widehat Y_0&=\widehat Y_N=0,
&\widehat Z_0&=0,
&\widehat\Gamma_0&=-Na,\\
\widehat Y_i&=-\frac a2(N-i)Q_0,
&\widehat Z_i&=0,
&\widehat\Gamma_i&=0,
\qquad 1\le i\le N-1.
\end{aligned}
\]
For $i\ge2$, $\widehat Y_i$ retains the first increment's second-chaos component
and is not $\sigma(X_i)$-measurable. Using $\mathbb E[Q_0^2]=2h^2$, direct calculation gives
\begin{equation*}
\rho_i[\widehat Y,\widehat Z,\widehat\Gamma]
=\frac a2Q_0,\quad 0\le i\le N-1,
\qquad
J_h(\widehat Y,\widehat Z,\widehat\Gamma)=\frac{Ta^2}{2}.
\end{equation*}

In contrast, the first-node Hessian contribution is
$h\mathbb E|\widehat\Gamma_0-\Gamma_0^h|^2=TNa^2$.
Their ratio is at least $2N\to\infty$ as $h\downarrow0$. Since both terminal
errors vanish, no $h$-independent constant can extend
\cref{eq:residual-stability} to arbitrary
adapted triples.

\end{remark}

\subsection{Convergence of the implicit reference scheme}
\label{subsec:convergence-of-the-implicit-reference-scheme}
\begin{corollary}[Convergence of the implicit reference scheme]
\label{cor:reference-scheme-convergence}
Under \cref{ass:time-regularity}, for $0<h\le h_\star$,
\begin{equation}
\mathsf{PE}_h^2(Y^h,Z^h,\Gamma^h)\le Ch.
\label{eq:reference-scheme-convergence}
\end{equation}

\end{corollary}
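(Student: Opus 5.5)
The plan is to compare the reference scheme with the exact-grid triple and then pass to the continuous target by the triangle inequality. Split the path error pointwise as
\[
|Y_t^\star-Y_i^h|^2\le 2|Y_t^\star-Y_i^\pi|^2+2|Y_i^\pi-Y_i^h|^2,
\]
and likewise in the $Z$ and $\Gamma$ channels. Summing over the grid intervals bounds $\mathsf{PE}_h^2(Y^h,Z^h,\Gamma^h)$ by twice the left side of \cref{eq:full-jet-path-regularity} plus
\[
2\sum_{i=0}^{N-1}h\,\mathbb E\bigl[|Y_i^\pi-Y_i^h|^2+|Z_i^\pi-Z_i^h|^2+|\Gamma_i^\pi-\Gamma_i^h|_F^2\bigr].
\]
Here we use that the grid values are constant on $[t_i,t_{i+1})$. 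By \cref{prop:full-jet-path-regularity}, the first contribution is at most $Ch$.

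For the second contribution, apply \cref{thm:residual-stability} to the comparison triple $(\widehat Y,\widehat Z,\widehat\Gamma)=(Y^\pi,Z^\pi,\Gamma^\pi)$. This is legitimate because \cref{prop:full-jet-path-regularity} (via \cref{lem:exact-grid-admissibility}) places the exact-grid triple in $\mathfrak C_h$, with terminal value $Y_N^\pi=Y_T^\star=g(X_T)$, so $\widehat g=g$. The key point is identifying the terminal datum. By the continuity of $Y^\star$ stated in \cref{lem:canonical-traces} and \cref{rem:occupation-mild-weak-classical}, $Y_T^\star=(\mathcal U(F^\star,g))(T,X_T)=P_0g(X_T)=g(X_T)$. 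Hence both terminal penalties in \cref{eq:residual-stability} vanish. The right-hand side then reduces to $C\,J_h(Y^\pi,Z^\pi,\Gamma^\pi)$, which is at most $Ch$ by \cref{prop:exact-grid-consistency}. The threshold $h_\star$ is the one from \cref{thm:residual-stability}, and it also guarantees that the reference scheme is well posed.

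Combining the two contributions gives \cref{eq:reference-scheme-convergence}. The only step needing care is the admissibility and terminal identification of the exact-grid triple: that $\widehat g=g\in H^1(\mu_T)$ is a valid witness in the definition of $\mathfrak C_h$, and that the $\sigma(X_i)$-measurable versions of the canonical traces, including the first-node average $\overline\Gamma_0^h$, are square-integrable. These facts are supplied by \cref{lem:canonical-traces} and \cref{prop:full-jet-path-regularity}. Everything else is the triangle inequality. The constant depends only on $T$, $K$, and the $L$-type constants, since $\beta$ is fixed inside the proof of \cref{thm:residual-stability} in terms of these.
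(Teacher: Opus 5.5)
Your proposal is correct and is essentially the paper's argument. The factor-two triangle inequality reduces $\mathsf{PE}_h^2(Y^h,Z^h,\Gamma^h)$ to the full-jet path regularity of the exact-grid triple plus its grid distance to the reference scheme, and that distance is bounded by residual stability applied to $(Y^\pi,Z^\pi,\Gamma^\pi)\in\mathfrak C_h$ with $\widehat g=g$ (both terminal penalties vanish) together with exact-grid consistency $J_h(Y^\pi,Z^\pi,\Gamma^\pi)\le Ch$.
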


\begin{proof}
By \cref{prop:full-jet-path-regularity}, the exact-grid triple belongs to
$\mathfrak C_h$ and has zero terminal errors. Thus
\cref{prop:exact-grid-consistency,thm:residual-stability} give an $O(h)$ grid
distance to $(Y^h,Z^h,\Gamma^h)$; combining it with
\cref{prop:full-jet-path-regularity} proves
\cref{eq:reference-scheme-convergence}.
\end{proof}

\begin{remark}[Implications for neighboring learning formulations]
\label{rem:neighboring-solvers}
The continuous and discrete estimates separate stability from the choice of
optimizer or network architecture. First, let $v$ be a sufficiently regular
space--time function and define its continuous PDE residual by
\[
\mathcal R_v
=\partial_tv+\frac12\Delta v
+f(t,x,v,D_xv,D_x^2v).
\]
Then $v=\mathcal U(F^v-\mathcal R_v,v(T,\cdot))$. Under
$\lambda(\beta)<1$, the linear estimates in
\cref{cor:linear-mild-map-estimates}, followed by the absorption in
\cref{eq:source-update-contraction}, control the full Brownian occupation-jet
error relative to $u^\star$ by $\|\mathcal R_v\|_{\mathcal H_\beta}$ and
$\|v(T,\cdot)-g\|_{H^1(\mu_T)}$. This is the population stability statement for
a Brownian-occupation PINN loss augmented with terminal $H^1$ control. Second,
\cref{thm:occupation-well-posedness} proves geometric convergence of the exact
source Picard iteration under the fixed-start Brownian occupation law. Together
with a regression or Monte Carlo error estimate, such as the finite-variance
control-variate analysis in \cite{ref42}, it supplies the deterministic
stability component of a DPI analysis. For both PINNs and DPI, a complete
finite-sample neural convergence theorem additionally requires method-specific
approximation, generalization, and optimization estimates.

On the grid, \cref{thm:residual-stability} does not depend on how a Markov
triple is constructed. It therefore applies a posteriori to a DBDP-type variant
that minimizes local residuals backward, provided its assembled Markov triple
has small $J_h$ and terminal penalties. However, each local solve feeds target
or Hessian information into the preceding time level, so errors can propagate
through the backward sweep and complicate tolerance and hyperparameter
calibration, as observed in \cite{ref44}. The sequential sweep also limits time
parallelism. By contrast, D2SRM jointly samples and optimizes all time levels.

The Deep 2BSDE method \cite{ref39} poses a different issue. Its forward
recursion generally makes the value and gradient depend on accumulated past
increments, even when the learned coefficient processes are state functions.
The resulting triple need not be state-local and thus need not belong to
$\mathfrak C_h$. Accordingly, \cref{rem:markov-class-necessity} shows that the
present $L^2$ residual estimate cannot extend to arbitrary adapted triples.
This excludes such candidates from the stability theorem but does not imply
that the Deep 2BSDE method diverges; the failures reported in \cite{ref44} are
separate numerical evidence.
\end{remark}

\section{Reliability, Attainability, and Population Convergence}
\label{sec:reliability-attainability-and-population-convergence}

\subsection{A posteriori reliability}
\label{subsec:a-posteriori-reliability}
\begin{proof}[Proof of \cref{thm:reliability}]
Fix $\theta\in\Theta_h$ and insert the implicit reference triple between the
continuous target and the neural candidate:
\[
\begin{aligned}
\mathsf{PE}_h^2(Y^\theta,Z^\theta,\Gamma^\theta)
&\le2\mathsf{PE}_h^2(Y^h,Z^h,\Gamma^h)\\
&\quad+2\sum_{i=0}^{N-1} h\,\mathbb E\!\left[
|Y_i^\theta-Y_i^h|^2
+|Z_i^\theta-Z_i^h|^2
+|\Gamma_i^\theta-\Gamma_i^h|_F^2
\right].
\end{aligned}
\]
\Cref{cor:reference-scheme-convergence} bounds the first term by $Ch$.
Applying \cref{thm:residual-stability} with
$\widehat g=U^\theta(T,\cdot)$ bounds the second by $C\mathcal L_h(\theta)$,
because \cref{eq:d2srm-objective} is exactly the sum of its residual and
terminal-error terms.
This proves \cref{eq:reliability}.
\end{proof}

\subsection{Attainability and population minimizers}
\label{subsec:attainability-and-approximate-population-minimizers}
Since $e^{-2\beta T}\le r_\beta(t)\le1$, the $\mathcal H_\beta$ and
$\mathcal H_0$ norms are equivalent. For
$v\in\mathcal V_h^{\mathrm{reg}}$, define
\begin{equation}
\begin{aligned}
\mathcal E_{\mathrm{occ}}^2(v)
=\|\mathscr Lv+F^\star\|_{\mathcal H_0}^2
+\|v-u^\star\|_{\mathcal H_0}^2 +\|\nabla_x(v-u^\star)\|_{\mathcal H_0}^2
+\|D_x^2(v-u^\star)\|_{\mathcal H_0}^2.
\end{aligned}
\label{eq:regular-occupation-error}
\end{equation}
Let $(Y^\pi,Z^\pi,\Gamma^\pi)$ be the exact-grid target from
\cref{def:canonical-traces-exact-grid}, and define
\begin{equation*}
\begin{aligned}
 &\varepsilon_{T,0}^2(v)=\mathbb E|v(T,X_T)-g(X_T)|^2, \qquad
\varepsilon_{T,1}^2(v)
=\mathbb E|\nabla_xv(T,X_T)-\nabla g(X_T)|^2,\\
&\mathcal E_\pi^2(v)
=h\sum_{i=0}^{N-1}\mathbb E\bigl[
|v(t_i,X_i)-Y_i^\pi|^2
+|\nabla_xv(t_i,X_i)-Z_i^\pi|^2
+|D_x^2v(t_i,X_i)-\Gamma_i^\pi|_F^2
\bigr].
\end{aligned}
\end{equation*}
The approximation functional is
\begin{equation}
\mathfrak A_h(v)
=\mathcal E_{\mathrm{occ}}^2(v)
+\mathcal E_\pi^2(v)
+hM_{\mathrm{dr}}^2(v)
+\varepsilon_{T,0}^2(v)
+\varepsilon_{T,1}^2(v),\,\,
M_{\mathrm{dr}}^2(v)=\|\nabla_x\mathscr Lv\|_{\mathcal H_0}^2.
\label{eq:approximation-functional}
\end{equation}
The occupation term controls the PDE graph and jet almost everywhere in time;
the grid term controls traces not determined by the occupation norm.

For $v\in\mathcal V_h^{\mathrm{reg}}$, write
$
(Y_t^v,Z_t^v,\Gamma_t^v)
=\bigl(v,\nabla_xv,D_x^2v\bigr)(t,X_t)
$
and let $(Y_i^v,Z_i^v,\Gamma_i^v)$ denote the corresponding grid samples.

\begin{proposition}[Attainability estimate]
\label{prop:attainability}
Under \cref{ass:time-regularity}, every
$v\in\mathcal V_h^{\mathrm{reg}}$ satisfies
\begin{equation}
J_h(Y^v,Z^v,\Gamma^v)
\le C\left[
\mathcal E_{\mathrm{occ}}^2(v)
+\mathcal E_\pi^2(v)
+h\bigl(1+M_{\mathrm{dr}}^2(v)\bigr)
\right].
\label{eq:attainability-residual-bound}
\end{equation}
\end{proposition}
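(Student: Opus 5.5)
We expand the one-step residual of the sampled jet of $v$ with Itô's formula and compare it term by term with the exact-grid decomposition $\rho_i=R_i^\Gamma-R_i^F-R_i^D$ from \cref{prop:exact-grid-consistency}. Since $v\in C^{1,3}_{\mathrm{loc}}([0,T)\times\mathbb R^d)\cap C^{0,2}([0,T]\times\mathbb R^d)$, a localized Itô formula on each $I_i=[t_i,t_{i+1}]$ gives
\[
Y_{t_{i+1}}^v-Y_{t_i}^v=\int_{I_i}\mathscr Lv(s,X_s)\,ds+\int_{I_i}Z_s^v\cdot dW_s,
\]
\[
Z_s^v-Z_i^v=\int_{t_i}^s\nabla_x\mathscr Lv(r,X_r)\,dr+\int_{t_i}^s\Gamma_r^v\,dW_r .
\]
On the last interval the gradient identity is obtained on $[t_{N-1},T-\epsilon]$ and the limit $\epsilon\downarrow0$ is taken in $L^2$, using $\nabla_x v\in C^{0}$ up to $T$ and the integrability $\nabla_x\mathscr Lv,D^2_xv\in\mathcal H_\beta$. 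Inserting $\Gamma_i^v(W_s-W_{t_i})$ and using \cref{lem:centered-quadratic-ito} exactly as in the proof of \cref{prop:exact-grid-consistency}, we obtain
\[
\rho_i[Y^v,Z^v,\Gamma^v]=\int_{I_i}\bigl(\mathscr Lv(s,X_s)+f(t_i,X_i,Y_i^v,Z_i^v,\Gamma_i^v)\bigr)ds+\widetilde R_i^D+\widetilde R_i^\Gamma,
\]
where $\widetilde R_i^D=\int_{I_i}\bigl(\int_{t_i}^s\nabla_x\mathscr Lv\,dr\bigr)\cdot dW_s$ and $\widetilde R_i^\Gamma=\int_{I_i}\bigl(\int_{t_i}^s(\Gamma_r^v-\Gamma_i^v)dW_r\bigr)\cdot dW_s$.

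The three pieces are then bounded separately. For the drift remainder, Itô isometry and Cauchy--Schwarz give $\sum_i\mathbb E|\widetilde R_i^D|^2\le h^2\|\nabla_x\mathscr Lv\|_{\mathcal H_0}^2=h^2M_{\mathrm{dr}}^2(v)$, which contributes $hM_{\mathrm{dr}}^2(v)$ after division by $h$.

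For the Hessian remainder, we get $\sum_i\mathbb E|\widetilde R_i^\Gamma|^2\le h\sum_i\int_{I_i}\mathbb E|\Gamma_r^v-\Gamma_i^v|_F^2dr$. We split $\Gamma_r^v-\Gamma_i^v=(\Gamma_r^v-\Gamma_r^\star)+(\Gamma_r^\star-\Gamma_i^\pi)+(\Gamma_i^\pi-\Gamma_i^v)$, and bound these using, respectively:
\begin{itemize}
\item the occupation term $\|D_x^2(v-u^\star)\|_{\mathcal H_0}^2\le\mathcal E^2_{\mathrm{occ}}(v)$;
\item \cref{prop:full-jet-path-regularity}, which gives $Ch$;
\item the grid term $\mathcal E_\pi^2(v)$.
\end{itemize}
After division by $h$ this piece is bounded by $C(\mathcal E_{\mathrm{occ}}^2+\mathcal E_\pi^2+h)$.

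For the drift/source term, we write $\mathscr Lv+f_i^v=(\mathscr Lv+F^\star)_s-(F_s^\star-F_i^\pi)+(f_i^v-F_i^\pi)$, with $f_i^v=f(t_i,X_i,Y_i^v,Z_i^v,\Gamma_i^v)$. We apply Cauchy--Schwarz on each interval, so that $\mathbb E|\int_{I_i}\cdot\,ds|^2\le h\int_{I_i}\mathbb E|\cdot|^2ds$. Each part is then handled as follows:
\begin{itemize}
\item the first part is bounded by $\|\mathscr Lv+F^\star\|_{\mathcal H_0}^2$;
\item the second part is bounded by \cref{lem:source-freezing}, giving $Ch$;
\item the third part is bounded by $h^2\mathbb E|f_i^v-F_i^\pi|^2\le Ch^2\mathbb E(|Y_i^v-Y_i^\pi|^2+|Z_i^v-Z_i^\pi|^2+|\Gamma_i^v-\Gamma_i^\pi|_F^2)$ using the jet-Lipschitz bound, which sums after division by $h$ to $C\mathcal E_\pi^2(v)$.
\end{itemize}
Summing the three pieces and dividing by $h$ yields \cref{eq:attainability-residual-bound}.

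The main obstacle we expect is justifying the Itô identities for $v$ in the Markov/occupation setting. Specifically, we need square-integrability of the stochastic integrals and validity of the gradient identity up to $t=T$ on the last interval, where $v$ is only $C^{0,2}$. We intend to use a localization in space together with the occupation integrability of $\mathscr Lv$, $\nabla_x\mathscr Lv$, and $D^2_xv$, and then pass to the limit in $L^2$ via Itô isometry, in the same way as the canonical identification of \cref{lem:canonical-traces}.
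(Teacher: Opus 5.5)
Your proposal is correct and follows the paper's proof essentially step by step: the candidate It\^o identities of \cref{lem:candidate-ito-identities}, the centered quadratic identity, the three-term Hessian split controlled by $\mathcal E_{\mathrm{occ}}$, \cref{prop:full-jet-path-regularity}, and $\mathcal E_\pi$, and \cref{lem:source-freezing} for the driver. The main difference is that the paper inserts $F^v(t,X_t)$ to split the drift into the defect $\delta_v=\mathscr Lv+F^v$ and a frozen-driver error, and these $F^v$ contributions cancel, so your direct split needs only the graph part $\|\mathscr Lv+F^\star\|_{\mathcal H_0}$ there; also note that the limit $t\uparrow T$ is genuinely needed for the value identity (since $Y_N^v=v(T,X_T)$), whereas the gradient identity is only used for $s<T$.
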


The proof is given in \cref{subsec:attainability-residual}.

\begin{proof}[Proof of \cref{thm:population-convergence}]
For every $\theta\in\Theta_h$,
\cref{prop:attainability,eq:d2srm-objective} give
$\mathcal L_h(\theta)\le C(h+\mathfrak A_h(U^\theta))$.
Taking the infimum and using the approximate-minimizer property gives
\[
\mathcal L_h(\theta_h)
\le C\left[
h+\inf_{\theta\in\Theta_h}\mathfrak A_h(U^\theta)
\right]
+\varepsilon_{\mathrm{opt},h}.
\]
Substitution into \cref{thm:reliability} gives \cref{eq:population-convergence}.
\end{proof}

\begin{remark}[Rate criterion and scope]
\label{rem:rate-criterion-and-scope}
The combined estimate separates time discretization $h$, best approximation
$\inf_{\theta\in\Theta_h}\mathfrak A_h(U^\theta)$, and population optimization
$\varepsilon_{\mathrm{opt},h}$. A sufficient $O(h)$ approximation criterion
is a sequence $\bar\theta_h\in\Theta_h$ such that
\[
\mathcal E_{\mathrm{occ}}^2(U^{\bar\theta_h})
+\mathcal E_\pi^2(U^{\bar\theta_h})
+\varepsilon_{T,0}^2(U^{\bar\theta_h})
+\varepsilon_{T,1}^2(U^{\bar\theta_h})=O(h),
\qquad
M_{\mathrm{dr}}(U^{\bar\theta_h})=O(1).
\]
\Cref{subsec:neural-approximation-context-and-limitations} proves that the
best-approximation term vanishes qualitatively over the centered-Softplus
ridge class and explains why a coefficient-dependent $O(h)$ rate requires
additional estimates.
\end{remark}

\section{Numerical Experiments}
\label{sec:numerical-experiments}
We use the manufactured benchmark to answer three numerical questions: how
network strucuture and terminal treatment affect performance, whether
a small residual remains informative for the full jet as $L_2$
crosses the proved small-gain boundary, and whether the loss and jet errors
decrease as the time step decreases. More extensive reproducibility details are given in
\cref{sec:supp-numerical-experiments}.

\subsection{Benchmark, Monte Carlo training, and common setting}
\label{subsec:numerical-benchmark}
We set $d=100$, $T=1$, $x_0=0$ and instantiate \eqref{eq:pde} by
\begin{equation}
\begin{aligned}
u^\star(t,x)
&=\sum_{j=1}^2v_j\sin\!\left(t+b_j+(w^{(j)})^\top x\right),
\qquad g=u^\star(T,\cdot),\\
f_\alpha(t,x,y,z,\gamma)
&=\alpha\sum_{k=1}^d|\gamma_{kk}|-
(\partial_t+\tfrac12\Delta)u^\star
-\alpha\sum_{k=1}^d|\partial_{x_kx_k}^2u^\star|.
\end{aligned}
\label{eq:numerical-benchmark}
\end{equation}
The two frequency vectors, phases, and amplitudes are sampled once per PDE seed
and then frozen, so the exact value, gradient, Hessian diagonal, and full
Hessian are available on every path. At each optimizer step, we independently
resample all Brownian increments; no training path bank is reused.
For a global batch size $B=256$ and $h=T/N$, the learned-terminal empirical loss is
\begin{equation}
  \begin{aligned}
    \widehat{\mathcal L}_{h,B}(\theta)
&=\frac{1}{Bh}\sum_{b=1}^B\sum_{i=0}^{N-1}|\rho_i^{\theta,(b)}|^2 +\frac{1}{B}\sum_{b=1}^B|U^\theta(T,X_T^{(b)})-g(X_T^{(b)})|^2 \\
&+\frac{1}{B}\sum_{b=1}^B|\nabla_xU^\theta(T,X_T^{(b)})-\nabla g(X_T^{(b)})|^2,
\label{eq:numerical-empirical-objective}
  \end{aligned}
\end{equation}
Here $\rho_i^\theta$ is the residual in
\cref{subsec:grid-residual-d2srm}. For
$q\in\{u,\nabla u,\operatorname{diag}D^2u,D^2u\}$, we report
\begin{equation}
\operatorname{RelRMSE}_q
=\left(
\frac{\sum_{b=1}^B\sum_{i=0}^{N-1}\|q_i^{\theta,(b)}-q_i^{\star,(b)}\|^2}
{\sum_{b=1}^B\sum_{i=0}^{N-1}\|q_i^{\star,(b)}\|^2+\varepsilon_{\mathrm{rel}}^2BN}
\right)^{1/2},
\qquad \varepsilon_{\mathrm{rel}}=10^{-12}.
\label{eq:numerical-relative-error}
\end{equation}
The Frobenius norm is used for $D^2u$, the Euclidean norm otherwise, and the
terminal point is excluded. For each run, the checkpoint with the lowest loss
on a frozen $2048$-path validation bank is selected from those saved every
$100$ steps. The validation minima
lie near the end: all runs in Experiments~1--2 select the final checkpoint, and
all $15$ runs in Experiment~3 select among the last three ($14$ among the last
two). The selected checkpoint alone is evaluated once on an independent
$4096$-path test bank, where the full-Hessian error is also computed.

We use the Adam optimizer. Experiments~1--2 use $4000$ steps and learning rates
$10^{-3}$, $3\times10^{-4}$, and $10^{-4}$ on steps $1{:}2000$,
$2001{:}3000$, and $3001{:}4000$. Experiment~3 uses $5000$ steps and the same learning rates
on steps $1{:}2000$, $2001{:}3500$, and $3501{:}5000$.

\subsection{Experiment 1: parameterization and terminal treatment}
\label{subsec:numerical-terminal-comparison}
We fix $N=100$ and $\alpha=0.25$ and compare four settings.
\emph{TL-exact} combines a trainable quadratic jet at $i=0$ with one scalar
$100$--$64$--$64$--$64$--$1$ spatial MLP at every $1\le i<N$, sets
$Y_N=g(X_T)$, and has $1{,}480{,}152$ parameters. The three ST variants use
identically initialized scalar $101$--$64$--$64$--$64$--$1$ MLPs in $(t,x)$,
each with $14{,}913$ parameters.
All TL and ST hidden layers use the centered Softplus activation
$\sigma_{\mathrm{csp}}$ defined in \cref{rem:rate-criterion-and-scope}.
\emph{ST-plain-exact} substitutes
$Y_N=g(X_T)$ only in the last residual, leaving $U^\theta(T,\cdot)$
unconstrained. \emph{ST-network} instead sets $Y_N=U^\theta(T,X_T)$.
\emph{ST-hard} uses
\begin{equation}
U^{\theta,\mathrm{hard}}(t,x)
=t/T\,g(x)+(1-t/T)U^{\theta,\mathrm{NN}}(t,x),
\label{eq:hard-terminal-ansatz}
\end{equation}
so the terminal value and spatial jet are exact without a penalty. Thus ST-network is the only
setting whose loss includes the terminal penalties in
\eqref{eq:numerical-empirical-objective}.

\begin{table}[tb]
\caption{Experiment 1 selected-checkpoint test loss, relative RMSEs in
percent, and optimizer-only training time. Bold entries are minima among the
loss and error columns.}
\label{tab:numerical-terminal-errors}
\centering
\scriptsize
\setlength{\tabcolsep}{3.0pt}
\begin{tabular}{@{}lrrrrrr@{}}
\toprule
& & \multicolumn{4}{c}{relative RMSE (\%)} & \\
\cmidrule(lr){3-6}
model & $\widehat{\mathcal L}_{h,\mathrm{test}}$ & $u$ & $\nabla u$
& $\operatorname{diag}D^2u$ & $D^2u$ & train (min) \\
\midrule
TL-exact       & 0.739140 & 30.24 & 61.14 & 88.78 & 88.98 & 57.68 \\
ST-plain-exact & 0.333280 & 13.52 & 33.88 & 20.20 & 23.34 & 23.59 \\
ST-network     & 0.009812 & 1.58 & \textbf{4.19} & 9.77 & \textbf{6.26} & 23.74 \\
ST-hard        & \textbf{0.009132} & \textbf{1.27} & 4.72 & \textbf{8.46} & 6.79 & 23.64 \\
\bottomrule
\end{tabular}
\end{table}

TL-exact and ST-plain-exact do not attain small losses in
\cref{tab:numerical-terminal-errors,fig:numerical-terminal-curves}.
\Cref{prop:attainability} suggests an intuitive explanation:
attaining a small residual is facilitated by a single continuous-time function
$v$ with sufficient temporal regularity. Separate time-layer networks do not
enforce this coherence, whereas replacing the terminal trace of a space--time
network by $g$ removes the terminal loss but can impair temporal approximation
near $T$. After sufficient training, ST-network and ST-hard achieve comparable
full-jet accuracy. ST-hard performs better during the early stage of training,
so we use it below.

\begin{figure}[!b]
\centering
\includegraphics[width=0.80\linewidth,trim=5bp 8bp 5bp 5bp,clip]{%
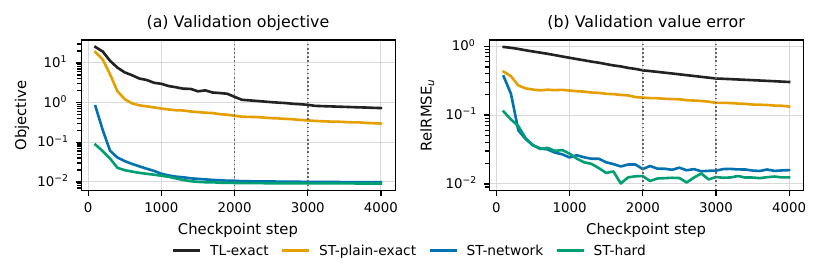}
\caption{Experiment 1 validation histories. Dotted vertical lines
mark the learning-rate changes at steps $2000$ and $3000$; both vertical axes
are logarithmic.}
\label{fig:numerical-terminal-curves}
\end{figure}

\subsection{Experiment 2: Hessian coupling}
\label{subsec:numerical-hessian-coupling}
We keep ST-hard and $N=100$ and vary $\alpha$. All random seeds are fixed. For
\cref{eq:numerical-benchmark}, $L_2=\alpha\sqrt d$, so the strict
small-gain range in
\cref{ass:continuous-occupation-well-posedness,rem:monotonicity-comparison} is
$\alpha<1/\sqrt{8d}$. The four uniformly parabolic runs ($\alpha<1/2$)
include an interior point, the boundary, and two outer points.

\begin{table}[tb]
\caption{Experiment 2 selected-checkpoint test loss, relative RMSEs in
percent, and optimizer-only training time. Bold entries are minima among the
loss and error columns.}
\label{tab:numerical-coupling-errors}
\centering
\scriptsize
\setlength{\tabcolsep}{3.0pt}
\begin{tabular}{@{}rrrrrrr@{}}
\toprule
& & \multicolumn{4}{c}{relative RMSE (\%)} & \\
\cmidrule(lr){3-6}
$\alpha$ & $\widehat{\mathcal L}_{h,\mathrm{test}}$ & $u$ & $\nabla u$
& $\operatorname{diag}D^2u$ & $D^2u$ & train (min) \\
\midrule
0.025000 & 0.010079 & \textbf{0.88} & \textbf{3.22} & \textbf{5.25} & \textbf{5.70} & 22.19 \\
$1/\sqrt{800}$ & 0.010070 & 0.89 & 3.26 & 5.35 & 5.76 & 24.11 \\
0.250000 & 0.009132 & 1.27 & 4.72 & 8.46 & 6.79 & 24.27 \\
0.450000 & \textbf{0.007910} & 2.44 & 10.00 & 15.61 & 11.73 & 24.23 \\
\bottomrule
\end{tabular}
\end{table}

\begin{samepage}
As shown in \cref{fig:numerical-coupling-curves}, test loss falls by $21.5\%$
from $\alpha=0.025$ to $\alpha=0.45$, while all four jet errors increase, so
residual informativeness worsens with coupling. The two outer runs show that
low residual values can still occur on this benchmark outside the proved
small-gain regime, but the theory provides no guarantee there.
\end{samepage}

\begin{figure}[!tb]
\centering
\includegraphics[width=0.80\linewidth,trim=5bp 9bp 5bp 5bp,clip]{%
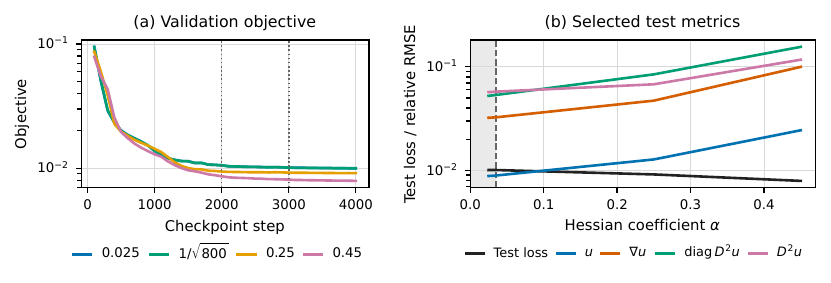}
\caption{Experiment 2. Panel (a) shows validation-loss histories; panel (b)
combines selected test loss and the four errors in
\cref{tab:numerical-coupling-errors}. Gray shading and the dashed line mark
the strict small-gain range and boundary.}
\label{fig:numerical-coupling-curves}
\end{figure}

\subsection{Experiment 3: time discretization}
\label{subsec:numerical-temporal-refinement}
We use ST-hard with $\alpha=0.25$. For
$N\in\{100,140,200,280,400\}$, three groups use distinct manufactured PDEs,
network initializations, and training seeds, with each PDE fixed across $N$.
Within each group, the frozen validation and test paths are coupled across $N$ by sampling
on a $2800$-step fine grid and aggregating its increments to the five grids.
Training paths are sampled afresh and separately for every run. An unweighted
five-point log--log fit to the group means gives
$
p_{\mathrm{loss}}=0.916,\, p_u=0.651,\, p_{\nabla u}=0.646,\,
p_{\operatorname{diag}D^2u}=0.541,\, p_{D^2u}=0.419
$.
\Cref{fig:numerical-temporal-decay} visualizes these mean decays and fitted
slopes. All reported metrics decrease as the time step decreases. The
fitted decay rate is close to first order for the loss, close to half order for
the value, gradient, and diagonal Hessian errors, and slightly below half order
for the full-Hessian error. The per-group results are in
\cref{subsec:supp-numerical-refinement}.

\begin{figure}[H]
\centering
\includegraphics[width=0.80\linewidth,trim=5bp 8bp 5bp 5bp,clip]{%
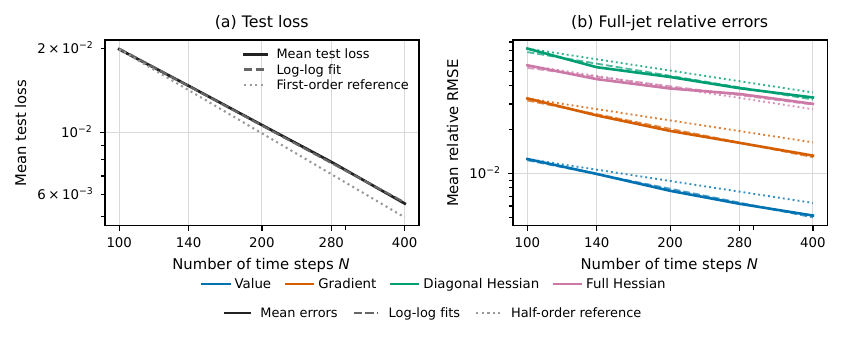}
\caption{Experiment 3 mean test-metric decay. Solid/dashed curves show
means/fits; dotted curves give first-order (a) and half-order (b) references;
per-group results are in \cref{subsec:supp-numerical-refinement}.}
\label{fig:numerical-temporal-decay}
\end{figure}

\section{Conclusion}
\label{sec:conclusion}
For identity diffusion on a uniform grid, D2SRM provides a population-level
convergence theory under the stated regularity and small-gain assumptions, with
an error bound that separates time discretization, best approximation, and
population suboptimality. Finite-sample generalization, optimizer dynamics,
and quantitative network rates remain outside the analysis.

Three directions merit further study. First,
\cref{sec:supp-general-diffusions} suggests a route from Brownian dynamics to
general smooth uniformly elliptic SDEs, but sharp dependence on $d$, $T$, and
the ellipticity condition number, as well as the corresponding discrete
consistency, stability, and convergence theory, remains open.

Second, end-to-end error estimates require quantitative control of neural
approximation, finite-sample generalization, and optimization. A complete
treatment of these interacting errors remains unavailable even for general
semilinear PDE solvers, so establishing it first in the semilinear setting
appears more appropriate than proceeding directly to the fully nonlinear
case, where Hessian dependence adds further difficulties.

Third, the numerical behavior observed outside the proved small-gain regime
on this benchmark motivates studying whether the theoretical convergence
regime can be enlarged. As discussed in
\cref{rem:monotonicity-comparison}, a fixed-reference $L^2$ framework is
unlikely to close this gap. A promising alternative is to develop stability
and convergence theory, together with the design and analysis of compatible
function-learning algorithms, in an $L^\infty$ or multi-distribution framework
such as
$
\sup_{\mu\in\mathcal P}\left|\int r\,d\mu\right|,
$
where $\mathcal P$ is a family of state laws and $r$ is the learned residual.
This framework is particularly natural for Hamilton--Jacobi--Bellman
equations with control-dependent diffusion, since different controls induce
different state laws.

\fi

\ifdTwoSRMIncludeSupplement
\ifdTwoSRMArxiv
\dTwoSRMBetweenMainAndSupplement
\clearpage
\appendix
\fi

\section{Analytic Preliminaries}
\label{sec:supp-analytic}
This section collects the analytic tools shared by the later proof sections.
It establishes Gaussian and Hermite identities, the two weighted
Hardy--Copson estimates for the discrete second-chaos channel, and smooth
approximation and weak-closure results for the occupation and terminal graph
spaces.

\subsection{Gaussian and Hermite tools}
\label{subsec:gaussian-hermite-tools}
For background on the normalized Hermite basis, Gaussian Sobolev spaces,
Wiener chaos, and Mehler's formula used below, see, e.g.,
\cite{ref8,ref9,ref10}.

For $m\in\mathbb N^+$ and $\alpha\in\mathbb N^m$, set
$|\alpha|=\sum_{j=1}^m\alpha_j$ and
$\alpha!=\prod_{j=1}^m\alpha_j!$.
Let $\gamma_d$ be the standard Gaussian measure and let
\[
H_\alpha(y)
=\frac{1}{\sqrt{\alpha!}}
\prod_{j=1}^d\operatorname{He}_{\alpha_j}(y_j),
\qquad
\alpha\in\mathbb N^d,
\]
be the normalized probabilists' Hermite basis. It is orthonormal in
$L^2(\gamma_d)$ and satisfies
\[
\partial_jH_\alpha
=
\begin{cases}
\sqrt{\alpha_j}\,H_{\alpha-e_j},&\alpha_j\ge1,\\
0,&\alpha_j=0.
\end{cases}
\]

\begin{lemma}[Hermite expansion, Parseval, and Mehler identity]
\label{lem:a-hermite-expansion}
Let $\phi$ be a jointly measurable representative of an element of
$\mathcal H_\beta$. For almost every $r\in(0,T]$, define
\begin{equation*}
\phi_\alpha(r)
:=\int_{\mathbb R^d}\phi(r,x_0+\sqrt r\,y)H_\alpha(y)\,\gamma_d(dy),
\qquad \alpha\in\mathbb N^d.
\end{equation*}
The coefficient functions admit measurable versions and, for almost every
$r\in(0,T]$,
\begin{equation*}
\begin{aligned}
\phi(r,x_0+\sqrt r\,y)
&=\sum_{\alpha\in\mathbb N^d}\phi_\alpha(r)H_\alpha(y)
&&\text{in }L^2(\gamma_d),\\
\mathbb E|\phi(r,X_r)|^2
&=\sum_{\alpha\in\mathbb N^d}|\phi_\alpha(r)|^2.
\end{aligned}
\end{equation*}
Moreover,
\begin{equation}
\int_0^T r_\beta(r)
\sum_{\alpha\in\mathbb N^d}|\phi_\alpha(r)|^2\,dr
=\|\phi\|_{\mathcal H_\beta}^2.
\label{eq:integrated-hermite-parseval}
\end{equation}
For every $r$ for which the preceding expansion holds and every $0<a\le r$,
\begin{equation*}
P_{r-a}\phi(r,\cdot)(x_0+\sqrt a\,y)
=\sum_{\alpha\in\mathbb N^d}
\left(\frac ar\right)^{\frac{|\alpha|}{2}}
\phi_\alpha(r)H_\alpha(y)
\qquad\text{in }L^2(\gamma_d).
\end{equation*}
Separately, let $r>0$ and $\eta\in H^1(\mu_r)$. Define
\[
\eta_\alpha
:=\int_{\mathbb R^d}\eta(x_0+\sqrt r\,y)H_\alpha(y)\,\gamma_d(dy),
\qquad
\eta^{(\ell)}(x_0+\sqrt r\,y)
:=\sum_{|\alpha|\le\ell}\eta_\alpha H_\alpha(y).
\]
Then $\eta^{(\ell)}\to\eta$ in $H^1(\mu_r)$ and
\begin{equation}
\|\nabla\eta\|_{L^2(\mu_r)}^2
=\frac1r\sum_{\alpha\in\mathbb N^d}|\alpha|\,|\eta_\alpha|^2.
\label{eq:gaussian-sobolev-hermite-parseval}
\end{equation}
\end{lemma}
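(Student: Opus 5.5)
The lemma is a collection of standard Gaussian facts, rescaled to the laws $\mu_r=N(x_0,rI_d)$ and integrated in time. The plan has four parts: the pointwise-in-time expansion, the time integration, the Mehler identity, and the terminal Gaussian--Sobolev statements.

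\emph{Pointwise expansion.} Fix a jointly measurable representative $\phi$ with $\|\phi\|_{\mathcal H_\beta}<\infty$. Tonelli applied to $r_\beta(r)p_r(x)\,dr\,dx$, after the substitution $x=x_0+\sqrt r\,y$, gives
\[
\int_0^T r_\beta(r)\int|\phi(r,x_0+\sqrt r\,y)|^2\gamma_d(dy)\,dr=\|\phi\|_{\mathcal H_\beta}^2<\infty .
\]
Hence for almost every $r$ the section $y\mapsto\phi(r,x_0+\sqrt r\,y)$ lies in $L^2(\gamma_d)$. On the full-measure set where this holds, $\phi_\alpha(r)$ is defined by an absolutely convergent integral. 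Measurability of $r\mapsto\phi_\alpha(r)$ follows from Fubini applied to the jointly measurable integrand $\phi(r,x_0+\sqrt r\,y)H_\alpha(y)$; one may set $\phi_\alpha=0$ on the exceptional null set. Completeness of the normalized Hermite basis in $L^2(\gamma_d)$ then gives the expansion and Parseval's identity. The identity $\mathbb E|\phi(r,X_r)|^2=\int|\phi(r,x_0+\sqrt r\,y)|^2\gamma_d(dy)$ holds because $X_r\overset{d}{=}x_0+\sqrt r\,G$ with $G\sim\gamma_d$.

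\emph{Time integration.} Multiply the Parseval identity by $r_\beta(r)$ and integrate in $r$. This is exactly the Tonelli display above, which yields \cref{eq:integrated-hermite-parseval}.

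\emph{Mehler identity.} Fix $0<a\le r$ and set $\rho=\sqrt{a/r}\in(0,1]$. Then
\[
P_{r-a}\phi(r,\cdot)(x_0+\sqrt a\,y)=\mathbb E\,\phi\bigl(r,x_0+\sqrt r(\rho y+\sqrt{1-\rho^2}\,G)\bigr),
\]
which is the Ornstein--Uhlenbeck/Mehler operator $M_\rho$ applied to $\psi(y):=\phi(r,x_0+\sqrt r\,y)$ and evaluated at $y$.

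\begin{itemize}
\item $M_\rho$ is a contraction on $L^2(\gamma_d)$. This follows from Jensen and the invariance of $\gamma_d$ under $(y,G)\mapsto\rho y+\sqrt{1-\rho^2}G$.
\item $M_\rho H_\alpha=\rho^{|\alpha|}H_\alpha$. One way to see it is the generating-function identity $\mathbb E\exp\bigl(s\cdot(\rho y+\sqrt{1-\rho^2}G)-|s|^2/2\bigr)=\exp(\rho s\cdot y-\rho^2|s|^2/2)$, then comparing coefficients in $s$.
\end{itemize}

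By contractivity, $M_\rho$ can be applied termwise to the $L^2(\gamma_d)$-convergent series for $\psi$. This gives the stated series with factors $(a/r)^{|\alpha|/2}$; the case $\rho=1$ is trivial.

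\emph{Terminal statement.} Transport $\eta\in H^1(\mu_r)$ to $\tilde\eta(y)=\eta(x_0+\sqrt r\,y)\in H^1(\gamma_d)$. The chain rule gives $\nabla\eta(x_0+\sqrt r\,y)=r^{-1/2}\nabla\tilde\eta(y)$, so it suffices to prove
\[
\|\nabla\tilde\eta\|^2_{L^2(\gamma_d)}=\sum_\alpha|\alpha|\,|\eta_\alpha|^2
\]
and $H^1(\gamma_d)$-convergence of the truncations. Compute the Hermite coefficients of $\partial_j\tilde\eta$: for $\tilde\eta$ smooth with polynomial growth, Gaussian integration by parts combined with the raising relation $H_{\alpha+e_j}$ gives $\langle\partial_j\tilde\eta,H_\beta\rangle=\sqrt{\beta_j+1}\,\eta_{\beta+e_j}$. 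This formula extends to $H^1(\gamma_d)$ by density of such functions, which is the definition of the Gaussian Sobolev space. Summing Parseval over $j$ and reindexing $\alpha=\beta+e_j$ gives $\sum_\alpha\bigl(\sum_j\alpha_j\bigr)|\eta_\alpha|^2$, hence \cref{eq:gaussian-sobolev-hermite-parseval} after restoring the factor $1/r$. The derivative relation $\partial_jH_\alpha=\sqrt{\alpha_j}H_{\alpha-e_j}$ shows that the truncation $\eta^{(\ell)}$ has derivative coefficients equal to those of $\tilde\eta$ truncated at degree $\le\ell-1$. The $H^1$ error of $\eta^{(\ell)}$ is therefore the tail $\sum_{|\alpha|>\ell}(1+|\alpha|/r)|\eta_\alpha|^2$, which tends to $0$ because the full sum is finite.

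\emph{Main obstacle.} The delicate step is the measure-theoretic bookkeeping in the first part: choosing the almost-everywhere set of $r$ and measurable versions of the coefficients uniformly over the countable index set $\mathbb N^d$. This is resolved by taking a single full-measure set of good $r$ obtained from Tonelli, valid simultaneously for all $\alpha$ since the index set is countable. The remaining steps are standard Hermite calculus.
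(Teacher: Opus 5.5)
Your proposal is correct and follows essentially the same route as the paper: Tonelli/Fubini for the almost-everywhere sections and measurable coefficients, Hermite completeness for the pointwise and integrated Parseval identities, the Mehler eigenrelation extended from finite sums by $L^2$-contractivity, and Hermite derivative calculus plus density for the Gaussian--Sobolev statement. Your integration-by-parts formula $\langle\partial_j\tilde\eta,H_\beta\rangle=\sqrt{\beta_j+1}\,\eta_{\beta+e_j}$ only makes explicit the step the paper delegates to the standard Hermite characterization and polynomial density of $H^1(\gamma_d)$.
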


\begin{proof}
Fubini's theorem and $L^2(\nu_\beta)=\mathcal H_\beta$ show that
$\phi(r,x_0+\sqrt r\,\cdot)\in L^2(\gamma_d)$ for almost every $r$.
For every such $r$, completeness and orthonormality of the Hermite basis give
the expansion and Parseval identity. Joint measurability of $\phi$ gives
measurable versions of the coefficient functions. Integrating the pointwise
Parseval identity against $r_\beta(r)\,dr$ proves
\cref{eq:integrated-hermite-parseval}.

For the Mehler identity, let $G_1,G_2$ be independent standard Gaussian vectors
and set $\varrho=\sqrt{a/r}$. Then
\begin{equation*}
x_0+\sqrt r\bigl(\varrho G_1+\sqrt{1-\varrho^2}\,G_2\bigr)
=x_0+\sqrt a\,G_1+\sqrt{r-a}\,G_2.
\end{equation*}
Mehler's identity gives
\begin{equation*}
\mathbb E\left[
H_\alpha\bigl(\varrho G_1+\sqrt{1-\varrho^2}\,G_2\bigr)
\,\middle|\,G_1\right]
=\varrho^{|\alpha|}H_\alpha(G_1).
\end{equation*}
Apply this identity to finite Hermite sums and pass to the
$L^2(\gamma_d)$ limit using the contraction property of conditional
expectation.

For the final Sobolev assertions, the Hermite derivative identity gives
\cref{eq:gaussian-sobolev-hermite-parseval} for finite sums, including the
factor $r^{-1}$ from the change of variables. The general statement follows
from the standard Hermite characterization and polynomial density of the
Gaussian Sobolev space; see, e.g., \cite{ref8}.
\end{proof}

\begin{lemma}[Hermite-level integral]
\label{lem:a-hermite-integral}
For $0<a<b\le T$ and $n\ge1$, set
\[
I_n(a,b)
:=\int_a^b\frac na\left(\frac ar\right)^n\,dr.
\]
Then
\begin{equation}
I_n(a,b)
=
\begin{cases}
\log(b/a),&n=1,\\[1mm]
\dfrac n{n-1}\left(1-(a/b)^{n-1}\right),&n\ge2.
\end{cases}
\label{eq:A-hermite-integral}
\end{equation}
Consequently,
\[
\sup_{n\ge1}I_n(a,b)
\le2\left(1+\log\frac ba\right),
\qquad
\sup_{\substack{n\ge1\\ b\le2a}}I_n(a,b)\le2.
\]
\end{lemma}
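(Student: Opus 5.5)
The plan is to evaluate the integral directly and then bound it, treating $n=1$ and $n\ge2$ separately.

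\textbf{Evaluating the integral.} Write the integrand as
\[
\frac na\left(\frac ar\right)^n = n\,a^{n-1}r^{-n}.
\]
For $n=1$ this is $r^{-1}$, so $I_1(a,b)=\log(b/a)$. For $n\ge2$, the antiderivative of $r^{-n}$ is $-r^{1-n}/(n-1)$, which gives
\[
I_n(a,b)=\frac{n a^{n-1}}{n-1}\bigl(a^{1-n}-b^{1-n}\bigr)=\frac n{n-1}\bigl(1-(a/b)^{n-1}\bigr).
\]
This is \cref{eq:A-hermite-integral}.

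\textbf{First supremum bound.} For $n\ge2$ we have $0\le1-(a/b)^{n-1}\le1$ and $n/(n-1)\le2$, so $I_n(a,b)\le2\le2(1+\log(b/a))$. For $n=1$, $\log(b/a)\le2(1+\log(b/a))$ because $\log(b/a)>0$. Taking the supremum over $n\ge1$ gives the first bound.

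\textbf{Second supremum bound.} Assume $b\le2a$. For $n=1$, $\log(b/a)\le\log2<2$. For $n\ge2$, the bound $I_n(a,b)\le2$ from the previous step already applies. Hence $\sup_{n\ge1,\,b\le2a}I_n(a,b)\le2$.

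No real obstacle arises. The one point to watch is that the factor $n/(n-1)$ is largest at $n=2$, where it equals $2$. This is why the constant $2$ appears, and it is uniform in $n$ because $1-(a/b)^{n-1}\le1$.
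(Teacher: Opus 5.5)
Your proposal is correct and follows the paper's own argument: you evaluate the integral directly, use $n/(n-1)\le 2$ for $n\ge2$, and observe $\log(b/a)\ge0$ (and $\log(b/a)\le\log 2<2$ when $b\le 2a$) in the case $n=1$. The only difference is that you write out the $n=1$ case explicitly, which the paper leaves to the reader.
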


\begin{proof}
Direct integration gives \cref{eq:A-hermite-integral}. For $n\ge2$, its
right-hand side is at most $n/(n-1)\le2$; the stated bounds follow.
\end{proof}

\subsection{Weighted Hardy--Copson estimates}
\label{subsec:weighted-hardy-copson}
The following are the two precise sequence estimates used for the discrete
second-chaos channel.

\begin{lemma}[Positive-time Copson estimate]
\label{lem:a-copson-positive}
Let $M\ge2$, $a>\frac12$, $b=(b_m)_{m=1}^M\in\mathbb R^M$, and let
$(r_i)_{i=0}^M$ be nonnegative and nondecreasing. Define
\[
(T_ab)_i
:=i^{a-1}\sum_{m=i+1}^M m^{-a}b_m,
\qquad i=1,\ldots,M-1.
\]
Then
\begin{equation}
\sum_{i=1}^{M-1}r_i|(T_ab)_i|^2
\le\frac1{(a-\frac12)^2}
\sum_{m=1}^M r_m|b_m|^2.
\label{eq:A-copson-positive}
\end{equation}
\end{lemma}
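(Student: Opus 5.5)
We prove \cref{eq:A-copson-positive} with the Schur test: one Cauchy--Schwarz step using a power weight, followed by an exchange of summation order. The monotone weights $r_i$ are handled by the observation that every $m$ contributing to $(T_ab)_i$ satisfies $m>i$, so $r_i\le r_m$.

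\emph{Step 1 (Cauchy--Schwarz with a power weight).} Fix $i\in\{1,\dots,M-1\}$ and an exponent $s\in(0,1)$ to be chosen. Write $m^{-a}b_m=m^{-a+s/2}\cdot m^{-s/2}b_m$. Cauchy--Schwarz then gives
\[
|(T_ab)_i|^2\le i^{2a-2}\Bigl(\sum_{m>i}m^{-2a+s}\Bigr)\Bigl(\sum_{m>i}m^{-s}|b_m|^2\Bigr).
\]
Since $2a-s>1$, we have $\sum_{m>i}m^{-2a+s}\le\int_i^\infty x^{-2a+s}dx=i^{1-2a+s}/(2a-1-s)$. This holds because $m^{-2a+s}\le\int_{m-1}^m x^{-2a+s}dx$ and the integral starts at $m-1\ge i$. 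Hence
\[
|(T_ab)_i|^2\le\frac{i^{s-1}}{2a-1-s}\sum_{m>i}m^{-s}|b_m|^2 .
\]

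\emph{Step 2 (weights and exchange).} Multiply by $r_i$, use $r_i\le r_m$ for $m>i$, and exchange the order of summation:
\[
\sum_{i=1}^{M-1}r_i|(T_ab)_i|^2\le\frac1{2a-1-s}\sum_{m=2}^M r_m|b_m|^2\,m^{-s}\sum_{i=1}^{m-1}i^{s-1}.
\]
Because $x^{s-1}$ is decreasing, $\sum_{i=1}^{m-1}i^{s-1}\le\int_0^{m-1}x^{s-1}dx\le m^s/s$, using $i^{s-1}\le\int_{i-1}^i x^{s-1}dx$. The total constant is therefore $1/\bigl(s(2a-1-s)\bigr)$.

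\emph{Step 3 (optimizing $s$).} The expression $s(2a-1-s)$ is maximized at $s=a-\tfrac12$, which lies in $(0,1)$ when $a<\tfrac32$. There the constant equals $(a-\tfrac12)^{-2}$, which is exactly \cref{eq:A-copson-positive}. The case $a\ge\tfrac32$ is the main obstacle, since $s=a-\tfrac12\ge1$ makes $x^{s-1}$ nondecreasing and the integral comparison in Step 2 reverses. For $s\ge1$ I would instead use $\sum_{i=1}^{m-1}i^{s-1}\le\int_1^m x^{s-1}dx\le m^s/s$, which holds because $i^{s-1}\le\int_i^{i+1}x^{s-1}dx$ for increasing integrands. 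The bound $m^s/s$ survives, so the same optimization applies. Step 1 needs only $2a-1-s>0$, which holds at $s=a-\tfrac12$ because $a>\tfrac12$. The inequality therefore holds for all $a>\tfrac12$ with constant $(a-\tfrac12)^{-2}$, and the condition $M\ge2$ only ensures the index range is nonempty.
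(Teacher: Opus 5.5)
Your proof is correct and is essentially the paper's argument: Cauchy--Schwarz with the power weight at the optimal choice $s=a-\tfrac12$ is exactly Schur's test with test function $\ell_m=m^{-1/2}$, and you use the same two integral comparisons, including the same case split on whether $x^{a-3/2}$ is decreasing or nondecreasing. The only cosmetic difference is that you insert $r_i\le r_m$ directly when exchanging the order of summation, whereas the paper first bounds the unweighted operator norm by $(a-\tfrac12)^{-1}$ and then applies it to $\widetilde b_m=r_m^{1/2}b_m$.
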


\begin{proof}
For the unweighted operator, apply Schur's test \cite[Problem~45]{ref23} with
$\ell_i=i^{-1/2}$ to
\[
K(i,m)=i^{a-1}m^{-a}\boldsymbol1_{\{m>i\}}.
\]
For $p=a-\frac32>-1$, comparison on $[i,i+1]$ when $p\ge0$ and on
$[i-1,i]$ when $-1<p<0$ gives
$\sum_{i=1}^{m-1}i^p\le\int_0^m x^p\,dx$. Hence, for each $i$ and $m$,
\begin{equation*}
\begin{aligned}
\sum_{m=i+1}^M K(i,m)\ell_m
&\le i^{a-1}\int_i^\infty x^{-a-\frac12}\,dx
=\frac1{a-\frac12}\ell_i,\\
\sum_{i=1}^{m-1}K(i,m)\ell_i
&=m^{-a}\sum_{i=1}^{m-1}i^{a-\frac32}
\le m^{-a}\int_0^m x^{a-\frac32}\,dx
=\frac1{a-\frac12}\ell_m.
\end{aligned}
\end{equation*}
Thus $\|T_a\|_{\ell^2\to\ell^2}\le(a-\frac12)^{-1}$. Set
$\widetilde b_m=r_m^{1/2}b_m$. Since $i<m$ implies $r_i\le r_m$,
\[
r_i^{1/2}|(T_ab)_i|
\le(T_a|\widetilde b|)_i.
\]
The unweighted estimate proves \cref{eq:A-copson-positive}.
\end{proof}

\begin{lemma}[Endpoint-inclusive Copson estimate]
\label{lem:a-copson-endpoint}
Let $M\ge1$, $b=(b_m)_{m=1}^M\in\mathbb R^M$, and let
$(r_i)_{i=0}^M$ be nonnegative and nondecreasing. Define
\[
(Cb)_i:=\sum_{m=i+1}^M\frac{b_m}{m},
\qquad i=0,\ldots,M-1.
\]
Then
\begin{equation}
\sum_{i=0}^{M-1}r_i|(Cb)_i|^2
\le4\sum_{m=1}^M r_m|b_m|^2.
\label{eq:A-copson-endpoint}
\end{equation}
\end{lemma}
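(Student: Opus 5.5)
The plan is to mirror the proof of \cref{lem:a-copson-positive}. We first reduce to the unweighted case using monotonicity of the weights, and then prove an unweighted bound with constant $4$ by Schur's test. The index shift caused by the endpoint $i=0$ must be handled with some care.

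\emph{Reduction to the unweighted case.} Set $\widetilde b_m=r_m^{1/2}b_m$. For $0\le i<m$ we have $r_i\le r_m$, hence
\[
r_i^{1/2}|(Cb)_i|\le\sum_{m=i+1}^M\frac{r_m^{1/2}|b_m|}{m}=(C|\widetilde b|)_i .
\]
It therefore suffices to show that $\|C\|_{\ell^2(\{1,\dots,M\})\to\ell^2(\{0,\dots,M-1\})}\le2$.

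\emph{Unweighted bound.} Reindex the output by $j=i+1\in\{1,\dots,M\}$, so that the kernel becomes
\[
K(j,m)=m^{-1}\boldsymbol 1_{\{m\ge j\}}, \qquad 1\le j,m\le M.
\]
This is the adjoint of the discrete Hardy averaging operator $(Ab)_m=m^{-1}\sum_{j\le m}b_j$, which includes the diagonal term. We apply Schur's test with $\ell_j=j^{-1/2}$. The row sums satisfy
\[
\sum_{m\ge j}m^{-3/2}\le j^{-3/2}+\int_j^\infty x^{-3/2}\,dx=j^{-3/2}+2j^{-1/2}.
\]
This gives a constant $2+j^{-1}$, which is too big at $j=1$. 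The fix is to use the sharper comparison $m^{-3/2}\le\int_{m-1/2}^{m+1/2}x^{-3/2}\,dx$, which holds by convexity of $x\mapsto x^{-3/2}$. Then
\[
\sum_{m\ge j}m^{-3/2}\le\int_{j-1/2}^\infty x^{-3/2}\,dx=2(j-\tfrac12)^{-1/2}.
\]
This is still not $\le2j^{-1/2}$, so the weight must be changed. We take $\ell_j=(j-\tfrac12)^{-1/2}$ instead.

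\emph{Row sums with the new weight.} We need
\[
\sum_{m\ge j}m^{-1}(m-\tfrac12)^{-1/2}\le2(j-\tfrac12)^{-1/2}.
\]
Since $m^{-1}\le(m-\tfrac12)^{-1}$, the left side is at most $\sum_{m\ge j}(m-\tfrac12)^{-3/2}$. By the same midpoint convexity bound this is at most $\int_{j-1}^\infty(x)^{-3/2}\,dx$, which diverges at $j=1$. The cleaner comparison is the telescoping bound
\[
m^{-1}(m-\tfrac12)^{-1/2}\le2\bigl[(m-\tfrac12)^{-1/2}-(m+\tfrac12)^{-1/2}\bigr],
\]
and summing it gives exactly the desired row bound. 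Verifying this elementary inequality by a mean-value or convexity argument is the main technical step I expect.

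\emph{Column sums.} We need
\[
m^{-1}\sum_{j\le m}(j-\tfrac12)^{-1/2}\le2(m-\tfrac12)^{-1/2}.
\]
By concavity (midpoint rule),
\[
\sum_{j\le m}(j-\tfrac12)^{-1/2}\le\int_0^m x^{-1/2}\,dx=2m^{1/2}.
\]
Hence the column sum is at most $2m^{-1/2}\le2(m-\tfrac12)^{-1/2}$.

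Schur's test then gives $\|C\|\le\sqrt{2\cdot2}=2$, so $\|C\|^2\le4$. Combining this with the weighted reduction proves \cref{eq:A-copson-endpoint}.

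If the telescoping inequality fails to hold uniformly, the fallback is a direct Hardy-type summation by parts. Write $S_i=(Cb)_i$, so that $S_{i-1}-S_i=b_i/i$, and expand $\sum_i|S_i|^2$ in the usual way to obtain the constant $4$. This is the standard proof of the dual Hardy inequality with constant $(p/(p-1))^2=4$.
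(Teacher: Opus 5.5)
Your weighted reduction and reindexing are fine, but the Schur step fails when you use the same weight $\ell_j=(j-\frac12)^{-1/2}$ on both sides. The telescoping inequality you single out as the main technical step is false for every $m$. Write $a=m-\frac12$ and $b=m+\frac12$, so that $a+b=2m$ and $2(a^{-1/2}-b^{-1/2})=2/(\sqrt{ab}(\sqrt a+\sqrt b))$. Then the claimed bound $m^{-1}a^{-1/2}\le2(a^{-1/2}-b^{-1/2})$ is equivalent to $\sqrt{ab}\le a$, i.e.\ $b\le a$. At $m=1$ it reads $\sqrt2\le2(\sqrt2-\sqrt{2/3})\approx1.20$.

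The row bound itself is also false, so no other argument can rescue it. At $j=1$, the sum $\sum_{m=1}^M m^{-1}(m-\frac12)^{-1/2}$ exceeds $3$ for large $M$ (the series sums to about $3.13$), whereas $2(j-\frac12)^{-1/2}=2\sqrt2\approx2.83$. So the symmetric choice of weights cannot produce the constant $4$.

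The missing idea is that Schur's test permits different test sequences on the input and output sides, and you had already proved both inequalities this requires. Take input weight $m^{-1/2}$ and output weight $(j-\frac12)^{-1/2}$. The row sum is $\sum_{m\ge j}m^{-3/2}\le\int_{j-1/2}^\infty x^{-3/2}\,dx=2(j-\frac12)^{-1/2}$, which is your midpoint bound. The column sum is $m^{-1}\sum_{j\le m}(j-\frac12)^{-1/2}\le2m^{-1/2}$, which is your column computation before the final weakening. This gives $\|C\|\le2$ and is exactly the paper's proof, which uses $\ell_i=(i+\frac12)^{-1/2}$ and $\widetilde\ell_m=m^{-1/2}$. (The midpoint bound for $x^{-1/2}$ rests on convexity, not concavity.)

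Your summation-by-parts fallback is a valid alternative, but as written it is only a pointer. It closes in a few lines. Set $T_n=\sum_{m\ge n}b_m/m$ for $1\le n\le M$ and $T_{M+1}=0$. Then $nT_n^2-nT_{n+1}^2=2b_nT_n-b_n^2/n\le2b_nT_n$. Summing telescopes to $\sum_{n=1}^MT_n^2\le2\sum_nb_nT_n$, and Cauchy--Schwarz gives the constant $4$.
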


\begin{proof}
For background on Hardy--Copson inequalities, see \cite{ref11}. Let
\[
(Ha)_m:=\frac1m\sum_{i=0}^{m-1}a_i,
\qquad m=1,\ldots,M.
\]
Then $C=H^*$ in the standard finite-dimensional $\ell^2$ inner products.
Schur's test with
$\ell_i=(i+\frac12)^{-1/2}$ and $\widetilde\ell_m=m^{-1/2}$ gives
\begin{equation*}
\begin{aligned}
\frac1m\sum_{i=0}^{m-1}\ell_i
&\le\frac1m\int_0^m x^{-1/2}\,dx=2\widetilde\ell_m,\\
\sum_{m=i+1}^\infty\frac{\widetilde\ell_m}{m}
&\le\int_{i+\frac12}^\infty x^{-3/2}\,dx=2\ell_i.
\end{aligned}
\end{equation*}
Hence $\|C\|_{\ell^2\to\ell^2}\le2$. With
$\widetilde b_m=r_m^{1/2}b_m$, monotonicity of the weights gives
\[
r_i^{1/2}|(Cb)_i|\le(C|\widetilde b|)_i,
\]
and the unweighted bound proves \cref{eq:A-copson-endpoint}.
\end{proof}

\subsection{Smooth approximation and weak closure}
\label{subsec:smooth-approximation-and-weak-closure}
\begin{lemma}[Occupation-space smooth approximation]
\label{lem:smooth-approximation}
Every $F\in\mathcal H_\beta$ admits
$F^m\in C_c^\infty((0,T)\times\mathbb R^d)$ such that
\[
F^m\longrightarrow F
\quad\text{in }\mathcal H_\beta.
\]
If $F$ has a weak derivative $D_xF\in\mathcal H_\beta$, the sequence can be chosen so that
\[
D_xF^m\longrightarrow D_xF
\quad\text{in }\mathcal H_\beta.
\]
For $k\in\{1,2\}$ and $\eta\in H^k(\mu_T)$, there are
$\eta^m\in C_c^\infty(\mathbb R^d)$ converging to $\eta$ in $H^k(\mu_T)$.
\end{lemma}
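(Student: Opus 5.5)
The plan is a two-stage approximation: first truncate in space and time, then mollify. Because the weight is positive and locally bounded away from zero, the proof reduces to standard mollification arguments. The facts used are that on every compact $K\subset(0,T)\times\mathbb R^d$ the density $r_\beta(t)p_t(x)$ is bounded above and below by positive constants, and that $\mathcal H_\beta$ is an $L^2$ space with respect to an absolutely continuous measure. The same holds for $\mu_T$, whose Gaussian density is locally bounded above and below.

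\emph{Truncation.} Fix $\varepsilon>0$. Choose cutoffs $\chi_R(x)=\chi(x/R)$ with $\chi\in C_c^\infty$, $0\le\chi\le1$, $\chi=1$ on $B_1$, and $|\nabla\chi_R|\le C/R$. Also choose a temporal cutoff $\zeta_\delta\in C_c^\infty((0,T))$ with $\zeta_\delta=1$ on $[\delta,T-\delta]$. Set $F_{R,\delta}=\zeta_\delta(t)\chi_R(x)F$. Dominated convergence in $L^2(\nu_\beta)$ gives $F_{R,\delta}\to F$ as $R\to\infty$ and $\delta\to0$.

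If $D_xF\in\mathcal H_\beta$, the product rule for weak derivatives (only the spatial cutoff is differentiated in $x$) gives
\[
D_xF_{R,\delta}=\zeta_\delta\chi_R D_xF+\zeta_\delta F\nabla\chi_R.
\]
The second term is bounded by $(C/R)\|F\|_{\mathcal H_\beta}$, and the first converges by dominated convergence. No time derivative is involved, so the temporal cutoff needs no derivative control; this is the point where the statement only asks for spatial derivatives.

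\emph{Mollification.} The truncated function $G=F_{R,\delta}$ is supported in a compact $K\subset(0,T)\times\mathbb R^d$. Convolve it with a space--time mollifier $\varphi_\epsilon$. For small $\epsilon$, $G*\varphi_\epsilon$ lies in $C_c^\infty((0,T)\times\mathbb R^d)$ with support in a fixed compact neighbourhood $K'$ of $K$. On $K'$ the $\mathcal H_\beta$ norm is equivalent to the Lebesgue $L^2$ norm, and $G\in L^2(K')$ because the weight is bounded below there. Hence $G*\varphi_\epsilon\to G$ in $L^2(dt\,dx)$ and therefore in $\mathcal H_\beta$.

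Weak spatial derivatives commute with convolution, since Definition~\ref{def:weak-spatial-derivatives} is tested against $C_c^\infty$ functions and the mollifier is compactly supported. This gives $D_x(G*\varphi_\epsilon)=(D_xG)*\varphi_\epsilon\to D_xG$, again in $L^2(K')$ and hence in $\mathcal H_\beta$. A diagonal choice over $(R,\delta,\epsilon)$ produces the sequence $F^m$.

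\emph{Terminal data.} For $\eta\in H^k(\mu_T)$ with $k\in\{1,2\}$, the argument is the same: cut off with $\chi_R$, then mollify in $x$. The cutoff error involves $\nabla\chi_R$ and $D^2\chi_R$ times lower-order derivatives of $\eta$, which are $O(1/R)$ in $L^2(\mu_T)$. Mollification of a compactly supported function converges in the Lebesgue $H^k$ norm on a ball, which is equivalent to the $H^k(\mu_T)$ norm there.

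Alternatively, Lemma~\ref{lem:a-hermite-expansion} already gives polynomial approximants $\eta^{(\ell)}\to\eta$ in $H^1(\mu_T)$, which could then be cut off and mollified. The direct route above is simpler.

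\emph{Expected obstacle.} The main delicacy is keeping all supports inside a fixed compact set away from $t=0$ and $t=T$. This is needed for the weight equivalence, since $p_t$ degenerates as $t\to0$. It is also needed to verify the commutation of weak derivatives with truncation and mollification under the local definition. Both are settled by performing the truncation first and then choosing $\epsilon<\operatorname{dist}(K,\partial((0,T)\times\mathbb R^d))/2$.
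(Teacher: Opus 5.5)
Your proposal is correct and follows essentially the same route as the paper. The paper also localizes with cutoffs, using dominated convergence in the weighted graph norms, and then mollifies on compact cylinders where $r_\beta p_t$ (respectively $p_T$) is bounded above and below, so the standard Lebesgue-space density and weak-derivative arguments apply unchanged.
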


\begin{lemma}[Weak spatial closure]
\label{lem:weak-spatial-closure}
Suppose
\[
F^m\to F\quad\text{strongly in }L^2(\nu_\beta),
\qquad
D_xF^m\rightharpoonup\Xi
\quad\text{weakly in }L^2(\nu_\beta;\mathbb R^d).
\]
Then $D_xF=\Xi$ distributionally on
$(0,T)\times\mathbb R^d$, and
\[
\|D_xF\|_{L^2(\nu_\beta)}
\le\liminf_m\|D_xF^m\|_{L^2(\nu_\beta)}.
\]
\end{lemma}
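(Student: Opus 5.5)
The plan is to pass to the limit in the weak-derivative identity of \cref{def:weak-spatial-derivatives}, using the fact that test functions have compact support in $(0,T)\times\mathbb R^d$. On such a support the weight $r_\beta(t)p_t(x)$ is bounded both above and below by positive constants. The norm inequality will then follow from weak lower semicontinuity of the norm.

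Fix $\varphi\in C_c^\infty((0,T)\times\mathbb R^d)$ with support $K$. By hypothesis $D_xF^m$ is a genuine weak derivative, so for each $m$ and each $k$,
\[
\int F^m\,\partial_{x_k}\varphi\,dt\,dx=-\int\partial_{x_k}F^m\,\varphi\,dt\,dx .
\]
We rewrite both sides as $\nu_\beta$-pairings by setting $\psi_k:=\partial_{x_k}\varphi/(r_\beta p)$ and $\chi:=\varphi/(r_\beta p)$, where $p(t,x)=p_t(x)$. Since $r_\beta p$ is continuous and strictly positive on the compact set $K$, both $\psi_k$ and $\chi$ are bounded and compactly supported. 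Hence they lie in $L^2(\nu_\beta)$.

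The left side equals $\langle F^m,\psi_k\rangle_{L^2(\nu_\beta)}$, which converges to $\langle F,\psi_k\rangle=\int F\,\partial_{x_k}\varphi\,dt\,dx$ by strong convergence. The right side equals $-\langle\partial_{x_k}F^m,\chi\rangle_{L^2(\nu_\beta)}$. Weak convergence, tested against the vector field $\chi e_k\in L^2(\nu_\beta;\mathbb R^d)$, gives the limit $-\langle\Xi_k,\chi\rangle=-\int\Xi_k\varphi\,dt\,dx$. Therefore $\partial_{x_k}F=\Xi_k$ in the sense of \cref{def:weak-spatial-derivatives}. This uses $F,\Xi\in L^1_{\mathrm{loc}}$, which was already noted after \cref{def:occupation-mild-solutions}.

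The bound $\|\Xi\|_{L^2(\nu_\beta)}\le\liminf_m\|D_xF^m\|_{L^2(\nu_\beta)}$ is the standard weak lower semicontinuity of a Hilbert norm: pair $D_xF^m$ with $\Xi$ and apply Cauchy--Schwarz. No step is a real obstacle. The only point needing care is that the test functions divided by the density stay in $L^2(\nu_\beta)$, which is exactly what the compact support away from $t=0$ guarantees.
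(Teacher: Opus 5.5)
Your proposal is correct and follows essentially the paper's route. The paper likewise invokes the standard weak-derivative closedness argument, using that $r_\beta p_t$ is bounded above and below by positive constants on compact cylinders in $(0,T)\times\mathbb R^d$, and obtains the norm bound from weak lower semicontinuity; dividing the test function by the density just makes that localization step explicit.
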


The proofs of the preceding two lemmas are the same, after cutoff localization,
as the standard cutoff--mollification density and weak-derivative closedness
arguments; see \cite[Lemma~3.16 and Theorem~3.17]{ref12} for the former and
\cite[Theorem~3.3]{ref12} for the latter. The usual cutoffs converge in the
weighted spatial graph norms by dominated convergence, while on each compact
cylinder $r_\beta p_t$ is bounded above and below by positive constants, so
the local mollification and distributional-passage steps apply without
change. The same argument with $p_T$ gives the terminal approximations, and
weak lower semicontinuity gives the norm bound in
\cref{lem:weak-spatial-closure}.

\section{Proofs for the Continuous Brownian Estimates}
\label{sec:supp-continuous-estimates}
This section completes the continuous Brownian estimates stated in
\cref{sec:continuous-estimates}. It proves the Gaussian affine-complement
coercivity and the remaining smooth-core value--gradient estimates, then
extends the smooth mild map continuously to the occupation spaces, thereby
completing the proofs of
\cref{thm:brownian-hessian-estimate,cor:linear-mild-map-estimates}.
The central smooth-data Hessian calculation remains in the main text.

\subsection{Remaining smooth-core estimates}
\label{subsec:smooth-core-brownian-estimates}
\begin{proposition}[Remaining smooth-core estimates]
\label{prop:remaining-smooth-core}
For every $t>0$ and every $\phi\in C_c^\infty(\mathbb R^d)$, the fixed-time bound
\cref{eq:gaussian-affine-coercivity} holds. For smooth data $(F,\eta)$ and
$w=\mathcal U(F,\eta)$, the
value--gradient estimates \cref{eq:linear-value-gradient-bounds} hold.
\end{proposition}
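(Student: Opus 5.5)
The statement has two parts: the fixed-time coercivity bound \cref{eq:gaussian-affine-coercivity}, and the value and gradient bounds \cref{eq:linear-value-gradient-bounds} for smooth data. I would treat them separately.

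\emph{Coercivity.} Rescale $x=x_0+\sqrt t\,y$, so that $\mu_t$ becomes $\gamma_d$. Under this change of variables, $\mathcal A_t^\mu=t^{-1}\mathcal L_{\mathrm{OU}}$, where $\mathcal L_{\mathrm{OU}}=\Delta_y-y\cdot\nabla_y$, and $D^2_x=t^{-1}D^2_y$. The factors of $t$ therefore cancel, and it suffices to treat $t=1$, $x_0=0$. Expand $\phi=\sum_\alpha\phi_\alpha H_\alpha$ as in \cref{lem:a-hermite-expansion}. The projection $\Pi^{\mathrm{aff}}$ keeps exactly the chaos levels $|\alpha|\le1$, because $\langle\nabla\phi\rangle=(\phi_{e_j})_j$ and $H_{e_j}=y_j$. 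Hence $Q^{\mathrm{aff}}\phi=\sum_{|\alpha|\ge2}\phi_\alpha H_\alpha$, and since $\mathcal L_{\mathrm{OU}}H_\alpha=-|\alpha|H_\alpha$,
\[
\|\mathcal L_{\mathrm{OU}}Q^{\mathrm{aff}}\phi\|^2=\sum_{|\alpha|\ge2}|\alpha|^2|\phi_\alpha|^2 .
\]
Applying the derivative rule twice gives $\|D^2\phi\|^2=\sum_\alpha|\alpha|(|\alpha|-1)|\phi_\alpha|^2$: each ordered pair $(j,k)$ contributes $\alpha_j(\alpha_k-\delta_{jk})$, and these sum to $|\alpha|^2-|\alpha|$. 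For $n\ge2$ we have $n^2\le2n(n-1)$, which proves the bound with constant $\sqrt2$. One point needs care: $\phi\in C_c^\infty$ has an infinite Hermite expansion. Both sides are finite and the identities hold by Parseval for the Gaussian Sobolev space $H^2(\gamma_d)$, so this is not an obstacle.

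\emph{Value and gradient bounds.} Set $Y_t=w(t,X_t)$, which satisfies $dY_t=-F(t,X_t)\,dt+Z_t\cdot dW_t$ by Itô's formula and the heat equation. Applying Itô's formula to $r_\beta(t)|Y_t|^2$ and using $r_\beta'=2\beta r_\beta$ gives
\[
2\beta\int r_\beta\mathbb E|Y|^2+\int r_\beta\mathbb E|Z|^2+r_\beta(0)|Y_0|^2=\mathbb E|\eta(X_T)|^2+2\int r_\beta\mathbb E[YF].
\]
From this alone one obtains $\beta\|w\|^2\le\|\eta\|^2+\beta^{-1}\|F\|^2$, which is weaker than the stated bound. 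To get the sharp form $\beta^{-1/2}\|\eta\|+\beta^{-1}\|F\|$, I would use linearity and estimate the two pieces separately instead of one energy identity.

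For the terminal part, $\mathbb E|P_{T-t}\eta(X_t)|^2\le\|\eta\|^2_{L^2(\mu_T)}$ by Jensen's inequality. Integrating against $r_\beta$ gives at most $(2\beta)^{-1}\|\eta\|^2$.

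For the source part, use Minkowski's inequality in $t$: $\|\int_t^TP_{r-t}F(r)\,dr\|_{L^2(\mu_t)}\le\int_t^T\|F(r)\|_{L^2(\mu_r)}\,dr$. Then apply a weighted Young/Hardy estimate with kernel $e^{-\beta(r-t)}\mathbf 1_{r>t}$. This kernel has $L^1$ norm at most $\beta^{-1}$, which yields the factor $\beta^{-1}$.

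For the gradient, the gradient of the source part is handled by the energy identity above. Dropping the $Y_0$ term and using Cauchy--Schwarz with the value bound for $\eta=0$ gives $\|Z\|^2\le2\|w\|\|F\|\le2\beta^{-1}\|F\|^2$. I expect the constants here to be the main obstacle. This argument gives $\sqrt2\beta^{-1/2}$ rather than the stated $\beta^{-1/2}$, so I would instead write $\nabla_x P_{r-t}F=P_{r-t}\nabla F$ only where it is available, or more robustly use the Hermite/Mehler representation. In that representation each chaos level $n$ of $F(r)$ contributes the gradient factor $\sqrt{n/t}\,(t/r)^{n/2}$, and it is integrated with weight $e^{-\beta(r-t)}$. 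The goal would then be a per-level Schur bound. For the terminal gradient term, the bound $\|\nabla P_{T-t}\eta\|_{L^2(\mu_t)}\le\|\nabla\eta\|_{L^2(\mu_T)}$ holds, but the statement uses $\|\eta\|_{L^2}$, so this term needs a separate argument.

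If the sharp constants resist these approaches, I would settle for constants $C$ of the same $\beta$-scaling, since only the scaling in $\beta$ is used downstream in $\lambda(\beta)$.
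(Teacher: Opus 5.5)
Your coercivity argument is correct. After rescaling to $\gamma_d$, $Q^{\mathrm{aff}}$ removes exactly the Hermite levels $|\alpha|\le1$. Comparing $\sum_{n\ge2}n^2|\phi_\alpha|^2$ with $\sum n(n-1)|\phi_\alpha|^2$ is the spectral form of the paper's argument. The paper uses the Gaussian Bochner identity $\|\mathcal A_t^\mu\psi\|^2=\|D^2\psi\|^2+t^{-1}\|\nabla\psi\|^2$, followed by Poincar\'e for the mean-zero field $\nabla\psi$; in chaos terms this is $n^2=n(n-1)+n$ together with $n\le n(n-1)$ for $n\ge2$. Your value bound is also correct: Jensen for the terminal part, and Minkowski plus the $L^1$ norm of $e^{-\beta s}\mathbf 1_{\{s\ge0\}}$ for the source part.

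The gap is the gradient estimate $\|\nabla_xw\|_{\mathcal H_\beta}\le\|\eta\|_{L^2(\mu_T)}+\beta^{-1/2}\|F\|_{\mathcal H_\beta}$. You leave the terminal contribution unresolved (``needs a separate argument'') and get $\sqrt2\,\beta^{-1/2}$ for the source contribution. You then propose to settle for unspecified constants, which does not prove the proposition as stated. These exact constants are used verbatim in $\lambda(\beta)$ and in the bound on $\Phi(0)$ in the well-posedness theorem.

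The missing step is already in the energy identity you wrote down, and your judgment that it is ``weaker than the stated bound'' is mistaken. Do not discard $2\beta\|w\|_{\mathcal H_\beta}^2$. Absorb into it with Young's inequality, $2\langle w,F\rangle_{\mathcal H_\beta}\le\beta\|w\|_{\mathcal H_\beta}^2+\beta^{-1}\|F\|_{\mathcal H_\beta}^2$, and drop $r_\beta(0)|Y_0|^2\ge0$, to get
\[
\beta\|w\|_{\mathcal H_\beta}^2+\|\nabla_xw\|_{\mathcal H_\beta}^2
\le\|\eta\|_{L^2(\mu_T)}^2+\beta^{-1}\|F\|_{\mathcal H_\beta}^2 .
\]
Since $\sqrt{a^2+b^2}\le a+b$, reading off each channel gives $\sqrt\beta\,\|w\|_{\mathcal H_\beta}\le\|\eta\|_{L^2(\mu_T)}+\beta^{-1/2}\|F\|_{\mathcal H_\beta}$ and $\|\nabla_xw\|_{\mathcal H_\beta}\le\|\eta\|_{L^2(\mu_T)}+\beta^{-1/2}\|F\|_{\mathcal H_\beta}$. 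These are both stated bounds with the exact constants, and this is precisely the paper's proof.

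In particular, the terminal gradient contribution needs no pointwise gradient estimate for $P_{T-t}\eta$. With $F=0$ the identity reduces to the It\^o isometry $\int_0^T\mathbb E|Z_t|^2\,dt\le\mathbb E|\eta(X_T)|^2$, combined with $r_\beta\le1$. This is why $\|\eta\|_{L^2(\mu_T)}$ rather than $\|\nabla\eta\|$ appears. The Hermite--Mehler per-level Schur route you sketch is unnecessary.
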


\begin{proof}
Fix $t>0$.
Gaussian integration by parts shows that $\Pi_t^{\mathrm{aff}}$ is the
$L^2(\mu_t)$-projection onto the affine functions. Consequently,
\begin{equation*}
\langle Q_t^{\mathrm{aff}}\phi\rangle_t=0,
\qquad
\langle\nabla Q_t^{\mathrm{aff}}\phi\rangle_t=0,
\qquad
D^2Q_t^{\mathrm{aff}}\phi=D^2\phi.
\end{equation*}
Set $\psi=Q_t^{\mathrm{aff}}\phi$.
The Gaussian Bochner identity \cite{ref20} and the componentwise Gaussian
Poincar\'e inequality give
\begin{equation*}
\begin{aligned}
\|\mathcal A_t^\mu\psi\|_{L^2(\mu_t)}^2
&=\|D^2\psi\|_{L^2(\mu_t)}^2
+\frac1t\|\nabla\psi\|_{L^2(\mu_t)}^2 \le2\|D^2\phi\|_{L^2(\mu_t)}^2.
\end{aligned}
\end{equation*}
This is \cref{eq:gaussian-affine-coercivity} on the smooth core.

With $Y_t=w(t,X_t)$ and $Z_t=\nabla_xw(t,X_t)$, the mild equation gives
\[
dY_t=-F(t,X_t)\,dt+Z_t\cdot dW_t,
\qquad Y_T=\eta(X_T).
\]
It\^o's formula for $r_\beta(t)|Y_t|^2$, followed by the
Cauchy--Schwarz and Young inequalities, gives
\[
\beta\|w\|_{\mathcal H_\beta}^2
+\|\nabla_xw\|_{\mathcal H_\beta}^2
\le
\|\eta\|_{L^2(\mu_T)}^2
+\beta^{-1}\|F\|_{\mathcal H_\beta}^2.
\]
Taking square roots in the two channels proves \cref{eq:linear-value-gradient-bounds}.
\end{proof}

\subsection{Continuous extension of the mild map}
\label{subsec:continuous-mild-extension}
\begin{theorem}[Occupation-space extension]
\label{thm:occupation-mild-extension}
The smooth mild-solution map
\[
(F,\eta)\longmapsto
\left(w,\nabla_xw,D_x^2w,
\mathcal A_\cdot^\mu Q_\cdot^{\mathrm{aff}}w\right)
\]
has a unique continuous extension from
$\mathcal H_\beta\times H^1(\mu_T)$ into the corresponding occupation spaces.
For general data, the fourth component denotes this occupation-space
extension. For each $(F,\eta)$, the first three components are the occupation mild jet of
the linear occupation mild solution $w=\mathcal U(F,\eta)$ in the sense of
\cref{def:occupation-mild-solutions}. The identity and estimates
\cref{eq:centered-gradient-energy,eq:brownian-hessian-bound,eq:linear-value-gradient-bounds} hold
on the extension. If
$\eta\in H^2(\mu_T)$, the construction is compatible with approximation in the
terminal $H^2(\mu_T)$ graph norm.
\end{theorem}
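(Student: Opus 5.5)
The plan is a density-and-closure argument built on the smooth-core bounds already in hand. For smooth data $(F,\eta)$ we use the Hessian calculation in the proof of \cref{thm:brownian-hessian-estimate} together with \cref{prop:remaining-smooth-core}. These give linear bounds on the four components $w,\nabla_xw,D_x^2w,\mathcal A_\cdot^\mu Q_\cdot^{\mathrm{aff}}w$ in $\mathcal H_\beta$, in terms of $\|F\|_{\mathcal H_\beta}+\|\eta\|_{H^1(\mu_T)}$. For the fourth component, \cref{eq:gaussian-affine-coercivity} applied at each fixed $t$ gives $\|\mathcal A_t^\mu Q_t^{\mathrm{aff}}w(t,\cdot)\|_{L^2(\mu_t)}\le\sqrt2\|D_x^2w(t,\cdot)\|_{L^2(\mu_t)}$; integrating against $r_\beta\,dt$ controls it by the Hessian bound. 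The spatial cutoff needed to apply the lemma to $w(t,\cdot)$, which is not compactly supported, is handled by dominated convergence, since smooth heat-flow solutions of compactly supported data have Gaussian decay. The map is linear in $(F,\eta)$, hence uniformly continuous. By \cref{lem:smooth-approximation} the smooth data are dense in $\mathcal H_\beta\times H^1(\mu_T)$. This yields a unique continuous extension into $\mathcal H_\beta^{1+d+d^2+1}$.

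Next I identify the first three components of the extension with the occupation mild jet of $w=\mathcal U(F,\eta)$. First, I show the Duhamel formula \cref{eq:duhamel-map} is itself continuous from $\mathcal H_\beta\times L^2(\mu_T)$ into $\mathcal H_\beta$. Jensen/Cauchy--Schwarz gives $\mathbb E|P_{T-t}\eta(X_t)|^2\le\mathbb E|\eta(X_T)|^2$ and $\mathbb E|P_{r-t}F(r,\cdot)(X_t)|^2\le\mathbb E|F(r,X_r)|^2$, so the limit of the smooth $w^m$ is exactly $\mathcal U(F,\eta)$. Second, let $(F^m,\eta^m)\to(F,\eta)$ be smooth approximants. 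Then $w^m\to w$ and $\nabla_xw^m\to\Xi_1$ strongly, so \cref{lem:weak-spatial-closure} gives $D_xw=\Xi_1$. Applying the lemma again to $\nabla_xw^m$ and $D_x^2w^m\to\Xi_2$ gives $D_x\Xi_1=\Xi_2$, and symmetry passes to the limit. Thus $w$ has an occupation mild jet. Uniqueness of the linear occupation mild solution is immediate, since the definition forces $v=\mathcal U(F,\eta)$.

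Passing the identities to the limit comes next. The estimates \cref{eq:linear-value-gradient-bounds,eq:brownian-hessian-bound} are norm inequalities between continuous quantities, so they pass to the limit directly. In \cref{eq:centered-gradient-energy}, the variance terms $\operatorname{Var}(\nabla\eta^m(X_T))$ and $\int r_\beta\operatorname{Var}(\nabla_xw^m(t,X_t))\,dt$ converge because $\nabla\eta^m\to\nabla\eta$ in $L^2(\mu_T)$ and $\nabla_xw^m\to\nabla_xw$ in $\mathcal H_\beta$; the time-wise means converge by Cauchy--Schwarz. The cross term $\int r_\beta\int F^m\,\mathcal A^\mu Q^{\mathrm{aff}}w^m\,d\mu_t\,dt$ is the $\mathcal H_\beta$ inner product of two strongly convergent sequences, so it converges. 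For $\eta\in H^2(\mu_T)$, I use the $H^2$ approximants from \cref{lem:smooth-approximation}. These converge a fortiori in $H^1$, so by uniqueness of the continuous extension the limit is the same object, which gives the compatibility claim.

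The main obstacle I expect is the smooth-core justification for the fourth component and the Itô computations when $w$ is not compactly supported. I would verify this with the explicit Gaussian bounds for $P_{r-t}F$ and $P_{T-t}\eta$ on compactly supported data. Near $t=0$ the factor $1/t$ in $\mathcal A_t^\mu$ only appears inside an $L^2(\mu_t)$ norm already bounded by \cref{eq:gaussian-affine-coercivity}, so no singularity survives.
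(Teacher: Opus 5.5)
Your proposal is correct and follows essentially the same route as the paper's proof: linear smooth-core bounds (with the fixed-time affine-complement coercivity controlling the fourth component by the Hessian), density and continuous extension, identification of the first component with $\mathcal U(F,\eta)$ through the Jensen/convolution bound, weak-derivative closure for the jet, and passage to the limit in the centered-gradient identity using contractivity of centering and strong convergence of both factors in the cross term. The differences are only cosmetic. You invoke the weak-closure lemma where the paper passes to the limit directly against test functions, and you obtain uniqueness from boundedness of a linear map on a dense set rather than by checking that the limits do not depend on the approximating sequence; the $H^2(\mu_T)$ compatibility is argued identically.
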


\begin{proof}
Choose approximations $F^m,\eta^m$ from \cref{lem:smooth-approximation}, and let $w^m$ be the
corresponding smooth mild solutions. Applying the smooth-data argument in the
proof of \cref{thm:brownian-hessian-estimate} to differences gives
\begin{equation}
\|D_x^2w^m-D_x^2w^\ell\|_{\mathcal H_\beta}
\le2\sqrt2\|F^m-F^\ell\|_{\mathcal H_\beta}
+\|\nabla\eta^m-\nabla\eta^\ell\|_{L^2(\mu_T)}.
\label{eq:mild-extension-hessian-cauchy}
\end{equation}
The smooth value--gradient energy inequality gives
\begin{equation*}
\begin{aligned}
&\beta\|w^m-w^\ell\|_{\mathcal H_\beta}^2
+\|\nabla_xw^m-\nabla_xw^\ell\|_{\mathcal H_\beta}^2\le
\|\eta^m-\eta^\ell\|_{L^2(\mu_T)}^2
+\beta^{-1}\|F^m-F^\ell\|_{\mathcal H_\beta}^2.
\end{aligned}
\end{equation*}
For each $t>0$, applying
\cref{eq:gaussian-affine-coercivity} to
$w^{m}(t,\cdot) - w^{\ell}(t,\cdot)$ and integrating over $t$ gives
\begin{equation*}
\begin{aligned}
\|\mathcal A_\cdot^\mu Q_\cdot^{\mathrm{aff}}(w^m-w^\ell)
\|_{\mathcal H_\beta}^2
&=\int_0^T r_\beta(t)
\|\mathcal A_t^\mu Q_t^{\mathrm{aff}}(w^m-w^\ell)(t,\cdot)
\|_{L^2(\mu_t)}^2\,dt \\
&\le2\|D_x^2(w^m-w^\ell)\|_{\mathcal H_\beta}^2
\longrightarrow0,
\end{aligned}
\end{equation*}
where the last convergence follows from
\cref{eq:mild-extension-hessian-cauchy}. Set
$z^m:=\nabla_xw^m$, $\gamma^m:=D_x^2w^m$, and
$a^m:=\mathcal A_\cdot^\mu Q_\cdot^{\mathrm{aff}}w^m$.
Then there are limits $w,z,\gamma,a$ in the corresponding occupation spaces
such that
\begin{equation}
(w^m,z^m,\gamma^m,a^m)
\longrightarrow(w,z,\gamma,a)
\quad\text{componentwise in }\mathcal H_\beta.
\label{eq:mild-extension-four-limits}
\end{equation}

The Markov property gives, for almost every $t$,
\begin{equation*}
\mathcal U(F,\eta)(t,X_t)
=\mathbb E\left[
\eta(X_T)+\int_t^T F(s,X_s)\,ds
\,\middle|\,\mathcal F_t\right].
\end{equation*}
Jensen's inequality and the $L^2$ convolution bound for
$e^{-\beta t}\mathbf 1_{\{t\ge0\}}$ yield
\begin{equation*}
\begin{aligned}
\|w^m-\mathcal U(F,\eta)\|_{\mathcal H_\beta}
&\le \beta^{-1/2}\|\eta^m-\eta\|_{L^2(\mu_T)}
+\beta^{-1}\|F^m-F\|_{\mathcal H_\beta}
\longrightarrow0.
\end{aligned}
\end{equation*}
Thus $w=\mathcal U(F,\eta)$. If $K$ is a compact cylinder in
$(0,T)\times\mathbb R^d$, then
\begin{equation*}
\|v\|_{L^2(K)}^2
\le \left(\inf_K r_\beta p_t\right)^{-1}
\|v\|_{\mathcal H_\beta}^2.
\end{equation*}
Consequently, for $\varphi\in C_c^\infty((0,T)\times\mathbb R^d)$ and
$1\le i,j\le d$, \cref{eq:mild-extension-four-limits} implies
\begin{equation*}
\begin{aligned}
\iint w\,\partial_{x_j}\varphi\,dt\,dx
&=\lim_{m\to\infty}\iint w^m\partial_{x_j}\varphi\,dt\,dx\\
&\hspace{1em}
=-\lim_{m\to\infty}\iint \partial_{x_j}w^m\varphi\,dt\,dx
=-\iint z_j\varphi\,dt\,dx,\\
\iint z_i\,\partial_{x_j}\varphi\,dt\,dx
&=\lim_{m\to\infty}\iint \partial_{x_i}w^m
\partial_{x_j}\varphi\,dt\,dx\\
&\hspace{1em}
=-\lim_{m\to\infty}\iint \partial_{x_j}\partial_{x_i}w^m
\varphi\,dt\,dx
=-\iint\gamma_{ij}\varphi\,dt\,dx.
\end{aligned}
\end{equation*}
Therefore $D_xw=z$ and $D_xz=\gamma$ weakly, so
$(w,z,\gamma)$ is the occupation mild jet from
\cref{def:occupation-mild-solutions}.

For a vector-valued field $q$, set
$\mathsf Cq(t,x)=q(t,x)-\int q(t,y)\,\mu_t(dy)$, and set
$\mathsf C_Tq=q-\int q\,d\mu_T$. Both centering maps are contractions, so
\begin{equation*}
\begin{gathered}
\|\mathsf Cz^m-\mathsf Cz\|_{\mathcal H_\beta}
\le\|z^m-z\|_{\mathcal H_\beta}\longrightarrow0,\\
\|\mathsf C_T\nabla\eta^m-\mathsf C_T\nabla\eta\|_{L^2(\mu_T)}
\le\|\nabla\eta^m-\nabla\eta\|_{L^2(\mu_T)}
\longrightarrow0.
\end{gathered}
\end{equation*}
Moreover, the Cauchy--Schwarz inequality gives
\begin{equation*}
\begin{aligned}
\left|\int F^ma^m\,d\nu_\beta-\int Fa\,d\nu_\beta\right|
&\le\|F^m-F\|_{\mathcal H_\beta}\|a^m\|_{\mathcal H_\beta}
\\
&\quad+\|F\|_{\mathcal H_\beta}\|a^m-a\|_{\mathcal H_\beta}
\longrightarrow0.
\end{aligned}
\end{equation*}
Passing to the limit in the smooth centered-gradient identity now gives
\begin{equation*}
\|\gamma\|_{\mathcal H_\beta}^2
+2\beta\|\mathsf Cz\|_{\mathcal H_\beta}^2
=\|\mathsf C_T\nabla\eta\|_{L^2(\mu_T)}^2
-2\int Fa\,d\nu_\beta,
\end{equation*}
which is \cref{eq:centered-gradient-energy} with the fourth component
interpreted as $a$. Strong convergence and the smooth estimates also give
\begin{equation*}
\begin{aligned}
\|\gamma\|_{\mathcal H_\beta}
&=\lim_m\|D_x^2w^m\|_{\mathcal H_\beta}
\le2\sqrt2\|F\|_{\mathcal H_\beta}
+\|\mathsf C_T\nabla\eta\|_{L^2(\mu_T)},\\
\|w\|_{\mathcal H_\beta}
&=\lim_m\|w^m\|_{\mathcal H_\beta}
\le\beta^{-1/2}\|\eta\|_{L^2(\mu_T)}
+\beta^{-1}\|F\|_{\mathcal H_\beta},\\
\|z\|_{\mathcal H_\beta}
&=\lim_m\|\nabla_xw^m\|_{\mathcal H_\beta}
\le\|\eta\|_{L^2(\mu_T)}
+\beta^{-1/2}\|F\|_{\mathcal H_\beta}.
\end{aligned}
\end{equation*}
These are \cref{eq:brownian-hessian-bound,eq:linear-value-gradient-bounds}.

For two inputs, use $\delta$ to denote the difference of the corresponding
quantities. The preceding estimates and the fixed-time Gaussian estimate give
\begin{equation*}
\begin{aligned}
\|\delta w\|_{\mathcal H_\beta}
&\le\beta^{-1/2}\|\delta\eta\|_{L^2(\mu_T)}
+\beta^{-1}\|\delta F\|_{\mathcal H_\beta},\\
\|\delta z\|_{\mathcal H_\beta}
&\le\|\delta\eta\|_{L^2(\mu_T)}
+\beta^{-1/2}\|\delta F\|_{\mathcal H_\beta},\\
\|\delta\gamma\|_{\mathcal H_\beta}
&\le2\sqrt2\|\delta F\|_{\mathcal H_\beta}
+\|\nabla\delta\eta\|_{L^2(\mu_T)},\\
\|\delta a\|_{\mathcal H_\beta}
&\le\sqrt2\|\delta\gamma\|_{\mathcal H_\beta}.
\end{aligned}
\end{equation*}
Hence the four limits are independent of the approximating sequence and
define the unique continuous extension, since smooth data are dense. Finally,
if $\eta\in H^2(\mu_T)$, choose
$\eta^m\to\eta$ in $H^2(\mu_T)$ using
\cref{lem:smooth-approximation}. This also gives convergence in $H^1(\mu_T)$,
so uniqueness identifies the graph-compatible limit with the extension above.
This proves \cref{thm:occupation-mild-extension}, and hence completes the
proofs of \cref{thm:brownian-hessian-estimate,cor:linear-mild-map-estimates}.
\end{proof}

\section{Weak Differentiation, Canonical Traces, and Exact-Grid Consistency}
\label{sec:supp-traces}
This section supplies the regularity and consistency proofs stated in
\cref{sec:grid-regularity}. It separates occupation versions from
deterministic-time realizations, proves weak differentiation and the
positive-time trace and local gradient bounds, and identifies the canonical
derivative traces together with their It\^o identities. It then verifies
exact-grid admissibility and proves the source-freezing estimate used for
exact-grid residual consistency.

\subsection{Path-space versions and deterministic-time values}
\label{subsec:path-space-versions}
An occupation mild jet $(v,D_xv,D_x^2v)$ in the sense of
\cref{def:occupation-mild-solutions} consists of time--space equivalence
classes in $\mathcal H_\beta$. By the definitions in
\cref{subsec:notation-and-pde}, evaluating a Borel representative along $X$
produces a progressive $L^2(dt\otimes d\mathbb P)$ class. Standard predictable
approximation gives a predictable representative of this class. We call such a
representative an
\emph{occupation version}. It is unique only up to
$dt\otimes d\mathbb P$ equivalence and therefore has no intrinsic value at a
prescribed time.

The mild value has the stronger continuous realization in
\cref{eq:continuous-value-process} and can therefore be evaluated at every
deterministic time. Under \cref{ass:spatial-regularity}, the same applies to
the target gradient by
\cref{prop:source-spatial-regularity,lem:weak-differentiation}. The Hessian
generally has only an occupation version; its deterministic-time values are
the canonical traces in \cref{def:canonical-traces-exact-grid}, constructed
using \cref{subsec:source-to-trace-continuity};
\cref{subsec:canonical-identification} proves that canonical traces agree with
occupation versions for almost every time, without identifying the two notions
at an exceptional prescribed time.

\begin{lemma}[Path-space realization]
\label{lem:path-space-realization}
Let $\mathsf V$ be a finite-dimensional Euclidean space. Every $\mathsf V$-valued
$\phi\in\mathcal H_\beta$ admits a predictable occupation version along
$X$. If $F\in\mathcal H_\beta$ is $\mathsf V$-valued,
$\eta\in H^1(\mu_T;\mathsf V)$, and
$w=\mathcal U(F,\eta)$,
there is a continuous adapted process $Y^w$, unique up to
indistinguishability, such that, for every $t\in[0,T]$,
\begin{equation}
Y_t^w
=\mathbb E_t\!\left[\eta(X_T)+\int_t^TF(r,X_r)\,dr\right]
=w(t,X_t)
\quad\text{in }L^2(\Omega).
\label{eq:continuous-value-process}
\end{equation}
In particular, $Y_T^w=\eta(X_T)$.
\end{lemma}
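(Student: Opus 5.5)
The plan is to build the continuous process as a martingale minus a running integral, and then identify it with $w(t,X_t)$ by the Markov property. First, the predictable occupation version. Take a Borel representative of $\phi$. Since $p_t$ is the density of $X_t$ for $t>0$, Fubini gives
\[
\mathbb E\int_0^T r_\beta(t)|\phi(t,X_t)|^2\,dt=\|\phi\|_{\mathcal H_\beta}^2<\infty .
\]
Because $r_\beta\ge e^{-2\beta T}$, the process $(t,\omega)\mapsto\phi(t,X_t(\omega))$ is a progressively measurable element of $L^2(dt\otimes d\mathbb P)$. Its class changes only on a $dt\otimes d\mathbb P$-null set when the representative is changed. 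The standard fact that progressive $L^2(dt\otimes d\mathbb P)$ classes have predictable representatives then yields the occupation version; for instance, take limits of Riemann-type left-continuous averages along a subsequence.

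Second, construct the process. By Jensen and Cauchy--Schwarz,
\[
\xi:=\eta(X_T)+\int_0^TF(r,X_r)\,dr\in L^2(\Omega),
\]
since $\eta\in L^2(\mu_T)$ and $F(\cdot,X_\cdot)\in L^2(dt\otimes d\mathbb P)$. Let $M_t=\mathbb E_t\xi$. Brownian martingale representation gives $M$ a continuous version in the augmented filtration. Set
\[
Y_t^w:=M_t-\int_0^tF(r,X_r)\,dr .
\]
The integral is continuous in $t$ and adapted, so $Y^w$ is continuous and adapted. For every $t$, pulling the $\mathcal F_t$-measurable integral out of the conditional expectation gives $Y_t^w=\mathbb E_t[\eta(X_T)+\int_t^TF(r,X_r)\,dr]$. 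At $t=T$ this is $\eta(X_T)$. Two continuous processes that agree almost surely at every rational time are indistinguishable, which gives uniqueness.

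Third, identify $Y_t^w$ with $w(t,X_t)$ at every $t$, and not just for almost every $t$. For smooth data this follows from the Markov property: $\mathbb E_t[\eta(X_T)]=P_{T-t}\eta(X_t)$ and $\mathbb E_t[F(r,X_r)]=P_{r-t}F(r,\cdot)(X_t)$, combined with conditional Fubini. For general data, approximate by $(F^m,\eta^m)$ as in \cref{lem:smooth-approximation}. The map $(F,\eta)\mapsto Y_t$ is $L^2$-continuous uniformly in $t$, with
\[
\|Y_t-Y_t^m\|_{L^2}\le\|\eta-\eta^m\|_{L^2(\mu_T)}+C_\beta\|F-F^m\|_{\mathcal H_\beta}.
\]
At a fixed $t>0$, the right side of \cref{eq:duhamel-map} evaluated at $X_t$ is continuous for the same reason. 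Jensen gives $\mathbb E|P_{T-t}\eta(X_t)|^2\le\|\eta\|_{L^2(\mu_T)}^2$ and
\[
\mathbb E\Bigl|\int_t^TP_{r-t}F(r,\cdot)(X_t)\,dr\Bigr|^2\le T\int_t^T\mathbb E|F(r,X_r)|^2\,dr .
\]
In particular the Duhamel integral converges absolutely for almost every $X_t$. Passing to the limit yields $Y_t^w=w(t,X_t)$ in $L^2(\Omega)$ for every $t$, where $w(t,\cdot)$ is the pointwise-in-$t$ Duhamel formula. For $t=0$, $X_0=x_0$ is deterministic, and the same formula holds with the integral converging by the identical bound.

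The main obstacle is exactly this last step, making sense of $w$ at every deterministic time even though $w$ is a priori only an $\mathcal H_\beta$ class. I handle it by reading \cref{eq:duhamel-map} as a genuine pointwise-in-$t$ definition, whose value at $X_t$ is controlled in $L^2(\Omega)$ by the Jensen bound above. Its occupation class then agrees with the extension from \cref{thm:occupation-mild-extension}, by the almost-every-$t$ Markov identity already used in that proof.
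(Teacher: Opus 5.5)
Your proposal is correct and follows the paper's proof essentially step by step: progressive measurability plus predictable approximation for the occupation version, the continuous martingale $\mathbb E_t\xi$ minus the running integral for $Y^w$, the Markov property with conditional Fubini to identify $Y_t^w$ with $w(t,X_t)$, and agreement on rational times for uniqueness. The one difference is your smooth-data density step in the identification, which is correct but unnecessary. The Markov property and conditional Fubini apply directly to Borel representatives under the integrability $\int_t^T\mathbb E|F(r,X_r)|\,dr<\infty$ that you already have, and this is how the paper argues.
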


\begin{proof}
For a Borel representative of $\phi$, set $H_t=\phi(t,X_t)$. The process $H$
is progressive, and
$e^{-2\beta T}\le r_\beta(t)\le1$ shows that it belongs to
$L^2(dt\otimes d\mathbb P)$. The same bound shows that changing the Borel
representative does not change its product-measure class.
By the standard predictable-approximation theorem for progressive
$L^2$ processes \cite[Sec.~3.2]{ref24}, this class has a predictable
representative.

For the value process, the Cauchy--Schwarz inequality
gives
\begin{equation*}
\begin{aligned}
\mathbb E|\xi|^2
&\le2\|\eta\|_{L^2(\mu_T)}^2
+2Te^{2\beta T}\|F\|_{\mathcal H_\beta}^2<\infty, \quad \xi:=\eta(X_T)+\int_0^TF(r,X_r)\,dr,
\end{aligned}
\end{equation*}
The Brownian martingale representation theorem
\cite[Sec.~3.4.D]{ref24} gives a continuous version of
$\mathcal M_t=\mathbb E_t[\xi]$, and
\[
Y_t^w=\mathcal M_t-\int_0^tF(r,X_r)\,dr
\]
is continuous and adapted. Conditioning gives the first equality in
\cref{eq:continuous-value-process}; the Brownian Markov property
\cite[Sec.~2.5]{ref24}, conditional Fubini, and the Duhamel formula in
\cref{eq:duhamel-map} give the second. Equality on rational times and
continuity give uniqueness up to indistinguishability.
\end{proof}

\subsection{Weak differentiation of the mild map}
\label{subsec:weak-differentiation-of-the-mild-map}
\begin{lemma}[Weak differentiation of the mild map]
\label{lem:weak-differentiation}
Let $F\in\mathcal H_\beta^{1,x}$, $\eta\in H^2(\mu_T)$, and
$u=\mathcal U(F,\eta)$.
Then $u$ has weak spatial derivatives through order three in the occupation
sense. More precisely, for each $k=1,\ldots,d$,
$v_k=\partial_ku$ is the linear occupation mild solution
$v_k=\mathcal U(\partial_kF,\partial_k\eta)$ of
\begin{equation}
-\partial_tv_k-\frac12\Delta v_k=\partial_kF,
\qquad
v_k(T,\cdot)=\partial_k\eta.
\label{eq:differentiated-mild-equation}
\end{equation}
For $\Xi\in\mathbb R^{d\times d\times d}$, write
$|\Xi|_{\mathrm{ten}}^2=\sum_{i,j,k=1}^d|\Xi_{ijk}|^2$.
In particular,
\[
v_k,\ \nabla_xv_k,\ D_x^2v_k\in\mathcal H_\beta,
\qquad
\nabla_xv_k=D_x^2u\,e_k,
\qquad
D_x^2v_k=D_x(D_x^2u\,e_k)
\]
in the distributional occupation sense. Consequently,
$D_xD_x^2u\in\mathcal H_\beta$, with
\[
|D_xD_x^2u|_{\mathrm{ten}}^2
=\sum_{k=1}^d|D_x^2v_k|_F^2.
\]
\end{lemma}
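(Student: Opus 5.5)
The plan is to prove the statement first for smooth data and then pass to the limit, in the same way as in the proof of \cref{thm:occupation-mild-extension}.

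\emph{Approximating data.} By \cref{lem:smooth-approximation}, since $D_xF\in\mathcal H_\beta$ and $\eta\in H^2(\mu_T)$, we can choose $F^m\in C_c^\infty((0,T)\times\mathbb R^d)$ and $\eta^m\in C_c^\infty(\mathbb R^d)$ such that
\[
F^m\to F,\quad D_xF^m\to D_xF\quad\text{in }\mathcal H_\beta,\qquad \eta^m\to\eta\quad\text{in }H^2(\mu_T).
\]

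\emph{Smooth case.} Let $u^m=\mathcal U(F^m,\eta^m)$. The heat semigroup commutes with spatial translations, so $\partial_kP_r\varphi=P_r\partial_k\varphi$ for smooth compactly supported $\varphi$. Differentiating under the integral in the Duhamel formula \cref{eq:duhamel-map} then gives the classical identity
\[
\partial_ku^m=\mathcal U(\partial_kF^m,\partial_k\eta^m).
\]
Hence $\nabla_x\partial_ku^m=D_x^2u^m e_k$ and $D_x^2\partial_ku^m=D_x(D_x^2u^m e_k)$ hold classically.

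\emph{Passage to the limit.} The data $(\partial_kF^m,\partial_k\eta^m)$ converge to $(\partial_kF,\partial_k\eta)$ in $\mathcal H_\beta\times H^1(\mu_T)$. By the continuous extension in \cref{thm:occupation-mild-extension}, the jets of $v_k^m:=\mathcal U(\partial_kF^m,\partial_k\eta^m)$ converge in $\mathcal H_\beta$ to the occupation mild jet $(v_k,\nabla_xv_k,D_x^2v_k)$ of $v_k:=\mathcal U(\partial_kF,\partial_k\eta)$. Simultaneously, applying \cref{thm:occupation-mild-extension} to $(F^m,\eta^m)$ gives convergence of the jet of $u^m$ to the jet of $u$. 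Since $\partial_ku^m=v_k^m$ and $\nabla_x\partial_ku^m=D_x^2u^me_k$, the limits must agree:
\[
\partial_ku=v_k,\qquad D_x^2u\,e_k=\nabla_xv_k\quad\text{in }\mathcal H_\beta.
\]
The identity $D_x^2v_k=D_x(D_x^2u\,e_k)$ follows from \cref{lem:weak-spatial-closure}, or equivalently from the compact-cylinder distributional argument in the proof of \cref{thm:occupation-mild-extension}. Indeed, $D_x^2u^me_k\to D_x^2u\,e_k$ and $D_x(D_x^2u^me_k)=D_x^2v_k^m\to D_x^2v_k$ strongly in $\mathcal H_\beta$, and this identifies the weak derivative.

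\emph{Third-order tensor.} The tensor identity $|D_xD_x^2u|_{\mathrm{ten}}^2=\sum_k|D_x^2v_k|_F^2$ is pure bookkeeping: the entry $(i,j,k)$ of $D_xD_x^2u$ equals $\partial_i(D_x^2u)_{jk}=(D_x^2v_k)_{ij}$. The symmetry $(D_x^2u)_{jk}=(D_x^2u)_{kj}$ carries over from the smooth approximants in the limit, so the indexing convention does not matter. Membership of $D_xD_x^2u$ in $\mathcal H_\beta$ then follows from $D_x^2v_k\in\mathcal H_\beta$, which holds by \cref{cor:linear-mild-map-estimates}.

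\emph{Main obstacle.} The only delicate point is making sure both limits pass through the same continuous extension, so that the weak derivative of the limit equals the limit of the derivatives. For $\partial_k\eta\in H^1(\mu_T)$ one needs $\eta\in H^2(\mu_T)$, and this is why the $H^2$-compatibility clause of \cref{thm:occupation-mild-extension} is invoked. The density of smooth pairs with simultaneous graph-norm convergence of $D_xF^m$ is supplied by \cref{lem:smooth-approximation}.
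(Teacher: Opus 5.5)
Your proposal is correct and follows essentially the same route as the paper: graph-norm smooth approximation, commuting $\partial_k$ with the Duhamel map on the smooth core, strong occupation convergence of both the $u^m$-jet and the $v_k^m$-jet from the linear estimates, and identification of the limits. The paper passes $\partial_k u^m=v_k^m$ to the limit via \cref{lem:weak-spatial-closure}, which is equivalent to your direct matching of the two strong limits.
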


\begin{proof}
Take graph-norm approximations $F^m,\eta^m$ from \cref{lem:smooth-approximation}, and let $u^m$
be the corresponding mild solutions. For smooth data, differentiation
commutes with the heat semigroup and the Duhamel integral, so
$v_k^m=\partial_ku^m$ solves \cref{eq:differentiated-mild-equation} with source $\partial_kF^m$ and
terminal datum $\partial_k\eta^m$.

\Cref{cor:linear-mild-map-estimates} applied to differences gives
\begin{equation*}
\begin{aligned}
(u^m,\nabla_xu^m,D_x^2u^m)
&\longrightarrow(u,\nabla_xu,D_x^2u),\\
(v_k^m,\nabla_xv_k^m,D_x^2v_k^m)
&\longrightarrow(v_k,\nabla_xv_k,D_x^2v_k)
\end{aligned}
\end{equation*}
strongly componentwise in $\mathcal H_\beta$. \Cref{lem:weak-spatial-closure} passes
$\partial_ku^m=v_k^m$ to the limit. The strong limit of
$\nabla_xv_k^m=D_x^2u^m e_k$ is therefore both $\nabla_xv_k$ and
$D_x^2u\,e_k$. Since $D_x^2v_k$ is the weak derivative of $\nabla_xv_k$, it
is also the weak derivative of $D_x^2u\,e_k$. This proves the claim.
\end{proof}

\subsection{Deterministic-time trace operator}
\label{subsec:source-to-trace-continuity}
\begin{lemma}[Positive-time trace bound]
\label{lem:positive-time-trace}
For every $\beta>0$ and $a\in(0,T)$, the smooth-data operator $\mathcal T_a$ has a unique
continuous extension to $\mathcal H_\beta\times H^1(\mu_T)$, and, for every
$F\in\mathcal H_\beta$ and $\eta\in H^1(\mu_T)$,
\begin{equation}
\|\mathcal T_a(F,\eta)\|_{L^2}
\le C_\beta
\left(1+\log\frac Ta\right)^{\frac12}
\left(\|F\|_{\mathcal H_\beta}
+\|\nabla\eta\|_{L^2(\mu_T)}\right).
\label{eq:positive-time-trace-bound}
\end{equation}
\end{lemma}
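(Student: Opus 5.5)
The plan is to prove \cref{eq:positive-time-trace-bound} for smooth data by a Hermite computation at the fixed time $a$. Density from \cref{lem:smooth-approximation} then gives the unique continuous extension. By linearity, the terminal and source contributions can be treated separately. Write $X_a=x_0+\sqrt a\,G$ with $G\sim\gamma_d$.

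\emph{Terminal part.} By Jensen's inequality (conditional expectation commutes with the semigroup), $\mathbb E|P_{T-a}\nabla\eta(X_a)|^2\le\mathbb E|\nabla\eta(X_T)|^2=\|\nabla\eta\|_{L^2(\mu_T)}^2$. This piece needs no logarithm.

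\emph{Source part.} For fixed $r\in(a,T]$, expand $F(r,x_0+\sqrt r\,y)=\sum_\alpha F_\alpha(r)H_\alpha(y)$ as in \cref{lem:a-hermite-expansion}. The Mehler identity gives $P_{r-a}F(r,\cdot)(x_0+\sqrt a\,y)=\sum_\alpha(a/r)^{|\alpha|/2}F_\alpha(r)H_\alpha(y)$. Differentiate in $x=x_0+\sqrt a\,y$, so $\partial_{x_j}=a^{-1/2}\partial_{y_j}$, and use $\partial_jH_\alpha=\sqrt{\alpha_j}H_{\alpha-e_j}$. Summing over $j$ and using orthonormality,
\[
\mathbb E\bigl|\nabla_xP_{r-a}F(r,\cdot)(X_a)\bigr|^2
=\sum_{\alpha}\frac{|\alpha|}{a}\Bigl(\frac ar\Bigr)^{|\alpha|}|F_\alpha(r)|^2 .
\]
The $\alpha=0$ mode drops out. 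Next, apply Minkowski's integral inequality in $L^2(\Omega)$ followed by Cauchy--Schwarz in $r$, splitting each $\alpha$-weight against $r_\beta(r)^{-1}\le e^{2\beta T}$.

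A plain Minkowski-then-Cauchy--Schwarz step loses the level structure. The plan is instead to bound the $L^2(\Omega)$ norm of $\int_a^T\nabla_xP_{r-a}F\,dr$ via Hermite coefficients at time $a$. Its $\beta$-coefficient, for each multi-index $\beta'=\alpha-e_j$, is
\[
\int_a^T a^{-1/2}\sqrt{\alpha_j}\,(a/r)^{|\alpha|/2}F_\alpha(r)\,dr .
\]
Apply Cauchy--Schwarz in $r$ with weight split $\bigl[(a/r)^{|\alpha|/2}\sqrt{|\alpha|/a}\bigr]\cdot|F_\alpha(r)|$. This bounds the squared coefficient (after summing $j$) by
\[
I_{|\alpha|}(a,T)\int_a^T|F_\alpha(r)|^2\,dr .
\]
Summing over $\alpha$ and using \cref{lem:a-hermite-integral}, $\sup_n I_n(a,T)\le2(1+\log(T/a))$, together with \cref{eq:integrated-hermite-parseval} and $r_\beta\ge e^{-2\beta T}$, gives
\[
\mathbb E\Bigl|\int_a^T\nabla_xP_{r-a}F(r,\cdot)(X_a)\,dr\Bigr|^2\le 2e^{2\beta T}\Bigl(1+\log\frac Ta\Bigr)\|F\|_{\mathcal H_\beta}^2 .
\]

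\emph{Main obstacle.} The delicate point is organizing the sum correctly. Different $(\alpha,j)$ pairs map to the same output index $\alpha-e_j$, so one must collect contributions by output multi-index before applying orthonormality. To handle this, I would first apply Cauchy--Schwarz for each fixed $(\alpha,j)$ and keep the $j$-sum inside the norm of the vector output. Equivalently, I would treat the gradient as $d$ scalar components and sum over $j$ at the end: for each component $j$, the map $\alpha\mapsto\alpha-e_j$ is injective, so orthonormality applies cleanly. The Cauchy--Schwarz weight $\alpha_j/a\le|\alpha|/a$ then yields $I_{|\alpha|}$ after summing $j$ by using $\sum_j\alpha_j=|\alpha|$ inside.

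Combining the two parts gives the bound on smooth data. Uniqueness of the continuous extension follows from density of smooth data in $\mathcal H_\beta\times H^1(\mu_T)$.
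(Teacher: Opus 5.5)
Your proposal is correct and follows the paper's proof essentially step for step. The shared steps are: conditional Jensen for the terminal term; the Mehler--Hermite identity $\mathbb E\bigl|\int_a^T\nabla_xP_{r-a}F(r,\cdot)(X_a)\,dr\bigr|^2=\sum_{|\alpha|\ge1}\frac{|\alpha|}{a}\bigl|\int_a^T(a/r)^{|\alpha|/2}F_\alpha(r)\,dr\bigr|^2$, which your componentwise injectivity of $\alpha\mapsto\alpha-e_j$ justifies; Cauchy--Schwarz in $r$ producing $I_{|\alpha|}(a,T)$; the bound $\sup_nI_n(a,T)\le2(1+\log(T/a))$; integrated Parseval with $r_\beta\ge e^{-2\beta T}$; and density of smooth data (the per-$r$ identity and the abandoned Minkowski route are unnecessary and can simply be dropped).
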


\begin{proof}
For the terminal term, the Markov property
\cite[Sec.~2.5]{ref24} and conditional Jensen give
\[
\mathbb E|P_{T-a}\nabla\eta(X_a)|^2
\le\mathbb E|\nabla\eta(X_T)|^2.
\]
For a smooth source, let $F_\alpha$ be the Hermite coefficients from
\cref{lem:a-hermite-expansion}. Mehler's identity and Hermite differentiation
give
\begin{equation*}
\begin{aligned}
&\mathbb E\left|
\int_a^T\nabla_xP_{r-a}F(r,\cdot)(X_a)\,dr
\right|^2
=
\sum_{\substack{\alpha\in\mathbb N^d\\|\alpha|\ge1}}
\frac{|\alpha|}{a}
\left|
\int_a^T
\left(\frac ar\right)^{\frac{|\alpha|}{2}}
F_\alpha(r)\,dr
\right|^2.
\end{aligned}
\end{equation*}
For $n=|\alpha|$, Cauchy--Schwarz in $r$ gives
\begin{equation*}
\begin{aligned}
\frac n a
\left|\int_a^T
\left(\frac ar\right)^{n/2}F_\alpha(r)\,dr\right|^2
&\le I_n(a,T)\int_a^T|F_\alpha(r)|^2\,dr.
\end{aligned}
\end{equation*}
Summing over $\alpha$, applying \cref{lem:a-hermite-integral}, and using
\cref{eq:integrated-hermite-parseval} together with
$r_\beta\ge e^{-2\beta T}$ yield
\[
\mathbb E\left|
\int_a^T\nabla_xP_{r-a}F(r,\cdot)(X_a)\,dr
\right|^2
\le C_\beta\left(1+\log\frac Ta\right)
\|F\|_{\mathcal H_\beta}^2.
\]
Together with the terminal estimate, this proves \cref{eq:positive-time-trace-bound} on the smooth
core. Density from \cref{lem:smooth-approximation} gives the unique extension.
\end{proof}

\subsection{Local gradient estimates}
\label{subsec:local-gradient-estimates}
\begin{proof}[Proof of \cref{lem:local-gradient-estimates}]
We first take smooth data. Set
\[
m_t=\mathbb E_aZ_t,
\qquad
\overline m=\frac1{b-a}\int_a^bm_t\,dt.
\]
Conditionally on $\mathcal F_a$, the increment $X_t-X_a$ is Gaussian with
covariance $(t-a)I_d$. The vector-valued Gaussian Poincar\'e inequality
\cite{ref20}, applied componentwise to $\nabla_xv(t,\cdot)$, gives
\[
\mathbb E_a|Z_t-m_t|^2
\le(t-a)\mathbb E_a|D_x^2v(t,X_t)|_F^2.
\]
Moreover,
$\overline Z_a^{a,b}=(b-a)^{-1}\int_a^bm_t\,dt=\overline m$.
Conditional orthogonality and $t-a\le b-a$ therefore give
\begin{equation}
\begin{aligned}
\int_a^b\mathbb E|Z_t-\overline Z_a^{a,b}|^2\,dt
&=\int_a^b\mathbb E|Z_t-m_t|^2\,dt
+\int_a^b\mathbb E|m_t-\overline m|^2\,dt,\\
\int_a^b\mathbb E|Z_t-m_t|^2\,dt
&\le(b-a)\int_a^b
\mathbb E|D_x^2v(t,X_t)|_F^2\,dt.
\end{aligned}
\label{eq:local-gradient-poincare}
\end{equation}

The Duhamel formula between $a$ and $t$ yields
\[
m_t=\nabla_xv(a,X_a)-A_t,
\qquad
A_t=\int_a^t\nabla_xP_{r-a}F(r,\cdot)(X_a)\,dr.
\]
Writing
\[
A_t-\frac1{b-a}\int_a^bA_s\,ds
=\int_a^bK_{a,b}(t,r)
\nabla_xP_{r-a}F(r,\cdot)(X_a)\,dr,
\]
where
\[
K_{a,b}(t,r)
=\boldsymbol1_{\{r\le t\}}-\frac{b-r}{b-a},
\qquad
\int_a^bK_{a,b}(t,r)^2\,dt
=\frac{(r-a)(b-r)}{b-a},
\]
set
$G_r=\nabla_xP_{r-a}F(r,\cdot)(X_a)$. For each outcome, weighted
Cauchy--Schwarz and the kernel identity give
\begin{equation*}
\begin{aligned}
\int_a^b\left|
\int_a^bK_{a,b}(t,r)G_r\,dr
\right|^2dt
&\le
\left(\int_a^b\int_a^b
\frac{K_{a,b}(t,r)^2}{r-a}\,dr\,dt\right)
\int_a^b(r-a)|G_r|^2\,dr\\
&\quad=\frac{b-a}{2}\int_a^b(r-a)|G_r|^2\,dr.
\end{aligned}
\end{equation*}
Taking expectations yields the required bound for
$\int_a^b\mathbb E|m_t-\overline m|^2dt$.

For completeness, the needed heat-semigroup estimate is dimension-free. If
$|e|=1$ and $\tau>0$, Gaussian integration by parts and Cauchy--Schwarz give
\[
|e\cdot\nabla P_\tau\phi(x)|^2
=\left|\frac1\tau
\mathbb E\bigl[\phi(x+W_\tau)e\cdot W_\tau\bigr]\right|^2
\le\tau^{-1}P_\tau|\phi|^2(x).
\]
Taking the supremum over unit vectors $e$ gives
\[
|\nabla P_\tau\phi|^2\le\tau^{-1}P_\tau|\phi|^2
\]
and hence
\[
(r-a)\mathbb E|G_r|^2
\le\mathbb E|F(r,X_r)|^2.
\]
Together with \cref{eq:local-gradient-poincare}, this proves
\cref{eq:local-gradient-average}.

For the endpoint estimate, use \cref{lem:a-hermite-expansion}. Parseval gives
\begin{equation*}
\mathbb E|A_t|^2
=
\sum_{\substack{\alpha\in\mathbb N^d\\|\alpha|\ge1}}
\frac{|\alpha|}{a}
\left|
\int_a^t
\left(\frac ar\right)^{\frac{|\alpha|}{2}}
F_\alpha(r)\,dr
\right|^2.
\end{equation*}
When $t\le2a$, \cref{lem:a-hermite-integral} and Cauchy--Schwarz imply
\begin{equation}
\mathbb E|A_t|^2
\le2\int_a^t\mathbb E|F(r,X_r)|^2\,dr.
\label{eq:local-gradient-duhamel}
\end{equation}
Since
$Z_t-\nabla_xv(a,X_a)=(Z_t-m_t)-A_t$, conditional orthogonality,
\cref{eq:local-gradient-poincare,eq:local-gradient-duhamel}, and integration in $t$ prove
\cref{eq:local-gradient-endpoint} for smooth data. On the smooth core,
$\nabla_xv(a,X_a)=\mathcal T_a(F,\eta)$.

For general data, choose $F^m,\eta^m$ from \cref{lem:smooth-approximation}. The extension in
\cref{thm:occupation-mild-extension} gives strong occupation convergence of
$v^m,\nabla_xv^m,D_x^2v^m$. For fixed $[a,b]$, the map
\[
Z\longmapsto\frac1{b-a}\mathbb E_a\int_a^bZ_t\,dt
\]
is bounded from $L^2([a,b]\times\Omega)$ to $L^2(\Omega)$, while
\cref{lem:positive-time-trace} gives convergence of the positive-time endpoint traces.
Passing to the limit proves
\cref{eq:local-gradient-average,eq:local-gradient-endpoint} with the same constants.
\end{proof}

\subsection{Canonical identification}
\label{subsec:canonical-identification}
\begin{proof}[Proof of \cref{lem:canonical-traces}]
\medskip\noindent\emph{Step 1: approximation.}
Choose graph-norm approximations $F^m,g^m$ from \cref{lem:smooth-approximation}, and let $u^m$
be the smooth mild solutions. Set
$Y_t^m=u^m(t,X_t)$, $Z_t^m=\nabla_xu^m(t,X_t)$, and
$\Gamma_t^m=D_x^2u^m(t,X_t)$.
By \cref{thm:occupation-mild-extension,lem:weak-differentiation},
\begin{equation}
\|Z^m-Z^{\star,\mathrm{occ}}\|_{\mathcal H_\beta}
+\|\Gamma^m-\Gamma^{\star,\mathrm{occ}}\|_{\mathcal H_\beta}
\longrightarrow0.
\label{eq:canonical-occupation-convergence}
\end{equation}
For smooth data, differentiation of the Duhamel formula gives
\begin{equation*}
Z^m=\mathcal U(D_xF^m,\nabla g^m).
\end{equation*}
For every fixed $a\in[0,T]$, the graph-norm convergence and conditional
Jensen inequality applied to \cref{eq:duhamel-map} give
$Y_a^m\to Y_a^\star$ and
$Z_a^m\to Z_a^{\star,\mathrm{can}}$ in $L^2(\Omega)$. For $a\in(0,T)$ and
each $k$, apply \cref{lem:positive-time-trace} to
$(\partial_kF^m,\partial_kg^m)$ to obtain the Hessian convergence. Thus
\begin{equation}
\begin{aligned}
Y_a^m&\longrightarrow Y_a^\star,
&&a\in[0,T],\\
Z_a^m&\longrightarrow Z_a^{\star,\mathrm{can}},
&&a\in[0,T],\\
\Gamma_a^m e_k&\longrightarrow\Gamma_a^{\star,\mathrm{can}}e_k,
&&a\in(0,T),\quad k=1,\ldots,d,
\end{aligned}
\quad\text{in }L^2(\Omega).
\label{eq:canonical-trace-convergence}
\end{equation}
For smooth data,
$Z_a^m=\mathcal T_a(F^m,g^m)$ by
$\nabla_xP_t=P_t\nabla_x$. Passing to the limit proves
$Z_a^{\star,\mathrm{can}}=\mathcal T_a(F^\star,g)$ for $0<a<T$.

\medskip\noindent\emph{Step 2: almost-everywhere identification.}
Choose a subsequence, not relabeled, such that
\begin{equation*}
\sum_m\left(
\|Z^m-Z^{\star,\mathrm{occ}}\|_{\mathcal H_\beta}^2
+\|\Gamma^m-\Gamma^{\star,\mathrm{occ}}\|_{\mathcal H_\beta}^2
\right)<\infty.
\end{equation*}
Since $r_\beta(a)>0$ on $[0,T]$, Fubini's theorem gives, for almost every
$a\in(0,T)$,
\begin{equation*}
\sum_m\mathbb E\left[
|Z_a^m-Z_a^{\star,\mathrm{occ}}|^2
+|\Gamma_a^m-\Gamma_a^{\star,\mathrm{occ}}|_F^2
\right]<\infty.
\end{equation*}
Consequently,
\[
Z_a^m\longrightarrow Z_a^{\star,\mathrm{occ}},
\qquad
\Gamma_a^m\longrightarrow\Gamma_a^{\star,\mathrm{occ}}
\quad\text{in }L^2(\Omega)
\]
for almost every $a\in(0,T)$.
At every time in this full-measure set,
\cref{eq:canonical-trace-convergence} and uniqueness of the $L^2$ limit give
\[
Z_a^{\star,\mathrm{can}}=Z_a^{\star,\mathrm{occ}},
\qquad
\Gamma_a^{\star,\mathrm{can}}=\Gamma_a^{\star,\mathrm{occ}}
\quad\text{in }L^2(\Omega).
\]
Thus the two versions agree for almost every time.

\medskip\noindent\emph{Step 3: state measurability, symmetry, and smooth-core
agreement.}
Each smooth trace is a Borel function of $X_a$. The closed subspace
$L^2(\sigma(X_a))\subset L^2(\Omega)$ contains the limit in
\cref{eq:canonical-trace-convergence}, so every canonical trace has a
state-measurable version. The smooth Hessian traces are symmetric,
and the symmetric matrices form a closed subspace; hence their fixed-time
$L^2$ limits are symmetric. The smooth-data identities in Step~1 also show
that the canonical traces agree with the classical value and spatial
derivatives on the smooth core. At $a=T$, the same state-measurability and
symmetry conclusions follow from
$D_x^2g^m\to D_x^2g$ in $L^2(\mu_T)$.

\medskip\noindent\emph{Step 4: continuous realizations and It\^o identities.}
The classical It\^o identities hold for $u^m$ and for each differentiated
mild equation. \Cref{lem:weak-differentiation} identifies the target gradient
with $\mathcal U(D_xF^\star,\nabla g)$ in the occupation sense. Applying
\cref{lem:path-space-realization} to $(F^\star,g)$ and
$(D_xF^\star,\nabla g)$ gives the continuous adapted versions
$Y^\star$ and $Z^{\star,\mathrm{can}}$. Fix $0\le s<t\le T$. The first two
lines of \cref{eq:canonical-trace-convergence} give convergence of the endpoint
values at $s$ and $t$. Graph-norm convergence of $F^m$, including
$D_xF^m\to D_xF^\star$, gives convergence of both drift integrals by
Cauchy--Schwarz. Finally,
\cref{eq:canonical-occupation-convergence}, equivalence of the weighted and
unweighted occupation norms, and the It\^o isometry give convergence of the
stochastic integrals with integrands $Z^m$ and $\Gamma^m$. Passing to the limit
in the two smooth It\^o identities proves \cref{eq:target-ito-identities}. No
pointwise Hessian at $t=0$ is used.
\end{proof}

\subsection{Exact-grid admissibility}
\label{subsec:exact-grid-admissibility}
\begin{lemma}[Exact-grid admissibility]
\label{lem:exact-grid-admissibility}
Under \cref{ass:spatial-regularity}, the exact-grid triple in
\cref{def:canonical-traces-exact-grid} belongs to $\mathfrak C_h$.
\end{lemma}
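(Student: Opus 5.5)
We must check the three defining requirements of $\mathfrak C_h$ for the exact-grid triple of \cref{def:canonical-traces-exact-grid}. These are: a terminal function $\widehat g\in H^1(\mu_T)$ with $Y_N^\pi=\widehat g(X_T)$; square integrability and $\sigma(X_i)$-measurability of $q_i^\pi=(Y_i^\pi,Z_i^\pi,\Gamma_i^\pi)$ for $0\le i<N$; and symmetry of the Hessian channel, so that $q_i^\pi$ takes values in $E$. The plan is to take $\widehat g=g$ and verify the remaining conditions node by node, treating $i\ge1$ and $i=0$ separately.

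\emph{Terminal node.} By \cref{lem:path-space-realization} applied to $(F^\star,g)$, with $F^\star\in\mathcal H_\beta$ from \cref{thm:occupation-well-posedness}, we have $Y_T^\star=g(X_T)$. Under \cref{ass:spatial-regularity}, $g\in H^2(\mu_T)\subset H^1(\mu_T)$. Hence $Y_N^\pi=g(X_T)$ and $\widehat g=g$ is admissible.

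\emph{Nodes $1\le i<N$.} Here $t_i\in(0,T)$.
\begin{itemize}
\item \emph{Value.} \cref{eq:continuous-value-process} gives $Y_{t_i}^\star=u^\star(t_i,X_{t_i})$ in $L^2(\Omega)$. Since $u^\star(t_i,\cdot)=P_{T-t_i}g+\int_{t_i}^TP_{r-t_i}F^\star(r,\cdot)\,dr$ is a Borel function, this node is $\sigma(X_i)$-measurable. The second moment is finite by conditional Jensen, as in the proof of \cref{lem:path-space-realization}.
\item \emph{Gradient.} \cref{prop:source-spatial-regularity} gives $D_xF^\star\in\mathcal H_\beta$, so the same argument applied to $\mathcal U(D_xF^\star,\nabla g)$ yields a square-integrable, state-measurable $Z_i^\pi$.
\item \emph{Hessian.} Each column is $\mathcal T_{t_i}(\partial_kF^\star,\partial_kg)$. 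It lies in $L^2$ by \cref{eq:canonical-trace-bound}, using $\partial_kg\in H^1(\mu_T)$ since $g\in H^2(\mu_T)$. State measurability and symmetry follow from Step~3 of the proof of \cref{lem:canonical-traces}: the smooth approximants are symmetric Borel functions of $X_{t_i}$, and both $L^2(\sigma(X_{t_i}))$ and the symmetric-matrix subspace are closed.
\end{itemize}

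\emph{Node $i=0$.} The value and gradient are handled as above at $a=0$. Since $X_0=x_0$ is deterministic, they are constants.

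The Hessian $\overline\Gamma_0^h=h^{-1}\mathbb E\int_0^h\Gamma_t^{\star,\mathrm{occ}}\,dt$ is a deterministic matrix, so it is trivially $\sigma(X_0)$-measurable. It is finite by Cauchy--Schwarz:
\[
|\overline\Gamma_0^h|_F^2\le h^{-1}\int_0^h\mathbb E|\Gamma_t^\star|_F^2\,dt\le h^{-1}e^{2\beta T}\|D_x^2u^\star\|_{\mathcal H_\beta}^2<\infty,
\]
where the last norm is finite by \cref{thm:brownian-hessian-estimate}. Symmetry follows because the occupation Hessian takes values in $\mathbb S^d$ almost everywhere, by \cref{def:occupation-mild-solutions}, and averaging preserves symmetry.

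\emph{Main obstacle.} The only delicate point is the Hessian at positive nodes. The occupation version has no intrinsic value at a fixed time, so both membership in $L^2(\sigma(X_i);\mathbb S^d)$ and the existence of a Borel version $\widehat\Gamma_i=\gamma_i(X_i)$ must come from the canonical trace. Two facts are needed: the fixed-time continuity bound \cref{eq:positive-time-trace-bound}, which is finite for each fixed $a>0$ even though it is not uniform as $a\downarrow0$, and closedness of $L^2(\sigma(X_a))$. Invoking these at $a=t_i\ge h$ settles that step. This is also why the first node must use the time average instead of a trace.
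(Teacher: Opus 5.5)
\paragraph{Missing condition.} Your node-by-node checks match the paper's proof. You take $\widehat g=g$ and obtain the value and gradient nodes from the continuous realizations of $\mathcal U(F^\star,g)$ and $\mathcal U(D_xF^\star,\nabla g)$. The positive-node Hessian comes from \cref{eq:canonical-trace-bound} together with closedness of $L^2(\sigma(X_{t_i}))$ and of $\mathbb S^d$, and the deterministic first-node average is bounded by Cauchy--Schwarz.

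The gap is that your list of ``three defining requirements'' of $\mathfrak C_h$ is incomplete. Membership in $\mathfrak C_h$ also requires that, for each $0\le i<N$, the one-step residual $\rho_i[Y^\pi,Z^\pi,\Gamma^\pi]$ belong to $L^2(\sigma(X_i,\Delta W_i))$. This condition is attached to the displayed residual in the definition of $\mathfrak C_h$, and the paper's proof spends its final paragraph on it. It is what makes $J_h(Y^\pi,Z^\pi,\Gamma^\pi)$ finite and the projections $\mathcal P_i^k\widehat e_i$ in \cref{thm:residual-stability} well defined.

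It does not follow from square-integrability of the nodes alone. The residual contains the frozen driver $F_i^\pi=f(t_i,X_i,q_i^\pi)$ at the fixed time $t_i$. The hypothesis $f_0\in\mathcal H_0$ controls $f_0(t,X_t)$ only for almost every $t$, so it says nothing about the single slice $t=t_i$.

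\paragraph{How to close it.} Combine the spatial Lipschitz bound of \cref{ass:spatial-regularity} with the jet-Lipschitz bound:
\[
|F_i^\pi|\le|f_0(t_i,x_0)|+L_x|X_i-x_0|+L_0|Y_i^\pi|+L_1|Z_i^\pi|+L_2|\Gamma_i^\pi|_F .
\]
This lies in $L^2$, because $f_0(t_i,x_0)$ is a finite number and $X_i$ has a finite second moment. Next, $q_i^\pi$ is $\mathcal F_{t_i}$-measurable and $\Delta W_i$ is independent of $\mathcal F_{t_i}$. Hence
\[
\mathbb E|Z_i^\pi\cdot\Delta W_i|^2=h\,\mathbb E|Z_i^\pi|^2,
\qquad
\tfrac14\mathbb E|\Gamma_i^\pi:Q_i|^2=\tfrac{h^2}{2}\mathbb E|\Gamma_i^\pi|_F^2 .
\]
Moreover, $Y_{i+1}^\pi$ is a function of $X_{i+1}=X_i+\Delta W_i$, so $\rho_i$ is $\sigma(X_i,\Delta W_i)$-measurable. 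Therefore
\[
\mathbb E|\rho_i|^2\le5\bigl(\mathbb E|Y_{i+1}^\pi|^2+\mathbb E|Y_i^\pi|^2+h^2\mathbb E|F_i^\pi|^2+h\,\mathbb E|Z_i^\pi|^2+\tfrac{h^2}{2}\mathbb E|\Gamma_i^\pi|_F^2\bigr)<\infty .
\]
With this step added, your argument is complete and is essentially the paper's.
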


\begin{proof}
The trace formulas are Borel functions of $X_i$, so all exact-grid variables
have state-measurable versions. The positive-node Hessian traces are symmetric
by \cref{lem:canonical-traces}; the first-node average is symmetric because
the occupation Hessian is symmetric almost everywhere. Moreover,
\begin{equation*}
\begin{aligned}
|\overline\Gamma_0^h|_F^2
&=\left|\frac1h\mathbb E\int_0^h
\Gamma_t^{\star,\mathrm{occ}}\,dt\right|_F^2
\le\frac1h\int_0^h
\mathbb E|\Gamma_t^{\star,\mathrm{occ}}|_F^2\,dt<\infty,
\end{aligned}
\end{equation*}
so the initial Hessian is a deterministic element of $\mathbb S^d$.

Apply \cref{lem:path-space-realization} to $(F^\star,g)$ and to
$(D_xF^\star,\nabla g)$. This proves square-integrability of all value and
gradient nodes, including $Z_0^{\star,\mathrm{can}}$. The positive-node
Hessian traces are square-integrable by
\cref{lem:canonical-traces,eq:canonical-trace-bound}. Finally,
$Y_N^\pi=g(X_T)$ with $g\in H^1(\mu_T)$.

For residual integrability, set
$q_i^\pi=(Y_i^\pi,Z_i^\pi,\Gamma_i^\pi)$. The spatial and jet Lipschitz bounds
give
\begin{equation*}
\begin{aligned}
|F_i^\pi|
&=|f(t_i,X_i,q_i^\pi)|\\
&\le |f(t_i,x_0,0)|
+L_x|X_i-x_0|
+L_0|Y_i^\pi|
+L_1|Z_i^\pi|
+L_2|\Gamma_i^\pi|_F,
\end{aligned}
\end{equation*}
so $F_i^\pi\in L^2$. Conditional Gaussian moments and state measurability give
\begin{equation*}
\begin{aligned}
\mathbb E|\rho_i[Y^\pi,Z^\pi,\Gamma^\pi]|^2
\le5\bigl(&\mathbb E|Y_{i+1}^\pi|^2
+\mathbb E|Y_i^\pi|^2
+h^2\mathbb E|F_i^\pi|^2\\
&+h\mathbb E|Z_i^\pi|^2
+\tfrac12h^2\mathbb E|\Gamma_i^\pi|_F^2\bigr)<\infty.
\end{aligned}
\end{equation*}
Thus each residual belongs to
$L^2(\sigma(X_i,\Delta W_i))$, and the exact-grid triple satisfies every
condition in the definition of $\mathfrak C_h$.
\end{proof}

\subsection{Source freezing}
\label{subsec:source-freezing-proof}
\begin{proof}[Proof of \cref{lem:source-freezing}]
For $t\in[t_i,t_{i+1}]$, set
\begin{equation*}
\tau=t-t_i,\qquad
q_t^\star=(Y_t^\star,Z_t^\star,\Gamma_t^\star),\qquad
q_i^\pi=(Y_i^\pi,Z_i^\pi,\Gamma_i^\pi),\qquad
\psi(x)=f(t,x,q_i^\pi).
\end{equation*}
Insert $f(t,X_t,q_i^\pi)$ and $f(t,X_i,q_i^\pi)$ between
$F_t^\star$ and $F_i^\pi$. The time and jet Lipschitz estimates then
give
\begin{equation*}
\begin{aligned}
\mathbb E|F_t^\star-F_i^\pi|^2
\le{}&5L_t^2\tau
+5\mathbb E|\psi(X_t)-\psi(X_i)|^2
+5L_0^2\mathbb E|Y_t^\star-Y_i^\pi|^2 \\
&+5L_1^2\mathbb E|Z_t^\star-Z_i^\pi|^2
+5L_2^2\mathbb E|\Gamma_t^\star-\Gamma_i^\pi|_F^2.
\end{aligned}
\end{equation*}
Conditioning on $\mathcal F_{t_i}$ makes $(X_i,q_i^\pi)$ fixed. Independence
of the Brownian increment and the Brownian-state condition in
\cref{ass:time-regularity} therefore give
\begin{equation*}
\mathbb E_{t_i}|\psi(X_t)-\psi(X_i)|^2
\le L_{\mathrm B}^2\tau.
\end{equation*}
Taking expectations and using
\[
\sum_{i=0}^{N-1}\int_{t_i}^{t_{i+1}}(t-t_i)\,dt
=\frac{Th}{2},
\]
controls the time and state terms by $Ch$.
Integrating and summing the three jet terms and applying
\cref{prop:full-jet-path-regularity} proves \cref{eq:source-freezing}.
\end{proof}

\begin{remark}[A sufficient Laplacian condition]
\label{rem:laplacian-sufficient-condition}
Under \cref{ass:spatial-regularity}, the Brownian-state condition in
\cref{ass:time-regularity} follows if
$x\mapsto f(t,x,q)$ belongs to $C^2(\mathbb R^d)$ for every $(t,q)$ and
\[
|\Delta_x f(t,x,q)|\le L_\Delta,
\qquad (t,x,q)\in[0,T]\times\mathbb R^d\times E.
\]
Indeed, for $\psi=f(t,\cdot,q)$, set
$m_r(x)=\mathbb E[\psi(x+W_r)]$. Gaussian Poincar\'e and the Brownian
heat-semigroup identity give
\[
\begin{aligned}
\mathbb E|\psi(x+W_r)-\psi(x)|^2
&=\mathbb E|\psi(x+W_r)-m_r(x)|^2
+|m_r(x)-\psi(x)|^2\\
&\le r\,\mathbb E|\nabla\psi(x+W_r)|^2
+\frac14\left|\int_0^r
\mathbb E[\Delta\psi(x+W_a)]\,da\right|^2\\
&\le L_x^2r+\frac14L_\Delta^2r^2
\le\left(L_x^2+\frac T4L_\Delta^2\right)r.
\end{aligned}
\]
Thus the Brownian-state condition holds with
$L_{\mathrm B}^2=L_x^2+TL_\Delta^2/4$. The Laplacian bound is therefore a
convenient sufficient criterion, not part of the standing assumption.
\end{remark}

\section{Discrete Gaussian Stability}
\label{sec:supp-discrete}
This section supplies the discrete estimates used in
\cref{sec:grid-regularity,sec:auxiliary-implicit-scheme-and-residual-stability}.
It derives the centered-quadratic Gaussian identities and one-step
projections, proves well-posedness of the implicit reference scheme, and
establishes the weighted source--terminal estimates underlying mesh-uniform
residual stability. Throughout the weighted estimates, $\beta>0$ is fixed.

\subsection{Quadratic identities and projections}
\label{subsec:centered-quadratic-gaussian-identities}
Let $\Delta W\sim N(0,hI_d)$ and
$Q=\Delta W\Delta W^\top-hI_d$. For $A,B\in\mathbb S^d$, Isserlis' formula
gives
\begin{equation}
\mathbb E[(A:Q)(B:Q)]=2h^2(A:B).
\label{eq:quadratic-chaos-covariance}
\end{equation}
The constant, linear, and centered quadratic Gaussian chaoses are mutually
orthogonal. In particular,
\begin{equation}
\begin{aligned}
\mathbb E\Delta W&=0,
&\mathbb E[\Delta W\Delta W^\top]&=hI_d,\\
\mathbb E Q&=0,
&\mathbb E[\Delta W_jQ]&=0\quad (1\le j\le d),\\
\mathbb E[(A:Q)Q]&=2h^2A,
&\mathbb E[(A:Q)^2]&=2h^2|A|_F^2.
\end{aligned}
\label{eq:one-step-gaussian-identities}
\end{equation}
Here and below, matrices in the quadratic channel are symmetric; $Q$ detects
only the symmetric part of a general matrix.

\begin{proof}[Proof of \cref{lem:centered-quadratic-ito}]
The componentwise It\^o product formula gives
\[
Q_i
=\int_{t_i}^{t_{i+1}}
\left[(W_s-W_{t_i})\,dW_s^\top
+dW_s\,(W_s-W_{t_i})^\top\right].
\]
Since $A$ is symmetric and $\mathcal F_{t_i}$-measurable, contracting this
identity with $A$ yields
\[
\frac12A:Q_i
=\int_{t_i}^{t_{i+1}}A(W_s-W_{t_i})\cdot dW_s.
\]
The stochastic integral is square-integrable by independence and the It\^o
isometry. This is \cref{eq:centered-quadratic-ito}.
\end{proof}

\begin{proof}[Proof of \cref{lem:one-step-projection}]
Write $\mathbb E_i=\mathbb E[\cdot\mid X_i]$. By definition,
\[
\bar e=\mathbb E_i[e],
\qquad
C=\frac1h\mathbb E_i[e\Delta W_i],
\qquad
B=\frac1{h^2}\mathbb E_i[eQ_i].
\]
Set
\[
e^\perp:=e-\bar e-C\cdot\Delta W_i-\frac12B:Q_i.
\]
The conditional form of \cref{eq:one-step-gaussian-identities} gives
\begin{equation*}
\begin{aligned}
\mathbb E_i[e^\perp]&=0, \qquad
\mathbb E_i[e^\perp\Delta W_i]
=\mathbb E_i[e\Delta W_i]-hC=0,\\
\mathbb E_i[e^\perp Q_i]
&=\mathbb E_i[eQ_i]
-\frac12\mathbb E_i[(B:Q_i)Q_i]
=h^2B-h^2B=0,\\
\mathbb E_i|C\cdot\Delta W_i|^2=h|C|^2,&\quad
\frac14\mathbb E_i|B:Q_i|^2=\frac{h^2}{2}|B|_F^2,\quad
\mathbb E_i[(C\cdot\Delta W_i)(B:Q_i)]=0.
\end{aligned}
\end{equation*}
Hence the conditional Pythagoras identity is
\[
\mathbb E_i|e|^2
=|\bar e|^2+h|C|^2+\frac{h^2}{2}|B|_F^2
+\mathbb E_i|e^\perp|^2.
\]
Dropping nonnegative terms gives \cref{eq:one-step-projection-bounds}.
\end{proof}

\subsection{Implicit reference scheme}
\label{subsec:implicit-reference-well-posedness-proof}
\begin{proof}[Proof of \cref{prop:implicit-reference-well-posedness}]
The terminal variable $Y_N^h=g(X_N)$ is square-integrable. Suppose that
$Y_{i+1}^h\in L^2(\sigma(X_{i+1}))$ has been constructed. Since
$X_{i+1}=X_i+\Delta W_i$, the variable $Y_{i+1}^h$ belongs to the one-step
space $L^2(\sigma(X_i,\Delta W_i))$. By
\cref{lem:one-step-projection},
\[
Z_i^h=\mathcal P_i^1Y_{i+1}^h\in L^2(\sigma(X_i);\mathbb R^d),
\qquad
\Gamma_i^h=\mathcal P_i^2Y_{i+1}^h\in L^2(\sigma(X_i);\mathbb S^d).
\]
Moreover, \cref{ass:spatial-regularity} gives
\[
|f_0(t_i,X_i)|
\le |f_0(t_i,x_0)|+L_x|X_i-x_0|,
\]
so $f_0(t_i,X_i)\in L^2$. It follows from the jet-Lipschitz bound that the map
\[
\mathcal T_i(y)
:=\mathcal P_i^0Y_{i+1}^h
+h f(t_i,X_i,y,Z_i^h,\Gamma_i^h)
\]
maps $L^2(\sigma(X_i))$ into itself and satisfies
\[
\|\mathcal T_i(y)-\mathcal T_i(\widetilde y)\|_{L^2}
\le hL_0\|y-\widetilde y\|_{L^2}.
\]
Since $hL_0<1$, the Banach fixed-point theorem gives a unique
$Y_i^h\in L^2(\sigma(X_i))$. Backward induction constructs the full solution.
Every component at node $i$ is $\sigma(X_i)$-measurable, and the same
contraction argument at each node proves uniqueness within the
square-integrable Markov class.
\end{proof}

\subsection{Discrete source--terminal estimates}
\label{subsec:discrete-source-terminal-estimates}
\begin{proof}[Proof of \cref{lem:discrete-source-terminal-estimates}]
\medskip\noindent\emph{Value and first-chaos channels.}
The one-step martingale decomposition is
\[
V_{i+1}
=V_i-hS_i+U_i\cdot\Delta W_i+R_{i+1}^\perp,
\]
where, conditionally on $X_i$, the remainder $R_{i+1}^\perp$ is orthogonal to
the constant and first Gaussian chaoses. Consequently,
\[
\mathbb E[|V_{i+1}|^2\mid X_i]
=|V_i-hS_i|^2+h|U_i|^2
+\mathbb E[|R_{i+1}^\perp|^2\mid X_i].
\]
Expanding the square and dropping two nonnegative terms gives
\[
|V_i|^2+h|U_i|^2
\le\mathbb E[|V_{i+1}|^2\mid X_i]+2hV_iS_i.
\]
For a general terminal value, the weighted telescoping identity is
\begin{equation*}
\begin{aligned}
&\sum_{i=0}^{N-1}r_\beta(t_i)
\left(\mathbb E|V_i|^2-\mathbb E|V_{i+1}|^2\right)\\
&\qquad=
\vartheta_{\beta,h}\|V\|_{\beta,h}^2
+e^{-2\beta h}r_\beta(t_0)\mathbb E|V_0|^2
-r_\beta(t_{N-1})\|\psi\|_{L^2(\mu_T)}^2.
\end{aligned}
\end{equation*}
After taking expectations in the one-step estimate, multiplying by
$r_\beta(t_i)$, and summing, it follows that
\begin{equation}
\begin{aligned}
\vartheta_{\beta,h}\|V\|_{\beta,h}^2+\|U\|_{\beta,h}^2
&+e^{-2\beta h}r_\beta(t_0)\mathbb E|V_0|^2\\
&\le2\|V\|_{\beta,h}\|S\|_{\beta,h}
+\|\psi\|_{L^2(\mu_T)}^2.
\end{aligned}
\label{eq:discrete-source-terminal-energy}
\end{equation}
Set $x=\|V\|_{\beta,h}$, $s=\|S\|_{\beta,h}$, and
$p=\|\psi\|_{L^2(\mu_T)}$. Dropping the nonnegative $U$ and initial terms
in \cref{eq:discrete-source-terminal-energy} and solving the resulting quadratic
inequality gives
\[
x\le\frac{s+\sqrt{s^2+\vartheta_{\beta,h}p^2}}
{\vartheta_{\beta,h}}
\le a_{0,h}s+b_{0,h}p.
\]
On the other hand, completing the square in $x$ gives
\[
\|U\|_{\beta,h}^2
\le2xs-\vartheta_{\beta,h}x^2+p^2
\le\vartheta_{\beta,h}^{-1}s^2+p^2.
\]
Thus
\[
\|U\|_{\beta,h}
\le b_{0,h}s+p
\le a_{1,h}s+p,
\]
which proves the first two estimates in
\cref{eq:discrete-source-terminal-estimates}.

\medskip\noindent\emph{Second-chaos channel.}
First suppose that the state functions $s_m$ for $1\le m<N$ and the terminal
datum $\psi$ have finite Hermite expansions
\begin{equation}
\begin{aligned}
s_m(x_0+\sqrt{t_m}\,y)
&=\sum_{\alpha\in\mathbb N^d}b_{\alpha,m}H_\alpha(y),\\
\psi(x_0+\sqrt T\,y)
&=\sum_{\alpha\in\mathbb N^d}c_\alpha H_\alpha(y).
\end{aligned}
\label{eq:discrete-source-terminal-hermite-expansions}
\end{equation}
Set
\[
\widetilde X_j:=\frac{X_j-x_0}{\sqrt{t_j}},
\qquad 1\le j\le N.
\]
For $1\le i<m\le N$, Mehler's identity gives
\[
\mathbb E[H_\alpha(\widetilde X_m)\mid X_i]
=\left(\frac{t_i}{t_m}\right)^{|\alpha|/2}
H_\alpha(\widetilde X_i).
\]
Let $\bar v_i$ be the continuation function defined by
$\bar v_i(X_i)=\mathbb E[V_{i+1}\mid X_i]$. Unrolling the recursion and using
\cref{eq:discrete-source-terminal-hermite-expansions} show that, at every
positive node, the coefficient of $H_\alpha(\widetilde X_i)$ in
$\bar v_i(X_i)$ is
\begin{equation}
\begin{aligned}
\bar a_{\alpha,i}
&=\bar a_{\alpha,i}^S+\bar a_{\alpha,i}^T,\quad
\bar a_{\alpha,i}^S
=h\sum_{m=i+1}^{N-1}
\left(\frac{t_i}{t_m}\right)^{n/2}b_{\alpha,m},
\quad
\bar a_{\alpha,i}^T
=\left(\frac{t_i}{T}\right)^{n/2}c_\alpha,
\end{aligned}
\label{eq:discrete-source-terminal-continuation-coefficient}
\end{equation}
where $n:=|\alpha|$.
Gaussian integration by parts in $\Delta W_i$ yields
\[
G_i=D_x^2\bar v_i(X_i),
\qquad 1\le i<N.
\]
Let $G^S$ and $G^T$ denote the contributions from
$\bar a_{\alpha,i}^S$ and $\bar a_{\alpha,i}^T$, respectively, including
their projections at $i=0$. Then $G=G^S+G^T$ and hence
\begin{equation}
\|G\|_{\beta,h}\le\|G^S\|_{\beta,h}+\|G^T\|_{\beta,h}.
\label{eq:source-terminal-second-chaos-minkowski}
\end{equation}

We first estimate the source contribution. The Hermite derivative identities
and tensor-valued orthogonality give, for $1\le i<N$,
\begin{equation}
\mathbb E|G_i^{S,(\alpha)}|_F^2
=\frac{n(n-1)}{t_i^2}|\bar a_{\alpha,i}^S|^2.
\label{eq:discrete-source-hessian-chaos-contribution}
\end{equation}
If $n\ge3$, the projection $\mathcal P_0^2$ annihilates this chaos. For
$N\ge3$, the identity $t_i=hi$ gives
\[
\frac{\sqrt{n(n-1)}}{t_i}|\bar a_{\alpha,i}^S|
=\sqrt{n(n-1)}\,|(T_{n/2}b_{\alpha,\cdot})_i|.
\]
Thus \cref{lem:a-copson-positive} with $M=N-1$, $a=n/2$, and weights
$r_\beta(t_i)$ gives
\begin{equation}
\begin{aligned}
&\sum_{i=1}^{N-1}h r_\beta(t_i)
\frac{n(n-1)}{t_i^2}|\bar a_{\alpha,i}^S|^2\\
&\qquad\le4\frac n{n-1}
\sum_{m=1}^{N-1}h r_\beta(t_m)|b_{\alpha,m}|^2
\le6\sum_{m=1}^{N-1}h r_\beta(t_m)|b_{\alpha,m}|^2.
\end{aligned}
\label{eq:source-high-degree-copson-bound}
\end{equation}
When $N=2$, $G_1^S=0$, so the same estimate is immediate.

For $n=2$, define
\[
(\mathcal C b)_i:=\sum_{m=i+1}^{N-1}\frac{b_{\alpha,m}}m,
\qquad 0\le i<N.
\]
For $1\le i<N$,
\[
\bar a_{\alpha,i}^S=hi(\mathcal C b)_i,
\qquad
\mathbb E|G_i^{S,(\alpha)}|_F^2
=2|(\mathcal C b)_i|^2.
\]
At $i=0$, the degree-two coefficient of the source contribution to $V_1$ is
$h(\mathcal C b)_0$. The normalization
$\mathcal P_0^2(\frac12A:Q_0)=A$ gives the same identity at the initial node.
Thus \cref{lem:a-copson-endpoint} yields
\begin{equation}
\sum_{i=0}^{N-1}h r_\beta(t_i)
\mathbb E|G_i^{S,(\alpha)}|_F^2
\le8\sum_{m=1}^{N-1}h r_\beta(t_m)|b_{\alpha,m}|^2.
\label{eq:source-degree-two-copson-bound}
\end{equation}
The degree-zero and degree-one chaoses do not contribute. Summing
\crefrange{eq:source-high-degree-copson-bound}{eq:source-degree-two-copson-bound}
over the orthogonal multi-indices and using Parseval give
\begin{equation}
\|G^S\|_{\beta,h}\le2\sqrt2\|S\|_{\beta,h}.
\label{eq:source-second-chaos-bound}
\end{equation}
The right-hand side includes the harmless $m=0$ source term, which does not
enter $G^S$.

For the terminal contribution, a coefficient of degree $n\ge2$ contributes
\begin{equation}
n(n-1)t_i^{n-2}T^{-n}|c_\alpha|^2
\label{eq:terminal-second-chaos-degree}
\end{equation}
to $\mathbb E|G_i^T|_F^2$ at every positive node. At $i=0$,
$\mathcal P_0^2$ annihilates every terminal chaos except degree two. For a
degree-two coefficient, the initial and positive nodes contribute exactly
\[
\frac{2h}{T^2}
+h\sum_{i=1}^{N-1}\frac2{T^2}
=\frac2T.
\]
For $n\ge3$, the initial contribution is zero, and the left Riemann sum gives
\[
h\sum_{i=1}^{N-1}n(n-1)t_i^{n-2}T^{-n}
\le\frac nT.
\]
Using the orthogonality of the differentiated Hermite modes and
$r_\beta(t_i)\le1$, then summing over $\alpha$, yields
\begin{equation}
\|G^T\|_{\beta,h}^2
\le\frac1T
\sum_{\substack{\alpha\in\mathbb N^d\\|\alpha|\ge1}}
|\alpha|\,|c_\alpha|^2
=\|\nabla\psi\|_{L^2(\mu_T)}^2.
\label{eq:terminal-second-chaos-bound}
\end{equation}
Combining
\cref{eq:source-terminal-second-chaos-minkowski,eq:source-second-chaos-bound,eq:terminal-second-chaos-bound}
proves the claimed second-chaos estimate for finite Hermite data.

For general data, let $s_m^{(\ell)}$ and $\psi^{(\ell)}$ be their Hermite
projections onto degrees at most $\ell$, with $S_0^{(\ell)}=S_0$.
Parseval, \cref{lem:a-hermite-expansion}, and the finiteness of the grid give
\[
\|S^{(\ell)}-S\|_{\beta,h}\longrightarrow0,
\qquad
\psi^{(\ell)}\longrightarrow\psi
\quad\text{in }H^1(\mu_T).
\]
Backward induction and conditional Jensen's inequality imply
$V_i^{(\ell)}\to V_i$ in $L^2$ at every node, while
\cref{lem:one-step-projection} gives $G_i^{(\ell)}\to G_i$ in $L^2$.
Passing to the limit completes
\cref{eq:discrete-source-terminal-estimates}.
\end{proof}

Finally, we verify that these estimates have the constant dependence used in
\cref{thm:residual-stability}. In its proof,
$c_2=2\sqrt2L_2<1$ and $\beta$ is fixed using only $L_0,L_1,L_2$ so that
\[
\lambda(\beta)\le\frac{1+c_2}{2}.
\]
Since $\lambda_{\beta,h}\to\lambda(\beta)$ as $h\downarrow0$, the threshold
$h_\star$ is then chosen, again using only the $L$-type constants, so that
$hL_0<1$ and
\[
\lambda_{\beta,h}\le\bar\lambda:=\frac{2+c_2}{3}<1
\qquad (0<h\le h_\star).
\]
Since $h\le T$,
\[
\vartheta_{\beta,h}\ge\frac{1-e^{-2\beta T}}{T},
\qquad
r_\beta(t_i)\ge e^{-2\beta T}.
\]
Thus the channel constants $a_{0,h}$, $a_{1,h}$, $a_2=2\sqrt2$, and
$b_{0,h}$, the absorption factor $(1-\bar\lambda)^{-1}$, and the removal of
the exponential weight introduce dependence only on $T$ and the $L$-type
constants. This is the default constant convention in the main text. In
particular, for problem families whose $L$-type constants are uniform in $d$,
the stability constant is independent of $d$, $h$, $N$, and the comparison
triple.

\section{Attainability and Neural Approximation}
\label{sec:supp-attainability}
This section completes the attainability side of
\cref{thm:population-convergence}. It proves the $L^2$ It\^o identities for
regular candidates, controls the PDE defect, driver freezing, and Hessian
oscillation, and inserts these bounds into the residual decomposition to prove
\cref{prop:attainability}. It then establishes qualitative neural
attainability for the centered-Softplus ridge class and records why the
present assumptions do not yield quantitative approximation rates.

\subsection{Candidate It\^o identities}
\label{subsec:candidate-it-identities-up-to-terminal-time}
\begin{lemma}[Candidate It\^o identities]
\label{lem:candidate-ito-identities}
Let $v\in\mathcal V_h^{\mathrm{reg}}$. For $0\le s\le t\le T$,
\begin{equation}
v(t,X_t)-v(s,X_s)
=\int_s^t\mathscr Lv(r,X_r)\,dr
+\int_s^t\nabla_xv(r,X_r)\cdot dW_r,
\label{eq:candidate-ito-value}
\end{equation}
and
\begin{equation}
\begin{aligned}
\nabla_xv(t,X_t)-\nabla_xv(s,X_s)
&=\int_s^t\nabla_x\mathscr Lv(r,X_r)\,dr+\int_s^tD_x^2v(r,X_r)\,dW_r.
\end{aligned}
\label{eq:candidate-ito-gradient}
\end{equation}
Both identities hold in $L^2(\Omega)$, including $t=T$.
\end{lemma}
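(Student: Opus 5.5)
The plan is to prove both identities first on compact subintervals $[s,t]\subset[0,T)$ by a localized classical It\^o formula, and then to reach $t=T$ by an $L^2$ limit that uses only the integrability built into $\mathcal V_h^{\mathrm{reg}}$.

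\emph{Step 1: the interior case.} Fix $0\le s\le t<T$. Since $v\in C^{1,3}_{\mathrm{loc}}([0,T)\times\mathbb R^d)$, the functions $v$ and $\partial_jv$ belong to $C^{1,2}$ on $[0,t]\times\mathbb R^d$. For each $j$ we have $\mathscr L\partial_jv=\partial_j\mathscr Lv$, because the spatial derivatives commute with $\partial_t+\frac12\Delta$ on that set.

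Introduce the stopping times $\tau_R=\inf\{r\ge s:|X_r|\ge R\}$. It\^o's formula applied to $v(r,X_r)$ and to $\partial_jv(r,X_r)$ on $[s,t\wedge\tau_R]$ gives \cref{eq:candidate-ito-value} and \cref{eq:candidate-ito-gradient} with upper limit $t\wedge\tau_R$.

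Now let $R\to\infty$, so that $\tau_R\to\infty$ almost surely.
\begin{itemize}
\item The left-hand sides converge almost surely by continuity of $v$ and $\nabla_xv$.
\item The drift integrals converge almost surely and in $L^1(\Omega)$ by dominated convergence. The dominating quantities are $\mathbb E\int_0^T|\mathscr Lv|(r,X_r)\,dr<\infty$ and $\mathbb E\int_0^T|\nabla_x\mathscr Lv|(r,X_r)\,dr<\infty$. These are finite because $\mathscr Lv,\nabla_x\mathscr Lv\in\mathcal H_\beta$ and $r_\beta\ge e^{-2\beta T}$.
\item The stochastic integrals converge in $L^2(\Omega)$ by the It\^o isometry, using $\nabla_xv,D_x^2v\in\mathcal H_\beta$ and dominated convergence of $\mathbf 1_{\{r>\tau_R\}}$.
\end{itemize}
Identifying the limits gives both identities on $[s,t]$. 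They hold in $L^2(\Omega)$ because every term on the right is square-integrable: the drifts by Cauchy--Schwarz in time, and the martingale terms by the isometry.

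\emph{Step 2: passage to $t=T$.} This is the expected main obstacle, since $v$ is only $C^{0,2}$ up to $T$ and no third-order control at $T$ is assumed.

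Let $t\uparrow T$ in the identities from Step 1. The right-hand sides are Cauchy in $L^2(\Omega)$, again by Cauchy--Schwarz and the It\^o isometry with the $\mathcal H_\beta$ bounds. Hence $v(t,X_t)$ and $\nabla_xv(t,X_t)$ converge in $L^2(\Omega)$ to some limits $\xi_0$ and $\xi_1$.

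Separately, continuity of $v$ and $\nabla_xv$ on $[0,T]\times\mathbb R^d$ gives almost sure convergence to $v(T,X_T)$ and $\nabla_xv(T,X_T)$. Passing along a subsequence identifies $\xi_0=v(T,X_T)$ and $\xi_1=\nabla_xv(T,X_T)$.

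\emph{Step 3: integrability at $T$.} The identification in Step 2 shows in particular that both terminal values are square-integrable, consistent with $v(T,\cdot)\in H^1(\mu_T)$. It does not require any uniform integrability beyond the $L^2$ Cauchy property already established. This yields \cref{eq:candidate-ito-value,eq:candidate-ito-gradient} for all $0\le s\le t\le T$ in $L^2(\Omega)$.
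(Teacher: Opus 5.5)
Your proposal is correct and follows essentially the same route as the paper. You apply the localized classical It\^o formula on $[s,t]\subset[0,T)$, remove the localization using the $\mathcal H_\beta$ integrability together with Cauchy--Schwarz and the It\^o isometry, and then reach $t=T$ by showing that the right-hand sides are Cauchy in $L^2(\Omega)$ and identifying the $L^2$ limits with the almost-sure limits $v(T,X_T)$ and $\nabla_xv(T,X_T)$ given by $C^{0,2}$ continuity and path continuity. The only differences are cosmetic: you localize directly on $[s,t]$ rather than on $[0,t]$ followed by subtraction, and you pass the drift terms to the limit in $L^1$ rather than in $L^2$.
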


\begin{proof}
These are the regular-candidate analogues of
\cref{eq:target-ito-identities}. As in Step~4 of the proof of
\cref{lem:canonical-traces}, the drift and stochastic terms are controlled in
$L^2$ using Cauchy--Schwarz and the It\^o isometry. Here the regularity of $v$
permits a direct localization argument without smooth-core approximation or
canonical-trace identification.

Fix $t<T$, stop $X$ on compact sets, and apply classical It\^o's formula to
$v(\cdot,X)$ and $\nabla_xv(\cdot,X)$ on $[0,t]$. The occupation
square-integrability in the definition of $\mathcal V_h^{\mathrm{reg}}$,
together with Cauchy--Schwarz and the It\^o isometry, permits the localization
radius to tend to infinity in $L^2$. The almost-sure convergence of the
stopped endpoint values then identifies the limits. Subtraction yields
\crefrange{eq:candidate-ito-value}{eq:candidate-ito-gradient} for
$0\le s<t<T$.

The same estimates based on Cauchy--Schwarz and the It\^o isometry show that all
drift and stochastic-integral tails vanish in $L^2$ as $t\uparrow T$. Thus
$v(t,X_t)$ and $\nabla_xv(t,X_t)$ are Cauchy in $L^2$; their almost-sure
limits are $v(T,X_T)$ and $\nabla_xv(T,X_T)$ by $C^{0,2}$ regularity and
path continuity. Passing to the limit proves both identities at $t=T$; the
case $s=t$ is immediate.
\end{proof}

\subsection{Perturbation estimates}
\label{subsec:attainability-perturbations}
For $v\in\mathcal V_h^{\mathrm{reg}}$, set
\begin{equation*}
\begin{aligned}
F^v(t,x)
&=f(t,x,v(t,x),\nabla_xv(t,x),D_x^2v(t,x)), \quad F_i^v=f(t_i,X_i,Y_i^v,Z_i^v,\Gamma_i^v) \\
\delta_v&=\mathscr Lv+F^v,
\qquad
\mathcal O_\Gamma(v)
=\sum_{i=0}^{N-1}\int_{t_i}^{t_{i+1}}
\mathbb E|D_x^2v(t,X_t)-\Gamma_i^v|_F^2\,dt.
\end{aligned}
\end{equation*}

\begin{proposition}[Attainability perturbation bounds]
\label{prop:attainability-perturbations}
Under \cref{ass:time-regularity}, the defect satisfies
\begin{equation}
\|\delta_v\|_{\mathcal H_0}^2
\le C\mathcal E_{\mathrm{occ}}^2(v).
\label{eq:candidate-defect-bound}
\end{equation}
The frozen driver satisfies
\begin{equation}
\sum_{i=0}^{N-1}\int_{t_i}^{t_{i+1}}
\mathbb E|F^v(t,X_t)-F_i^v|^2\,dt
\le C\left[
\mathcal E_{\mathrm{occ}}^2(v)
+\mathcal E_\pi^2(v)+h
\right],
\label{eq:candidate-driver-freezing}
\end{equation}
and the Hessian oscillation satisfies
\begin{equation}
\mathcal O_\Gamma(v)
\le C\left[
\mathcal E_{\mathrm{occ}}^2(v)
+\mathcal E_\pi^2(v)+h
\right].
\label{eq:candidate-hessian-oscillation}
\end{equation}
\end{proposition}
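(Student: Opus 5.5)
The plan is to compare each candidate quantity with the corresponding target quantity through the triangle inequality, and to read every estimate off results already proved.

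\emph{Defect bound \cref{eq:candidate-defect-bound}.} Since $F^\star=F^{u^\star}$ by \cref{thm:occupation-well-posedness}, we can write
\[
\delta_v=(\mathscr Lv+F^\star)+(F^v-F^{u^\star}).
\]
The jet-Lipschitz condition in \cref{ass:continuous-occupation-well-posedness} gives, pointwise,
\[
|F^v-F^{u^\star}|\le L_0|v-u^\star|+L_1|\nabla_x(v-u^\star)|+L_2|D_x^2(v-u^\star)|_F .
\]
Squaring, integrating against $\mathcal H_0$, and comparing with \cref{eq:regular-occupation-error} proves \cref{eq:candidate-defect-bound}.

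\emph{Hessian oscillation \cref{eq:candidate-hessian-oscillation}.} On each interval we insert the target path and the exact-grid value:
\[
D_x^2v(t,X_t)-\Gamma_i^v=\bigl(D_x^2v(t,X_t)-\Gamma_t^\star\bigr)+\bigl(\Gamma_t^\star-\Gamma_i^\pi\bigr)+\bigl(\Gamma_i^\pi-\Gamma_i^v\bigr).
\]
\begin{itemize}
\item For the first term, summing the integrals over the grid gives $\|D_x^2(v-u^\star)\|_{\mathcal H_0}^2\le\mathcal E_{\mathrm{occ}}^2(v)$. This uses \cref{lem:canonical-traces} to identify $\Gamma^\star$ with the occupation Hessian almost everywhere in time.
\item The second term is at most $Ch$ by \cref{prop:full-jet-path-regularity}.
\item The third term is time-independent on each interval. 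Its contribution is exactly the Hessian part of $\mathcal E_\pi^2(v)$.
\end{itemize}

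\emph{Frozen driver \cref{eq:candidate-driver-freezing}.} We use the same splitting:
\[
F^v(t,X_t)-F_i^v=\bigl(F^v(t,X_t)-F_t^\star\bigr)+\bigl(F_t^\star-F_i^\pi\bigr)+\bigl(F_i^\pi-F_i^v\bigr).
\]
\begin{itemize}
\item The first term is bounded by the jet-Lipschitz condition. It gives $C\mathcal E_{\mathrm{occ}}^2(v)$ after using the almost-everywhere agreement of $(Y^\star,Z^\star,\Gamma^\star)$ with the occupation jet of $u^\star$ from \cref{lem:canonical-traces}.
\item The second term is $\le Ch$ by \cref{lem:source-freezing}.
\item The third term involves the same $(t_i,X_i)$ in both drivers. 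The jet-Lipschitz bound therefore controls it by $h\sum_i\mathbb E|q_i^\pi-q_i^v|^2\le C\mathcal E_\pi^2(v)$.
\end{itemize}

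\emph{Main obstacle.} All three estimates are routine once one point is settled. The point is that the occupation errors in $\mathcal E_{\mathrm{occ}}$ are measured against the occupation versions of the target jet. The path terms in \cref{prop:full-jet-path-regularity} and \cref{lem:source-freezing}, by contrast, use the continuous value and gradient processes and the occupation Hessian.

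I would handle this by invoking \cref{lem:canonical-traces}. It gives agreement for almost every $t$ in $L^2(\Omega)$, and this agreement transfers through Fubini to the time integrals. The evaluation of $v$ and its derivatives along $X$ is pointwise, because $v\in\mathcal V_h^{\mathrm{reg}}$ is classical. No trace issue arises at $t_0=0$, since the first-node terms appear only through the grid quantities $\Gamma_0^\pi$ and $\Gamma_0^v$, which $\mathcal E_\pi$ measures directly.
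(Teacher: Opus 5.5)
Your proposal is correct and follows essentially the same route as the paper: you split $\delta_v=(\mathscr Lv+F^\star)+(F^v-F^{u^\star})$ and apply the jet-Lipschitz bound. You then use the same three-term splittings through the target path and the exact-grid triple, controlled respectively by the occupation error, \cref{lem:source-freezing} or \cref{prop:full-jet-path-regularity}, and $\mathcal E_\pi$; as in the paper, the first-node Hessian enters only through $\overline\Gamma_0^h$ inside $\mathcal E_\pi$, so no pointwise Hessian of $u^\star$ at $(0,x_0)$ is needed.
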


\begin{proof}
\medskip\noindent\emph{Driver defect and freezing.}
By the definition of the graph term in
\cref{eq:regular-occupation-error},
\[
\delta_v
=(\mathscr Lv+F^\star)+(F^v-F^\star).
\]
The jet-Lipschitz bound proves \cref{eq:candidate-defect-bound}. For the frozen driver, decompose
\[
F^v(t,X_t)-F_i^v
=\bigl(F^v(t,X_t)-F_t^\star\bigr)
+\bigl(F_t^\star-F_i^\pi\bigr)
+\bigl(F_i^\pi-F_i^v\bigr).
\]
The three terms are controlled, respectively, by the occupation jet error,
\cref{lem:source-freezing}, and the grid-trace error. This proves
\cref{eq:candidate-driver-freezing}.

\medskip\noindent\emph{Hessian oscillation.}
For $t\in[t_i,t_{i+1}]$,
\[
D_x^2v(t,X_t)-\Gamma_i^v
=\bigl(D_x^2v(t,X_t)-\Gamma_t^\star\bigr)
+\bigl(\Gamma_t^\star-\Gamma_i^\pi\bigr)
+\bigl(\Gamma_i^\pi-\Gamma_i^v\bigr).
\]
The three terms are controlled by
$\mathcal E_{\mathrm{occ}}$, \cref{prop:full-jet-path-regularity}, and $\mathcal E_\pi$. On the
first interval, the identity is explicitly
\begin{equation*}
\begin{aligned}
D_x^2v(t,X_t)-D_x^2v(0,x_0)
&=\bigl(D_x^2v(t,X_t)-\Gamma_t^\star\bigr)+\bigl(\Gamma_t^\star-\overline\Gamma_0^h\bigr)
+\bigl(\overline\Gamma_0^h-D_x^2v(0,x_0)\bigr).
\end{aligned}
\end{equation*}
Thus no value of $D_x^2u^\star(0,x_0)$ is used; the last term is exactly the
first-node contribution in $\mathcal E_\pi^2(v)$. This proves
\cref{eq:candidate-hessian-oscillation}.
\end{proof}

\subsection{Residual estimate}
\label{subsec:attainability-residual}
\begin{proof}[Proof of \cref{prop:attainability}]
Use \cref{eq:candidate-ito-value,lem:centered-quadratic-ito} in the residual:
\begin{equation}
\begin{aligned}
\rho_i[Y^v,Z^v,\Gamma^v]
&=\int_{t_i}^{t_{i+1}}\delta_v(t,X_t)\,dt
-\int_{t_i}^{t_{i+1}}
\bigl(F^v(t,X_t)-F_i^v\bigr)\,dt\\
&\quad+\int_{t_i}^{t_{i+1}}
\bigl[Z_t^v-Z_i^v-\Gamma_i^v(W_t-W_{t_i})\bigr]\cdot dW_t.
\end{aligned}
\label{eq:candidate-residual-decomposition}
\end{equation}
By \cref{eq:candidate-ito-gradient},
\begin{equation}
\begin{aligned}
Z_t^v-Z_i^v-\Gamma_i^v(W_t-W_{t_i})
&=\int_{t_i}^t\nabla_x\mathscr Lv(r,X_r)\,dr\\
&\quad+\int_{t_i}^t
\bigl(D_x^2v(r,X_r)-\Gamma_i^v\bigr)\,dW_r.
\end{aligned}
\label{eq:candidate-gradient-remainder}
\end{equation}

The two deterministic integrals in \cref{eq:candidate-residual-decomposition} satisfy
\begin{equation*}
\begin{aligned}
\sum_{i=0}^{N-1}\mathbb E\left|
\int_{t_i}^{t_{i+1}}\delta_v(t,X_t)\,dt\right|^2
&\le h\|\delta_v\|_{\mathcal H_0}^2,\\
\sum_{i=0}^{N-1}\mathbb E\left|
\int_{t_i}^{t_{i+1}}
(F^v(t,X_t)-F_i^v)\,dt\right|^2
&\le h\sum_{i=0}^{N-1}\int_{t_i}^{t_{i+1}}
\mathbb E|F^v(t,X_t)-F_i^v|^2\,dt.
\end{aligned}
\end{equation*}
Applying the It\^o isometry twice to \cref{eq:candidate-gradient-remainder} gives
\begin{equation*}
\begin{aligned}
&\sum_{i=0}^{N-1}\int_{t_i}^{t_{i+1}}
\mathbb E|Z_t^v-Z_i^v-\Gamma_i^v(W_t-W_{t_i})|^2\,dt\le
2h^2M_{\mathrm{dr}}^2(v)+2h\mathcal O_\Gamma(v).
\end{aligned}
\end{equation*}
Combining these bounds with
\crefrange{eq:candidate-defect-bound}{eq:candidate-hessian-oscillation} yields
\[
h^{-1}\sum_{i=0}^{N-1}\mathbb E|\rho_i[Y^v,Z^v,\Gamma^v]|^2
\le C\left[
\mathcal E_{\mathrm{occ}}^2(v)
+\mathcal E_\pi^2(v)
+h\bigl(1+M_{\mathrm{dr}}^2(v)\bigr)
\right].
\]
\end{proof}

\subsection{Neural approximation and rate limitation}
\label{subsec:neural-approximation-context-and-limitations}
For related results on Sobolev approximation rates, tanh networks,
noncompact-domain universality, and simultaneous derivative approximation,
see \cite{ref14,ref15,ref16,ref17}. The argument below uses only qualitative
order-three derivative density for a finite measure tailored to the D2SRM
objective.
Let $\sigma\in C^3(\mathbb R)$ be nonpolynomial, and suppose that
$\sigma^{(k)}$ is bounded for $1\le k\le3$. Let
$\mathcal N_\sigma$ be the class of scalar finite ridge expansions
\[
U(y)=a_0+\sum_{j=1}^M a_j\sigma(w_j\cdot y+b_j),
\qquad y=(t,x)\in\mathbb R^{d+1}.
\]
Here $M\in\mathbb N^+$. The output weights satisfy $a_j\in\mathbb R$ for
$0\le j\le M$. For $1\le j\le M$, the hidden parameters satisfy
$b_j\in\mathbb R$ and $w_j\in\mathbb R^{d+1}$. For
$\alpha=(\alpha_0,\ldots,\alpha_d)\in\mathbb N^{d+1}$, write
\[
D_{t,x}^\alpha
:=\partial_t^{\alpha_0}
\partial_{x_1}^{\alpha_1}\cdots\partial_{x_d}^{\alpha_d}.
\]
For a fixed grid, introduce the finite Borel measure
\begin{equation}
\nu_h
=dt\,\mu_t(dx)
+h\sum_{i=0}^{N-1}\delta_{t_i}(dt)\mu_{t_i}(dx)
+\delta_T(dt)\mu_T(dx).
\label{eq:neural-approximation-measure}
\end{equation}
Here $\mu_0=\delta_{x_0}$. The three terms encode the occupation, grid-trace,
and terminal components of $\mathfrak A_h$. Every $U\in\mathcal N_\sigma$
has at most linear growth, while its derivatives of positive order through
three are bounded. Hence its required occupation fields lie in
$\mathcal H_\beta$ and its terminal trace lies in $H^1(\mu_T)$. Under
\cref{ass:time-regularity}, the finite second moments of the Brownian grid
laws and the spatial and jet Lipschitz bounds on $f$ also make the sampled
jets and their one-step residuals square-integrable. Thus every such $U$
belongs to $\mathcal V_h^{\mathrm{reg}}$.

\begin{proposition}[Qualitative neural attainability]
\label{prop:qualitative-neural-attainability}
Under \cref{ass:time-regularity}, let $\sigma$ satisfy the preceding
activation assumptions. Then the following assertions hold. If $v$ is the restriction to $[0,T]\times\mathbb R^d$ of a
$C^3(\mathbb R^{d+1})$ function satisfying
$D_{t,x}^\alpha v\in L^2(\nu_h)$ for every $|\alpha|\le3$, then
\begin{equation}
\inf_{U\in\mathcal N_\sigma}
\sum_{\substack{\alpha\in\mathbb N^{d+1}\\|\alpha|\le3}}
\|D_{t,x}^\alpha(U-v)\|_{L^2(\nu_h)}
=0.
\label{eq:derivative-universality}
\end{equation}
Therefore, along uniform grids,
\begin{equation}
\lim_{\substack{h=T/N\downarrow0}}
\inf_{U\in\mathcal N_\sigma}\mathfrak A_h(U)=0.
\label{eq:qualitative-attainability}
\end{equation}
Equivalently, there are networks $U_h\in\mathcal N_\sigma$ such that
$\mathfrak A_h(U_h)\to0$.
\end{proposition}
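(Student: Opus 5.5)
The proof splits into two parts: first the density statement \eqref{eq:derivative-universality} for a fixed grid, then its use to make each term of $\mathfrak A_h$ small.

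\emph{Density.} I would use the order-three version of Hornik's derivative universality for finite measures. The measure $\nu_h$ is a finite Borel measure on $\mathbb R^{d+1}$. It has Gaussian tails in $x$ and compact support in $t$, and it includes the atom $\delta_{(0,x_0)}$ inside the grid term. Set
\[
\|v\|_{3,\nu_h}:=\sum_{|\alpha|\le3}\|D^\alpha_{t,x}v\|_{L^2(\nu_h)}.
\]
The argument has three steps.
\begin{enumerate}
\item Cut off $v$: replace it by $\chi_Rv$ with a smooth cutoff $\chi_R$ equal to $1$ on $B_R$ and with derivatives bounded uniformly in $R$. Since $D^\alpha v\in L^2(\nu_h)$ for $|\alpha|\le3$, dominated convergence makes $\|\chi_Rv-v\|_{3,\nu_h}$ small. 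This reduces the problem to $C^3$ functions of compact support.
\item Approximate such a function in $C^3$ uniformly on a large ball $B_\rho$ by elements of $\mathcal N_\sigma$. This is the classical simultaneous-derivative density for nonpolynomial $C^3$ activations with bounded derivatives (Hornik 1991; \cite{ref17}); I would take it as known.
\item Control the exterior $\mathbb R^{d+1}\setminus B_\rho$. Here the network has only linear growth with bounded derivatives, but those bounds depend on the network. The uniform approximation on $B_\rho$ does not control it outside.
\end{enumerate}
Step~3 is the main obstacle. I would handle it by invoking Hornik's theorem directly in the weighted form: $\mathcal N_\sigma$ is dense in $C^3$ for any finite compactly supported measure, and his proof via the Hahn--Banach duality for $L^2(\nu_h)$ with bounded $\sigma^{(k)}$ applies to finite measures generally. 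Concretely, suppose a functional $\sum_{|\alpha|\le3}\int g_\alpha D^\alpha U\,d\nu_h$ annihilates all ridge functions. Because the derivatives of $\sigma$ are bounded, the ridge derivatives lie in $L^2(\nu_h)$ and Fubini-type manipulations are permitted. The nonpolynomial hypothesis then forces the functional to vanish on all polynomials of degree at most three times exponentials, hence on $C_c^\infty$. Density of $C_c^\infty$ in the $\nu_h$-Sobolev graph norm (the cutoff of step~1) concludes.

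\emph{From density to $\mathfrak A_h\to0$.} Assume $u^\star$ itself were a $C^3$ function with the stated integrability. Then \eqref{eq:derivative-universality} gives $U$ with $\|U-u^\star\|_{3,\nu_h}$ small. This controls the terms of $\mathfrak A_h$ as follows:
\begin{itemize}
\item $\mathcal E_{\mathrm{occ}}$: through the $dt\,\mu_t$ part, using $\mathscr Lu^\star=-F^\star$.
\item $\mathcal E_\pi$ and the terminal errors: through the grid and terminal atoms.
\item $M_{\mathrm{dr}}(U)$: bounded by $M_{\mathrm{dr}}(u^\star)+1$.
\end{itemize}
Consequently $\inf\mathfrak A_h\le Ch$, which tends to $0$.

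The target is in general only an occupation mild solution, so I would insert a regularized intermediary. Take smooth data $F^m,g^m$ from \cref{lem:smooth-approximation}, possibly combined with a short time shift near $t=0$, and let $v^m$ be the corresponding smooth function extended to $C^3(\mathbb R^{d+1})$. There are three error contributions.
\begin{itemize}
\item The occupation jet error $\mathcal E_{\mathrm{occ}}(v^m)$ is controlled by \cref{thm:occupation-mild-extension} and the Lipschitz bounds, since $\mathscr Lv^m+F^\star=F^\star-F^m$.
\item The grid-trace error against $(Y^\pi,Z^\pi,\Gamma^\pi)$ at positive nodes follows from the trace convergence \eqref{eq:canonical-trace-convergence} in Step~1 of \cref{lem:canonical-traces}.
\item The first-node Hessian $D^2v^m(0,x_0)$ must approach $\overline\Gamma_0^h$. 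For fixed $h$ I would arrange this by adding to $v^m$ a quadratic correction $\tfrac12(x-x_0)^\top A_m(x-x_0)$ localized near $t=0$. Its contribution to the other terms is $O(h)$ times a bounded quantity, and absorbing it is the delicate bookkeeping step.
\end{itemize}
A diagonal choice then completes the argument: pick $m=m(h)$ so that these errors are below $h$, pick $U_h$ by density so that $\mathfrak A_h(U_h)\le\mathfrak A_h(v^{m})+h$, and note that $hM_{\mathrm{dr}}^2(v^{m(h)})\to0$ provided $m(h)$ grows slowly enough. This yields \eqref{eq:qualitative-attainability}.
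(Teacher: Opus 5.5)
\textbf{Density step.} You correctly identify the tails of $\nu_h$ as the obstruction. However, your resolution does not close the gap.

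Hornik's density theorem for arbitrary finite measures (Theorem~4 of \cite{ref13}) is stated for an activation that is bounded together with its derivatives through order $m$. The activations allowed here, such as $\sigma_{\mathrm{csp}}$, grow linearly, so the theorem does not apply to $\sigma$. Your claim that the argument ``applies to finite measures generally'' is false for such $\sigma$: for a heavy-tailed finite $\mu$, the $\sigma$-ridges are not even in $L^2(\mu)$.

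Your Hahn--Banach sketch only asserts the key implication, from annihilating all $\sigma$-ridges to annihilating a dense class. That implication is exactly where the tails of $\nu_h$ must be controlled in the third-order norm, and you give no argument for it.

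The missing idea is the paper's difference trick. Since $\sigma$ is nonpolynomial, choose $r_1$ and $c\ne0$ with $\sigma'(r_1)\ne\sigma'(r_1-c)$, and set $\psi(r)=\sigma(r)-\sigma(r-c)$. Then:
\begin{itemize}
\item $|\psi|\le|c|\,\|\sigma'\|_\infty$;
\item $\psi$ is nonconstant, with bounded derivatives through order three;
\item each $\psi$-ridge is a difference of two $\sigma$-ridges, so $\mathcal N_\psi\subset\mathcal N_\sigma$.
\end{itemize}
Theorem~4 of \cite{ref13} with $m=3$, $p=2$ then yields \eqref{eq:derivative-universality} directly for the finite, non-compactly supported measure $\nu_h$. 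Your cutoff Steps~1--2 become unnecessary.

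\textbf{First Hessian node.} Your fixed-$h$ route differs from the paper's: trace convergence \eqref{eq:canonical-trace-convergence} at positive nodes, plus a quadratic correction at $(0,x_0)$. The bookkeeping you defer is where it breaks as written.

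Nothing bounds $A_m=\overline\Gamma_0^h-D_x^2v^m(0,x_0)$ uniformly in $m$. The trace bound \eqref{eq:positive-time-trace-bound} degenerates like $(1+\log(T/a))^{1/2}$ as $a\downarrow0$, and the paper never uses a Hessian of the target at $(0,x_0)$.

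Take the correction $\chi(t)\tfrac12(x-x_0)^\top A_m(x-x_0)$ with $\chi(t)=\phi(t/\epsilon)$. Then $\|\chi'(t)A_m(x-x_0)\|_{\mathcal H_0}^2=|A_m|_F^2\int_0^1\phi'(s)^2s\,ds$, which is independent of $\epsilon$. So the correction adds a fixed multiple of $h|A_m|_F^2$ to $hM_{\mathrm{dr}}^2$, not ``$O(h)$ times a bounded quantity.''

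Your diagonal then pulls in two directions. It needs $m(h)$ large enough that the fixed-$h$ trace errors are small, a convergence with no rate uniform in $h$. It also needs $m(h)$ slow enough that $h|A_{m(h)}|_F^2\to0$. Nothing shows these requirements are compatible.

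One repair is a logarithmic profile: $\phi=1$ on $[0,\delta]$ and $\phi(s)=\log(1/s)/\log(1/\delta)$ on $[\delta,1]$, suitably smoothed. For it, $\int_0^1\phi'(s)^2s\,ds=1/\log(1/\delta)$ is arbitrarily small. With this profile your fixed-$h$ argument closes, and even gives $\inf_{U\in\mathcal N_\sigma}\mathfrak A_h(U)\le h\|D_xF^\star\|_{\mathcal H_0}^2$.

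The paper needs neither trace convergence nor a correction. Let $q_t^m$ be the jet of $v^m$ along $X$, let $q_t^\star=(Y_t^\star,Z_t^\star,\Gamma_t^\star)$, let $\pi_h(t)=t_i$ on $[t_i,t_{i+1})$, and set $\omega_m(h)=\int_0^T\mathbb E|q_t^m-q_{\pi_h(t)}^m|^2\,dt$. Inserting the continuous path gives $\mathcal E_\pi^2(v^m)\le3\omega_m(h)+3\int_0^T\mathbb E|q_t^m-q_t^\star|^2\,dt+Ch$. This works because \cref{prop:full-jet-path-regularity} already contains $\overline\Gamma_0^h$ on $[0,h]$. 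A slowly growing diagonal choice with $\omega_{m(h)}(h)\to0$ then finishes the proof.
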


\begin{proof}
Since $\sigma$ is nonpolynomial, $\sigma'$ is nonconstant. Choose
$r_1,r_2\in\mathbb R$ such that $\sigma'(r_1)\ne\sigma'(r_2)$, set
$c=r_1-r_2\ne0$, and define
\[
\psi(r):=\sigma(r)-\sigma(r-c).
\]
Then
\[
\psi'(r_1)=\sigma'(r_1)-\sigma'(r_2)\ne0,
\]
so $\psi$ is nonconstant. The bound on $\sigma'$ also gives
\[
|\psi(r)|
=\left|\int_{r-c}^r\sigma'(s)\,ds\right|
\le |c|\|\sigma'\|_\infty,
\]
so $\psi$ is bounded. Moreover, its derivatives of orders one through three
are bounded. Theorem~4 of \cite{ref13}, applied with $m=3$ and $p=2$,
therefore states that $\mathcal N_\psi$ is dense in the order-three
derivative $L^2$ norm associated with any finite Borel measure.
Every $\psi$ ridge function is the difference of two $\sigma$ ridge functions:
\[
\psi(w\cdot y+b)
=\sigma(w\cdot y+b)-\sigma(w\cdot y+b-c).
\]
Consequently, $\mathcal N_\psi\subset\mathcal N_\sigma$. Applying the
theorem to \cref{eq:neural-approximation-measure} proves
\cref{eq:derivative-universality}. The
definition of
$\mathfrak A_h$ and the triangle inequality also give
\begin{equation}
\left|
\mathfrak A_h(U)^{1/2}-\mathfrak A_h(v)^{1/2}
\right|
\le C_d
\sum_{\substack{\alpha\in\mathbb N^{d+1}\\|\alpha|\le3}}
\|D_{t,x}^\alpha(U-v)\|_{L^2(\nu_h)}.
\label{eq:approximation-functional-continuity}
\end{equation}
The occupation and grid components use derivatives through order two, while
$\nabla_x\mathscr L$ uses derivatives through order three; the factor
$h^{1/2}$ in the latter component is bounded by $T^{1/2}$. Since $\nu_h$
contains the full Gaussian laws, no spatial truncation or separate tail
passage is required.

Choose $F^m,g^m$ from \cref{lem:smooth-approximation}, with
$F^m\to F^\star$ in the spatial graph norm and
$g^m\to g$ in $H^2(\mu_T)$, and let
\[
v^m=\mathcal U(F^m,g^m).
\]
These are smooth mild functions with bounded derivatives of every fixed
order; after a bounded smooth extension in time, they satisfy the hypotheses
of \cref{eq:derivative-universality}. Since
$\mathscr Lv^m=-F^m$ and
$\nabla_x\mathscr Lv^m=-D_xF^m$, \cref{thm:occupation-mild-extension} gives
\begin{equation}
\mathcal E_{\mathrm{occ}}^2(v^m)
+\varepsilon_{T,0}^2(v^m)
+\varepsilon_{T,1}^2(v^m)
\longrightarrow0,
\qquad
\sup_m M_{\mathrm{dr}}(v^m)<\infty.
\label{eq:smooth-core-approximation}
\end{equation}

For $q=(y,z,\gamma)\in E$, write
$|q|_E^2=|y|^2+|z|^2+|\gamma|_F^2$.
Set
\[
q_t^m=\bigl(v^m,\nabla_xv^m,D_x^2v^m\bigr)(t,X_t),
\quad
q_t^\star=(Y_t^\star,Z_t^\star,\Gamma_t^\star),
\quad
q_i^\pi=(Y_i^\pi,Z_i^\pi,\Gamma_i^\pi),
\]
and let $\pi_h(t)=t_i$ on $[t_i,t_{i+1})$. For every fixed $m$, boundedness
and continuity of the derivatives of $v^m$, together with continuity of $X$, imply
\begin{equation}
\omega_m(h)
:=\int_0^T\mathbb E|q_t^m-q_{\pi_h(t)}^m|_E^2\,dt
\longrightarrow0
\qquad\text{as }h\downarrow0.
\label{eq:smooth-grid-modulus}
\end{equation}
For $t\in[t_i,t_{i+1})$, insert $q_t^m$ and $q_t^\star$ between
$q_{t_i}^m$ and $q_i^\pi$. Summing the three-term bound and using
\cref{prop:full-jet-path-regularity} yields
\begin{equation}
\mathcal E_\pi^2(v^m)
\le3\omega_m(h)
+3\int_0^T\mathbb E|q_t^m-q_t^\star|_E^2\,dt
+Ch.
\label{eq:smooth-grid-error}
\end{equation}
This includes the first Hessian node. Indeed,
\begin{equation*}
\begin{aligned}
h|D_x^2v^m(0,x_0)-\overline\Gamma_0^h|_F^2 &\le3\int_0^h\mathbb E
|D_x^2v^m(0,x_0)-D_x^2v^m(t,X_t)|_F^2\,dt\\
&+3\int_0^h\mathbb E
|D_x^2v^m(t,X_t)-\Gamma_t^\star|_F^2\,dt +3\int_0^h\mathbb E
|\Gamma_t^\star-\overline\Gamma_0^h|_F^2\,dt.
\end{aligned}
\end{equation*}
Thus neither a pointwise Hessian of $u^\star$ at $(0,x_0)$ nor a uniform
bound on $\overline\Gamma_0^h$ is needed.

By \cref{eq:smooth-core-approximation,eq:smooth-grid-modulus}, choose a diagonal index
$m=m(h)\uparrow\infty$ slowly enough that $\omega_{m(h)}(h)\to0$. Then
\cref{eq:smooth-core-approximation,eq:smooth-grid-error} and the factor $h$ in front of
$M_{\mathrm{dr}}^2$ give
\[
\mathfrak A_h(v^{m(h)})\longrightarrow0.
\]
For each $h$, use \cref{eq:derivative-universality,eq:approximation-functional-continuity} to
choose a finite network $U_h$ for
which the right-hand side of \cref{eq:approximation-functional-continuity}, with $v=v^{m(h)}$,
tends to
zero. This proves \cref{eq:qualitative-attainability}.
\end{proof}

The centered Softplus activation $\sigma_{\mathrm{csp}}$ defined in
\cref{rem:rate-criterion-and-scope} satisfies these assumptions. Its first
derivative is the bounded,
nonconstant logistic function, and its second and third derivatives are
bounded. The same conclusion holds for tanh and sigmoid activations, but not
for ReLU, leaky ReLU, or ELU, which are not $C^3$.

\Cref{prop:qualitative-neural-attainability} is deliberately qualitative: the diagonal choice
$m(h)$ and
the sizes of the required finite ridge expansions are not estimated. A
quantitative argument based on high-order PDE regularity and generic neural
approximation rates may still suffer from the curse of dimensionality. A
more promising direction is a PDE-based construction: Feynman--Kac and Monte
Carlo representations yield dimension-polynomial bounds for linear
Kolmogorov equations \cite{ref26}, while multilevel Picard constructions do
so for a class of semilinear heat equations \cite{ref27}. Extending such
arguments to the simultaneous value--gradient--Hessian and PDE-graph
approximation required here remains open.

\section{A Conditional A Priori Estimate for Smooth Uniformly
Elliptic Diffusions}
\label{sec:supp-general-diffusions}
This section isolates a conditional linear analogue of
\cref{thm:brownian-hessian-estimate}. For a smooth uniformly elliptic,
possibly time-inhomogeneous diffusion,
\cref{subsec:general-from-coercivity} derives an a priori estimate for the
diffusion-weighted Hessian from two low-mode marginal bounds and
\cref{eq:general-projected-coercivity}. No symmetry of the transition law or
divergence-form structure of the original generator $\mathcal G_t$ is
assumed. The standard It\^o and absorption steps are recorded briefly; the
substantive points are affine-mode control and verification of projected
coercivity. \Cref{subsec:general-why-coercivity} records the standard
low-mode consequences of log-Sobolev and log-Hessian bounds and then performs
that verification by a localized weighted Bochner argument.
\Cref{subsec:general-time-only} treats the spatially homogeneous
time-dependent case $b=b(t)$, $\Sigma=\Sigma(t)$ by direct Gaussian
calculation and records a cleaner estimate for the two-sided
diffusion-weighted Hessian and centered gradient.

\paragraph{Scope}
The general estimate is conditional on transition-law inputs; their constants
may depend on $d$, and no sharp dependence on the time horizon or ellipticity
constants is claimed. This section is not a complete variable-coefficient
D2SRM theory, excludes degenerate or rough diffusions, and is not used in the
main convergence chain.

\subsection{General SDE setting}
\label{subsec:scope-and-formal-setting}
Let
\begin{equation*}
b\in C([0,T];C_b^2(\mathbb R^d;\mathbb R^d)),
\qquad
a\in C([0,T];C_b^2(\mathbb R^d;\mathbb S^d)),
\end{equation*}
and assume
\begin{equation*}
\lambda_{\mathrm{ell}}I_d
\preceq a(t,x)\preceq\Lambda_{\mathrm{ell}}I_d,
\qquad (t,x)\in[0,T]\times\mathbb R^d,
\end{equation*}
for constants
$0<\lambda_{\mathrm{ell}}\le\Lambda_{\mathrm{ell}}<\infty$.
Set $\Sigma(t,x)=a(t,x)^{1/2}$, the positive-definite square root. No
relation between $b$ and $a$ is imposed. Consider the point-started diffusion
and the associated linear terminal-value problem
\begin{equation*}
\begin{aligned}
dX_t&=b(t,X_t)\,dt+\Sigma(t,X_t)\,dW_t,
\qquad X_0=x_0,\\
-\partial_tw-\mathcal G_tw&=F,
\qquad w(T,\cdot)=\eta,\\
\mathcal G_t
&=b(t,\cdot)\cdot\nabla+\frac12a(t,\cdot):D^2,
\qquad a=\Sigma\Sigma^\top.
\end{aligned}
\end{equation*}
For $0\le s<t\le T$, let $p(s,x;t,y)$ denote the transition density and
write
\begin{equation*}
p_t^X(y)=p(0,x_0;t,y),
\qquad
\mu_t(dy)=p_t^X(y)\,dy
\end{equation*}
for the one-time law of $X_t$. For $\beta\ge0$ and a process $U$ along $X$,
set
\begin{equation*}
\|U\|_{\mathcal H_{\beta,X}}^2
:=\int_0^T r_\beta(t)\,\mathbb E|U_t|^2\,dt,
\qquad
r_\beta(t)=e^{-2\beta(T-t)},
\end{equation*}
and use the same notation for a space--time field evaluated along $X$.
For $b=0$ and $a=I_d$, this agrees with the Brownian occupation norm
$\mathcal H_\beta$ used in the main text.

Define
\begin{equation*}
\begin{aligned}
B(t,x)&:=D_xb(t,x)^\top,
\qquad
L_B:=\sup_{t,x}|B(t,x)|_{\mathrm{op}},\\
L_{DB}&:=\sup_{t,x}|D_xB(t,x)|_F, \quad
L_a:=\sup_{t,x}\sup_{\substack{H\in\mathbb S^d\\|H|_F=1}}
\left|
\left(\frac12\partial_{x_i}a(t,x):H\right)_{i=1}^d
\right|.
\end{aligned}
\end{equation*}
These constants are finite by the coefficient assumptions and make the
dimension dependence of the coefficient contributions in the energy estimate
explicit.

Along the diffusion, set
\begin{equation*}
\begin{aligned}
Z_t&:=\nabla w(t,X_t),
\qquad
\bar Z_t:=\mathbb EZ_t,
\qquad
Z_t^\circ:=Z_t-\bar Z_t,\\
M_t&:=D^2w(t,X_t)\Sigma(t,X_t),\quad
(R_t)_i:=\frac12\partial_{x_i}a(t,X_t):D^2w(t,X_t).
\end{aligned}
\end{equation*}
For vector processes $U_1,U_2$, write
\begin{equation*}
\langle U_1,U_2\rangle_{\beta,X}
:=\int_0^T r_\beta(t)\,
\mathbb E[(U_1)_t\cdot(U_2)_t]\,dt.
\end{equation*}
We call $(F,\eta,w)$ \emph{admissible} if the terminal-value problem above
holds, all derivatives, expectations, pairings, integrals, and norms
involving these objects and their associated processes below exist and are
finite, and the It\^o formulas, PDE differentiations, exchanges with
expectation, integrations by parts, and cutoff approximations used below are
valid. This is an operational convention for the estimate, not an additional
collection of uniform pointwise regularity assumptions.

\subsection{From projected coercivity to the a priori estimate}
\label{subsec:general-from-coercivity}
\begin{proposition}[Centered-gradient identity]
\label{prop:general-centered-gradient-identity}
Under the preceding coefficient assumptions, let $(F,\eta,w)$ be admissible.
Then
\begin{equation}
\begin{aligned}
\|M\|_{\mathcal H_{\beta,X}}^2
+2\beta\|Z^\circ\|_{\mathcal H_{\beta,X}}^2
={}&\operatorname{Var}(\nabla\eta(X_T))
+2\langle Z^\circ,\nabla F\rangle_{\beta,X}\\
&+2\langle Z^\circ,BZ^\circ\rangle_{\beta,X}
+2\langle Z^\circ,B\bar Z\rangle_{\beta,X}
+2\langle Z^\circ,R\rangle_{\beta,X}.
\end{aligned}
\label{eq:general-centered-gradient-identity}
\end{equation}
All fields in the pairings are evaluated along $X$.
\end{proposition}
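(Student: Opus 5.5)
We will mirror the Brownian computation in the proof of \cref{thm:brownian-hessian-estimate}, now with drift and state-dependent diffusion. The plan is to first derive the SDE satisfied by $Z_t=\nabla w(t,X_t)$ by differentiating the PDE in $x$, then subtract expectations, and finally apply It\^o's formula to $r_\beta(t)|Z_t^\circ|^2$ and take expectations.

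\emph{Step 1: gradient dynamics.} Differentiating $-\partial_tw-\mathcal G_tw=F$ in $x_i$ gives
\[
-\partial_t\partial_iw-\mathcal G_t\partial_iw=\partial_iF+(\partial_ib)\cdot\nabla w+\tfrac12\partial_ia:D^2w .
\]
Applying It\^o's formula to $\partial_iw(t,X_t)$ along $dX=b\,dt+\Sigma\,dW$ then yields
\[
dZ_t=-\bigl(\nabla F(t,X_t)+B(t,X_t)Z_t+R_t\bigr)\,dt+D^2w(t,X_t)\Sigma(t,X_t)\,dW_t ,
\]
where $(BZ)_i=\sum_j\partial_ib_jZ_j$ matches $B=D_xb^\top$. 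The martingale part is $M_t\,dW_t$, and it has zero mean under admissibility. Therefore $d\bar Z_t=-\mathbb E[\nabla F+BZ+R]\,dt$, and
\[
dZ_t^\circ=-\bigl(\nabla F+BZ+R-\mathbb E[\nabla F+BZ+R]\bigr)\,dt+M_t\,dW_t .
\]

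\emph{Step 2: energy identity.} It\^o's formula for $r_\beta(t)|Z_t^\circ|^2$ gives
\[
d(r_\beta|Z^\circ|^2)=2\beta r_\beta|Z^\circ|^2\,dt+r_\beta\bigl(2Z^\circ\cdot dZ^\circ+|M|_F^2\,dt\bigr).
\]
The quadratic variation of $M\,dW$ is $|M|_F^2\,dt$, since $M=D^2w\,\Sigma$. We integrate over $[0,T]$ and take expectations. The boundary terms are $r_\beta(T)=1$ together with $\mathbb E|Z_T^\circ|^2=\operatorname{Var}(\nabla\eta(X_T))$, and $Z_0^\circ=0$ because $X_0=x_0$ is deterministic. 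The stochastic integral has zero mean. The deterministic centering terms are removed by $\mathbb E[Z_t^\circ]=0$, so $\mathbb E[Z^\circ\cdot(\xi-\mathbb E\xi)]=\mathbb E[Z^\circ\cdot\xi]$.

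Rearranging gives
\[
\|M\|^2+2\beta\|Z^\circ\|^2=\operatorname{Var}(\nabla\eta(X_T))+2\langle Z^\circ,\nabla F+BZ+R\rangle_{\beta,X},
\]
with both norms taken in $\mathcal H_{\beta,X}$. Splitting $BZ=BZ^\circ+B\bar Z$ produces exactly the five terms in \cref{eq:general-centered-gradient-identity}. The sign convention is the same as in the Brownian case, where the source pairing appeared with a $+$ sign before Gaussian integration by parts.

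\emph{Main obstacle.} The only delicate points are technical: the It\^o formula for $\nabla w$, the vanishing of the martingale expectations, and the interchange of $d/dt$ with $\mathbb E$ for $\bar Z$. All of these are covered by the admissibility convention, so I would simply invoke it. Beyond that, the care needed is bookkeeping. The transpose convention for $B$ must be checked so that $(\partial_ib)\cdot\nabla w=(BZ)_i$. The term $\tfrac12\partial_ia:D^2w$ from differentiating the second-order coefficient must be identified with $R$. Finally, the Itô correction must be $|D^2w\,\Sigma|_F^2$ rather than $a:(D^2w)^2$ written differently; the two agree, since $|D^2w\Sigma|_F^2=\operatorname{tr}(D^2w\,a\,D^2w)$.
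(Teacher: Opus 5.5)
Your proposal is correct and follows the paper's proof essentially verbatim. Like the paper, you differentiate the backward equation to get $dZ_t=-[\nabla F+B_tZ_t+R_t]\,dt+M_t\,dW_t$, repeat the centered It\^o computation for $r_\beta(t)|Z_t^\circ|^2$ from the Brownian Hessian estimate (with $Z_0^\circ=0$ and $\mathbb E Z_t^\circ=0$ removing the deterministic mean drift), and split $BZ=BZ^\circ+B\bar Z$; your checks of the transpose convention for $B$, the identification of $R$, and the It\^o correction $|D^2w\,\Sigma|_F^2$ are right, and the technical interchanges are exactly what the admissibility convention is meant to cover.
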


\begin{proof}
Differentiating the backward equation componentwise and applying It\^o's
formula along $X$ give
\begin{equation*}
dZ_t
=-\bigl[\nabla F(t,X_t)+B_tZ_t+R_t\bigr]dt+M_t\,dW_t,
\qquad B_t=B(t,X_t).
\end{equation*}
Repeat the centered It\^o calculation in the proof of
\cref{thm:brownian-hessian-estimate}, with $M$ as the martingale coefficient
and the two additional drift terms $BZ+R$. The deterministic mean drift again
pairs to zero; splitting $B_tZ_t=B_tZ_t^\circ+B_t\bar Z_t$ gives
\cref{eq:general-centered-gradient-identity}.
\end{proof}

\paragraph{Marginal notation and low-mode inputs}
Set $q(t)=t\wedge1$ and
$m_t=\int x\,d\mu_t$.
Also set
\begin{equation*}
\Psi_t=-\log p_t^X,
\qquad
\mathcal A_t^\mu\psi
:=(p_t^X)^{-1}\nabla\!\cdot(p_t^X\nabla\psi)
=\Delta\psi-\nabla\Psi_t\cdot\nabla\psi.
\end{equation*}
Let $C_P(\mu_t)$ be the least constant in the Poincar\'e inequality
\begin{equation*}
\left\|\phi-\int\phi\,d\mu_t\right\|_{L^2(\mu_t)}^2
\le C_P(\mu_t)\|\nabla\phi\|_{L^2(\mu_t)}^2.
\end{equation*}
Define the Fisher information matrix
\begin{equation*}
\mathcal I_t^\mu
:=\int \nabla\log p_t^X\otimes\nabla\log p_t^X\,d\mu_t.
\end{equation*}

\begin{assumption}[Uniform low-mode marginal bounds]
\label{ass:general-low-mode-bounds}
For $t\in(0,T]$, the density $p_t^X$ is strictly positive and $C^2$ in its
spatial variable. There are constants $c_P,c_{\mathrm{sc}}>0$, possibly
depending on $T$ but uniform in $t$, such that
\begin{align}
C_P(\mu_t)
&\le c_Pq(t),
\label{eq:general-poincare-bound}\\
\mathcal I_t^\mu
&\preceq c_{\mathrm{sc}}^2q(t)^{-1}I_d.
\label{eq:general-score-bound}
\end{align}
\end{assumption}

\paragraph{Coefficient terms and affine leakage}
Uniform ellipticity gives
\begin{equation}
\|M\|_{\mathcal H_{\beta,X}}^2
\ge\lambda_{\mathrm{ell}}
\|D^2w\|_{\mathcal H_{\beta,X}}^2.
\label{eq:general-ellipticity-bound}
\end{equation}
The centered--centered drift term satisfies
\begin{equation*}
2|\langle Z^\circ,BZ^\circ\rangle_{\beta,X}|
\le2L_B\|Z^\circ\|_{\mathcal H_{\beta,X}}^2.
\end{equation*}
The definition of $L_a$ and uniform ellipticity imply
\begin{equation*}
|R_t|
\le L_a|D^2w(t,X_t)|_F
\le L_a\lambda_{\mathrm{ell}}^{-1/2}|M_t|_F.
\end{equation*}
Consequently, for every $\varepsilon>0$,
\begin{equation}
2|\langle Z^\circ,R\rangle_{\beta,X}|
\le\varepsilon\|M\|_{\mathcal H_{\beta,X}}^2
+\frac{L_a^2}{\varepsilon\lambda_{\mathrm{ell}}}
\|Z^\circ\|_{\mathcal H_{\beta,X}}^2.
\label{eq:general-R-bound}
\end{equation}

The remaining drift contribution couples the centered and affine modes.
Since $\mathbb EZ_t^\circ=0$,
\begin{equation*}
\mathbb E[Z_t^\circ\cdot B_t\bar Z_t]
=\mathbb E[Z_t^\circ\cdot(B_t-\mathbb EB_t)\bar Z_t].
\end{equation*}
Applying the Poincar\'e inequality componentwise to the matrix field $B$
gives
\begin{equation*}
\left(\mathbb E|B_t-\mathbb EB_t|_F^2\right)^{1/2}
\le C_P(\mu_t)^{1/2}L_{DB}.
\end{equation*}
For $\beta\ge0$, set
\begin{equation*}
L_{\mathrm{low}}^2
:=\int_0^T r_\beta(t)C_P(\mu_t)|\bar Z_t|^2\,dt.
\end{equation*}
When $\beta>0$, Young's inequality yields
\begin{equation}
2|\langle Z^\circ,B\bar Z\rangle_{\beta,X}|
\le\frac{\beta}{4}\|Z^\circ\|_{\mathcal H_{\beta,X}}^2
+\frac{4L_{DB}^2}{\beta}L_{\mathrm{low}}^2.
\label{eq:general-leakage-bound}
\end{equation}

\paragraph{Projected source control}
Define
\begin{equation*}
\begin{aligned}
(\Pi_t^{\mathrm{aff}}\phi)(x)
&:=\int\phi\,d\mu_t
+\left(\int\nabla\phi\,d\mu_t\right)\cdot(x-m_t),\\
Q_t^{\mathrm{aff}}&:=I-\Pi_t^{\mathrm{aff}}.
\end{aligned}
\end{equation*}
Although $\Pi_t^{\mathrm{aff}}$ need not be the orthogonal projection in
$L^2(\mu_t)$, it satisfies
\begin{equation*}
\int Q_t^{\mathrm{aff}}\phi\,d\mu_t=0,
\qquad
\int\nabla Q_t^{\mathrm{aff}}\phi\,d\mu_t=0,
\qquad
D^2Q_t^{\mathrm{aff}}\phi=D^2\phi.
\end{equation*}

\begin{assumption}[Projected marginal coercivity]
\label{ass:general-projected-coercivity}
There is a constant $C_A>0$, uniform in $t\in(0,T]$, such that, for every
smooth $\phi$ for which the displayed norms are finite and cutoff
approximation is valid,
\begin{equation}
\|\mathcal A_t^\mu Q_t^{\mathrm{aff}}\phi\|_{L^2(\mu_t)}
\le C_A\|D^2\phi\|_{L^2(\mu_t)}.
\label{eq:general-projected-coercivity}
\end{equation}
Here cutoff approximation means that there are
$\phi_n\in C_c^\infty(\mathbb R^d)$ such that
\begin{equation*}
D^2\phi_n\to D^2\phi,
\qquad
\mathcal A_t^\mu Q_t^{\mathrm{aff}}\phi_n
\to\mathcal A_t^\mu Q_t^{\mathrm{aff}}\phi
\quad\text{in }L^2(\mu_t).
\end{equation*}
\end{assumption}

\begin{lemma}[Projected source control]
\label{lem:projected-source-control}
Under the coefficient setting of \cref{subsec:scope-and-formal-setting} and
\cref{ass:general-projected-coercivity}, an admissible triple $(F,\eta,w)$
satisfies, for every $\varepsilon>0$,
\begin{equation}
\left|
\langle Z^\circ,\nabla F\rangle_{\beta,X}
\right|
\le\varepsilon\|M\|_{\mathcal H_{\beta,X}}^2
+\frac{C_A^2}{4\varepsilon\lambda_{\mathrm{ell}}}
\|F\|_{\mathcal H_{\beta,X}}^2.
\label{eq:general-source-control}
\end{equation}
\end{lemma}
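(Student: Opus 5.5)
The plan is to mirror the Gaussian integration-by-parts step in the proof of \cref{thm:brownian-hessian-estimate}, with $\mu_t$ now the diffusion marginal. I would argue at a fixed $t\in(0,T]$, then integrate against $r_\beta(t)\,dt$.

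\emph{Step 1: identify the centered gradient.} By the listed properties of $Q_t^{\mathrm{aff}}$, the function $\psi_t:=Q_t^{\mathrm{aff}}w(t,\cdot)$ satisfies $\nabla\psi_t=\nabla w(t,\cdot)-\int\nabla w(t,\cdot)\,d\mu_t$. Hence $Z_t^\circ=\nabla\psi_t(X_t)$, and
\[
\mathbb E[Z_t^\circ\cdot\nabla F(t,X_t)]=\int\nabla\psi_t\cdot\nabla F(t,\cdot)\,p_t^X\,dx .
\]

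\emph{Step 2: integrate by parts against $p_t^X$.} Using the definition $\mathcal A_t^\mu\psi=(p_t^X)^{-1}\nabla\!\cdot(p_t^X\nabla\psi)$, this gives
\[
\int\nabla\psi_t\cdot\nabla F\,d\mu_t=-\int F\,\mathcal A_t^\mu Q_t^{\mathrm{aff}}w(t,\cdot)\,d\mu_t .
\]
No symmetry of the transition law is needed, since only the one-time marginal density enters. To justify the boundary terms, I would first take the cutoff approximants $\phi_n\in C_c^\infty$ from \cref{ass:general-projected-coercivity}, for which the identity is exact. Strictly, it is the convergence of $\nabla Q_t^{\mathrm{aff}}\phi_n$ against $\nabla F$ that is needed. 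This I would absorb into the admissibility convention, which explicitly licenses the integrations by parts and cutoff approximations. The right-hand side then passes to the limit because $\mathcal A_t^\mu Q_t^{\mathrm{aff}}\phi_n$ converges in $L^2(\mu_t)$. This justification is the only delicate point, and I expect it to be the main obstacle. The rest is routine.

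\emph{Step 3: apply coercivity and ellipticity.} Cauchy--Schwarz and \cref{eq:general-projected-coercivity} give
\[
|\mathbb E[Z_t^\circ\cdot\nabla F]|\le C_A\|F(t)\|_{L^2(\mu_t)}\|D^2w(t)\|_{L^2(\mu_t)} .
\]
Then \cref{eq:general-ellipticity-bound}, applied pointwise, gives $|D^2w|_F\le\lambda_{\mathrm{ell}}^{-1/2}|M_t|_F$. Integrating with weight $r_\beta$ and applying Cauchy--Schwarz in time yields
\[
|\langle Z^\circ,\nabla F\rangle_{\beta,X}|\le C_A\lambda_{\mathrm{ell}}^{-1/2}\|F\|_{\mathcal H_{\beta,X}}\|M\|_{\mathcal H_{\beta,X}} .
\]

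\emph{Step 4: apply Young.} Finally, $xy\le\varepsilon y^2+x^2/(4\varepsilon)$ with $y=\|M\|_{\mathcal H_{\beta,X}}$ and $x=C_A\lambda_{\mathrm{ell}}^{-1/2}\|F\|_{\mathcal H_{\beta,X}}$ gives exactly \cref{eq:general-source-control}.
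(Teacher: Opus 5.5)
Your proposal is correct and follows the paper's proof step for step. You identify $Z_t^\circ=\nabla Q_t^{\mathrm{aff}}w(t,X_t)$, integrate by parts against $p_t^X$ to obtain $-\int F\,\mathcal A_t^\mu Q_t^{\mathrm{aff}}w\,d\mu_t$, then apply projected coercivity, the pointwise ellipticity comparison, Cauchy--Schwarz in time, and Young's inequality, exactly as the paper does. You defer the boundary-term and cutoff justification to the admissibility convention, which is how the paper treats that step too; you just state it more explicitly.
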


\begin{proof}
The definition of $Q_t^{\mathrm{aff}}$ gives
$Z_t^\circ=\nabla Q_t^{\mathrm{aff}}w(t,X_t)$. Integration by parts yields
\begin{equation*}
\begin{aligned}
\mathbb E[Z_t^\circ\cdot\nabla F(t,X_t)]
&=\int\nabla Q_t^{\mathrm{aff}}w(t,x)\cdot\nabla F(t,x)\,\mu_t(dx)\\
&=-\int F(t,x)\mathcal A_t^\mu Q_t^{\mathrm{aff}}w(t,x)\,\mu_t(dx).
\end{aligned}
\end{equation*}
Apply \cref{eq:general-projected-coercivity}, integrate in time with weight
$r_\beta$, and use \cref{eq:general-ellipticity-bound} to obtain
\begin{equation*}
\left|\langle Z^\circ,\nabla F\rangle_{\beta,X}\right|
\le C_A\lambda_{\mathrm{ell}}^{-1/2}
\|F\|_{\mathcal H_{\beta,X}}
\|M\|_{\mathcal H_{\beta,X}}.
\end{equation*}
Young's inequality proves \cref{eq:general-source-control}.
\end{proof}

\paragraph{Affine-mode closure}
\begin{lemma}[Affine-mode Volterra estimate]
\label{lem:general-affine-mode-estimate}
Under the preceding coefficient assumptions and
\cref{ass:general-low-mode-bounds}, let $(F,\eta,w)$ be admissible.
There is a finite constant $C_{\mathrm{low}}$, depending only on
$T,\lambda_{\mathrm{ell}},c_P,c_{\mathrm{sc}},L_B,L_{DB},L_a$ and independent
of $\beta$, such that
\begin{equation}
\begin{aligned}
L_{\mathrm{low}}^2
\le C_{\mathrm{low}}\bigl(&
\|\nabla\eta\|_{L^2(\mu_T)}^2
+\|F\|_{\mathcal H_{\beta,X}}^2
+\|Z^\circ\|_{\mathcal H_{\beta,X}}^2
+\|M\|_{\mathcal H_{\beta,X}}^2
\bigr).
\end{aligned}
\label{eq:general-affine-mode-estimate}
\end{equation}
\end{lemma}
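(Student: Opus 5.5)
The plan is to derive an ODE for the mean gradient $\bar Z_t=\mathbb EZ_t$, solve it backward from $T$ as a Volterra-type integral equation, and then feed the resulting pointwise bound on $|\bar Z_t|$ into $L_{\mathrm{low}}^2$ using the Poincaré bound \cref{eq:general-poincare-bound}. Taking expectations in the gradient dynamics from the proof of \cref{prop:general-centered-gradient-identity} removes the martingale term $M_t\,dW_t$ and gives
\[
\frac{d}{dt}\bar Z_t=-\mathbb E[\nabla F(t,X_t)]-\mathbb E[B_tZ_t^\circ]-\mathbb E[B_t]\bar Z_t-\mathbb E[R_t],
\qquad \bar Z_T=\mathbb E\nabla\eta(X_T).
\]
The two easy terms are handled directly: $|\mathbb E[B_tZ_t^\circ]|\le L_B(\mathbb E|Z_t^\circ|^2)^{1/2}$, and $|\mathbb E R_t|\le L_a\lambda_{\mathrm{ell}}^{-1/2}(\mathbb E|M_t|_F^2)^{1/2}$. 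The linear term $\mathbb E[B_t]\bar Z_t$ has operator norm at most $L_B$, so it is absorbed by a backward Grönwall factor $e^{L_BT}$.

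The source term $\mathbb E[\nabla F(t,X_t)]$ is the key obstacle, because the right-hand side of \cref{eq:general-affine-mode-estimate} contains only $\|F\|$, not $\|\nabla F\|$. Integrate by parts against $p_t^X$ to get $\mathbb E[\nabla F(t,X_t)]=-\int F\,\nabla\log p_t^X\,d\mu_t$. Cauchy–Schwarz together with \cref{eq:general-score-bound} then gives
\[
|\mathbb E\nabla F(t,X_t)|\le c_{\mathrm{sc}}\,q(t)^{-1/2}\,(\mathbb E|F(t,X_t)|^2)^{1/2},
\]
with only a mild $t^{-1/2}$ singularity near $0$. Combining these bounds with Grönwall on $[t,T]$ yields
\[
|\bar Z_t|\le e^{L_BT}\Bigl(\|\nabla\eta\|_{L^2(\mu_T)}+\int_t^T\bigl[c_{\mathrm{sc}}q(s)^{-1/2}f_s+L_Bz_s+L_a\lambda_{\mathrm{ell}}^{-1/2}m_s\bigr]ds\Bigr),
\]
where $f_s,z_s,m_s$ are the $L^2(\Omega)$ norms of $F(s,X_s)$, $Z_s^\circ$, and $M_s$.

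The weights $r_\beta$ must now be handled so that the constant does not depend on $\beta$. Because $r_\beta$ is nondecreasing in $t$, we have $r_\beta(t)\le r_\beta(s)$ for $s\ge t$, so it suffices to bound $r_\beta(t)^{1/2}|\bar Z_t|$ by the same integral with each integrand weighted by $r_\beta(s)^{1/2}$ (and $r_\beta(T)=1$ for the terminal term). Cauchy–Schwarz in $s$ handles the integrals: the $f$-term produces $\int_0^T q(s)^{-1}ds$, which diverges logarithmically. Since $C_P(\mu_t)\le c_Pq(t)$ vanishes near $0$, I will instead split as $\int_t^T q(s)^{-1/2}f_s\,ds\le(\int_t^T q^{-1})^{1/2}(\int f^2r_\beta)^{1/2}$, giving a factor $(1+\log(1/q(t)))$. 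Multiplying by $C_P(\mu_t)\le c_Pq(t)$ and integrating, $\int_0^T q(t)(1+\log(1/q(t)))\,dt<\infty$, which produces a $\beta$-independent constant.

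For the terminal term, the bound $|\mathbb E\nabla\eta|^2\le\|\nabla\eta\|^2_{L^2(\mu_T)}$ follows from Jensen. Squaring everything and collecting the constants gives \cref{eq:general-affine-mode-estimate}, with $C_{\mathrm{low}}$ depending only on $T,\lambda_{\mathrm{ell}},c_P,c_{\mathrm{sc}},L_B,L_a$; $L_{DB}$ is allowed but not needed here. The steps that need the most care are justifying the integration by parts against $p_t^X$ within the admissibility convention, and handling the $q(t)^{-1/2}$ singularity; both are resolved by the score bound combined with the vanishing of $C_P(\mu_t)$ near $t=0$.
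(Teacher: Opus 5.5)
Your proposal is correct and follows essentially the paper's route: taking expectations in the gradient dynamics, the score bound via integration by parts, backward Gr\"onwall, $\beta$-uniformity from the monotonicity of $r_\beta$ (the paper's conjugation factor $e^{-\beta(r-t)}\le1$), and the final Poincar\'e bound $C_P(\mu_t)\le c_Pq(t)$. Your pointwise Cauchy--Schwarz with the factor $1+\log(1/q(t))$ is just an explicit evaluation of the paper's Hilbert--Schmidt bound for the kernel $(q(t)/q(r))^{1/2}\boldsymbol1_{\{t<r\}}$. The one difference is that you bound $\mathbb E[B_tZ_t^\circ]$ directly by $L_Bz_t$ rather than via Poincar\'e and $L_{DB}$; this is valid because $q\le1$ makes the paper's extra factor $q(r)^{1/2}$ unnecessary, so your constant indeed need not depend on $L_{DB}$.
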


\begin{proof}
Set
\begin{equation*}
\begin{aligned}
\mathfrak f(t)&:=\bigl(\mathbb E|F(t,X_t)|^2\bigr)^{1/2},\quad
z(t):=\bigl(\mathbb E|Z_t^\circ|^2\bigr)^{1/2},\\
m_M(t)&:=\bigl(\mathbb E|M_t|_F^2\bigr)^{1/2},\quad
G:=\|\nabla\eta\|_{L^2(\mu_T)}.
\end{aligned}
\end{equation*}
Taking expectations in the differentiated equation, directional integration
by parts and \cref{eq:general-score-bound} control
$\mathbb E\nabla F$, while componentwise Poincar\'e controls
$\mathbb E[B_rZ_r^\circ]
=\mathbb E[(B_r-\mathbb EB_r)Z_r^\circ]$. Together with the coefficient
bounds for $B_r\bar Z_r$ and $R_r$, this gives
\begin{equation*}
\begin{aligned}
|\bar Z_t|
\le G+\int_t^T\bigl(&
c_{\mathrm{sc}}q(r)^{-1/2}\mathfrak f(r)
+L_{DB}c_P^{1/2}q(r)^{1/2}z(r)\\
&+L_B|\bar Z_r|
+L_a\lambda_{\mathrm{ell}}^{-1/2}m_M(r)
\bigr)\,dr.
\end{aligned}
\end{equation*}
Backward Gronwall then gives, with $C$ depending only on the parameters in
the statement,
\begin{equation*}
|\bar Z_t|
\le C\left[
G+\int_t^T
\left(
q(r)^{-1/2}\mathfrak f(r)
+q(r)^{1/2}z(r)
+m_M(r)
\right)dr
\right].
\end{equation*}

Let $\mathfrak h=\mathfrak f+z+m_M$ and define
\begin{equation*}
(\mathcal K\mathfrak h)(t)
:=\int_t^T K(t,r)\mathfrak h(r)\,dr,
\qquad
K(t,r)=\left(\frac{q(t)}{q(r)}\right)^{1/2}
\boldsymbol1_{\{t<r\}}.
\end{equation*}
Since $0<q\le1$, multiplication by $q(t)^{1/2}$ in the preceding bound gives
$q(t)^{1/2}|\bar Z_t|\le C[G+(\mathcal K\mathfrak h)(t)]$. Moreover, $q$ is
nondecreasing, so $K$ is Hilbert--Schmidt, and conjugation by
$r_\beta^{1/2}$ replaces it by $e^{-\beta(r-t)}K(t,r)$. Explicitly,
\begin{equation*}
\int_0^T\int_t^T
e^{-2\beta(r-t)}\frac{q(t)}{q(r)}\,dr\,dt
\le\frac{T^2}{2}.
\end{equation*}
Hence the weighted $L^2$ operator norm of $\mathcal K$ is bounded uniformly
in $\beta$, and
\begin{equation*}
\int_0^T r_\beta(t)q(t)|\bar Z_t|^2\,dt
\le C\bigl(
G^2
+\|F\|_{\mathcal H_{\beta,X}}^2
+\|Z^\circ\|_{\mathcal H_{\beta,X}}^2
+\|M\|_{\mathcal H_{\beta,X}}^2
\bigr).
\end{equation*}
The Poincar\'e bound now proves
\cref{eq:general-affine-mode-estimate}.
\end{proof}

\paragraph{Final absorption}
\begin{proposition}[Conditional a priori estimate]
\label{prop:smooth-variable-diffusion-estimate}
Assume the coefficient setting of
\cref{subsec:scope-and-formal-setting} and
\cref{ass:general-low-mode-bounds,ass:general-projected-coercivity}, and let
$(F,\eta,w)$ be admissible.
Then there are finite constants $\beta_{\mathrm{vc},0}>0$ and
$C_{\mathrm{vc}}>0$, depending only on
\begin{equation*}
T,\lambda_{\mathrm{ell}},c_P,c_{\mathrm{sc}},C_A,
L_B,L_{DB},L_a,
\end{equation*}
such that, for every $\beta\ge\beta_{\mathrm{vc},0}$,
\begin{equation}
\|M\|_{\mathcal H_{\beta,X}}^2
+\beta\|Z^\circ\|_{\mathcal H_{\beta,X}}^2
\le C_{\mathrm{vc}}\left(
\|F\|_{\mathcal H_{\beta,X}}^2
+\|\nabla\eta\|_{L^2(\mu_T)}^2
\right).
\label{eq:general-variable-energy}
\end{equation}
Here $M_t=D^2w(t,X_t)\Sigma(t,X_t)$ is the diffusion-weighted Hessian channel.
Uniform ellipticity also converts \cref{eq:general-variable-energy} into an
estimate for the unweighted Hessian if needed.
\end{proposition}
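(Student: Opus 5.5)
Our plan is to start from the centered-gradient identity \cref{eq:general-centered-gradient-identity} of \cref{prop:general-centered-gradient-identity} and bound each pairing on its right-hand side. Each bound will either be absorbed into the left-hand side or land in $\|F\|^2+\|\nabla\eta\|^2$. First, $\operatorname{Var}(\nabla\eta(X_T))\le\|\nabla\eta\|_{L^2(\mu_T)}^2$. For the source pairing, \cref{lem:projected-source-control} with $\varepsilon=1/8$ gives
\begin{equation*}
2|\langle Z^\circ,\nabla F\rangle_{\beta,X}|\le\tfrac14\|M\|_{\mathcal H_{\beta,X}}^2+\frac{4C_A^2}{\lambda_{\mathrm{ell}}}\|F\|_{\mathcal H_{\beta,X}}^2 .
\end{equation*}
For the coefficient term $R$, \cref{eq:general-R-bound} with $\varepsilon=1/4$ gives $\tfrac14\|M\|^2+4L_a^2\lambda_{\mathrm{ell}}^{-1}\|Z^\circ\|^2$. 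The centered--centered drift term is bounded by $2L_B\|Z^\circ\|^2$. Finally, the leakage bound \cref{eq:general-leakage-bound} gives $\tfrac\beta4\|Z^\circ\|^2+4L_{DB}^2\beta^{-1}L_{\mathrm{low}}^2$.

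Next we close the affine leakage with \cref{lem:general-affine-mode-estimate}. Since $C_{\mathrm{low}}$ does not depend on $\beta$, the leakage contribution is at most
\begin{equation*}
\frac{4L_{DB}^2C_{\mathrm{low}}}{\beta}\bigl(\|\nabla\eta\|^2+\|F\|^2+\|Z^\circ\|^2+\|M\|^2\bigr).
\end{equation*}
Collecting everything, the left-hand side $\|M\|^2+2\beta\|Z^\circ\|^2$ is bounded by
\begin{equation*}
\Bigl(\tfrac12+\tfrac{c_1}{\beta}\Bigr)\|M\|^2+\Bigl(\tfrac\beta4+2L_B+\tfrac{4L_a^2}{\lambda_{\mathrm{ell}}}+\tfrac{c_1}{\beta}\Bigr)\|Z^\circ\|^2+C\bigl(\|F\|^2+\|\nabla\eta\|^2\bigr),
\end{equation*}
where $c_1=4L_{DB}^2C_{\mathrm{low}}$ and $C$ contains $1+c_1/\beta$.

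We then choose $\beta_{\mathrm{vc},0}$ large enough that $c_1/\beta\le1/4$ and that $2L_B+4L_a^2/\lambda_{\mathrm{ell}}+c_1/\beta\le\beta/2$. With this choice the $\|M\|^2$ coefficient on the right is at most $3/4$ and the $\|Z^\circ\|^2$ coefficient is at most $3\beta/4$. Absorbing both into the left-hand side leaves $\tfrac14\|M\|^2+\tfrac{5\beta}4\|Z^\circ\|^2\le C(\|F\|^2+\|\nabla\eta\|^2)$, which implies \cref{eq:general-variable-energy}. The constants depend only on the listed parameters, because $\varepsilon$ was fixed numerically and $C_{\mathrm{low}}$ depends only on them. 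The unweighted Hessian version then follows from \cref{eq:general-ellipticity-bound}.

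The main obstacle is the order of the absorption. The leakage bound \cref{eq:general-leakage-bound} feeds $\|M\|^2$ and $\|Z^\circ\|^2$ back into the right-hand side through $L_{\mathrm{low}}^2$. This is harmless only because \cref{lem:general-affine-mode-estimate} is $\beta$-uniform, so the feedback carries a $1/\beta$ factor. The $\beta$-independent $Z^\circ$ contributions from $L_B$ and $L_a$ must be dominated by the $2\beta\|Z^\circ\|^2$ term on the left. We therefore fix $\varepsilon$ first and only afterwards take $\beta$ large, so that no $\varepsilon$-dependent constant can be made to depend on $\beta$.
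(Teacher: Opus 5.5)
Your proposal is correct and follows the paper's proof: absorb a quarter of $\|M\|_{\mathcal H_{\beta,X}}^2$ from each of the source and $R$ pairings, bound the drift and leakage terms, and use the $\beta$-uniform affine-mode estimate so that the feedback through $L_{\mathrm{low}}^2$ carries a factor $\beta^{-1}$ and can be absorbed for large $\beta$. Your explicit choices ($\varepsilon=1/8$, $\varepsilon=1/4$, and the two conditions defining $\beta_{\mathrm{vc},0}$) just make concrete the constants $C_1,C_2,C_3$ that the paper leaves implicit.
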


\begin{proof}
Write $\mathsf M=\|M\|_{\mathcal H_{\beta,X}}^2$,
$\mathsf Z=\|Z^\circ\|_{\mathcal H_{\beta,X}}^2$, and
$\mathsf D=\|F\|_{\mathcal H_{\beta,X}}^2
+\|\nabla\eta\|_{L^2(\mu_T)}^2$.
The bounds in
\cref{eq:general-R-bound,eq:general-source-control} absorb one quarter of
$\mathsf M$ from each term. Together with the
centered--centered drift estimate,
$\operatorname{Var}(\nabla\eta(X_T))
\le\|\nabla\eta\|_{L^2(\mu_T)}^2$, and
\cref{eq:general-leakage-bound}, this gives constants $C_1,C_2,C_3$,
independent of $\beta$, such that
\begin{equation*}
\frac12\mathsf M
+\left(\frac{7\beta}{4}-C_1\right)\mathsf Z
\le
C_2\mathsf D
+\frac{C_3}{\beta}L_{\mathrm{low}}^2.
\end{equation*}
Now \cref{eq:general-affine-mode-estimate} bounds the last term by
$C\beta^{-1}(\mathsf D+\mathsf M+\mathsf Z)$. Taking $\beta$ sufficiently
large absorbs $\mathsf M+\mathsf Z$ and proves
\cref{eq:general-variable-energy}.
\end{proof}

\subsection{Transition criterion}
\label{subsec:general-why-coercivity}
\Cref{subsec:general-from-coercivity} treats
\cref{ass:general-low-mode-bounds,ass:general-projected-coercivity} as inputs.
Thus the transition law enters only through the Poincar\'e bound, the Fisher
bound, and projected coercivity \cref{eq:general-projected-coercivity}. We
give one concrete sufficient criterion. Under the standing boundedness and
ellipticity assumptions, a Gaussian moment follows from the classical
transition-density bound; a marginal log-Sobolev inequality and a
terminal-variable log-Hessian bound then imply all three inputs.

\paragraph{A log-Sobolev and log-Hessian criterion}
Suppose $p_t^X$ is strictly positive and $C^2$ in its terminal spatial
variable. For a smooth scalar $g$, set
\begin{equation*}
\operatorname{Ent}_{\mu_t}(g^2)
:=\int g^2\log\left(
\frac{g^2}{\|g\|_{L^2(\mu_t)}^2}
\right)d\mu_t.
\end{equation*}
We use the following two estimates, uniform in $t\in(0,T]$:
\begin{enumerate}
\item[(LS)] a marginal logarithmic Sobolev inequality,
\begin{equation*}
\operatorname{Ent}_{\mu_t}(g^2)
\le c_{\mathrm{LS}}q(t)\|\nabla g\|_{L^2(\mu_t)}^2;
\end{equation*}
\item[(LH)] a terminal-variable log-Hessian upper bound,
\begin{equation*}
-D^2\log p_t^X(x)
\preceq c_H\left(
q(t)^{-1}+\frac{|x-m_t|^2}{q(t)^2}
\right)I_d,
\qquad x\in\mathbb R^d.
\end{equation*}
\end{enumerate}
Here (LS) is required on its natural Sobolev domain.

\paragraph{Standard low-mode consequences and (GM)}
Linearizing (LS) at constants gives the Poincar\'e bound with
$c_P=c_{\mathrm{LS}}/2$. The Gaussian upper bound for the transition density
\cite{ref18}, followed by the shift
$|m_t-x_0|\le\|b\|_\infty t$, gives finite-horizon constants
$\kappa_*>0$ and $M_*<\infty$ such that
\begin{equation*}\tag{GM}
\sup_{0<t\le T}
\int\exp\left(
\kappa_*\frac{|x-m_t|^2}{q(t)}
\right)d\mu_t
\le M_*.
\end{equation*}
Thus (GM) is automatic under the coefficient assumptions of
\cref{subsec:scope-and-formal-setting}, rather than an additional
transition-law assumption. Write
\begin{equation*}
\gamma:=\frac{\log M_*}{\kappa_*}.
\end{equation*}
Jensen's inequality gives
\begin{equation*}
\int\frac{|x-m_t|^2}{q(t)}\,d\mu_t\le\gamma.
\end{equation*}

The Fisher implication is the standard weighted integration-by-parts
argument. For a unit vector $v$, apply it to
$s_v=v\cdot\nabla\log p_t^X$ with radial cutoffs
$0\le\chi_R\le1$ such that $\chi_R=1$ on $B_R$,
$\operatorname{supp}\chi_R\subset B_{2R}$, and
$\|\nabla\chi_R\|_\infty\le C/R$. Using (LH), (GM), and Young's inequality
on the cutoff identity first gives
$s_v\in L^2(\mu_t)$; a second pass then removes the cutoff and yields
\begin{equation*}
\int s_v^2\,d\mu_t
\le c_H(1+\gamma)q(t)^{-1}.
\end{equation*}
Taking the supremum over unit vectors gives the Fisher bound in
\cref{ass:general-low-mode-bounds} with
$c_{\mathrm{sc}}^2=c_H(1+\gamma)$. Together with the Poincar\'e bound, this
proves that assumption without presupposing the formal full-space identity
$\mathcal I_t^\mu=\int D^2\Psi_t\,d\mu_t$.

\paragraph{Projected-coercivity verification}
Set $\alpha=c_{\mathrm{LS}}/\kappa_*$. The entropy variational inequality,
(LS), and (GM) give the distance multiplier
\begin{equation*}
\left\|\frac{|x-m_t|}{q(t)}g\right\|_{L^2(\mu_t)}^2
\le
\alpha\|\nabla g\|_{L^2(\mu_t)}^2
+\frac{\gamma}{q(t)}\|g\|_{L^2(\mu_t)}^2.
\end{equation*}
The same bound holds on the corresponding Sobolev domain by truncation.

Fix $t\in(0,T]$ and initially take
$\phi\in C_c^\infty(\mathbb R^d)$, and set
$\psi=Q_t^{\mathrm{aff}}\phi$. Componentwise Poincar\'e and the distance multiplier applied
to $g=\partial_j\psi$ give
\begin{equation*}
\begin{aligned}
q(t)^{-1}\|\nabla\psi\|_{L^2(\mu_t)}^2
&\le c_P\|D^2\phi\|_{L^2(\mu_t)}^2,\\
\sum_{j=1}^d
\left\|\frac{|x-m_t|}{q(t)}\partial_j\psi
\right\|_{L^2(\mu_t)}^2
&\le(\alpha+\gamma c_P)
\|D^2\phi\|_{L^2(\mu_t)}^2.
\end{aligned}
\end{equation*}
The Fisher bound just proved and the bounded tail of $\nabla\psi$
show that
\begin{equation*}
\mathcal A_t^\mu\psi
=\Delta\phi+\nabla\log p_t^X\cdot\nabla\psi
\in L^2(\mu_t).
\end{equation*}
Although $\phi$ is compactly supported, subtracting its affine modes leaves
$\psi$ with an affine tail. Choose the cutoff above with $R$ large enough that
$\operatorname{supp}\nabla\chi_R$ is disjoint from
$\operatorname{supp}D^2\phi$. The localized weighted Bochner identity
\cite[Sec.~1.2]{ref20} gives
\begin{equation}
\begin{aligned}
\int\chi_R^2(\mathcal A_t^\mu\psi)^2\,d\mu_t
={}&\int\chi_R^2|D^2\phi|_F^2\,d\mu_t
+\int\chi_R^2D^2\Psi_t[\nabla\psi,\nabla\psi]\,d\mu_t\\
&-2\int\chi_R\mathcal A_t^\mu\psi\,
\nabla\chi_R\cdot\nabla\psi\,d\mu_t.
\end{aligned}
\label{eq:general-localized-bochner}
\end{equation}
The other cutoff term contains
$\nabla|\nabla\psi|^2=2D^2\phi\,\nabla\psi$ and vanishes on
$\operatorname{supp}\nabla\chi_R$. The displayed boundary term tends to zero
by Cauchy--Schwarz because $\mathcal A_t^\mu\psi\in L^2(\mu_t)$,
$\nabla\psi\in L^\infty$, and
$\|\nabla\chi_R\|_\infty=O(R^{-1})$. Applying (LH) before passing to the
limit and using the preceding two bounds gives
\begin{equation*}
\|\mathcal A_t^\mu Q_t^{\mathrm{aff}}\phi\|_{L^2(\mu_t)}^2
\le
\left\{1+c_H[c_P+\alpha+\gamma c_P]\right\}
\|D^2\phi\|_{L^2(\mu_t)}^2.
\end{equation*}
The cutoff-approximation clause in
\cref{ass:general-projected-coercivity} passes this core estimate to its
stated domain. Thus that assumption holds with
\begin{equation*}
C_A^2=1+c_H[c_P+\alpha+\gamma c_P].
\end{equation*}
Consequently, under the standing coefficient assumptions, (LS) and (LH)
imply all inputs of
\cref{prop:smooth-variable-diffusion-estimate} and hence
\cref{eq:general-variable-energy}.

\paragraph{Scope and limitations of the criterion}
The pair (LS)--(LH) is only one sufficient route to
\cref{ass:general-low-mode-bounds,ass:general-projected-coercivity}; it is not
part of the a priori estimate itself. In this route, (LS) supplies both the
Poincar\'e bound and, together with (GM), the distance-multiplier estimate.
Either use may be replaced by a weaker hypothesis giving the corresponding
bound directly. In particular, the Poincar\'e input alone is available under
broader conditions than a full logarithmic Sobolev inequality; see
\cite{ref20,ref31,ref34}.

Within this route, the nonstandard input is (LH). Sheu \cite{ref19} estimates
logarithmic derivatives for time-homogeneous diffusion kernels. Stroock and
Turetsky \cite{ref21} obtain the relevant short-time scale for compact
Riemannian heat kernels, while Chen, Li, and Wu \cite{ref33} prove first- and
second-order logarithmic heat-kernel estimates on complete Riemannian
manifolds. Kernel symmetry makes such geometric results applicable to the
terminal variable. For a general nonsymmetric time-inhomogeneous Euclidean
kernel, however, we have not found in these references a terminal-variable
log-Hessian upper bound with the uniform scaling required in (LH). Thus (LH)
must presently be verified separately in that setting; it is not asserted as
a consequence of the standing coefficient assumptions. The Gaussian
time-only case below bypasses this issue.

\subsection{Time-only coefficients}
\label{subsec:general-time-only}
Suppose $b(t,x)=b(t)$ and $\Sigma(t,x)=\Sigma(t)$. Set
\begin{equation*}
m_t=x_0+\int_0^t b(s)\,ds,
\qquad
C_t=\int_0^t a(s)\,ds.
\end{equation*}
Then $\mu_t=N(m_t,C_t)$ and
\begin{equation*}
\lambda_{\mathrm{ell}}tI_d
\preceq C_t\preceq
\Lambda_{\mathrm{ell}}tI_d.
\end{equation*}
The Gaussian Poincar\'e inequality and the exact Fisher information matrix
give
\begin{equation*}
\begin{aligned}
C_P(\mu_t)
&=\lambda_{\max}(C_t)
\le\Lambda_{\mathrm{ell}}(T\vee1)q(t),\\
\mathcal I_t^\mu
&=C_t^{-1}
\preceq\lambda_{\mathrm{ell}}^{-1}q(t)^{-1}I_d.
\end{aligned}
\end{equation*}
Thus \cref{ass:general-low-mode-bounds} is automatic.

The Gaussian structure also bypasses the (LH) bottleneck: projected
coercivity is explicit. Fix $t>0$, choose an orthonormal eigenbasis
$C_t e_i=\rho_i e_i$, set $\psi=Q_t^{\mathrm{aff}}\phi$, and write
\begin{equation*}
\kappa_{\mathrm{ell}}
:=\frac{\Lambda_{\mathrm{ell}}}{\lambda_{\mathrm{ell}}}.
\end{equation*}
Since each $\partial_{e_i}\psi$ has zero $\mu_t$-mean, Gaussian Poincar\'e
and symmetry of $D^2\phi$ give
\begin{equation*}
\begin{aligned}
\int C_t^{-1}[\nabla\psi,\nabla\psi]\,d\mu_t
&=\sum_i\rho_i^{-1}
\|\partial_{e_i}\psi\|_{L^2(\mu_t)}^2
\le\sum_{i,j}\frac{\rho_j}{\rho_i}
\|\partial_{e_j e_i}^2\phi\|_{L^2(\mu_t)}^2 \\
&\le\frac12\left(
\kappa_{\mathrm{ell}}+\kappa_{\mathrm{ell}}^{-1}
\right)
\|D^2\phi\|_{L^2(\mu_t)}^2.
\end{aligned}
\end{equation*}
For compactly supported $\phi$, the Gaussian Bochner identity is justified
by the cutoffs used in \cref{eq:general-localized-bochner}; Gaussian tails
remove the boundary term.
Since $D^2\Psi_t=C_t^{-1}$,
\begin{equation*}
\|\mathcal A_t^\mu Q_t^{\mathrm{aff}}\phi\|_{L^2(\mu_t)}^2
\le\left[
1+\frac12\left(
\kappa_{\mathrm{ell}}+\kappa_{\mathrm{ell}}^{-1}
\right)
\right]
\|D^2\phi\|_{L^2(\mu_t)}^2.
\end{equation*}
Hence \cref{ass:general-projected-coercivity} holds with
\begin{equation*}
C_A^2=1+\frac12\left(
\kappa_{\mathrm{ell}}+\kappa_{\mathrm{ell}}^{-1}
\right).
\end{equation*}

The channel $M=D^2w\,\Sigma$ remains the one produced directly by the
gradient It\^o formula. In this spatially homogeneous case one may also use
the more strongly coefficient-weighted channels
\begin{equation*}
\widetilde M_t
:=\Sigma(t)D^2w(t,X_t)\Sigma(t),
\qquad
\widetilde Z_t^\circ
:=\Sigma(t)Z_t^\circ.
\end{equation*}
These channels are obtained from $M$ and $Z^\circ$ by pointwise algebraic
comparison; no It\^o formula is applied to $\Sigma(t)Z_t^\circ$, so no time
derivative of $\Sigma$ is required.
Indeed,
\begin{equation*}
|\widetilde M_t|_F^2
\le\Lambda_{\mathrm{ell}}|M_t|_F^2,
\qquad
|\widetilde Z_t^\circ|^2
\le\Lambda_{\mathrm{ell}}|Z_t^\circ|^2.
\end{equation*}
Here $B=R=0$. First, applying the centered-gradient identity and the projected
source bound with $\varepsilon=1/4$ gives
\begin{equation*}
\|M\|_{\mathcal H_{\beta,X}}^2
+4\beta\|Z^\circ\|_{\mathcal H_{\beta,X}}^2
\le2\operatorname{Var}(\nabla\eta(X_T))
+\frac{4C_A^2}{\lambda_{\mathrm{ell}}}
\|F\|_{\mathcal H_{\beta,X}}^2.
\end{equation*}
The two pointwise comparisons therefore give, for every $\beta\ge0$,
\begin{equation}
\begin{aligned}
\|\widetilde M\|_{\mathcal H_{\beta,X}}^2
+4\beta\|\widetilde Z^\circ\|_{\mathcal H_{\beta,X}}^2
\le{}&2\Lambda_{\mathrm{ell}}
\operatorname{Var}(\nabla\eta(X_T))+2(\kappa_{\mathrm{ell}}+1)^2
\|F\|_{\mathcal H_{\beta,X}}^2.
\end{aligned}
\label{eq:time-only-fully-weighted-energy}
\end{equation}
For identity diffusion, $\mathcal I_t^\mu=t^{-1}I_d$ and the exact Gaussian
constants recover $c_{\mathrm{sc}}=1$ and the sharp $C_A=\sqrt2$. Neither
this identity-diffusion case nor \cref{eq:time-only-fully-weighted-energy} has an
explicit dimension factor.
This estimate uses the original weight $r_\beta$.

\section{Supplementary Numerical Experiments}
\label{sec:supp-numerical-experiments}
This section records the protocol and auxiliary diagnostics underlying
\cref{sec:numerical-experiments}. Throughout, tables give relative RMSEs in
percent and figures use their fractional form.

\subsection{Benchmark realization and derivative diagnostics}
\label{subsec:supp-numerical-protocol}
For each manufactured benchmark, the two modes in
\cref{eq:numerical-benchmark} are frozen after sampling
\begin{equation}
w_k^{(j)}\sim d^{-1/2}N(0,1),\qquad
b_j\sim U(0,2\pi),\qquad v_j\sim N(0,1).
\label{eq:numerical-mode-sampling}
\end{equation}
Within each Experiment~3 group, the same modes are reused for all $N$. Within
Experiment~2, only $\alpha$ and hence the driver $f_\alpha$ change.

For a grid function $U_i^\theta(x)$, write $D_vU_i^\theta$ and
$D_{v,v}^2U_i^\theta$ for its first and second spatial directional
derivatives in the direction $v$. The derivative-consistent channels satisfy
the exact identities
\begin{equation}
\begin{aligned}
Z_i^\theta\cdot\Delta W_i
&=D_{\Delta W_i}U_i^\theta(X_i),\\
\Gamma_i^\theta:Q_i
&=D_{\Delta W_i,\Delta W_i}^2U_i^\theta(X_i)
-h\sum_{k=1}^dD_{e_k,e_k}^2U_i^\theta(X_i).
\end{aligned}
\label{eq:numerical-directional-contractions}
\end{equation}
Thus the gradient and second-chaos terms in the residual are evaluated by
directional automatic differentiation. The Brownian direction $\Delta W_i$
is part of the sampled matrix $Q_i$ itself, so
\cref{eq:numerical-directional-contractions} is an exact evaluation of the
sampled residual rather than a stochastic approximation of a deterministic
Hessian contraction.

At every left time point, each coordinate curvature
$\Gamma_{i,kk}^\theta=D_{e_k,e_k}^2U_i^\theta(X_i)$ is evaluated separately.
Their exact sum supplies $\operatorname{tr}(\Gamma_i^\theta)$, and their exact
coordinatewise absolute sum supplies the nonlinear driver in
\cref{eq:numerical-benchmark}. The full gradient used in validation and test
metrics is likewise evaluated exactly. No coordinate subsampling or randomized
trace estimator is used: SDGD instead samples dimensional derivative
components~\cite{ref29}, while HTE estimates a Hessian trace from randomized
Hessian--vector products~\cite{ref30}. The dense full-Hessian matrix is not
materialized during training: the directional second derivative retains all
Hessian entries in the contraction with $\Delta W_i\Delta W_i^\top$, while the
exact diagonal sum supplies the $-hI_d$ part of $Q_i$. A separate test
diagnostic evaluates the full Hessian row by row to compute its Frobenius-norm
error; this diagnostic does not enter model selection.

\subsection{Device batches}
\label{subsec:supp-numerical-training}
The global Monte Carlo batch sizes are fixed at $256$ for training, $2048$ for
validation, and $4096$ for the one-time test of each selected checkpoint. A
\emph{device batch} is only the
number of paths processed simultaneously on the GPU. Each global batch is
partitioned into device microbatches; their losses are normalized by the global
batch size, and their gradients are accumulated before one optimizer step.
Validation and test statistics are accumulated over microbatches in the same
way. Thus the device batch changes peak memory and runtime, but not the sampled
paths, the empirical objective, or the optimizer update, apart from
floating-point summation order. The device batch is selected to maximize throughput
while avoiding out-of-memory errors.

The training and checkpoint-selection device batches are $64$ for TL-exact and
$128$ for the three ST models in Experiment~1; all Experiment~2 runs use
$128$. In Experiment~3, the device batches for $N=100,140,200,280,400$ are
respectively $128,64,64,32,32$. The independently preflighted full-Hessian
tests use $64$ for TL-exact and $128$ for all $N=100$ ST runs; the Experiment~3
tests use the same $N$-dependent sequence $128,64,64,32,32$.
The runs were executed serially on one NVIDIA GeForce RTX 4070 SUPER with
$12{,}282$ MiB reported memory.

\FloatBarrier
\subsection{Time discretization}
\label{subsec:supp-numerical-refinement}
Tables~\ref{tab:numerical-refinement-r1}--\ref{tab:numerical-refinement-rates}
report all three PDE groups separately together with their fitted exponents,
while \cref{fig:numerical-refinement-replicates} visualizes the corresponding
per-group decay.
For each metric, the reported exponent is obtained by an unweighted least-squares
regression of $\log M$ against $\log h$ over all five values of $h$. The
group-mean fits are reported in \cref{tab:numerical-refinement-rates}. Across
the five values of $N$, fixed-bank validation
loss decreases by only $0.24\%$--$0.55\%$ over steps $4500$--$5000$, and
\cref{fig:numerical-late-training} shows no late instability. This supports
using $5000$ steps as an adequate common budget without asserting exact
optimizer convergence.
In the first three tables, the final training loss uses the last fresh batch,
and the final validation loss uses the fixed bank at step $5000$; the test
columns instead use the validation-selected checkpoint. Relative RMSEs are in
percent, and bold entries are minima within each loss or error column.
Training time sums only optimizer-step runtimes and excludes validation,
selection, and test.

\begin{table}[!htbp]
\caption{Experiment 3 results for PDE group r1.}
\label{tab:numerical-refinement-r1}
\centering
\scriptsize
\setlength{\tabcolsep}{2.1pt}
\begin{tabular}{@{}rrrrrrrrr@{}}
\toprule
& \multicolumn{2}{c}{step-$5000$ loss} &
& \multicolumn{4}{c}{selected test relative RMSE (\%)} & \\
\cmidrule(lr){2-3}\cmidrule(lr){5-8}
$N$ & train & validation & test loss & $u$ & $\nabla u$
& $\operatorname{diag}D^2u$ & $D^2u$ & train (min) \\
\midrule
100 & 0.008670 & 0.008929 & 0.009023 & 1.22 & 4.64 & 8.75 & 6.75 & 30.01 \\
140 & 0.006851 & 0.006683 & 0.006781 & 1.03 & 3.64 & 6.49 & 5.63 & 42.19 \\
200 & 0.005126 & 0.004913 & 0.004894 & 0.84 & 2.94 & 5.76 & 4.86 & 59.54 \\
280 & 0.003827 & 0.003951 & 0.003864 & 0.76 & 2.55 & 5.08 & 4.86 & 82.00 \\
400 & \textbf{0.002633} & \textbf{0.002601} & \textbf{0.002651}
& \textbf{0.57} & \textbf{2.00} & \textbf{4.37} & \textbf{3.96} & 115.68 \\
\bottomrule
\end{tabular}
\end{table}

\begin{table}[!htbp]
\caption{Experiment 3 results for PDE group r2.}
\label{tab:numerical-refinement-r2}
\centering
\scriptsize
\setlength{\tabcolsep}{2.1pt}
\begin{tabular}{@{}rrrrrrrrr@{}}
\toprule
& \multicolumn{2}{c}{step-$5000$ loss} &
& \multicolumn{4}{c}{selected test relative RMSE (\%)} & \\
\cmidrule(lr){2-3}\cmidrule(lr){5-8}
$N$ & train & validation & test loss & $u$ & $\nabla u$
& $\operatorname{diag}D^2u$ & $D^2u$ & train (min) \\
\midrule
100 & 0.005079 & 0.005100 & 0.005101 & 1.12 & 2.15 & 5.89 & 3.84 & 30.04 \\
140 & 0.003816 & 0.003717 & 0.003714 & 0.83 & 1.61 & 4.23 & 3.11 & 42.21 \\
200 & 0.002635 & 0.002617 & 0.002659 & 0.62 & 1.22 & 3.61 & 2.88 & 59.60 \\
280 & 0.001832 & 0.001932 & 0.001935 & 0.42 & 0.93 & 2.76 & 2.60 & 82.00 \\
400 & \textbf{0.001371} & \textbf{0.001366} & \textbf{0.001380}
& \textbf{0.38} & \textbf{0.83} & \textbf{2.47} & \textbf{2.42} & 115.73 \\
\bottomrule
\end{tabular}
\end{table}

\begin{table}[!htbp]
\caption{Experiment 3 results for PDE group r3.}
\label{tab:numerical-refinement-r3}
\centering
\scriptsize
\setlength{\tabcolsep}{2.1pt}
\begin{tabular}{@{}rrrrrrrrr@{}}
\toprule
& \multicolumn{2}{c}{step-$5000$ loss} &
& \multicolumn{4}{c}{selected test relative RMSE (\%)} & \\
\cmidrule(lr){2-3}\cmidrule(lr){5-8}
$N$ & train & validation & test loss & $u$ & $\nabla u$
& $\operatorname{diag}D^2u$ & $D^2u$ & train (min) \\
\midrule
100 & 0.044311 & 0.045438 & 0.045564 & 1.44 & 3.04 & 6.96 & 6.00 & 30.05 \\
140 & 0.031710 & 0.033390 & 0.033683 & 1.13 & 2.27 & 5.39 & 4.60 & 42.03 \\
200 & 0.023706 & 0.025026 & 0.024395 & 0.82 & 1.72 & 4.42 & 3.76 & 59.18 \\
280 & 0.017453 & 0.017320 & 0.017776 & 0.68 & 1.40 & 3.72 & 3.10 & 82.02 \\
400 & \textbf{0.012219} & \textbf{0.012584} & \textbf{0.012692}
& \textbf{0.60} & \textbf{1.16} & \textbf{3.13} & \textbf{2.65} & 118.97 \\
\bottomrule
\end{tabular}
\end{table}

\begin{table}[!htbp]
\caption{Log--log decay exponents fitted separately within each PDE group by
the same unweighted five-point regression. The mean column reports the fit to
the arithmetic mean of the three group metrics at each $N$.}
\label{tab:numerical-refinement-rates}
\centering
\scriptsize
\setlength{\tabcolsep}{6pt}
\begin{tabular}{@{}lrrrr@{}}
\toprule
metric & r1 & r2 & r3 & mean \\
\midrule
test loss                   & 0.870 & 0.943 & 0.922 & 0.916 \\
$u$                          & 0.526 & 0.817 & 0.654 & 0.651 \\
$\nabla u$                   & 0.589 & 0.708 & 0.695 & 0.646 \\
$\operatorname{diag}D^2u$   & 0.471 & 0.622 & 0.567 & 0.541 \\
$D^2u$                       & 0.351 & 0.319 & 0.585 & 0.419 \\
\bottomrule
\end{tabular}
\end{table}

\begin{figure}[!htbp]
\centering
\includegraphics[width=0.96\linewidth]{%
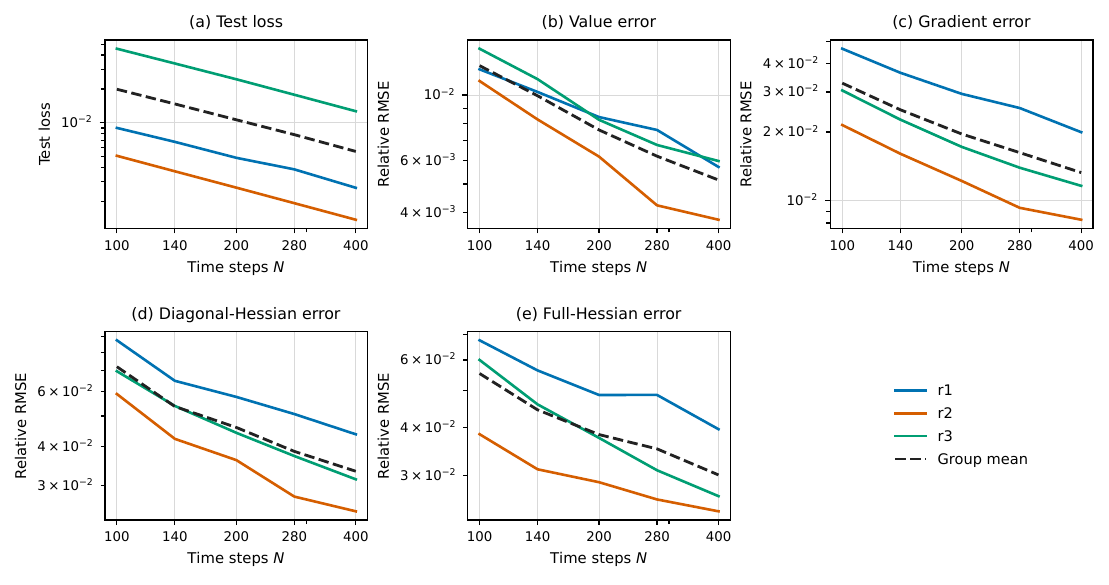}
\caption{Per-group test-metric decay. Colored solid lines show the three PDE
groups and black dashed lines their arithmetic mean. The groups use different
manufactured PDEs, network initializations, and random streams; their spread
does not isolate optimization noise.}
\label{fig:numerical-refinement-replicates}
\end{figure}

\begin{figure}[!htbp]
\centering
\includegraphics[width=0.80\linewidth]{%
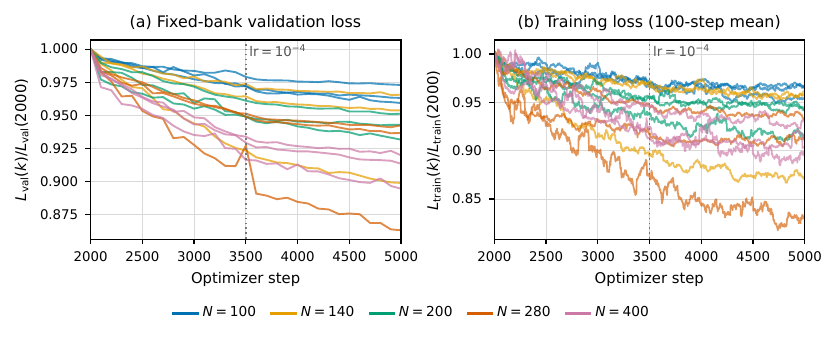}
\caption{Late-stage loss histories for all $15$ time-discretization runs. Each $N$ has
three same-color solid curves, one per PDE seed. Curves are normalized by their
own step-$2000$ values; the dotted line marks the final learning-rate change at
step $3500$. Panel (a) uses the fixed validation bank, and panel (b) uses the
$100$-step moving mean of the fresh-path training loss.}
\label{fig:numerical-late-training}
\end{figure}

\FloatBarrier

\fi

\end{document}